\DeclareRobustCommand{\lyxsout}[1]{\ifx\\#1\else\sout{#1}\fi}
\numberwithin{equation}{section}
\numberwithin{figure}{section}
\theoremstyle{plain}
\newtheorem{thm}{\protect\theoremname}
\theoremstyle{definition}
\newtheorem{defn}[thm]{\protect\definitionname}
\theoremstyle{plain}
\newtheorem{lem}[thm]{\protect\lemmaname}
\theoremstyle{definition}
\newtheorem{example}[thm]{\protect\examplename}
\theoremstyle{remark}
\newtheorem{rem}[thm]{\protect\remarkname}
\theoremstyle{plain}
\newtheorem{cor}[thm]{\protect\corollaryname}
\setlist[enumerate]{leftmargin=*,label=(\roman*),align=left}
\newcommand{\xyR}[1]{ \makeatletter
\xydef@\xymatrixrowsep@{#1} \makeatother} % end of \xyR
\newcommand{\xyC}[1]{ \makeatletter
\xydef@\xymatrixcolsep@{#1} \makeatother} % end of \xyC
\newcommand{\ra}{\longrightarrow}
\newcommand{\field}[1]{\mathbb{#1}}
\newcommand{\R}{\field{R}} % reals
\newcommand{\Cc}{\field{C}} % complex
\newcommand{\N}{\field{N}} % naturals
\newcommand{\Z}{\ensuremath{\mathbb{Z}}} % integers
\newcommand{\eps}{\varepsilon} % for the sake of brevity only
\renewcommand{\phi}{\varphi}
\newcommand{\diff}[1]{\ifmmode\mathchoice{\hbox{\rm d}#1}  % displaystyle
 {\hbox{\rm d}#1}  % normal 
 {\scalebox{0.75}{$\hbox{\rm d}#1$}}  % scriptstyle 
 {\scalebox{0.35}{$\hbox{\rm d}#1$}}  % scriptscriptstyle
 \fi} % dt,dx,... for integrals
\newcommand{\abs}[2][\empty]{\ifx#1\empty\left|#2\right|%
\else#1\vert #2 #1\vert\fi}% optional arg=size
\newcommand{\cinfty}{\mathcal{C}^\infty}
\newcommand{\Coo}{\mbox{\ensuremath{\mathcal{C}}}^{\infty}} % C infinity
\DeclareMathOperator{\Set}{{\bf Set}} % category of Sets
\newcommand{\Gcinf}{\mathcal{G}\cinfty} % generalized smooth functions
\newcommand{\Rtil}{\widetilde \R} % real Colombeau generalized number
\newcommand{\Ctil}{\widetilde \Cc} % complex Colombeau generalized number
\newcommand{\gs}{\mathcal{G}^s} % special Colombeau algebra
\newcommand{\ns}{\mathcal{N}^s} % negligible nets for the special CA
\newcommand{\sint}[1]{\langle#1\rangle} % strong internal set
\newcommand{\Eball}{B^{{\scriptscriptstyle \text{\rm E}}}} % ordinary Euclidean ball
\newcommand{\csp}[1]{{\text{\rm c}}({#1})}
\newcommand{\fcmp}{\Subset_{\text{\rm f}}}
\newcommand{\frontRise}[2]{\ifmmode\mathchoice{{\vphantom{#1}}^{\scalebox{0.6}{$#2$}}}  % displaystyle
 {{\vphantom{#1}}^{\scalebox{0.56}{$#2$}}}  % normal 
 {{\vphantom{#1}}^{\scalebox{0.47}{$#2$}}}  % scriptstyle 
 {{\vphantom{#1}}^{\scalebox{0.35}{$#2$}}}\fi} % scriptscriptstyle 
\newcommand{\RC}[1]{\frontRise{\R}{#1}\Rtil}
\newcommand{\rcrho}{\RC{\rho}}
\newcommand{\rti}{\RC{\rho}}
\newcommand{\gsf}{\frontRise{\mathcal{G}}{\rho}\mathcal{GC}^{\infty}}
\newcommand{\Gsf}[1]{\frontRise{\mathcal{G}}{#1}\mathcal{GC}^{\infty}}
\newcommand{\GSFud}[2]{\frontRiseDown{\mathcal{G}}{#1}{#2}\mathcal{GC}^{\infty}}
\newcommand{\Dgsf}{\frontRise{\mathcal{D}}{\rho}\mathcal{GD}}
\newcommand{\DGsf}[1]{\frontRise{\mathcal{D}}{#1}\mathcal{GD}}
\newcommand{\hyperN}[1]{	\frontRise{\N}{#1}\widetilde{\N}}
\newcommand{\hyperZ}[1]{	\frontRise{\N}{#1}\widetilde{\Z}}
\newcommand{\hypNr}{\hyperN{\rho}}
\newcommand{\hypNs}{\hyperN{\sigma}}
\newcommand{\hyperlimarg}[3]{\mathchoice{\frontRise{\lim}{\raisebox{-0.05em}{$#1\hspace{-0.67em}$}}\lim_{#3\in \hyperN{#2}\,}}
{\frontRise{\lim}{#1\hspace{-0.25em}}\lim_{#3\in \hyperN{#2}\,}}
{\frontRise{\lim}{#1\hspace{-0.25em}}\lim_{#3\in \hyperN{#2}\,}}
{\frontRise{\lim}{#1\hspace{-0.25em}}\lim_{#3\in \hyperN{#2}\,}}}
\newcommand{\hyperlim}[2]{\hyperlimarg{#1}{#2}{n}}
\newcommand{\hypersumarg}[3]{\mathchoice{\frontRise{\sum}{\raisebox{-0.2em}{$#1\hspace{-0.67em}$}}\sum_{#3\in \hyperN{#2}\,}}
{\frontRise{\sum}{#1\hspace{-0.25em}}\sum_{#3\in \hyperN{#2}\,}}
{\frontRise{\sum}{#1\hspace{-0.25em}}\sum_{#3\in \hyperN{#2}\,}}
{\frontRise{\sum}{#1\hspace{-0.25em}}\sum_{#3\in \hyperN{#2}\,}}}
\newcommand{\hypersumargZ}[3]{\mathchoice{\frontRise{\sum}{\raisebox{-0.2em}{$#1\hspace{-0.67em}$}}\sum_{#3\in \hyperZ{#2}\,}}
{\frontRise{\sum}{#1\hspace{-0.25em}}\sum_{#3\in \hyperZ{#2}\,}}
{\frontRise{\sum}{#1\hspace{-0.25em}}\sum_{#3\in \hyperZ{#2}\,}}
{\frontRise{\sum}{#1\hspace{-0.25em}}\sum_{#3\in \hyperZ{#2}\,}}}
\newcommand{\hypersum}[2]{\hypersumarg{#1}{#2}{n}}
\newcommand{\hypersumZ}[2]{\hypersumargZ{#1}{#2}{n}}
\newcommand{\subzero}{\subseteq_{0}}
\newcommand{\dom}[1]{\text{dom}(#1)}
\newcommand{\sbpt}[1]{#1_{\text{\rm s}}}
\newcommand{\frontRiseDown}[3]{\ifmmode\mathchoice{{\vphantom{#1}}^{\scalebox{0.6}{$#2$}}_{\scalebox{0.6}{$#3$}}}  % displaystyle
 {{\vphantom{#1}}^{\scalebox{0.56}{$#2$}}_{\scalebox{0.56}{$#3$}}}  % normal 
 {{\vphantom{#1}}^{\scalebox{0.47}{$#2$}}_{\scalebox{0.47}{$#3$}}}  % scriptstyle 
 {{\vphantom{#1}}^{\scalebox{0.35}{$#2$}}_{\scalebox{0.35}{$#3$}}}\fi} % scriptscriptstyle 
\newcommand{\RCud}[2]{\frontRiseDown{\R}{#1}{#2}\Rtil}
\newcommand{\GI}{\frontRise{\mathcal{G}}{\rho}\mathcal{GI}} % generalized integrable functions
\newcommand{\ptind}{\displaystyle \mathop {\ldots\ldots\,}} % marks with smth over. Usage: \ptind^{...}
\newcommand{\rccrho}{\frontRise{\mathbb{C}}{\rho}\widetilde{\mathbb{C}}}
\providecommand{\corollaryname}{Corollary}
\providecommand{\definitionname}{Definition}
\providecommand{\examplename}{Example}
\providecommand{\lemmaname}{Lemma}
\providecommand{\remarkname}{Remark}
\providecommand{\theoremname}{Theorem}
\begin{document}

\title{A Fourier transform for all generalized functions}
\author{Akbarali Mukhammadiev \and Diksha tiwari \and Paolo Giordano}
\thanks{A. Mukhammadiev has been supported by Grant P30407 and P33538 of the
Austrian Science Fund FWF}
\address{\textsc{University of Vienna, Austria}}
\email{akbarali.mukhammadiev@univie.ac.at}
\thanks{D.~Tiwari has been supported by Grant P30407 and P33538 of the Austrian
Science Fund FWF}
\address{\textsc{University of Vienna, Austria}}
\email{diksha.tiwari@univie.ac.at}
\thanks{P.~Giordano has been supported by grants P30407, P33538 and P34113
of the Austrian Science Fund FWF}
\address{\textsc{University of Vienna, Austria}}
\email{paolo.giordano@univie.ac.at}
\subjclass[2020]{42B10, 46F12, 46F-xx, 46F30}
\keywords{Fourier transforms, Schwartz distributions, Integral transforms in
distribution spaces, generalized functions for nonlinear analysis.}
\begin{abstract}
Using the existence of infinite numbers $k$ in the non-Archimedean
ring of Robinson-Colombeau, we define the hyperfinite Fourier transform
(HFT) by considering integration extended to $[-k,k]^{n}$ instead
of $(-\infty,\infty)^{n}$. In order to realize this idea, the space
of generalized functions we consider is that of \emph{generalized
smooth functions} (GSF), an extension of classical distribution theory
sharing many nonlinear properties with ordinary smooth functions,
like the closure with respect to composition, a good integration theory,
and several classical theorems of calculus. Even if the final transform
depends on $k$, we obtain a new notion that applies to all GSF, in
particular to all Schwartz's distributions and to all Colombeau generalized
functions defined in $[-k,k]^{n}$, without growth restrictions. We
prove that this FT generalizes several classical properties of the
ordinary FT, and in this way we also overcome the difficulties of
FT in Colombeau's settings. Differences in some formulas, such as
in the transform of derivatives, reveal to be meaningful since allow
to obtain also non-tempered global solutions of differential equations.
\end{abstract}

\maketitle
\tableofcontents{}

\section{Introduction: extending the domain of the Fourier transform}

Fourier transform (FT) and generalized functions (GF) are naturally
interwoven, since the former naturally leads to suitable spaces of
the latter. This already occurs even in trivial cases, such as transforming
a simple sound wave $f(t)=A\sin(2\pi\omega_{0}t)$, whose spectrum
must be, in some way, concentrated at the frequencies $\pm\omega_{0}$.
Even the link between constants and delta-like functions was already
conceived by Fourier (see e.g.~\cite{Lau92}). Although different
theories of generalized functions arise for different motivations,
from distribution theory of Sobolev, Schwartz \cite{Sch45,Sob50}
up to Hairer's regularity structures \cite{Hai}, almost all these
theories are usually augmented with a corresponding calculus of FT,
which can be applied to an appropriate subspace of generalized functions.
Since the beginning of distribution theory, it was hence natural to
try to extend the domain of the FT with less or even with no growth
restrictions imposed. In fact, e.g., as a consequence of these restrictions,
the only solution of the trivial ODE $y'=y$ we can achieve using
tempered distributions is the trivial one. We can hence cite in \cite{GeSh1,GeSh2}
the definition of the FT as the limit of a sequence of functions integrated
on a finite domain, or \cite{Zem65} for a two-sided Laplace transform
defined on a space larger than that of tempered distributions, and
similarly in \cite{AtPiSa} for the directional short-time Fourier
transform of exponential-type distributions. In the same direction
we can inscribe the works \cite{AtMaPi,CaKaPi,DiPr,KaPePi,PiRaTeVi,Teo,Smi,EsFu,EsViYa}
on ultradistributions, hyperfunctions and thick distributions.

On the other hand, problems originating from physics, such as singularities
and point-source fields, also suggest us to consider alternative modeling,
ranging from non-smooth functions as test functions in the theory
of distributions (see e.g.~\cite{Yan} and references therein) to
non-Archimedean analysis (i.e.~mathematical analysis over a ring
extending the real field and containing infinitesimal and/or infinite
numbers, see \cite{GKOS,FGBL}). In the interplay between mathematics
and physics, it is well-known that heuristically manipulating non-linear
pointwise equalities such as $H^{2}=H$ ($H$ being the Heaviside
function) can easily lead to contradictions (see e.g.~\cite{Bow,GKOS}).
This can make particularly difficult to realize the strategy of \cite{LiWe},
where the authors search for a metaplectic representation from symplectic
maps to symplectic relations. According to A.~Weinstein (personal
communication, May 2019), this would require an algebra of generalized
functions extending the usual algebra of smooth functions and a FT
acting on them with the usual inversion formula and transforming the
Dirac delta into $1$. As we will see more diffusely in the following
sections, this is not possible in the classical approach to Colombeau's
algebra, see \cite{Col85,Das91,NedPil92,Hor99}. We will only arrive
at a partial solution of this problem where equalities are replaced
by infinitely closed relations or by limits, see Cor.~\ref{cor:Dirichlet-delta}
and Thm.~\ref{thm:FIT}, Cor.\ref{cor:FIT=00003D_rho}.

To overcome this type of problems, we are going to use the category
of \emph{generalized smooth functions} (GSF), see \cite{GiKu15,GiKu16,LeLuGi17,GiKu18,GIO1}.
This theory seems to be a good candidate, since it is an extension
of classical distribution theory which allows to model nonlinear singular
problems, while at the same time sharing many nonlinear properties
with ordinary smooth functions, like the closure with respect to composition
(thereby, they form an algebra extending the algebra of \emph{smooth}
functions with pointwise product) and several non trivial classical
theorems of the calculus. One could describe GSF as a methodological
restoration of Cauchy-Dirac's original conception of generalized function,
see \cite{Dirac,Lau89,KaTa99}. In essence, the idea of Cauchy and
Dirac (but also of Poisson, Kirchhoff, Helmholtz, Kelvin and Heaviside)
was to view generalized functions as suitable types of smooth set-theoretical
maps obtained from ordinary smooth maps depending on suitable infinitesimal
or infinite parameters. For example, the density of a Cauchy-Lorentz
distribution with an infinitesimal scale parameter was used by Cauchy
to obtain classical properties which nowadays are attributed to the
Dirac delta, cf.~\cite{KaTa99}.

The basic idea to define a very general FT in this setting is the
following: Since GSF form a non-Archimedean framework, we can consider
a positive infinite generalized number $k$ (i.e.~$k>r$ for all
$r\in\R_{>0}$) and define the FT with the usual formula, but integrating
over the $n$-dimensional interval $[-k,k]^{n}$. Although $k$ is
an infinite number (hence, $[-k,k]^{n}\supseteq\R^{n}$), this interval
behaves like a compact set for GSF, so that, e.g., on these domains
we always have an extreme value theorem and integrals always exist.
Clearly, this leads to a FT, called \emph{hyperfinite} FT, that depends
on the parameter $k$, but, on the other hand, where we can transform
\emph{all} the GSF defined on this interval and these include all
tempered Schwartz distributions, all tempered Colombeau GF, but also
a large class of non-tempered GF, such as the exponential functions,
or non-linear examples like $\delta^{a}\circ\delta^{b}$, $\delta^{a}\circ H^{b}$,
$a$, $b\in\N$, etc. Not all the properties of the classical FT remain
unchanged for this more general transform, but the final formalism
still retains the useful properties of the FT in dealing with differential
equations. Even more, the new formula for the transform of derivatives
leads to discover also exponential solutions of the aforementioned
ODE $y'=y$. Since \cite{DeHaPiVa} proves that ultradistributions
and periodic hyperfunctions can be embedded in Colombeau type algebra,
this give strong hints to conjecture that the hyperfinite FT is very
general, and it justifies the title of this article.

The structure of the paper is as follows. We start with an introduction
into the setting of GSF and give basic notions concerning GSF and
their calculus that are needed for a first study of the hyperfinite
FT (Sec.~\ref{sec:Basic-notions}). We then define the hyperfinite
FT in Sec.~\ref{sec:Hyperfinite-Fourier-transform} and the convolution
of compactly supported GSF in Sec.~\ref{sec:Convolution}. In Sec.~\ref{sec:Elementary-properties},
we show how the elementary properties of FT change for the hyperfinite
FT. In Sec.~\ref{sec:The-inverse-hyperfinite} and Sec.~\ref{sec:preservation},
we respectively prove the inversion theorem and that the embedding
of a very large class of Sobolev-Schwartz tempered distributions preserves
their FT, i.e.~that the hyperfinite FT commutes with the embedding
of Schwartz functions and tempered distributions. In this section,
we also recall the problems of FT in the Colombeau's setting and how
we overcome them. Finally, in Sec.~\ref{sec:Examples-and-applications}
we give several examples which underscore the new possibility to transform
any generalized functions. Thanks to the developed formalism, which
stresses the similarities with ordinary smooth functions, frequently
the proofs we are going to present are very simple and similar to
those for smooth functions, but replacing the real field $\R$ with
the non-Archimedean ring of Robinson-Colombeau $\rti$.

The paper is self-contained, in the sense that it contains all the
statements required for the proofs we are going to present. If proofs
of preliminaries are omitted, we clearly give references to where
they can be found. Therefore, to understand this paper, only a basic
knowledge of distribution theory is needed.

\section{Basic notions\label{sec:Basic-notions}}

\subsection{The new ring of scalars}

In this work, $I$ denotes the interval $(0,1]\subseteq\R$ and we
will always use the variable $\eps$ for elements of $I$; we also
denote $\eps$-dependent nets $x\in\R^{I}$ simply by $(x_{\eps})$.
By $\N$ we denote the set of natural numbers, including zero.

We start by defining a new simple non-Archimedean ring of scalars
that extends the real field $\R$. The entire theory is constructive
to a high degree, e.g.~neither ultrafilter nor non-standard method
are used. For all the proofs of results in this section, see \cite{GiKu18,GiKu15,GIO1,GiKu16}.
\begin{defn}
\label{def:RCGN}Let $\rho=(\rho_{\eps})\in(0,1]^{I}$ be a net such
that $(\rho_{\eps})\to0$ as $\eps\to0^{+}$ (in the following, such
a net will be called a \emph{gauge}), then
\begin{enumerate}
\item $\mathcal{I}(\rho):=\left\{ (\rho_{\eps}^{-a})\mid a\in\R_{>0}\right\} $
is called the \emph{asymptotic gauge} generated by $\rho$.
\item If $\mathcal{P}(\eps)$ is a property of $\eps\in I$, we use the
notation $\forall^{0}\eps:\,\mathcal{P}(\eps)$ to denote $\exists\eps_{0}\in I\,\forall\eps\in(0,\eps_{0}]:\,\mathcal{P}(\eps)$.
We can read $\forall^{0}\eps$ as \emph{for $\eps$ small}.
\item We say that a net $(x_{\eps})\in\R^{I}$ \emph{is $\rho$-moderate},
and we write $(x_{\eps})\in\R_{\rho}$ if 
\[
\exists(J_{\eps})\in\mathcal{I}(\rho):\ x_{\eps}=O(J_{\eps})\text{ as }\eps\to0^{+},
\]
i.e., if 
\[
\exists N\in\N\,\forall^{0}\eps:\ |x_{\eps}|\le\rho_{\eps}^{-N}.
\]
\item Let $(x_{\eps})$, $(y_{\eps})\in\R^{I}$, then we say that $(x_{\eps})\sim_{\rho}(y_{\eps})$
if 
\[
\forall(J_{\eps})\in\mathcal{I}(\rho):\ x_{\eps}=y_{\eps}+O(J_{\eps}^{-1})\text{ as }\eps\to0^{+},
\]
that is if 
\begin{equation}
\forall n\in\N\,\forall^{0}\eps:\ |x_{\eps}-y_{\eps}|\le\rho_{\eps}^{n}.\label{eq:negligible}
\end{equation}
This is a congruence relation on the ring $\R_{\rho}$ of moderate
nets with respect to pointwise operations, and we can hence define
\[
\RC{\rho}:=\R_{\rho}/\sim_{\rho},
\]
which we call \emph{Robinson-Colombeau ring of generalized numbers}.
This name is justified by \cite{Rob73,C1}: Indeed, in \cite{Rob73}
A.~Robinson introduced the notion of moderate and negligible nets
depending on an arbitrary fixed infinitesimal $\rho$ (in the framework
of nonstandard analysis); independently, J.F.~Colombeau, cf.~e.g.~\cite{C1}
and references therein, studied the same concepts without using nonstandard
analysis, but considering only the particular gauge $\rho_{\eps}=\eps$.
\end{enumerate}
\end{defn}

We will also use other directed sets instead of $I$: e.g.~$J\subseteq I$
such that $0$ is a closure point of $J$, or $I\times\N$. The reader
can easily check that all our constructions can be repeated in these
cases. We can also define an order relation on $\RC{\rho}$ by saying
that $[x_{\eps}]\le[y_{\eps}]$ if there exists $(z_{\eps})\in\R^{I}$
such that $(z_{\eps})\sim_{\rho}0$ (we then say that $(z_{\eps})$
is \emph{$\rho$-negligible}) and $x_{\eps}\le y_{\eps}+z_{\eps}$
for $\eps$ small. Equivalently, we have that $x\le y$ if and only
if there exist representatives $[x_{\eps}]=x$ and $[y_{\eps}]=y$
such that $x_{\eps}\le y_{\eps}$ for all $\eps$. Although the order
$\le$ is not total, we still have the possibility to define the infimum
$[x_{\eps}]\wedge[y_{\eps}]:=[\min(x_{\eps},y_{\eps})]$, the supremum
$[x_{\eps}]\vee[y_{\eps}]:=\left[\max(x_{\eps},y_{\eps})\right]$
of a finite number of generalized numbers. See \cite{MTAG} for a
complete study of supremum and infimum in $\rti$. Henceforth, we
will also use the customary notation $\RC{\rho}^{*}$ for the set
of invertible generalized numbers, and we write $x<y$ to say that
$x\le y$ and $x-y\in\rcrho^{*}$. Our notations for intervals are:
$[a,b]:=\{x\in\RC{\rho}\mid a\le x\le b\}$, $[a,b]_{\R}:=[a,b]\cap\R$,
and analogously for segments $[x,y]:=\left\{ x+r\cdot(y-x)\mid r\in[0,1]\right\} \subseteq\RC{\rho}^{n}$
and $[x,y]_{\R^{n}}=[x,y]\cap\R^{n}$. We also set $\Cc_{\rho}:=\R_{\rho}+i\cdot\R_{\rho}$
and $\rccrho:=\rti+i\cdot\rti$, where $i=\sqrt{-1}$. On the $\RC{\rho}$-module
$\RC{\rho}^{n}$ we can consider the natural extension of the Euclidean
norm, i.e.~$|[x_{\eps}]|:=[|x_{\eps}|]\in\RC{\rho}$, where $[x_{\eps}]\in\RC{\rho}^{n}$.

As in every non-Archimedean ring, we have the following
\begin{defn}
\label{def:nonArchNumbs}Let $x\in\RC{\rho}^{n}$ be a generalized
number, then
\begin{enumerate}
\item $x$ is \emph{infinitesimal} if $|x|\le r$ for all $r\in\R_{>0}$.
If $x=[x_{\eps}]$, this is equivalent to $\lim_{\eps\to0^{+}}\left|x_{\eps}\right|=0$.
We write $x\approx y$ if $x-y$ is infinitesimal.
\item $x$ is \emph{finite} if $|x|\le r$ for some $r\in\R_{>0}$.
\item $x$ is \emph{infinite} if $|x|\ge r$ for all $r\in\R_{>0}$. If
$x=[x_{\eps}]$, this is equivalent to $\lim_{\eps\to0^{+}}\left|x_{\eps}\right|=+\infty$.
\end{enumerate}
\end{defn}

\noindent For example, setting $\diff{\rho}:=[\rho_{\eps}]\in\RC{\rho}$,
we have that $\diff{\rho}^{n}\in\RC{\rho}$, $n\in\N_{>0}$, is an
invertible infinitesimal, whose reciprocal is $\diff{\rho}^{-n}=[\rho_{\eps}^{-n}]$,
which is necessarily a positive infinite number. Of course, in the
ring $\RC{\rho}$ there exist generalized numbers which are not in
any of the three classes of Def.~\ref{def:nonArchNumbs}, like e.g.~$x_{\eps}=\frac{1}{\eps}\sin\left(\frac{1}{\eps}\right)$.
\begin{defn}
\label{def:stronWeak}We say that $x$ is a \emph{strong infinite
number} if $|x|\ge\diff{\rho}^{-r}$ for some $r\in\R_{>0}$, whereas
we say that $x$ is a \emph{weak infinite number} if $|x|\le\diff{\rho}^{-r}$
for all $r\in\R_{>0}$. For example, $x=-N\log\diff{\rho}$, $N\in\N,$
is a weak infinite number, whereas if $x_{\eps}=\rho_{\eps}^{-1}$
for $\eps=\frac{1}{k}$, $k\in\N_{>0}$, and $x_{\eps}=-\log\rho_{\eps}$
otherwise, then $x$ is neither a strong nor a weak infinite number.
\end{defn}

The following result is useful to deal with positive and invertible
generalized numbers. For its proof, see e.g.~\cite{GKOS}.
\begin{lem}
\label{lem:mayer} Let $x\in\RC{\rho}$. Then the following are equivalent:
\begin{enumerate}
\item \label{enu:positiveInvertible}$x$ is invertible and $x\ge0$, i.e.~$x>0$.
\item \label{enu:strictlyPositive}For each representative $(x_{\eps})\in\R_{\rho}$
of $x$ we have $\forall^{0}\eps:\ x_{\eps}>0$.
\item \label{enu:greater-i_epsTom}For each representative $(x_{\eps})\in\R_{\rho}$
of $x$ we have $\exists m\in\N\,\forall^{0}\eps:\ x_{\eps}>\rho_{\eps}^{m}$.
\item \label{enu:There-exists-a}There exists a representative $(x_{\eps})\in\R_{\rho}$
of $x$ such that $\exists m\in\N\,\forall^{0}\eps:\ x_{\eps}>\rho_{\eps}^{m}$.
\end{enumerate}
\end{lem}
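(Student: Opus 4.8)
The plan is to prove the equivalence of the four conditions by establishing a cycle of implications, exploiting the definition of the order $\le$ on $\rti$ together with the characterization of moderate nets. I would organize the argument as $\eqref{enu:positiveInvertible}\Rightarrow\eqref{enu:strictlyPositive}\Rightarrow\eqref{enu:greater-i_epsTom}\Rightarrow\eqref{enu:There-exists-a}\Rightarrow\eqref{enu:positiveInvertible}$, since each step is short and the genuinely informative content is concentrated in only one of them.

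\emph{First}, for $\eqref{enu:positiveInvertible}\Rightarrow\eqref{enu:strictlyPositive}$, I would argue by contradiction. Fix an arbitrary representative $(x_{\eps})\in\R_{\rho}$ of $x$. If $x_{\eps}>0$ fails for $\eps$ small, then there is a sequence $\eps_{j}\to0^{+}$ with $x_{\eps_{j}}\le0$; modifying the net on this sequence to produce a representative witnessing both $x\ge0$ and $x\le0$ on complementary index sets, one shows that the product $x\cdot x^{-1}$ cannot be the generalized number $1$, contradicting invertibility. More directly: invertibility of $x$ means $x^{-1}=[y_{\eps}]$ with $x_{\eps}y_{\eps}\sim_{\rho}1$, so $|x_{\eps}|\ge|y_{\eps}|^{-1}/2$ for $\eps$ small, while $y=x^{-1}$ being moderate forces $|y_{\eps}|\le\rho_{\eps}^{-N}$; combined with $x\ge0$ (which gives $x_{\eps}\ge-z_{\eps}$ for some negligible $(z_{\eps})$) this yields $x_{\eps}>0$ for $\eps$ small.

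\emph{Second}, the implication $\eqref{enu:strictlyPositive}\Rightarrow\eqref{enu:greater-i_epsTom}$ is the heart of the lemma and I expect it to be the main obstacle. Here one must upgrade the pointwise strict positivity $x_{\eps}>0$ to a quantitative lower bound $x_{\eps}>\rho_{\eps}^{m}$. The key observation is that invertibility of $x$ (which I would re-derive from \eqref{enu:strictlyPositive} or carry along as part of the cycle) means the reciprocal net is moderate, i.e.\ $|x_{\eps}^{-1}|\le\rho_{\eps}^{-m}$ for some $m\in\N$ and $\eps$ small; inverting this inequality gives exactly $x_{\eps}\ge\rho_{\eps}^{m}$, whence the strict bound after enlarging $m$ by one and using $\rho_{\eps}\to0$. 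The subtlety is ensuring that every representative (not merely one) admits such an $m$, which is why one leverages that changing representative alters the net only by a $\rho$-negligible quantity bounded by $\rho_{\eps}^{n}$ for every $n$, hence absorbed into the estimate. The remaining steps are routine: $\eqref{enu:greater-i_epsTom}\Rightarrow\eqref{enu:There-exists-a}$ is trivial, as a universally quantified statement over all representatives immediately yields the existential one. Finally, for $\eqref{enu:There-exists-a}\Rightarrow\eqref{enu:positiveInvertible}$, from $x_{\eps}>\rho_{\eps}^{m}>0$ one reads off $x\ge0$ directly; invertibility follows because $(x_{\eps}^{-1})$ is then moderate, satisfying $|x_{\eps}^{-1}|<\rho_{\eps}^{-m}$, so $x^{-1}:=[x_{\eps}^{-1}]$ is a well-defined element of $\rti$ with $x\cdot x^{-1}=1$, and strict positivity $x>0$ follows from $x\ge0$ together with invertibility by the definition of $<$ recalled in the text.
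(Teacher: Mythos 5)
The paper itself does not spell out a proof of this lemma (it defers to \cite{GKOS}), so your proposal must be judged on its own merits, and it has a genuine gap exactly where you yourself locate the ``heart of the lemma'': the step \ref{enu:strictlyPositive}\;$\Rightarrow$\;\ref{enu:greater-i_epsTom}. To get the quantitative lower bound you invoke invertibility of $x$, but invertibility is part of \ref{enu:positiveInvertible}, not of \ref{enu:strictlyPositive}; it is precisely the content that must be derived. Neither of your two escape routes works. ``Re-deriving'' invertibility from \ref{enu:strictlyPositive} is exactly the implication \ref{enu:strictlyPositive}\;$\Rightarrow$\;\ref{enu:positiveInvertible} that your cycle is supposed to deliver, and you give no argument for it. ``Carrying invertibility along as part of the cycle'' means that what you actually prove is \ref{enu:positiveInvertible}\;$\wedge$\;\ref{enu:strictlyPositive}\;$\Rightarrow$\;\ref{enu:greater-i_epsTom}; together with your other three steps this yields \ref{enu:positiveInvertible}\;$\Leftrightarrow$\;\ref{enu:greater-i_epsTom}\;$\Leftrightarrow$\;\ref{enu:There-exists-a} and \ref{enu:positiveInvertible}\;$\Rightarrow$\;\ref{enu:strictlyPositive}, but no implication starting from \ref{enu:strictlyPositive} alone: a hypothetical $x$ satisfying \ref{enu:strictlyPositive} but not \ref{enu:positiveInvertible} is never excluded, so the four-way equivalence is not established. (A minor additional inaccuracy: invertibility gives a representative $(y_{\eps})$ of $x^{-1}$ with $(x_{\eps}y_{\eps})\sim_{\rho}1$; it does not literally say the net $(1/x_{\eps})$ is defined or moderate, though this is repairable exactly as in your first step.)

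The missing idea is a contrapositive argument that \emph{manufactures a bad representative}, with no appeal to invertibility. Assume \ref{enu:greater-i_epsTom} fails: there is a representative $(x_{\eps})\in\R_{\rho}$ of $x$ such that for every $m\in\N$ the bound $x_{\eps}>\rho_{\eps}^{m}$ fails for arbitrarily small $\eps$ (this is the negation formula \eqref{eq:negation}). Choose a strictly decreasing null sequence $(\eps_{m})_{m}$ with $x_{\eps_{m}}\le\rho_{\eps_{m}}^{m}$, and define $x'_{\eps}:=x_{\eps}-\rho_{\eps_{m}}^{m}$ if $\eps=\eps_{m}$ and $x'_{\eps}:=x_{\eps}$ otherwise. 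The modification is $\rho$-negligible, because for any fixed $n$ we have $\rho_{\eps_{m}}^{m}\le\rho_{\eps_{m}}^{n}$ as soon as $m\ge n$; hence $(x'_{\eps})$ is again a representative of $x$. But $x'_{\eps_{m}}=x_{\eps_{m}}-\rho_{\eps_{m}}^{m}\le0$ for every $m$, so $(x'_{\eps})$ is not eventually positive, contradicting \ref{enu:strictlyPositive}. This proves $\neg$\ref{enu:greater-i_epsTom}\;$\Rightarrow$\;$\neg$\ref{enu:strictlyPositive} and closes the cycle \ref{enu:positiveInvertible}\;$\Rightarrow$\;\ref{enu:strictlyPositive}\;$\Rightarrow$\;\ref{enu:greater-i_epsTom}\;$\Rightarrow$\;\ref{enu:There-exists-a}\;$\Rightarrow$\;\ref{enu:positiveInvertible}. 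Your remaining three steps (\ref{enu:positiveInvertible}\;$\Rightarrow$\;\ref{enu:strictlyPositive}, the trivial \ref{enu:greater-i_epsTom}\;$\Rightarrow$\;\ref{enu:There-exists-a}, and \ref{enu:There-exists-a}\;$\Rightarrow$\;\ref{enu:positiveInvertible}) are correct as written, as is your observation that passing between representatives only costs a negligible perturbation absorbed by enlarging $m$ by one.
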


\subsection{Topologies on $\RC{\rho}^{n}$}

As we mentioned above, on the $\RC{\rho}$-module $\RC{\rho}^{n}$
we defined $|[x_{\eps}]|:=[|x_{\eps}|]\in\RC{\rho}$, where $[x_{\eps}]\in\RC{\rho}^{n}$.
Even if this generalized norm takes values in $\RC{\rho}$, it shares
some essential properties with classical norms: 
\begin{align*}
 & |x|=x\vee(-x)\\
 & |x|\ge0\\
 & |x|=0\Rightarrow x=0\\
 & |y\cdot x|=|y|\cdot|x|\\
 & |x+y|\le|x|+|y|\\
 & ||x|-|y||\le|x-y|.
\end{align*}
It is therefore natural to consider on $\RC{\rho}^{n}$ a topology
generated by balls defined by this generalized norm and the set of
radii $\RC{\rho}_{>0}$ of positive invertible numbers:
\begin{defn}
\label{def:setOfRadii}Let $c\in\RC{\rho}^{n}$ then:
\begin{enumerate}
\item $B_{r}(c):=\left\{ x\in\RC{\rho}^{n}\mid\left|x-c\right|<r\right\} $
for each $r\in\rcrho_{>0}$.
\item $\Eball_{r}(c):=\{x\in\R^{n}\mid|x-c|<r\}$, for each $r\in\R_{>0}$,
denotes an ordinary Euclidean ball in $\R^{n}$ if $c\in\R^{n}$.
\end{enumerate}
\end{defn}

\noindent The relation $<$ has better topological properties as compared
to the usual strict order relation $a\le b$ and $a\ne b$ (that we
will \emph{never} use) because the set of balls $\left\{ B_{r}(c)\mid r\in\rcrho_{>0},\ c\in\RC{\rho}^{n}\right\} $
is a base for a topology on $\RC{\rho}^{n}$ called \emph{sharp topology}.
We will call \emph{sharply open set} any open set in the sharp topology.
The existence of infinitesimal neighborhoods (e.g.~$r=\diff{\rho}$)
implies that the sharp topology induces the discrete topology on $\R$.
This is a necessary result when one has to deal with continuous generalized
functions which have infinite derivatives. In fact, if $f'(x_{0})$
is infinite, we have $f(x)\approx f(x_{0})$ only for $x\approx x_{0}$
, see \cite{GiKu18}. Also open intervals are defined using the relation
$<$, i.e.~$(a,b):=\{x\in\RC{\rho}\mid a<x<b\}$.

\subsection{\label{subsec:subpoints}The language of subpoints}

The following simple language allows us to simplify some proofs using
steps that recall the classical real field $\R$, see \cite{MTAG}.
We first introduce the notion of \emph{subpoint}:
\begin{defn}
For subsets $J$, $K\subseteq I$ we write $K\subzero J$ if $0$
is an accumulation point of $K$ and $K\subseteq J$ (we read it as:
$K$ \emph{is co-final in $J$}). Note that for any $J\subzero I$,
the constructions introduced so far in Def.~\ref{def:RCGN} can be
repeated using nets $(x_{\eps})_{\eps\in J}$. We indicate the resulting
ring with the symbol $\rcrho^{n}|_{J}$. More generally, no peculiar
property of $I=(0,1]$ will ever be used in the following, and hence
all the presented results can be easily generalized considering any
other directed set. If $K\subzero J$, $x\in\rcrho^{n}|_{J}$ and
$x'\in\rcrho^{n}|_{K}$, then $x'$ is called a \emph{subpoint} of
$x$, denoted as $x'\subseteq x$, if there exist representatives
$(x_{\eps})_{\eps\in J}$, $(x'_{\eps})_{\eps\in K}$ of $x$, $x'$
such that $x'_{\eps}=x_{\eps}$ for all $\eps\in K$. In this case
we write $x'=x|_{K}$, $\dom{x'}:=K$, and the restriction $(-)|_{K}:\rcrho^{n}\ra\rcrho^{n}|_{K}$
is a well defined operation. In general, for $X\subseteq\rcrho^{n}$
we set $X|_{J}:=\{x|_{J}\in\rcrho^{n}|_{J}\mid x\in X\}$.
\end{defn}

In the next definition, we introduce binary relations that hold only
\emph{on subpoints}. Clearly, this idea is inherited from nonstandard
analysis, where co-final subsets are always taken in a fixed ultrafilter.
\begin{defn}
Let $x$, $y\in\rcrho$, $L\subzero I$, then we say
\begin{enumerate}
\item $x<_{L}y\ :\iff\ x|_{L}<y|_{L}$ (the latter inequality has to be
meant in the ordered ring $\rcrho|_{L}$). We read $x<_{L}y$ as ``\emph{$x$
is less than $y$ on $L$}''.
\item $x\sbpt{<}y\ :\iff\ \exists L\subzero I:\ x<_{L}y$. We read $x\sbpt{<}y$
as ``\emph{$x$ is less than $y$ on subpoints''.}
\end{enumerate}
Analogously, we can define other relations holding only on subpoints
such as e.g.: $=_{L}$, $\in_{L}$, $\sbpt{\in}$, $\sbpt{\le}$,
$\sbpt{=}$, $\sbpt{\subseteq}$, etc.
\end{defn}

\noindent For example, we have
\begin{align*}
x\le y\  & \iff\ \forall L\subzero I:\ x\le_{L}y\\
x<y\  & \iff\ \forall L\subzero I:\ x<_{L}y,
\end{align*}
the former following from the definition of $\le$, whereas the latter
following from Lem.~\ref{lem:mayer}. Moreover, if $\mathcal{P}\left\{ x_{\eps}\right\} $
is an arbitrary property of $x_{\eps}$, then
\begin{equation}
\neg\left(\forall^{0}\eps:\ \mathcal{P}\left\{ x_{\eps}\right\} \right)\ \iff\ \exists L\subzero I\,\forall\eps\in L:\ \neg\mathcal{P}\left\{ x_{\eps}\right\} .\label{eq:negation}
\end{equation}

Note explicitly that, generally speaking, relations on subpoints,
such as $\sbpt{\le}$ or $\sbpt{=}$, do not inherit the same properties
of the corresponding relations for points. So, e.g., both $\sbpt{=}$
and $\sbpt{\le}$ are not transitive relations.

The next result clarifies how to equivalently write a negation of
an inequality or of an equality using the language of subpoints.
\begin{lem}
\label{lem:negationsSubpoints}Let $x$, $y\in\rcrho$, then
\begin{enumerate}
\item \label{enu:neg-le}$x\nleq y\quad\Longleftrightarrow\quad x\sbpt{>}y$
\item \label{enu:negStrictlyLess}$x\not<y\quad\Longleftrightarrow\quad x\sbpt{\ge}y$
\item \label{enu:negEqual}$x\ne y\quad\Longleftrightarrow\quad x\sbpt{>}y$
or $x\sbpt{<}y$
\end{enumerate}
\end{lem}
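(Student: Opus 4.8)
The plan is to prove each of the three equivalences by reducing negations of point-wise relations to statements that hold on cofinal subsets, using the characterizations of the order already established in the excerpt. The fundamental tool is the logical equivalence \eqref{eq:negation}, which converts the negation of a ``for $\eps$ small'' statement into the existence of some $L\subzero I$ on which the property fails for every $\eps\in L$. Combined with Lem.~\ref{lem:mayer}, which tells us that strict positivity $x_\eps-y_\eps>0$ for all $\eps\in L$ (after passing to suitable representatives) is precisely what $x|_L<y|_L$ means in the ordered ring $\rcrho|_L$, this should let me translate each negation directly into a subpoint relation.

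First I would prove (i). The relation $x\le y$ holds iff there are representatives with $x_\eps\le y_\eps$ for $\eps$ small; by the displayed equivalence $x\le y\iff\forall L\subzero I:\ x\le_L y$, negating gives $x\nleq y\iff\exists L\subzero I:\ x\not\le_L y$. Now on the fixed cofinal set $L$ the ring $\rcrho|_L$ is itself of Robinson--Colombeau type, so the order there is total enough that Lem.~\ref{lem:mayer} applies: $x\not\le_L y$ means $y|_L-x|_L$ is not both $\ge 0$ and non-invertible in the required sense, and one extracts a further $K\subzero L$ on which $x_\eps>y_\eps$ for all $\eps\in K$, which is exactly $x>_K y$, i.e.~$x\sbpt{>}y$. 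The converse direction is the easy one: if $x>_K y$ on some $K\subzero I$, then $x\le y$ would force $x\le_K y$ by the displayed equivalence, contradicting strictness of $>_K$ on $K$ via Lem.~\ref{lem:mayer}.

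For (ii) I would argue symmetrically, swapping the roles of $<$ and $\le$: starting from $x<y\iff\forall L\subzero I:\ x<_L y$, negation yields $\exists L\subzero I$ with $\neg(x<_L y)$, and within the ordered ring $\rcrho|_L$ the negation of strict inequality is $y|_L-x|_L$ non-invertible-or-nonpositive, which by Lem.~\ref{lem:mayer} again produces a cofinal $K$ with $x_\eps\ge y_\eps$ for all $\eps\in K$, i.e.~$x\sbpt{\ge}y$. Item (iii) then follows by combining (i) and (ii): $x\ne y$ means $\neg(x\le y)$ or $\neg(y\le x)$ (since $x=y\iff x\le y\le x$), and applying (i) to each disjunct gives $x\sbpt{>}y$ or $y\sbpt{>}x$, the latter being $x\sbpt{<}y$.

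The main obstacle I anticipate is the careful handling of the passage to the subring $\rcrho|_L$ and the extraction of the further cofinal subset $K$. The subtlety is that $x\not\le_L y$ in $\rcrho|_L$ does \emph{not} immediately give a pointwise inequality for all $\eps\in L$; rather, one must apply Lem.~\ref{lem:mayer} (specifically the equivalence between invertibility-plus-positivity and the existence of a representative bounded below by a power of $\rho_\eps$) and then invoke \eqref{eq:negation} once more inside $L$ to locate $K\subzero L\subzero I$ where the strict pointwise inequality genuinely holds. Since $\subzero$ is transitive, $K\subzero I$, and the chain closes. I would take care to state explicitly that no special property of $I$ is used, so that the lemma's machinery transfers verbatim to $\rcrho|_L$, justifying the repeated application of Lem.~\ref{lem:mayer} on the restricted index set.
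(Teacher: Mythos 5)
You should note at the outset that the paper itself contains no proof of this lemma: Sec.~\ref{subsec:subpoints} defers all proofs of the subpoint calculus to \cite{MTAG}, so your proposal stands or falls on its own. Its skeleton is the natural one --- convert negations of $\forall^{0}\eps$ statements into cofinal sets via \eqref{eq:negation}, convert pointwise information into order relations via Lem.~\ref{lem:mayer}, and deduce \ref{enu:negEqual} from \ref{enu:neg-le} together with antisymmetry --- and all three backward implications you sketch are correct. But the forward direction of \ref{enu:neg-le}, which is the heart of the lemma, contains a genuine error.

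You assert that extracting $K\subzero L$ with $x_{\eps}>y_{\eps}$ for all $\eps\in K$ ``is exactly $x>_{K}y$''. It is not: pointwise strict inequality on a cofinal set does not make $x|_{K}-y|_{K}$ invertible in $\rcrho|_{K}$, and invertibility is part of the definition of $<$. For instance, taking $x=[e^{-1/\rho_{\eps}}]$ and $y=0$, one has $x_{\eps}>y_{\eps}$ for \emph{every} $\eps$, yet the net $(e^{-1/\rho_{\eps}})$ is $\rho$-negligible, so $x|_{K}=0=y|_{K}$ for every $K\subzero I$ and $x>_{K}y$ fails everywhere. By Lem.~\ref{lem:mayer}.\ref{enu:There-exists-a}, what $x>_{K}y$ actually requires is a uniform gap $x_{\eps}-y_{\eps}>\rho_{\eps}^{q}$ on $K$ for some fixed $q$, and your argument never produces one: Lem.~\ref{lem:mayer} characterizes positivity-plus-invertibility, not the negation of $\le$, and your reduction of $x\nleq y$ to $\exists L\subzero I:\ x\not\le_{L}y$ followed by ``characterizing $\not\le_{L}$ inside $\rcrho|_{L}$'' is circular, since that characterization is precisely statement \ref{enu:neg-le} relativized to the index set $L$. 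The repair is shorter than your route and bypasses the reduction entirely: fix representatives and note that $x\le y$ holds if and only if the net $z_{\eps}:=\max(x_{\eps}-y_{\eps},0)$ is negligible, i.e.~if and only if $\forall q\in\N\,\forall^{0}\eps:\ x_{\eps}\le y_{\eps}+\rho_{\eps}^{q}$; negating this and applying \eqref{eq:negation} yields $q\in\N$ and $L\subzero I$ with $x_{\eps}-y_{\eps}>\rho_{\eps}^{q}$ for all $\eps\in L$, and then Lem.~\ref{lem:mayer}.\ref{enu:There-exists-a}$\,\Rightarrow\,$\ref{enu:positiveInvertible}, applied in $\rcrho|_{L}$, gives $x>_{L}y$. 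The same care is needed in \ref{enu:negStrictlyLess}: in the case ``$x\le y$ but $y-x$ not invertible'' one should negate Lem.~\ref{lem:mayer}.\ref{enu:strictlyPositive} and apply \eqref{eq:negation} to get $K\subzero I$ with $y_{\eps}-x_{\eps}\le0$ on $K$; here, unlike in \ref{enu:neg-le}, pointwise inequality does suffice, because the target relation $\ge_{K}$ is non-strict. Your deduction of \ref{enu:negEqual} from \ref{enu:neg-le} via $x=y\iff x\le y\le x$ is fine as stated.
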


Using the language of subpoints, we can write different forms of dichotomy
or trichotomy laws for inequality.
\begin{lem}
\label{lem:trich1st}Let $x$, $y\in\rcrho$, then
\begin{enumerate}
\item \label{enu:dichotomy}$x\le y$ or $x\sbpt{>}y$
\item \label{enu:strictDichotomy}$\neg(x\sbpt{>}y$ and $x\le y)$
\item \label{enu:trichotomy}$x=y$ or $x\sbpt{<}y$ or $x\sbpt{>}y$
\item \label{enu:leThen}$x\le y\ \Rightarrow\ x\sbpt{<}y$ or $x=y$
\item \label{enu:leSubpointsIff}$x\sbpt{\le}y\ \Longleftrightarrow\ x\sbpt{<}y$
or $x\sbpt{=}y$.
\end{enumerate}
\end{lem}

\noindent As usual, we note that these results can also be trivially
repeated for the ring $\rcrho|_{L}$. So, e.g., we have $x\not\le_{L}y$
if and only if $\exists J\subzero L:\ x>_{J}y$, which is the analog
of Lem.~\ref{lem:negationsSubpoints}.\ref{enu:neg-le} for the ring
$\rcrho|_{L}$.

\subsection{Open, closed and bounded sets generated by nets}

A natural way to obtain sharply open, closed and bounded sets in $\RC{\rho}^{n}$
is by using a net $(A_{\eps})$ of subsets $A_{\eps}\subseteq\R^{n}$.
We have two ways of extending the membership relation $x_{\eps}\in A_{\eps}$
to generalized points $[x_{\eps}]\in\RC{\rho}^{n}$ (cf.\ \cite{ObVe08,GiKu15}).
\begin{defn}
\label{def:internalStronglyInternal}Let $(A_{\eps})$ be a net of
subsets of $\R^{n}$, then
\begin{enumerate}
\item $[A_{\eps}]:=\left\{ [x_{\eps}]\in\RC{\rho}^{n}\mid\forall^{0}\eps:\,x_{\eps}\in A_{\eps}\right\} $
is called the \emph{internal set} generated by the net $(A_{\eps})$.
\item Let $(x_{\eps})$ be a net of points of $\R^{n}$, then we say that
$x_{\eps}\in_{\eps}A_{\eps}$, and we read it as $(x_{\eps})$ \emph{strongly
belongs to $(A_{\eps})$}, if
\begin{enumerate}
\item $\forall^{0}\eps:\ x_{\eps}\in A_{\eps}$.
\item If $(x'_{\eps})\sim_{\rho}(x_{\eps})$, then also $x'_{\eps}\in A_{\eps}$
for $\eps$ small.
\end{enumerate}
\noindent Moreover, we set $\sint{A_{\eps}}:=\left\{ [x_{\eps}]\in\RC{\rho}^{n}\mid x_{\eps}\in_{\eps}A_{\eps}\right\} $,
and we call it the \emph{strongly internal set} generated by the net
$(A_{\eps})$.
\item We say that the internal set $K=[A_{\eps}]$ is \emph{sharply bounded}
if there exists $M\in\RC{\rho}_{>0}$ such that $K\subseteq B_{M}(0)$.
\item Finally, we say that the $(A_{\eps})$ is a \emph{sharply bounded
net} if there exists $N\in\R_{>0}$ such that $\forall^{0}\eps\,\forall x\in A_{\eps}:\ |x|\le\rho_{\eps}^{-N}$.
\end{enumerate}
\end{defn}

\noindent Therefore, $x\in[A_{\eps}]$ if there exists a representative
$[x_{\eps}]=x$ such that $x_{\eps}\in A_{\eps}$ for $\eps$ small,
whereas this membership is independent from the chosen representative
in case of strongly internal sets. An internal set generated by a
constant net $A_{\eps}=A\subseteq\R^{n}$ will simply be denoted by
$[A]$.

The following theorem (cf.~\cite{ObVe08,GiKu15,GIO1}) shows that
internal and strongly internal sets have dual topological properties:
\begin{thm}
\noindent \label{thm:strongMembershipAndDistanceComplement}For $\eps\in I$,
let $A_{\eps}\subseteq\R^{n}$ and let $x_{\eps}\in\R^{n}$. Then
we have
\begin{enumerate}
\item \label{enu:internalSetsDistance}$[x_{\eps}]\in[A_{\eps}]$ if and
only if $\forall q\in\R_{>0}\,\forall^{0}\eps:\ d(x_{\eps},A_{\eps})\le\rho_{\eps}^{q}$.
Therefore $[x_{\eps}]\in[A_{\eps}]$ if and only if $[d(x_{\eps},A_{\eps})]=0\in\RC{\rho}$.
\item \label{enu:stronglyIntSetsDistance}$[x_{\eps}]\in\sint{A_{\eps}}$
if and only if $\exists q\in\R_{>0}\,\forall^{0}\eps:\ d(x_{\eps},A_{\eps}^{\text{c}})>\rho_{\eps}^{q}$,
where $A_{\eps}^{\text{c}}:=\R^{n}\setminus A_{\eps}$. Therefore,
if $(d(x_{\eps},A_{\eps}^{\text{c}}))\in\R_{\rho}$, then $[x_{\eps}]\in\sint{A_{\eps}}$
if and only if $[d(x_{\eps},A_{\eps}^{\text{c}})]>0$.
\item \label{enu:internalAreClosed}$[A_{\eps}]$ is sharply closed.
\item \label{enu:stronglyIntAreOpen}$\sint{A_{\eps}}$ is sharply open.
\item \label{enu:internalGeneratedByClosed}$[A_{\eps}]=\left[\text{\emph{cl}}\left(A_{\eps}\right)\right]$,
where $\text{\emph{cl}}\left(S\right)$ is the closure of $S\subseteq\R^{n}$.
\item \label{enu:stronglyIntGeneratedByOpen}$\sint{A_{\eps}}=\sint{\text{\emph{int}\ensuremath{\left(A_{\eps}\right)}}}$,
where $\emph{int}\left(S\right)$ is the interior of $S\subseteq\R^{n}$.
\end{enumerate}
\end{thm}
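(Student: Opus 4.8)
The plan is to prove each of the six statements by translating the membership conditions into statements about the nets $(x_\eps)$ and $(A_\eps)$ using the characterization of $\sim_\rho$ given in \eqref{eq:negligible}, and then reading off the topological consequences. The unifying tool is that distance to a set behaves monotonically and Lipschitz-ly, so moderateness and negligibility pass cleanly through $d(\cdot,A_\eps)$.

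\textbf{Items (i) and (ii): the distance characterizations.} For (i), I would argue that $[x_\eps]\in[A_\eps]$ means there is a representative $(\bar{x}_\eps)\sim_\rho(x_\eps)$ with $\bar{x}_\eps\in A_\eps$ for $\eps$ small; then $d(x_\eps,A_\eps)\le|x_\eps-\bar{x}_\eps|$, and by \eqref{eq:negligible} the right-hand side is $\le\rho_\eps^q$ eventually for every $q$, giving the forward direction. Conversely, if $d(x_\eps,A_\eps)\le\rho_\eps^q$ for all $q$ (for $\eps$ small), I choose near-optimal points $\bar{x}_\eps\in A_\eps$ (or in $\mathrm{cl}(A_\eps)$, handled by (v)) with $|x_\eps-\bar{x}_\eps|\le 2\rho_\eps^q$; a standard diagonal/exhaustion argument over $q\in\N$ produces a single representative witnessing $(\bar{x}_\eps)\sim_\rho(x_\eps)$, so $[x_\eps]\in[A_\eps]$. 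The reformulation $[d(x_\eps,A_\eps)]=0$ is then immediate from Def.~\ref{def:nonArchNumbs}(i). For (ii), the strong membership $x_\eps\in_\eps A_\eps$ must be shown equivalent to $d(x_\eps,A_\eps^{\mathrm c})>\rho_\eps^q$ for some fixed $q$ and $\eps$ small: if such a $q$ exists, any $(x'_\eps)\sim_\rho(x_\eps)$ satisfies $|x_\eps-x'_\eps|\le\rho_\eps^{q+1}<d(x_\eps,A_\eps^{\mathrm c})$ eventually, forcing $x'_\eps\in A_\eps$; the converse is the contrapositive, using \eqref{eq:negation} to extract, on a cofinal set, points of $A_\eps^{\mathrm c}$ within $\rho_\eps^q$ of $x_\eps$ and assembling them into a negligible perturbation that escapes $A_\eps$.

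\textbf{Items (iii) and (iv): closed and open.} These follow formally from (i) and (ii). For (iii), to see $[A_\eps]$ is sharply closed I show its complement is open: if $[x_\eps]\notin[A_\eps]$, then by (i) there is $q_0$ with $d(x_\eps,A_\eps)>\rho_\eps^{q_0}$ on a cofinal set, which after passing to a representative yields a sharp ball $B_r([x_\eps])$ of infinitesimal radius $r\approx\diff{\rho}^{q_0}$ disjoint from $[A_\eps]$. For (iv), statement (ii) literally exhibits, for $[x_\eps]\in\sint{A_\eps}$ with witness exponent $q$, that the ball $B_{\diff{\rho}^{q+1}}([x_\eps])$ lies inside $\sint{A_\eps}$, since any point within that distance still has positive distance to $A_\eps^{\mathrm c}$.

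\textbf{Items (v) and (vi): passing to closure/interior.} Here the key observation is $d(y,\mathrm{cl}(S))=d(y,S)$ and $d(y,\mathrm{int}(S)^{\mathrm c})=d(y,S^{\mathrm c})$ for any $S\subseteq\R^n$ (the latter since $\mathrm{int}(S)^{\mathrm c}=\mathrm{cl}(S^{\mathrm c})$). Then (v) and (vi) are immediate corollaries of the distance characterizations (i) and (ii), respectively, because those characterizations depend on $(A_\eps)$ only through $d(x_\eps,A_\eps)$ or $d(x_\eps,A_\eps^{\mathrm c})$.

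\textbf{Main obstacle.} The only genuinely delicate point is the converse direction of (i): manufacturing a \emph{single} representative $(\bar{x}_\eps)\sim_\rho(x_\eps)$ from the family of near-optimal approximants indexed by the accuracy $q$. One must choose, for each $\eps$, an approximant whose error is controlled simultaneously by the ``current'' value of $q$ along a suitable partition of $I$ into intervals $(\eps_{q+1},\eps_q]$, so that the resulting net is negligible; care is needed because $A_\eps$ may fail to attain its distance (addressed by allowing error $2\rho_\eps^q$ or by first replacing $A_\eps$ with $\mathrm{cl}(A_\eps)$ via (v)). Everything else is routine once the distance function is used as the bridge between the net-level and the sharp-topology-level statements.
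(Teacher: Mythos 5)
Your proposal is correct, and since the paper itself omits the proof of this theorem (it is quoted from \cite{ObVe08,GiKu15,GIO1}, as announced at the start of Sec.~\ref{sec:Basic-notions}), the right benchmark is those sources, whose argument yours essentially reproduces: establish the distance characterizations \ref{enu:internalSetsDistance} and \ref{enu:stronglyIntSetsDistance} first, with the converse directions handled by a diagonal choice, over the accuracy exponent $q$, of near-optimal points at distance $\le2\rho_{\eps}^{q}$, and then deduce \ref{enu:internalAreClosed}--\ref{enu:stronglyIntGeneratedByOpen} formally from them via $d(y,\text{cl}(S))=d(y,S)$ and $\text{int}(S)^{\text{c}}=\text{cl}(S^{\text{c}})$. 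Two minor points: in the converse of \ref{enu:internalSetsDistance} you must use the $2\rho_{\eps}^{q}$ device rather than the alternative ``replace $A_{\eps}$ by $\text{cl}(A_{\eps})$ via (v)'', since you prove \ref{enu:internalGeneratedByClosed} from \ref{enu:internalSetsDistance} and that route would be circular; and the reformulations $[d(x_{\eps},A_{\eps})]=0$ and $[d(x_{\eps},A_{\eps}^{\text{c}})]>0$ follow from \eqref{eq:negligible} and Lem.~\ref{lem:mayer} respectively, not from Def.~\ref{def:nonArchNumbs} (being infinitesimal is weaker than being $0$ in $\RC{\rho}$).
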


\noindent For example, it is not hard to show that the closure in
the sharp topology of a ball of center $c=[c_{\eps}]$ and radius
$r=[r_{\eps}]>0$ is 
\begin{equation}
\overline{B_{r}(c)}=\left\{ x\in\rti^{d}\mid\left|x-c\right|\le r\right\} =\left[\overline{\Eball_{r_{\eps}}(c_{\eps})}\right],\label{eq:closureBall}
\end{equation}
whereas
\[
B_{r}(c)=\left\{ x\in\rti^{d}\mid\left|x-c\right|<r\right\} =\sint{\Eball_{r_{\eps}}(c_{\eps})}.
\]

\subsection{Generalized smooth functions and their calculus}

Using the ring $\rti$, it is easy to consider a Gaussian with an
infinitesimal standard deviation. If we denote this probability density
by $f(x,\sigma)$, and if we set $\sigma=[\sigma_{\eps}]\in\RC{\rho}_{>0}$,
where $\sigma\approx0$, we obtain the net of smooth functions $(f(-,\sigma_{\eps}))_{\eps\in I}$.
This is the basic idea we are going to develop in the following
\begin{defn}
\label{def:netDefMap}Let $\left(\Omega_{\eps}\right)$ be a net of
open subsets of $\R^{n}$. Let $X\subseteq\RC{\rho}^{n}$ and $Y\subseteq\RC{\rho}^{d}$
be arbitrary subsets of generalized points. Then we say that 
\[
f:X\longrightarrow Y\text{ is a \emph{generalized smooth function}}
\]
if there exists a net $f_{\eps}\in\cinfty(\Omega_{\eps},\R^{d})$
defining the map $f:X\ra Y$ in the sense that
\begin{enumerate}
\item $X\subseteq\langle\Omega_{\eps}\rangle$,
\item $f([x_{\eps}])=[f_{\eps}(x_{\eps})]\in Y$ for all $x=[x_{\eps}]\in X$,
\item $(\partial^{\alpha}f_{\eps}(x_{\eps}))\in\R_{{\scriptscriptstyle \rho}}^{d}$
for all $x=[x_{\eps}]\in X$ and all $\alpha\in\N^{n}$.
\end{enumerate}
The space of generalized smooth functions (GSF) from $X$ to $Y$
is denoted by $\gsf(X,Y)$.
\end{defn}

Let us note explicitly that this definition states minimal logical
conditions to obtain a set-theoretical map from $X$ into $Y$ and
defined by a net of smooth functions of which we can take arbitrary
derivatives still remaining in the space of $\rho$-moderate nets.
In particular, the following Thm.~\ref{thm:propGSF} states that
the equality $f([x_{\eps}])=[f_{\eps}(x_{\eps})]$ is meaningful,
i.e.~that we have independence from the representatives for all derivatives
$[x_{\eps}]\in X\mapsto[\partial^{\alpha}f_{\eps}(x_{\eps})]\in\RC{\rho}^{d}$,
$\alpha\in\N^{n}$.
\begin{thm}
\label{thm:propGSF}Let $X\subseteq\RC{\rho}^{n}$ and $Y\subseteq\RC{\rho}^{d}$
be arbitrary subsets of generalized points. Let $f_{\eps}\in\cinfty(\Omega_{\eps},\R^{d})$
be a net of smooth functions that defines a generalized smooth map
of the type $X\longrightarrow Y$, then
\begin{enumerate}
\item $\forall\alpha\in\N^{n}\,\forall(x_{\eps}),(x'_{\eps})\in\R_{\rho}^{n}:\ [x_{\eps}]=[x'_{\eps}]\in X\ \Rightarrow\ (\partial^{\alpha}f_{\eps}(x_{\eps}))\sim_{\rho}(\partial^{\alpha}f_{\eps}(x'_{\eps}))$.
\item \label{enu:GSF-cont}Each $f\in\gsf(X,Y)$ is continuous with respect
to the sharp topologies induced on $X$, $Y$.
\item \label{enu:globallyDefNet}$f:X\longrightarrow Y$ is a GSF if and
only if there exists a net $v_{\eps}\in\cinfty(\R^{n},\R^{d})$ defining
a generalized smooth map of type $X\longrightarrow Y$ such that $f=[v_{\eps}(-)]|_{X}$.
\item \label{enu:category}GSF are closed with respect to composition, i.e.~subsets
$S\subseteq\RC{\rho}^{s}$ with the trace of the sharp topology, and
GSF as arrows form a subcategory of the category of topological spaces.
We will call this category $\gsf$, the \emph{category of GSF}. Therefore,
with pointwise sum and product, any space $\gsf(X,\rcrho)$ is an
algebra.
\end{enumerate}
\end{thm}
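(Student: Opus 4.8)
The plan is to prove the four statements of Theorem~\ref{thm:propGSF} in the order given, since each part builds naturally on its predecessors and on the earlier structural results in the excerpt.

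\medskip

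\noindent\textbf{Part (1): Independence from representatives.} First I would reduce the general claim about all multi-indices $\alpha$ to the case $\alpha=0$ applied to the net of derivatives $(\partial^\alpha f_\eps)$, which is again a net of smooth functions satisfying the moderateness condition (3) of Definition~\ref{def:netDefMap}; so it suffices to show that $[x_\eps]=[x'_\eps]\in X$ implies $(f_\eps(x_\eps))\sim_\rho(f_\eps(x'_\eps))$. The key tool is the mean value estimate: writing $f_\eps(x_\eps)-f_\eps(x'_\eps)$ as an integral of $\nabla f_\eps$ along the segment $[x_\eps,x'_\eps]$, I would bound $|f_\eps(x_\eps)-f_\eps(x'_\eps)|\le |x_\eps-x'_\eps|\cdot\sup_{t}|\nabla f_\eps((1-t)x'_\eps+tx_\eps)|$. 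The factor $|x_\eps-x'_\eps|$ is $\rho$-negligible since $[x_\eps]=[x'_\eps]$, i.e.\ it is $O(\rho_\eps^m)$ for every $m$. The subtle point, and what I expect to be the main obstacle, is that the supremum of $|\nabla f_\eps|$ must be taken over points on the segment that need \emph{not} be of the form $[z_\eps]\in X$, so condition (3) does not directly give moderateness of this supremum. I would handle this by choosing the segment to lie within a fixed $\rho$-negligible neighborhood of $x=[x_\eps]\in X$, observing that for $\eps$ small the intermediate points $(1-t)x'_\eps+tx_\eps$ define an element of $X$ (or at least a point at which the defining net is controlled via Theorem~\ref{thm:strongMembershipAndDistanceComplement} and the moderateness of $\nabla f_\eps$ at $x$), so that the first-order Taylor remainder is negligible and moderateness of higher derivatives at $x$ controls the gradient. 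This gives $|f_\eps(x_\eps)-f_\eps(x'_\eps)|\le \rho_\eps^{m}\cdot\rho_\eps^{-N}$ for every $m$, hence $\sim_\rho$.

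\medskip

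\noindent\textbf{Part (2): Sharp continuity.} This follows immediately from Part~(1) together with the same mean-value bound: given $x=[x_\eps]\in X$ and $x'=[x'_\eps]\in X$, the estimate $|f(x)-f(x')|\le |x-x'|\cdot M$ for some moderate generalized number $M$ bounding the gradient near $x$ shows that $f$ maps sharply nearby points to sharply nearby points. I would phrase this as: for any radius $r\in\rcrho_{>0}$, the ball $B_{r/M}(x)$ is mapped into $B_r(f(x))$, which is exactly sharp continuity in the topology generated in Definition~\ref{def:setOfRadii}.

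\medskip

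\noindent\textbf{Parts (3) and (4): Global representatives and closure under composition.} For Part~(3), the nontrivial direction takes a net $f_\eps\in\cinfty(\Omega_\eps,\R^d)$ defining $f$ and produces a globally defined net $v_\eps\in\cinfty(\R^n,\R^d)$ with $f=[v_\eps(-)]|_X$. The plan is to use Theorem~\ref{thm:strongMembershipAndDistanceComplement}: since $X\subseteq\sint{\Omega_\eps}$ (one shows the hypothesis $X\subseteq\langle\Omega_\eps\rangle$ can be upgraded using sharp boundedness on each ball), each point of $X$ sits at $\rho$-positive distance from $\Omega_\eps^{\mathrm{c}}$, so a standard smooth cutoff supported in $\Omega_\eps$ and equal to $1$ on the relevant strongly internal region, multiplied into $f_\eps$ and extended by $0$, yields the desired $v_\eps$; the cutoff can be chosen with moderate derivatives so that condition (3) is preserved. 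For Part~(4), given $f\in\gsf(X,Y)$ and $g\in\gsf(Y,Z)$ defined by nets $f_\eps$, $g_\eps$, I would show the composite net $g_\eps\circ f_\eps$ defines a GSF of type $X\to Z$: conditions (1) and (2) are immediate, and condition (3) follows from the Fa\`a di Bruno formula expressing $\partial^\alpha(g_\eps\circ f_\eps)$ as a finite sum of products of derivatives of $g_\eps$ evaluated at $f_\eps(x_\eps)$ times derivatives of $f_\eps$ at $x_\eps$; each factor is moderate at the relevant generalized point (here I use that $f([x_\eps])\in Y$, so $g_\eps$ is evaluated at a genuine point of $Y$), and moderate nets form a ring, so the sum is moderate. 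Continuity of the composite then gives a subcategory of topological spaces, and taking $Z=\rcrho$ with pointwise operations makes $\gsf(X,\rcrho)$ an algebra. The main obstacle across Parts~(3)--(4) is bookkeeping the interplay between strongly internal domains and moderateness of the cutoff/Fa\`a di Bruno terms, but no genuinely new idea beyond Theorem~\ref{thm:strongMembershipAndDistanceComplement} and the ring structure of $\R_\rho$ is required.
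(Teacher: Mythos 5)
First, a caveat on the comparison: the paper does not prove Theorem~\ref{thm:propGSF} at all; it is a preliminary quoted from the references \cite{GiKu15,GIO1,GiKu18,GiKu16} (``For all the proofs of results in this section, see...''). Measured against the standard proofs given there, your overall strategy (mean value theorem plus representative arguments for (1)--(2), cutoffs for (3), Fa\`a di Bruno for (4)) is the right one, but there is a genuine gap in Part~(2). You claim sharp continuity ``follows immediately from Part~(1)'' via $|f(x)-f(x')|\le M|x-x'|$ with $M$ a moderate number ``bounding the gradient near $x$''. Condition~(3) of Def.~\ref{def:netDefMap} yields moderateness of $(\partial^{\alpha}f_{\eps}(z_{\eps}))$ only when $[z_{\eps}]$ is a point \emph{of} $X$. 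In Part~(1) this is enough: there $|x_{\eps}-x'_{\eps}|$ is $\rho$-negligible, so the mean-value points $\xi_{\eps}\in[x_{\eps},x'_{\eps}]$ (taken componentwise, as a single net) are again representatives of $x\in X$, and condition~(3) applies to them directly — this, incidentally, is the clean resolution of the obstacle you flagged there; your alternative phrasing ``moderateness of higher derivatives at $x$ controls the gradient'' is circular, since it just moves the same problem to the next derivative. In Part~(2), however, $x'$ is only sharply close to $x$, the difference is \emph{not} negligible, so the intermediate points define generalized points that in general lie neither in $X$ nor in the class of $x$; neither Part~(1) nor condition~(3) gives any bound on $\nabla f_{\eps}$ there, and the existence of your $M$ is unjustified. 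The standard repair is a diagonalization over representatives: if continuity failed at $x$, there would exist $q\in\N$ and points $x^{p}\in X$ with $|x^{p}-x|<\diff{\rho}^{p}$ but $|f(x^{p})-f(x)|\not<\diff{\rho}^{q}$; choosing $\eps_{p}\downarrow0$ suitably and setting $w_{\eps_{p}}:=x_{\eps_{p}}^{p}$ and $w_{\eps}:=x_{\eps}$ otherwise gives $(w_{\eps})\sim_{\rho}(x_{\eps})$, i.e.\ a representative of $x\in X$, with $(f_{\eps}(w_{\eps}))\not\sim_{\rho}(f_{\eps}(x_{\eps}))$, contradicting Part~(1). (The same diagonalization proves that $\sup_{|z-x_{\eps}|\le\rho_{\eps}^{q}}|\nabla f_{\eps}(z)|$ is $\rho$-moderate for \emph{some} $q$, after which your mean-value route does work.) Without some such argument Part~(2) does not close.

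Two smaller defects in the same vein. In Part~(3), one cutoff net must serve \emph{all} points of $X$ simultaneously, and the exponents $q$ from Thm.~\ref{thm:strongMembershipAndDistanceComplement}.\ref{enu:stronglyIntSetsDistance} are unbounded as $x$ ranges over $X$; hence the cutoff scale must shrink faster than every power of $\rho_{\eps}$, and your parenthetical claim that ``the cutoff can be chosen with moderate derivatives'' is false. The construction survives anyway, but only because condition~(3) is tested solely at representatives of points of $X$, where such a cutoff is locally constant equal to $1$, so its (non-moderate) derivative bounds never enter. In Part~(4), the net $g_{\eps}\circ f_{\eps}$ is not a priori defined, since nothing forces $f_{\eps}(\Omega_{\eps})$ to lie in the domain of $g_{\eps}$; you must first invoke Part~(3) to replace $g_{\eps}$ by a globally defined net (or restrict to $f_{\eps}^{-1}$ of the domain of $g_{\eps}$ and check strong membership using Part~(1)). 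After that correction, your Fa\`a di Bruno argument for moderateness is correct.
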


The differential calculus for GSF can be introduced by showing existence
and uniqueness of another GSF serving as incremental ratio (sometimes
this is called \emph{derivative á la Carathéodory}, see e.g.~\cite{Kuh91}).
\begin{thm}[Fermat-Reyes theorem for GSF]
\label{thm:FR-forGSF} Let $U\subseteq\RC{\rho}^{n}$ be a sharply
open set, let $v=[v_{\eps}]\in\RC{\rho}^{n}$, and let $f\in\gsf(U,\RC{\rho})$
be a GSF generated by the net of smooth functions $f_{\eps}\in\cinfty(\Omega_{\eps},\R)$.
Then
\begin{enumerate}
\item \label{enu:existenceRatio}There exists a sharp neighborhood $T$
of $U\times\{0\}$ and a generalized smooth map $r\in\gsf(T,\RC{\rho})$,
called the \emph{generalized incremental ratio} of $f$ \emph{along}
$v$, such that 
\[
\forall(x,h)\in T:\ f(x+hv)=f(x)+h\cdot r(x,h).
\]
\item \label{enu:uniquenessRatio}Any two generalized incremental ratios
coincide on a sharp neighborhood of $U\times\{0\}$, so that we can
use the notation $\frac{\partial f}{\partial v}[x;h]:=r(x,h)$ if
$(x,h)$ are sufficiently small.
\item \label{enu:defDer}We have $\frac{\partial f}{\partial v}[x;0]=\left[\frac{\partial f_{\eps}}{\partial v_{\eps}}(x_{\eps})\right]$
for every $x\in U$ and we can thus define $\diff{f}(x)\cdot v:=\frac{\partial f}{\partial v}(x):=\frac{\partial f}{\partial v}[x;0]=\left[\frac{\partial f_{\eps}}{\partial v_{\eps}}(x_{\eps})\right][x;0]$,
so that $\frac{\partial f}{\partial v}\in\gsf(U,\RC{\rho})$.
\end{enumerate}
\end{thm}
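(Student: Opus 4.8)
The plan is to prove the Fermat--Reyes theorem for GSF by transferring the classical smooth version to the nets and then checking moderateness. The classical starting point is that for each fixed $\eps$ and each smooth $f_\eps\in\cinfty(\Omega_\eps,\R)$, the function
\[
r_\eps(x,h):=\int_0^1 \frac{\partial f_\eps}{\partial v_\eps}(x+sh v_\eps)\,\diff{s}
\]
is smooth on the open set $\Theta_\eps:=\{(x,h)\in\R^n\times\R\mid [x,x+hv_\eps]_{\R^n}\subseteq\Omega_\eps\}$, and satisfies the exact identity $f_\eps(x+hv_\eps)=f_\eps(x)+h\cdot r_\eps(x,h)$ by the fundamental theorem of calculus. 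This is the single classical ingredient; everything else is bookkeeping about moderateness and the sharp topology. So first I would fix this net $(r_\eps)$ as the candidate generating net for the incremental ratio $r$.

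Second, I would construct the sharp neighborhood $T$ of $U\times\{0\}$ and verify condition (i). Since $U$ is sharply open and $U\subseteq\langle\Omega_\eps\rangle$, for each $x=[x_\eps]\in U$ the segment $[x,x+hv]$ stays strongly inside $\langle\Omega_\eps\rangle$ for $h$ sufficiently small in the sharp topology; concretely one takes $T:=\sint{\Theta_\eps}$, or a strongly internal neighborhood of $U\times\{0\}$ contained in it, using Thm.~\ref{thm:strongMembershipAndDistanceComplement}.\ref{enu:stronglyIntAreOpen} to see it is sharply open. The key verifications are (a) that $(r_\eps)$ defines a GSF of type $T\to\rcrho$, i.e.\ that $(\partial^\alpha r_\eps(x_\eps,h_\eps))\in\R_\rho$ for all $(x,h)\in T$ and all multi-indices, and (b) that the generating identity descends to $\rcrho$. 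For (a) I would differentiate under the integral sign, bounding $\partial^\alpha r_\eps$ by a sup of derivatives of $f_\eps$ over the (sharply bounded) segment; moderateness of these derivatives follows from Def.~\ref{def:netDefMap}(iii) applied to $f$ together with the fact that the segment endpoints are moderate and that moderateness is stable under the integral. For (b) one simply passes the exact pointwise identity to equivalence classes, which is legitimate because both sides are moderate and the relation $\sim_\rho$ is a congruence.

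Third, for the uniqueness statement (ii), suppose $r,\tilde r\in\gsf(T,\rcrho)$ both satisfy $f(x+hv)=f(x)+h\cdot r(x,h)=f(x)+h\cdot\tilde r(x,h)$ on $T$. Then $h\cdot(r(x,h)-\tilde r(x,h))=0$ for all $(x,h)\in T$. One cannot simply cancel $h$ since $\rcrho$ is not a field and $h$ need not be invertible, so instead I would use continuity (Thm.~\ref{thm:propGSF}.\ref{enu:GSF-cont}): on the sharply dense set of points where $h$ is invertible the two ratios agree, and then sharp continuity of $r-\tilde r$ forces agreement on a whole sharp neighborhood of $U\times\{0\}$. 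Finally, for (iii), setting $h=0$ in the generating net gives $r_\eps(x_\eps,0)=\tfrac{\partial f_\eps}{\partial v_\eps}(x_\eps)$ directly from the integral, and membership in $\gsf(U,\rcrho)$ is inherited by composing $r$ with the GSF $x\mapsto(x,0)$ and invoking closure under composition, Thm.~\ref{thm:propGSF}.\ref{enu:category}.

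I expect the main obstacle to be the cancellation issue in the uniqueness part (ii): the naive step of dividing by $h$ fails over the non-Archimedean ring, so the argument must route through the density of invertible-$h$ subpoints together with sharp continuity rather than through algebraic cancellation. A secondary subtlety is pinning down $T$ so that the segment $[x,x+hv]$ genuinely remains strongly internal to $\langle\Omega_\eps\rangle$ uniformly enough to guarantee moderateness of \emph{all} derivatives $\partial^\alpha r_\eps$, which requires care with the interplay between the sharp-boundedness of the segment and Def.~\ref{def:netDefMap}(iii).
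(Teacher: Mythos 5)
The paper itself does not prove Thm.~\ref{thm:FR-forGSF}: it is recalled as a preliminary, with the proof deferred to the references (see \cite{GiKu15,GK13b}). Measured against the proof given there, your skeleton is the standard one: the Hadamard-type ratio $r_\eps(x,h)=\int_0^1\frac{\partial f_\eps}{\partial v_\eps}(x+shv_\eps)\,\diff{s}$, uniqueness via sharp density of invertible $h$ together with sharp continuity (your refusal to ``divide by $h$'' is exactly the right instinct, since $\rti$ is not a field), and part \ref{enu:defDer} by evaluating at $h=0$ and composing with the GSF $x\mapsto(x,0)$. Parts \ref{enu:uniquenessRatio} and \ref{enu:defDer} of your argument are correct as written.

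The genuine gap is in part \ref{enu:existenceRatio}: the domain $T:=\sint{\Theta_\eps}$ is wrong, and with it your moderateness claim (a) fails. Two problems occur. First, for $(x,h)\in\sint{\Theta_\eps}$ the point $x+hv$ need not belong to $U$, so the left-hand side $f(x+hv)$ of the defining identity is not even defined ($f$ is only given on $U$). Second, and more seriously, the third condition of Def.~\ref{def:netDefMap} controls the nets $(\partial^\beta f_\eps(y_\eps))$ only at points $[y_\eps]\in U$; it says nothing at other points of $\sint{\Omega_\eps}$, so it cannot be ``applied to $f$'' along a segment that exits $U$. Concretely, take $n=1$, $U=B_1(0)$, $v=1$, $\Omega_\eps=\R$, $\phi\in\cinfty(\R,\R)$ with $\phi=0$ on $[-2,2]$ and $\phi=1$ outside $[-3,3]$, and $f_\eps:=e^{1/\rho_\eps}\phi$. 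Then $(f_\eps)$ defines the GSF $f=0\in\gsf(U,\rti)$, because if $[x_\eps]\in U$ then $|x_\eps|<1$ for $\eps$ small by Lem.~\ref{lem:mayer}, so all nets $(\partial^\beta f_\eps(x_\eps))$ vanish eventually; but $\Theta_\eps=\R^2$, hence $(0,4)\in\sint{\Theta_\eps}$, and $r_\eps(0,4)=\frac14\left(f_\eps(4)-f_\eps(0)\right)=\frac14 e^{1/\rho_\eps}$ is not $\rho$-moderate, so $(r_\eps)$ defines no GSF on your $T$. The missing idea is the \emph{thickening of $U$ along $v$}: $T$ must be a union of products $B_r(x)\times B_r(0)$, $x\in U$, with $r$ chosen so small relative to $|v|$ and to a ball $B_R(x)\subseteq U$ that for every $(y,h)\in T$ the whole generalized segment $[y,y+hv]$ stays inside $U$. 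This both makes the identity well posed and repairs moderateness: after differentiating under the integral sign, one bounds $|\partial^\alpha r_\eps(y_\eps,h_\eps)|$ by a supremum over $s\in[0,1]$ of derivatives of $f_\eps$ along the segment; choosing for each $\eps$ a maximizing $s_\eps\in[0,1]$, the net $(y_\eps+s_\eps h_\eps v_\eps)$ represents a point of the segment, hence of $U$, where the third condition of Def.~\ref{def:netDefMap} applies. Your closing remark shows you sensed this subtlety, but you aimed it at the wrong target: what matters is not that the segment stays in $\sint{\Omega_\eps}$, but that it stays in $U$.
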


Note that this result permits us to consider the partial derivative
of $f$ with respect to an arbitrary generalized vector $v\in\RC{\rho}^{n}$
which can be, e.g., infinitesimal or infinite. Using recursively this
result, we can also define subsequent differentials $\diff{^{j}}f(x)$
as $j-$multilinear maps, and we set $\diff{^{j}}f(x)\cdot h^{j}:=\diff{^{j}}f(x)(h,\ptind^{j},h)$.
The set of all the $j-$multilinear maps $\left(\rti^{n}\right)^{j}\ra\rti^{d}$
over the ring $\rti$ will be denoted by $L^{j}(\rti^{n},\rti^{d})$.
For $A=[A_{\eps}(-)]\in L^{j}(\rti^{n},\rti^{d})$, we set $\Vert{A}\Vert:=[|{A_{\eps}}|]$,
the generalized number defined by the operator norms of the multilinear
maps $A_{\eps}\in L^{j}(\R^{n},\R^{d})$.

The following result follows from the analogous properties for the
nets of smooth functions defining $f$ and $g$.
\begin{thm}
\label{thm:rulesDer} Let $U\subseteq\rcrho^{n}$ be an open subset
in the sharp topology, let $v\in\rcrho^{n}$ and $f$, $g:U\longrightarrow\rcrho$
be generalized smooth maps. Then
\begin{enumerate}
\item $\frac{\partial(f+g)}{\partial v}=\frac{\partial f}{\partial v}+\frac{\partial g}{\partial v}$
\item $\frac{\partial(r\cdot f)}{\partial v}=r\cdot\frac{\partial f}{\partial v}\quad\forall r\in\rcrho$
\item $\frac{\partial(f\cdot g)}{\partial v}=\frac{\partial f}{\partial v}\cdot g+f\cdot\frac{\partial g}{\partial v}$
\item For each $x\in U$, the map $\diff{f}(x).v:=\frac{\partial f}{\partial v}(x)\in\rcrho$
is $\rcrho$-linear in $v\in\rcrho^{n}$
\item Let $U\subseteq\rcrho^{n}$ and $V\subseteq\rcrho^{d}$ be open subsets
in the sharp topology and $g\in{}^{\rho}\Gcinf(V,U)$, $f\in{}^{\rho}\Gcinf(U,\rcrho)$
be generalized smooth maps. Then for all $x\in V$ and all $v\in\rcrho^{d}$,
we have $\frac{\partial\left(f\circ g\right)}{\partial v}(x)=\diff{f}\left(g(x)\right).\frac{\partial g}{\partial v}(x)$.
\end{enumerate}
\end{thm}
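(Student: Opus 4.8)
The plan is to prove Theorem~\ref{thm:rulesDer} by reducing each assertion to the corresponding familiar identity for the defining nets of ordinary smooth functions, and then invoking the representation-independence and the defining relation of the incremental ratio supplied by the Fermat--Reyes theorem (Thm.~\ref{thm:FR-forGSF}), most crucially the equality $\frac{\partial f}{\partial v}(x)=\left[\frac{\partial f_{\eps}}{\partial v_{\eps}}(x_{\eps})\right]$ from part~\ref{enu:defDer}. The strategy throughout is the one emphasized in the introduction: carry out the classical calculation $\eps$-wise on representatives and then quotient.

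First I would fix GSF $f$, $g\in\gsf(U,\rcrho)$ defined by nets $f_{\eps}$, $g_{\eps}\in\cinfty(\Omega_{\eps},\R)$, and a vector $v=[v_{\eps}]\in\rcrho^{n}$. For (i), (ii), (iii) I would simply observe that the classical directional-derivative rules hold for each $f_{\eps}$, $g_{\eps}$:
\[
\tfrac{\partial(f_{\eps}+g_{\eps})}{\partial v_{\eps}}=\tfrac{\partial f_{\eps}}{\partial v_{\eps}}+\tfrac{\partial g_{\eps}}{\partial v_{\eps}},\quad
\tfrac{\partial(r_{\eps}f_{\eps})}{\partial v_{\eps}}=r_{\eps}\tfrac{\partial f_{\eps}}{\partial v_{\eps}},\quad
\tfrac{\partial(f_{\eps}g_{\eps})}{\partial v_{\eps}}=\tfrac{\partial f_{\eps}}{\partial v_{\eps}}\,g_{\eps}+f_{\eps}\tfrac{\partial g_{\eps}}{\partial v_{\eps}}.
\]
Evaluating at $x_{\eps}$, passing to classes in $\rcrho$, and using Thm.~\ref{thm:FR-forGSF}.\ref{enu:defDer} to recognize each class $\left[\frac{\partial f_{\eps}}{\partial v_{\eps}}(x_{\eps})\right]$ as the GSF-derivative then yields the three identities. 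Representation-independence of the outcome is guaranteed by Thm.~\ref{thm:propGSF}(i); the only mild care needed is that $r\cdot f$ and $f\cdot g$ are again GSF, which follows from the algebra structure in Thm.~\ref{thm:propGSF}.\ref{enu:category}.

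For (iv), $\rcrho$-linearity of $v\mapsto\frac{\partial f}{\partial v}(x)$, I would again work at the level of representatives: for $v=[v_{\eps}]$, $w=[w_{\eps}]\in\rcrho^{n}$ and $r=[r_{\eps}]\in\rcrho$, the ordinary differential $\diff{f_{\eps}}(x_{\eps})$ is $\R$-linear, so $\frac{\partial f_{\eps}}{\partial(v_{\eps}+r_{\eps}w_{\eps})}(x_{\eps})=\frac{\partial f_{\eps}}{\partial v_{\eps}}(x_{\eps})+r_{\eps}\frac{\partial f_{\eps}}{\partial w_{\eps}}(x_{\eps})$; taking classes and again applying part~\ref{enu:defDer} gives $\rcrho$-linearity. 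For (v), the chain rule, I would use that GSF are closed under composition (Thm.~\ref{thm:propGSF}.\ref{enu:category}), so $f\circ g$ is a GSF defined (at least locally) by the net $f_{\eps}\circ g_{\eps}$; the classical chain rule $\frac{\partial(f_{\eps}\circ g_{\eps})}{\partial v_{\eps}}(x_{\eps})=\diff{f_{\eps}}\bigl(g_{\eps}(x_{\eps})\bigr)\cdot\frac{\partial g_{\eps}}{\partial v_{\eps}}(x_{\eps})$ then descends to the desired identity $\frac{\partial(f\circ g)}{\partial v}(x)=\diff{f}\bigl(g(x)\bigr).\frac{\partial g}{\partial v}(x)$.

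The only genuine subtlety, and the step I expect to require the most care, is the chain rule in (v): one must check that the \emph{net} $f_{\eps}\circ g_{\eps}$ actually defines the GSF $f\circ g$ on a suitable domain — i.e.\ that $g(x)\in U$ for $x\in V$ is realized at the representative level so that $g_{\eps}(x_{\eps})\in\Omega_{\eps}$ for $\eps$ small — and that the resulting derivative class is independent of representatives. This is precisely what the closure-under-composition statement of Thm.~\ref{thm:propGSF}.\ref{enu:category} secures, together with part~(i) of that theorem for representation-independence. Once the correct defining net for the composite is in hand, everything reduces, as the paper stresses, to the ordinary smooth-function identities evaluated $\eps$-wise, so no further obstacle remains.
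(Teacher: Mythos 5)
Your proof is correct and takes essentially the same route as the paper, which disposes of this theorem with the single remark that it ``follows from the analogous properties for the nets of smooth functions defining $f$ and $g$'' --- precisely the $\eps$-wise reduction you carry out via Thm.~\ref{thm:FR-forGSF}.\ref{enu:defDer} together with closure under composition and the algebra structure from Thm.~\ref{thm:propGSF}.\ref{enu:category}. Your added care about the composite net $f_{\eps}\circ g_{\eps}$ and representative-independence fills in exactly the details the paper leaves implicit.
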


One dimensional integral calculus of GSF is based on the following
\begin{thm}
\label{thm:existenceUniquenessPrimitives}Let $f\in{}^{\rho}\Gcinf([a,b],\rcrho)$
be a GSF defined in the interval $[a,b]\subseteq\RC{\rho}$, where
$a<b$. Let $c\in[a,b]$. Then, there exists one and only one GSF
$F\in{}^{\rho}\Gcinf([a,b],\rcrho)$ such that $F(c)=0$ and $F'(x)=f(x)$
for all $x\in[a,b]$. Moreover, if $f$ is defined by the net $f_{\eps}\in\Coo(\R,\R)$
and $c=[c_{\eps}]$, then $F(x)=\left[\int_{c_{\eps}}^{x_{\eps}}f_{\eps}(s)\diff{s}\right]$
for all $x=[x_{\eps}]\in[a,b]$.
\end{thm}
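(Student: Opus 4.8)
The plan is to reduce everything to the defining net $(f_\eps)$ and classical calculus, and then to verify that the natural candidate built $\eps$-wise is well-defined, moderate, and unique. First I would fix a representative $(x_\eps)$ of each $x=[x_\eps]\in[a,b]$ and a representative $(c_\eps)$ of $c$, and \emph{define} $F_\eps(t):=\int_{c_\eps}^t f_\eps(s)\,\diff{s}$ for $t\in\R$. By the fundamental theorem of ordinary calculus each $F_\eps\in\Coo(\R,\R)$ with $F_\eps'=f_\eps$ and $F_\eps(c_\eps)=0$. The candidate primitive is then $F:=[F_\eps(-)]|_{[a,b]}$, and by Thm.~\ref{thm:propGSF}.\ref{enu:globallyDefNet} it suffices to check that $(F_\eps)$ defines a GSF of type $[a,b]\to\rcrho$ in the sense of Def.~\ref{def:netDefMap}.

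\textbf{Moderateness and well-definedness.} The core verification is condition (iii) of Def.~\ref{def:netDefMap}: for every $x=[x_\eps]\in[a,b]$ and every $j\in\N$, the net $(\partial^j F_\eps(x_\eps))$ is $\rho$-moderate. For $j\ge1$ this is immediate since $\partial^j F_\eps=\partial^{j-1}f_\eps$, and $(\partial^{j-1}f_\eps(x_\eps))\in\R_\rho$ because $f$ is a GSF. For $j=0$ I would estimate $|F_\eps(x_\eps)|=\left|\int_{c_\eps}^{x_\eps}f_\eps(s)\,\diff{s}\right|\le |x_\eps-c_\eps|\cdot\sup_{s\in[c_\eps,x_\eps]}|f_\eps(s)|$; here $|x_\eps-c_\eps|$ is moderate since both $x,c\in[a,b]$ with $a,b\in\rcrho$, and the supremum over the interval is controlled by a moderateness bound coming from $f_\eps$ together with its derivative (one applies the mean value estimate, or simply that the interval $[c_\eps,x_\eps]$ is contained in a sharply bounded set). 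Thus $(F_\eps(x_\eps))\in\R_\rho$. Independence from the chosen representatives, i.e.\ that $F([x_\eps])$ does not depend on $(x_\eps)$ nor on the representatives $(f_\eps)$, $(c_\eps)$, then follows from Thm.~\ref{thm:propGSF}.(i) applied to $(F_\eps)$, so $F$ is a genuine GSF with $F(c)=0$ and, by Thm.~\ref{thm:FR-forGSF}.\ref{enu:defDer}, $F'(x)=[\partial F_\eps(x_\eps)]=[f_\eps(x_\eps)]=f(x)$ for all $x\in[a,b]$.

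\textbf{Uniqueness.} Suppose $G\in{}^{\rho}\Gcinf([a,b],\rcrho)$ also satisfies $G(c)=0$ and $G'=f$ on $[a,b]$. Then $H:=F-G$ is a GSF with $H'=0$ on all of $[a,b]$ and $H(c)=0$. I would conclude $H\equiv0$ by an internal mean value argument: for $x\in[a,b]$, apply the Fermat--Reyes incremental ratio (Thm.~\ref{thm:FR-forGSF}) along $v=x-c$ to write $H(x)-H(c)=(x-c)\cdot r(c,\,\cdot\,)$ and use that the derivative, hence the incremental ratio at the relevant point, vanishes; equivalently one transfers the classical identity $H_\eps(x_\eps)-H_\eps(c_\eps)=\int_{c_\eps}^{x_\eps}H_\eps'(s)\,\diff{s}$ and notes that $H'=0$ forces $(\partial H_\eps(s))$ to be $\rho$-negligible uniformly on the interval, giving $[H_\eps(x_\eps)]=0$. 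The step needing care is that ``$H'=0$ as a generalized number at every point of $[a,b]$'' must be upgraded to a \emph{uniform}-in-$s$ negligibility estimate along each interval $[c_\eps,x_\eps]$ before integrating; this uniformity is exactly where the sharp boundedness of $[a,b]$ and the moderate bounds on the next derivative $H''$ are used, via a Taylor/mean-value estimate. I expect this uniform negligibility, rather than the existence half, to be the main obstacle, since pointwise generalized statements do not automatically globalize over the continuum of subpoints of $[a,b]$.
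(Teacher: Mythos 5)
The paper itself does not prove this theorem: it is recalled as a preliminary (Section~\ref{sec:Basic-notions}) with the proofs delegated to the cited GSF literature, so there is no in-paper argument to compare against. Your construction is precisely the standard one from those references: define $F_{\eps}(t):=\int_{c_{\eps}}^{t}f_{\eps}(s)\,\diff{s}$, check moderateness, and get uniqueness from $H'=0$, $H(c)=0$. The overall structure is correct, and you correctly identify the crux, namely upgrading \emph{pointwise} statements (moderateness of $f$, respectively negligibility of $H'$, at each generalized point of $[a,b]$) to \emph{uniform} ones along the $\eps$-wise intervals $[c_{\eps},x_{\eps}]$. However, the tool you propose for closing this gap (a ``Taylor/mean-value estimate'' using $H''$) is not the right one and tends to become circular, since any partition or Taylor argument again needs uniform control of the next derivative, which is the same problem one level up. The clean argument is an attainment (extreme-value) argument of the kind underlying Thm.~\ref{thm:extremeValues}: for each $\eps$, the continuous function $|f_{\eps}|$ (resp.\ $|H'_{\eps}|$) attains its supremum on the compact interval $[\min(c_{\eps},x_{\eps}),\max(c_{\eps},x_{\eps})]$ at some $s_{\eps}$; the net $(s_{\eps})$ is moderate and, choosing representatives with $a_{\eps}\le c_{\eps},x_{\eps}\le b_{\eps}$, defines a point $[s_{\eps}]\in[a,b]$, so pointwise moderateness (resp.\ pointwise negligibility, since $H'([s_{\eps}])=0$) at that single point bounds the whole supremum. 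With this substitution both your $j=0$ moderateness estimate and the uniqueness step $\bigl|\int_{c_{\eps}}^{x_{\eps}}H'_{\eps}\bigr|\le|x_{\eps}-c_{\eps}|\cdot|H'_{\eps}(s_{\eps})|\sim_{\rho}0$ go through, and the rest of your argument (independence of representatives via Thm.~\ref{thm:propGSF}, $F(c)=0$, $F'=f$ from the defining net) is sound; only note additionally that the Fermat--Reyes theorem is stated for sharply open sets, so on $[a,b]$ one should argue via the nets or on the sharp interior plus continuity, exactly as in your second, preferable, formulation of the uniqueness step.
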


\noindent We can thus define
\begin{defn}
\label{def:integral}Under the assumptions of Theorem \ref{thm:existenceUniquenessPrimitives},
we denote by $\int_{c}^{(-)}f:=\int_{c}^{(-)}f(s)\,\diff{s}\in{}^{\rho}\Gcinf([a,b],\rcrho)$
the unique GSF such that:
\begin{enumerate}
\item $\int_{c}^{c}f=0$
\item $\left(\int_{u}^{(-)}f\right)'(x)=\frac{\diff{}}{\diff{x}}\int_{u}^{x}f(s)\,\diff{s}=f(x)$
for all $x\in[a,b]$.
\end{enumerate}
\end{defn}

\noindent All the classical rules of integral calculus hold in this
setting:
\begin{thm}
\label{thm:intRules}Let $f\in{}^{\rho}\Gcinf(U,\rcrho)$ and $g\in{}^{\rho}\Gcinf(V,\rcrho)$
be two GSF defined on sharply open domains in $\rcrho$. Let $a$,
$b\in\rcrho$ with $a<b$ and $c$, $d\in[a,b]\subseteq U\cap V$,
then
\begin{enumerate}
\item \label{enu:additivityFunction}$\int_{c}^{d}\left(f+g\right)=\int_{c}^{d}f+\int_{c}^{d}g$
\item \label{enu:homog}$\int_{c}^{d}\lambda f=\lambda\int_{c}^{d}f\quad\forall\lambda\in\rcrho$
\item \label{enu:additivityDomain}$\int_{c}^{d}f=\int_{c}^{e}f+\int_{e}^{d}f$
for all $e\in[a,b]$
\item \label{enu:chageOfExtremes}$\int_{c}^{d}f=-\int_{d}^{c}f$
\item \label{enu:foundamental}$\int_{c}^{d}f'=f(d)-f(c)$
\item \label{enu:intByParts}$\int_{c}^{d}f'\cdot g=\left[f\cdot g\right]_{c}^{d}-\int_{c}^{d}f\cdot g'$
\item \label{enu:intMonotone}If $f(x)\le g(x)$ for all $x\in[a,b]$, then
$\int_{a}^{b}f\le\int_{a}^{b}g$.
\item \label{enu:derUnderInt}Let $a$, $b$, $c$, $d\in\rcrho$, with
$a<b$ and $c<d$, and $f\in\gsf([a,b]\times[c,d],\rcrho^{d})$, then
\[
\frac{\diff{}}{\diff{s}}\int_{a}^{b}f(\tau,s)\,\diff{\tau}=\int_{a}^{b}\frac{\partial}{\partial s}f(\tau,s)\,\diff{\tau}\quad\forall s\in[c,d].
\]
\end{enumerate}
\end{thm}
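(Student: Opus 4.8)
The plan is to prove each of the eight rules by reducing to the corresponding classical statement for the defining nets of smooth functions, exploiting Theorem \ref{thm:existenceUniquenessPrimitives} which gives the representative formula $\int_c^d f = \left[\int_{c_\eps}^{d_\eps} f_\eps(s)\,\diff{s}\right]$. The general strategy is: pick representatives $(f_\eps)$, $(g_\eps)$ of the GSF and representatives $(a_\eps)$, $(b_\eps)$, $(c_\eps)$, $(d_\eps)$ of the endpoints with $a_\eps < b_\eps$ for $\eps$ small (possible by Lemma \ref{lem:mayer}), verify the claimed identity at the level of the classical Riemann integrals $\int_{c_\eps}^{d_\eps}$ componentwise, and then pass to the quotient ring $\rcrho$. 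Since all operations ($+$, scalar multiplication, etc.) on $\rcrho$ are defined representative-wise and are compatible with $\sim_\rho$, each classical identity lifts immediately.

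First I would dispatch \ref{enu:additivityFunction}--\ref{enu:chageOfExtremes}: linearity in the integrand (\ref{enu:additivityFunction}), homogeneity (\ref{enu:homog}), Chasles' additivity over the domain (\ref{enu:additivityDomain}), and the sign change under swapping the limits (\ref{enu:chageOfExtremes}) are all exact mirrors of the elementary properties of $\int_{c_\eps}^{d_\eps} f_\eps$, so they follow at once from linearity of the classical integral and from the representative formula. For \ref{enu:foundamental} (the fundamental theorem $\int_c^d f' = f(d) - f(c)$), I would use that by Theorem \ref{thm:FR-forGSF}\ref{enu:defDer} a representative of $f'$ is $(f_\eps')$, hence $\int_{c_\eps}^{d_\eps} f_\eps'(s)\,\diff{s} = f_\eps(d_\eps) - f_\eps(c_\eps)$ classically, and passing to classes gives the result. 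Integration by parts \ref{enu:intByParts} follows identically, applying the classical integration-by-parts formula to $(f_\eps)$, $(g_\eps)$ and reading off $[f\cdot g]_c^d$ as the class of $[f_\eps g_\eps]_{c_\eps}^{d_\eps}$.

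The monotonicity statement \ref{enu:intMonotone} requires slightly more care because the order on $\rcrho$ is not defined pointwise on a single representative but up to negligible nets. The clean route is the characterization recalled after Lemma \ref{lem:mayer}: $f(x)\le g(x)$ for all $x\in[a,b]$ together with the subpoint trichotomy (Lemma \ref{lem:trich1st}) lets one choose representatives with $f_\eps \le g_\eps$ on $[a_\eps,b_\eps]$ for $\eps$ small; classical monotonicity of the integral then gives $\int_{a_\eps}^{b_\eps} f_\eps \le \int_{a_\eps}^{b_\eps} g_\eps$, whence $\int_a^b f \le \int_a^b g$ in $\rcrho$. I would argue by contradiction using \eqref{eq:negation} and Lemma \ref{lem:negationsSubpoints}\ref{enu:neg-le}: if the inequality failed, then $\int_a^b g \sbpt{<} \int_a^b f$ on some $L \subzero I$, which on restricting to $L$ contradicts the pointwise inequality $f_\eps \le g_\eps$. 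Finally, differentiation under the integral sign \ref{enu:derUnderInt} reduces to the classical Leibniz rule applied to the net $(f_\eps(\tau,s))$: the representative of $\frac{\diff{}}{\diff{s}}\int_a^b f(\tau,s)\,\diff{\tau}$ is $\frac{\diff{}}{\diff{s}}\int_{a_\eps}^{b_\eps} f_\eps(\tau,s)\,\diff{\tau} = \int_{a_\eps}^{b_\eps}\partial_s f_\eps(\tau,s)\,\diff{\tau}$, and moderateness of all the relevant derivative nets (guaranteed by the definition of GSF and Theorem \ref{thm:propGSF}) ensures the resulting net again defines a GSF.

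I expect the only genuine obstacle to be \ref{enu:intMonotone}: it is the single item where the partial, non-pointwise nature of the order $\le$ on $\rcrho$ interacts with the integral, so one cannot simply quote the classical result on a fixed representative without first using the subpoint machinery to extract representatives respecting the inequality. All remaining items are essentially bookkeeping, transferring an identity valid for each $f_\eps$ through the quotient; the main thing to check there is merely that the chosen endpoint representatives satisfy $a_\eps \le c_\eps, d_\eps \le b_\eps$ for $\eps$ small so that the classical formulas apply on a genuine subinterval.
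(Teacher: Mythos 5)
The paper itself contains no proof of Thm.~\ref{thm:intRules}: it is part of the preliminaries, with proofs delegated to the cited references \cite{GiKu15,GiKu18,GIO1}. Your $\eps$-wise reduction through the representative formula of Thm.~\ref{thm:existenceUniquenessPrimitives} is exactly the route those references take, and items \ref{enu:additivityFunction}--\ref{enu:intByParts} and \ref{enu:derUnderInt} close essentially as you describe (for \ref{enu:derUnderInt}, note that moderateness of $\sup_{\tau\in[a_{\eps},b_{\eps}]}\left|\partial_{s}^{j}f_{\eps}(\tau,s_{\eps})\right|$ is not immediate from Def.~\ref{def:netDefMap}, which only controls derivatives at generalized points; one needs the same witness-point argument as in Thm.~\ref{thm:extremeValues}, though this is routine).

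The genuine gap is in \ref{enu:intMonotone}, the very item you flagged. You assert that the hypothesis $f(x)\le g(x)$ for all $x\in[a,b]$ ``lets one choose representatives with $f_{\eps}\le g_{\eps}$ on $[a_{\eps},b_{\eps}]$ for $\eps$ small'' via the subpoint trichotomy. But Lem.~\ref{lem:trich1st} compares two \emph{fixed} generalized numbers; it cannot convert a hypothesis quantified over generalized points of $[a,b]$ into a uniform inequality of nets over all real points $s\in[a_{\eps},b_{\eps}]$, and your subsequent contradiction argument presupposes precisely this unproved claim, so it is circular. The missing idea is an extraction-and-gluing step that runs the transfer in the opposite direction: assume $\int_{a}^{b}f\not\le\int_{a}^{b}g$; by Lem.~\ref{lem:negationsSubpoints}.\ref{enu:neg-le} and Lem.~\ref{lem:mayer} (applied in $\rcrho|_{L}$) there exist $L\subzero I$ and $q\in\N$ such that $\int_{a_{\eps}}^{b_{\eps}}\left(f_{\eps}-g_{\eps}\right)\ge\rho_{\eps}^{q}$ for all small $\eps\in L$. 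By the classical mean value theorem for integrals there is $s_{\eps}\in[a_{\eps},b_{\eps}]$ with $(f_{\eps}-g_{\eps})(s_{\eps})\ge\rho_{\eps}^{q}/(b_{\eps}-a_{\eps})\ge\rho_{\eps}^{q+N}$, where $N\in\N$ comes from moderateness of $b-a$. Now glue: set $x_{\eps}:=s_{\eps}$ for $\eps\in L$ and $x_{\eps}:=a_{\eps}$ otherwise; then $x:=[x_{\eps}]\in[a,b]$ and $(f-g)(x)\ge_{L}\diff{\rho}^{q+N}$, contradicting $f(x)\le g(x)$, which forces $(f-g)(x)|_{L}\le0$. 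It is exactly this construction of a witness generalized point from $\eps$-dependent real maximizers that links the pointwise hypothesis to the integrals; the same construction (applied to $\inf_{[a_{\eps},b_{\eps}]}(g_{\eps}-f_{\eps})$, followed by correcting $g_{\eps}$ by the resulting negligible net) is also what would legitimize your representative-selection claim, but it cannot be replaced by order trichotomy alone.
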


\begin{thm}
\label{thm:changeOfVariablesInt}Let $f\in{}^{\rho}\Gcinf(U,\rcrho)$
and $\phi\in{}^{\rho}\Gcinf(V,U)$ be GSF defined on sharply open
domains in $\rcrho$. Let $a$, $b\in\rcrho$, with $a<b$, such that
$[a,b]\subseteq V$, $\phi(a)<\phi(b)$, $[\phi(a),\phi(b)]\subseteq U$.
Finally, assume that $\phi([a,b])\subseteq[\phi(a),\phi(b)]$. Then
\[
\int_{\phi(a)}^{\phi(b)}f(t)\diff{t}=\int_{a}^{b}f\left[\phi(s)\right]\cdot\phi'(s)\diff{s}.
\]
\end{thm}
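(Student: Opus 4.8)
The plan is to replicate the classical proof of the substitution rule, using the intrinsic calculus of GSF: a primitive of $f$, the Fundamental Theorem of Calculus (Thm.~\ref{thm:intRules}.\ref{enu:foundamental}), and the chain rule (Thm.~\ref{thm:rulesDer}). First, since $[\phi(a),\phi(b)]\subseteq U$ and $f\in{}^{\rho}\Gcinf(U,\rcrho)$, the restriction $f|_{[\phi(a),\phi(b)]}$ is again a GSF, so by Thm.~\ref{thm:existenceUniquenessPrimitives} there is a unique primitive $F:=\int_{\phi(a)}^{(-)}f\in{}^{\rho}\Gcinf([\phi(a),\phi(b)],\rcrho)$ with $F(\phi(a))=0$ and $F'=f$ on $[\phi(a),\phi(b)]$. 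By the Fundamental Theorem of Calculus the left-hand side is then $\int_{\phi(a)}^{\phi(b)}f(t)\,\diff{t}=F(\phi(b))-F(\phi(a))=F(\phi(b))$.

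Next I would introduce the composite $G:=F\circ\phi|_{[a,b]}$. This is where the hypothesis $\phi([a,b])\subseteq[\phi(a),\phi(b)]$ is essential: it guarantees that $\phi$ maps $[a,b]$ into the domain $[\phi(a),\phi(b)]$ of $F$, so that $G$ is well defined, and by the closure of GSF under composition (Thm.~\ref{thm:propGSF}.\ref{enu:category}) we have $G\in{}^{\rho}\Gcinf([a,b],\rcrho)$. The chain rule (Thm.~\ref{thm:rulesDer}) then yields $G'(s)=F'(\phi(s))\cdot\phi'(s)=f(\phi(s))\cdot\phi'(s)$ for all $s\in[a,b]$, so $G$ is a primitive of the integrand on the right-hand side. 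Applying the Fundamental Theorem of Calculus once more gives
\[
\int_{a}^{b}f[\phi(s)]\cdot\phi'(s)\,\diff{s}=\int_{a}^{b}G'(s)\,\diff{s}=G(b)-G(a)=F(\phi(b))-F(\phi(a))=F(\phi(b)),
\]
which coincides with the value of the left-hand side computed above, proving the claim. Equivalently, one could define both sides as GSF of the upper endpoint, observe that they vanish at $a$ and share the derivative $f(\phi(s))\cdot\phi'(s)$, and invoke the uniqueness part of Thm.~\ref{thm:existenceUniquenessPrimitives}.

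The only genuinely delicate point is the application of the chain rule, since Thm.~\ref{thm:rulesDer} is stated for sharply \emph{open} domains, whereas here both $F$ and $\phi|_{[a,b]}$ live on closed intervals; I expect this to be the main obstacle. I would resolve it at the level of defining nets: by Thm.~\ref{thm:FR-forGSF}.\ref{enu:defDer} the derivative of a GSF is computed from any defining net, and if $f_{\eps}$, $\phi_{\eps}$ are nets defining $f$, $\phi$ on open sets $\Omega_{\eps}$, then $F_{\eps}(x):=\int_{c_{\eps}}^{x}f_{\eps}$ defines $F$ (via the explicit formula of Thm.~\ref{thm:existenceUniquenessPrimitives}, with $c_{\eps}$ a representative of $\phi(a)$), and the ordinary chain rule gives $(F_{\eps}\circ\phi_{\eps})'=(f_{\eps}\circ\phi_{\eps})\cdot\phi'_{\eps}$ on open sets. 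Passing to the generalized setting then yields $G'=(f\circ\phi)\cdot\phi'$ with no openness assumption on the GSF domains, and all remaining steps are routine consequences of the calculus established in Thm.~\ref{thm:intRules} and Thm.~\ref{thm:existenceUniquenessPrimitives}.
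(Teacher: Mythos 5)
Your proposal is correct, but note that the paper itself contains no proof of Thm.~\ref{thm:changeOfVariablesInt}: it is quoted as a preliminary, with proofs deferred to the cited literature (\cite{GiKu18,GiKu15,GIO1,GiKu16}), so there is no in-paper argument to compare against. The proof given in those references is the $\eps$-wise reduction: by Thm.~\ref{thm:existenceUniquenessPrimitives}, both sides equal the classes $\left[\int_{\phi_{\eps}(a_{\eps})}^{\phi_{\eps}(b_{\eps})}f_{\eps}(t)\,\diff{t}\right]$ and $\left[\int_{a_{\eps}}^{b_{\eps}}f_{\eps}(\phi_{\eps}(s))\phi'_{\eps}(s)\,\diff{s}\right]$ respectively, and these coincide for each $\eps$ by the classical substitution rule for smooth functions; the hypotheses $[\phi(a),\phi(b)]\subseteq U$ and $\phi([a,b])\subseteq[\phi(a),\phi(b)]$ serve only to make both sides well-defined GSF integrals. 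Your intrinsic route (primitive $F$, composite $G=F\circ\phi$, chain rule, uniqueness of primitives) is a valid repackaging of the same content, and you correctly identify the one genuine subtlety, namely that Thm.~\ref{thm:rulesDer} and Thm.~\ref{thm:intRules} are stated for sharply open domains while $F$ and $G$ live on closed intervals. Your resolution—computing $G'$ from the defining net $F_{\eps}\circ\phi_{\eps}$ via the classical chain rule, which is legitimate since Thm.~\ref{thm:propGSF}.\ref{enu:category} gives closure under composition for arbitrary subsets and derivatives of GSF on $[a,b]$ are net-independent (Fermat--Reyes on the interior plus sharp continuity at the endpoints, cf.\ the paper's remark on one-sided derivatives)—is sound, and in effect it is exactly where the $\eps$-wise argument re-enters; your closing alternative, invoking the uniqueness part of Thm.~\ref{thm:existenceUniquenessPrimitives} rather than the open-domain FTC of Thm.~\ref{thm:intRules}.\ref{enu:foundamental}, is indeed the cleaner way to finish on a closed interval.
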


We also have a generalization of Taylor formula:
\begin{thm}
\label{thm:Taylor}Let $f\in\gsf(U,\rcrho)$ be a generalized smooth
function defined in the sharply open set $U\subseteq\rcrho^{d}$.
Let $a$, $b\in\rcrho^{d}$ such that the line segment $[a,b]\subseteq U$,
and set $h:=b-a$. Then, for all $n\in\N$ we have
\begin{enumerate}
\item \label{enu:LagrangeRest}$\exists\xi\in[a,b]:\ f(a+h)=\sum_{j=0}^{n}\frac{\diff{^{j}f}(a)}{j!}\cdot h^{j}+\frac{\diff{^{n+1}f}(\xi)}{(n+1)!}\cdot h^{n+1}.$
\item \label{enu:integralRest}$f(a+h)=\sum_{j=0}^{n}\frac{\diff{^{j}f}(a)}{j!}\cdot h^{j}+\frac{1}{n!}\cdot\int_{0}^{1}(1-t)^{n}\,\diff{^{n+1}f}(a+th)\cdot h^{n+1}\,\diff{t}.$
\end{enumerate}
\noindent Moreover, there exists some $R\in\rcrho_{>0}$ such that
\begin{equation}
\forall k\in B_{R}(0)\,\exists\xi\in[a,a+k]:\ f(a+k)=\sum_{j=0}^{n}\frac{\diff{^{j}f}(a)}{j!}\cdot k^{j}+\frac{\diff{^{n+1}f}(\xi)}{(n+1)!}\cdot k^{n+1}\label{eq:LagrangeInfRest}
\end{equation}
\begin{equation}
\frac{\diff{^{n+1}f}(\xi)}{(n+1)!}\cdot k^{n+1}=\frac{1}{n!}\cdot\int_{0}^{1}(1-t)^{n}\,\diff{^{n+1}f}(a+tk)\cdot k^{n+1}\,\diff{t}\approx0.\label{eq:integralInfRest}
\end{equation}
\end{thm}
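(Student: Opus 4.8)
The plan is to mirror the classical proof of Taylor's theorem with Lagrange and integral remainder, but carried out inside the net of smooth functions defining $f$, invoking the previously established calculus (the Fermat-Reyes theorem \ref{thm:FR-forGSF}, the integration theory of \ref{thm:existenceUniquenessPrimitives}--\ref{thm:intRules}, and change of variables \ref{thm:changeOfVariablesInt}). The key reduction is to the one-dimensional case: define $g(t):=f(a+th)$ for $t\in[0,1]\subseteq\rcrho$, which is a GSF by the closure of GSF under composition (Thm.~\ref{thm:propGSF}.\ref{enu:category}), since $t\mapsto a+th$ is a GSF mapping $[0,1]$ into the segment $[a,b]\subseteq U$. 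The chain rule (Thm.~\ref{thm:rulesDer}, item (v)) gives $g^{(j)}(t)=\diff{^{j}f}(a+th)\cdot h^{j}$, so the multivariate statement follows at once from the one-variable Taylor expansion of $g$ around $0$ evaluated at $1$.

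First I would prove the integral-remainder form \ref{enu:integralRest} for $g$ by iterating integration by parts (Thm.~\ref{thm:intRules}.\ref{enu:intByParts}). Starting from the fundamental theorem $g(1)-g(0)=\int_0^1 g'(t)\,\diff{t}$ and repeatedly integrating by parts against the polynomial factors $\frac{(1-t)^j}{j!}$, exactly as in the classical argument, yields
\[
g(1)=\sum_{j=0}^{n}\frac{g^{(j)}(0)}{j!}+\frac{1}{n!}\int_{0}^{1}(1-t)^{n}g^{(n+1)}(t)\,\diff{t}.
\]
Every manipulation here is licensed by the integration rules already established for GSF, so no new analytic input is needed. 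Substituting $g^{(j)}(0)=\diff{^jf}(a)\cdot h^j$ and $g^{(n+1)}(t)=\diff{^{n+1}f}(a+th)\cdot h^{n+1}$ gives \ref{enu:integralRest} directly.

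For the Lagrange form \ref{enu:LagrangeRest} the natural route is the one-dimensional mean value theorem for GSF applied to the remainder: one wants $\xi\in[a,b]$ with $\frac{1}{n!}\int_0^1(1-t)^n g^{(n+1)}(t)\,\diff{t}=\frac{g^{(n+1)}(\tau)}{(n+1)!}$ for some $\tau\in[0,1]$, then set $\xi=a+\tau h$. \textbf{This is the step I expect to be the main obstacle:} because the order on $\rcrho$ is only partial, the classical intermediate-value/mean-value argument must be replaced by the GSF version of these theorems, which typically hold only ``on subpoints'' or require an extreme value theorem on the interval $[0,1]$ (a functionally compact set). One would realize this at the level of the defining nets: for each $\eps$, the classical Lagrange remainder supplies a point $\tau_\eps\in[0,1]_\R$, and the task is to check that $(\tau_\eps)$ is $\rho$-moderate so that $\xi:=[a+\tau_\eps h_\eps]$ is a well-defined generalized point lying in $[a,b]$, and that the resulting equality holds in $\rcrho$. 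Care is needed here precisely because the selected $\tau_\eps$ need not vary smoothly, but since the statement only asserts existence of $\xi$ (not that $\xi$ is itself a GSF), the net-level selection suffices.

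Finally, the ``moreover'' part \eqref{eq:LagrangeInfRest}--\eqref{eq:integralInfRest} is obtained by replacing $h$ with a variable increment $k$ ranging over a ball $B_R(0)$ small enough that $[a,a+k]\subseteq U$; the existence of such $R\in\rcrho_{>0}$ follows from $U$ being sharply open. The two displayed remainder expressions for $k$ are just \ref{enu:LagrangeRest} and \ref{enu:integralRest} with $h$ replaced by $k$, and the infinitesimality $\approx 0$ in \eqref{eq:integralInfRest} follows from $k^{n+1}\approx 0$ together with the boundedness of $\diff{^{n+1}f}(\xi)$ and of the integral factor, using that $\diff{^{n+1}f}$ is a GSF and hence its values on the sharply bounded segment $[a,a+k]$ are $\rho$-moderate.
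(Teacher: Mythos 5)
Your treatment of parts \ref{enu:LagrangeRest} and \ref{enu:integralRest} is sound and follows the standard route (this theorem is recalled by the paper from the cited references, where the proof runs exactly along these lines: reduction to $g(t)=f(a+th)$, iterated integration by parts for the integral remainder, and an $\eps$-wise application of the classical Lagrange form to produce $\xi$). One technical point you should make explicit: to apply the classical Taylor theorem to $g_{\eps}(t)=f_{\eps}(a_{\eps}+th_{\eps})$ for $t\in[0,1]_{\R}$, the defining net must be defined and smooth on the whole Euclidean segment $[a_{\eps},b_{\eps}]_{\R^{d}}$, which does \emph{not} follow from the pointwise condition $[a,b]\subseteq\langle\Omega_{\eps}\rangle$; this is repaired by first replacing $(f_{\eps})$ with a globally defined net $v_{\eps}\in\cinfty(\R^{d},\R)$ using Thm.~\ref{thm:propGSF}.\ref{enu:globallyDefNet}, after which your net-level selection of $\tau_{\eps}\in[0,1]_{\R}$ (trivially $\rho$-moderate) works as you describe.

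The genuine gap is in the ``moreover'' part. You choose $R$ only so that $[a,a+k]\subseteq U$ and then claim the remainder is $\approx0$ because ``$k^{n+1}\approx0$'' and $\diff{^{n+1}f}(\xi)$ is $\rho$-moderate. Both steps fail. First, $k\in B_{R}(0)$ does not give $k^{n+1}\approx0$ unless $R$ is itself infinitesimal: if $B_{1}(a)\subseteq U$ your $R$ could be $1$, and $k=1/2$ gives a non-infinitesimal remainder factor. Second, and more importantly, in $\rcrho$ the product of an infinitesimal by a $\rho$-moderate number need not be infinitesimal: $\diff{\rho}\cdot\diff{\rho}^{-2}=\diff{\rho}^{-1}$ is infinite, and infinite derivative values genuinely occur for GSF (e.g.\ the embedded Dirac delta has $|\delta^{(j)}(0)|$ of order $b^{j+1}$ with $b$ a strong infinite number). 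So moderateness of $\diff{^{n+1}f}(\xi)$ alone cannot close the argument; $R$ must be calibrated against a \emph{uniform} bound for the $(n+1)$-st differential near $a$. Concretely: since $U$ is sharply open, pick $R_{0}\in\rcrho_{>0}$ with $\overline{B_{R_{0}}(a)}\subseteq U$; by \eqref{eq:closureBall} this closed ball is functionally compact, so the extreme value theorem Thm.~\ref{thm:extremeValues}, applied to the finitely many partial derivatives $\partial^{\alpha}f$ with $|\alpha|=n+1$ (GSF by Thm.~\ref{thm:FR-forGSF}), yields $N\in\N$ such that $\norm{\diff{^{n+1}f}(y)}\le\diff{\rho}^{-N}$ for all $y\in\overline{B_{R_{0}}(a)}$. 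Now set $R:=\min\left(R_{0},\diff{\rho}^{(N+1)/(n+1)}\right)\in\rcrho_{>0}$. Then for every $k\in B_{R}(0)$ and every $\xi\in[a,a+k]\subseteq\overline{B_{R_{0}}(a)}$ one gets
\[
\left|\frac{\diff{^{n+1}f}(\xi)}{(n+1)!}\cdot k^{n+1}\right|\le\diff{\rho}^{-N}\cdot\diff{\rho}^{N+1}=\diff{\rho}\approx0,
\]
and the same uniform bound, combined with monotonicity of the integral (Thm.~\ref{thm:intRules}.\ref{enu:intMonotone}), handles the integral expression in \eqref{eq:integralInfRest}. With this choice of $R$, the rest of your argument for \eqref{eq:LagrangeInfRest}--\eqref{eq:integralInfRest} goes through.
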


Formulas \ref{enu:LagrangeRest} and \ref{enu:integralRest} correspond
to a plain generalization of Taylor's theorem for ordinary smooth
functions with Lagrange and integral remainder, respectively. Dealing
with generalized functions, it is important to note that this direct
statement also includes the possibility that the differential $\diff{^{n+1}f}(\xi)$
may be an infinite number at some point. For this reason, in \eqref{eq:LagrangeInfRest}
and \eqref{eq:integralInfRest}, considering a sufficiently small
increment $k$, we get more classical infinitesimal remainders $\diff{^{n+1}f}(\xi)\cdot k^{n+1}\approx0$.
We can also define right and left derivatives as e.g.~$f'(a):=f'_{+}(a):=\lim_{\substack{t\to a\\
a<t
}
}f'(t)$, which always exist if $f\in\gsf([a,b],\RC{\rho}^{d})$.

\subsection{\label{subsec:Embedding}Embedding of Sobolev-Schwartz distributions
and Colombeau functions}

We finally recall two results that give a certain flexibility in constructing
embeddings of Schwartz distributions. Note that both the infinitesimal
$\rho$ and the embedding of Schwartz distributions have to be chosen
depending on the problem we aim to solve. A trivial example in this
direction is the ODE $y'=y/\diff{\eps}$, which cannot be solved for
$\rho=(\eps)$ (in a finite interval), but it has a solution for $t\le N\left[e^{-1/\eps}\frac{1}{\eps}\right]$
if $\rho=(e^{-1/\eps})$. As another simple example, if we need the
property $H(0)=1/2$, where $H$ is the Heaviside function, then we
have to choose the embedding of distributions accordingly. In other
words, both the gauges and the particular embedding we choose have
to be thought of as elements of the mathematical structure we are
considering to deal with the particular problem we want to solve.
See also \cite{GiLu16,LuGi17} for further details in this direction.\\
 If $\phi\in\mathcal{D}(\R^{n})$, $r\in\R_{>0}$ and $x\in\R^{n}$,
we use the notations $r\odot\phi$ for the function $x\in\R^{n}\mapsto\frac{1}{r^{n}}\cdot\phi\left(\frac{x}{r}\right)\in\R$
and $x\oplus\phi$ for the function $y\in\R^{n}\mapsto\phi(y-x)\in\R$.
These notations permit us to highlight that $\odot$ is a free action
of the multiplicative group $(\R_{>0},\cdot,1)$ on $\mathcal{D}(\R^{n})$
and $\oplus$ is a free action of the additive group $(\R_{>0},+,0)$
on $\mathcal{D}(\R^{n})$. We also have the distributive property
$r\odot(x\oplus\phi)=rx\oplus r\odot\phi$.
\begin{lem}
\label{lem:strictDeltaNet}Let $b\in\rti$ be a net such that $\lim_{\eps\to0^{+}}b_{\eps}=+\infty$.
Let $d\in(0,1)_{\R}$, there exists a net $\left(\psi_{\eps}\right)_{\eps\in I}$
of $\mathcal{D}(\R^{n})$ with the properties:
\begin{enumerate}
\item \label{enu:suppStrictDeltaNet}$supp(\psi_{\eps})\subseteq B_{1}(0)$
and $\psi_{\eps}$ is even for all $\eps\in I$.
\item \label{enu:c_n}Let $\omega_{n}$ denote the surface area of $S^{n-1}$
and set $c_{n}:=\frac{2n}{\omega_{n}}$ for $n>1$ and $c_{1}:=1$,
then $\psi_{\eps}(0)=c_{n}$ for all $\eps\in I$.
\item \label{enu:intOneStrictDeltaNet}$\int\psi_{\eps}=1$ for all $\eps\in I$.
\item \label{enu:moderateStrictDeltaNet}$\forall\alpha\in\N^{n}\exists p\in\mathbb{N}:\ \sup_{x\in\R^{n}}\left|\partial^{\alpha}\psi_{\eps}(x)\right|=O(b_{\eps}^{p})$
as $\eps\to0^{+}$.
\item \label{enu:momentsStrictDeltaNet}$\forall j\in\N\,\forall^{0}\eps:\ 1\le|\alpha|\le j\Rightarrow\int x^{\alpha}\cdot\psi_{\eps}(x)\,\diff{x}=0$.
\item \label{enu:smallNegPartStrictDeltaNet}$\forall\eta\in\R_{>0}\,\forall^{0}\eps:\ \int\left|\psi_{\eps}\right|\le1+\eta$.
\item \label{enu:int1Dim}If $n=1$, then the net $(\psi_{\eps})_{\eps\in I}$
can be chosen so that $\int_{-\infty}^{0}\psi_{\eps}=d$.
\end{enumerate}
\noindent In particular $\psi_{\eps}^{b}:=b_{\eps}^{-1}\odot\psi_{\eps}$
satisfies \ref{enu:intOneStrictDeltaNet} - \ref{enu:smallNegPartStrictDeltaNet}.
For example, for $n=1$, the net $\left(\psi_{\eps}\right)_{\eps\in I}$
can even be taken independently from $\eps$ by setting $\psi:=\mathcal{F}^{-1}(\beta)$,
where $\beta\in\Coo(\R)$ is supported e.g. in $[-1,1]$ and identically
equals $1$ in a neighborhood of $0$.
\end{lem}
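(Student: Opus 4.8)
The plan is to construct the net $(\psi_\eps)$ explicitly, in fact independently of $\eps$ in most cases, and then verify each property in turn; the only genuinely $\eps$-dependent adjustment is forced by the one-dimensional normalization \ref{enu:int1Dim}. First I would fix a single smooth even bump function on $\R^n$ supported in $B_1(0)$ — for instance an even $\chi\in\mathcal{D}(\R^n)$ with $\chi\ge 0$, $\chi(0)>0$ — and then correct it to kill the prescribed moments. The standard device is to take a finite linear combination $\psi:=\sum_{|\beta|\le N}a_\beta\,\partial^\beta\chi$ or, more transparently, to multiply in Fourier space: since the vanishing-moment condition \ref{enu:momentsStrictDeltaNet} says that all derivatives of $\widehat{\psi_\eps}$ at the origin up to some order vanish, i.e.\ $\widehat{\psi}$ is flat to high order at $0$, the cleanest choice is precisely the one already flagged at the end of the statement for $n=1$: set $\psi:=\mathcal{F}^{-1}(\beta)$ with $\beta\in\Coo(\R)$ supported in $[-1,1]$ and $\beta\equiv 1$ near $0$. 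Flatness of $\beta-\mathbb{1}$ at $0$ is what produces the vanishing moments, while $\int\psi=\widehat{\psi}(0)=\beta(0)=1$ gives \ref{enu:intOneStrictDeltaNet}, and evenness of $\beta$ transfers to $\psi$. The multidimensional analogue replaces $\beta$ by a radial cutoff, and $\psi(0)=\int\beta$ can be rescaled to the constant $c_n$ of \ref{enu:c_n}.

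Next I would verify \ref{enu:suppStrictDeltaNet}–\ref{enu:momentsStrictDeltaNet} for this fixed $\psi$: compact support in $B_1(0)$ and evenness are immediate from the construction; the value $\psi(0)=c_n$ is a one-parameter normalization; $\int\psi=1$ and the vanishing of all positive-order moments follow from the flatness of $\widehat{\psi}$ at the origin as indicated above. Since $\psi$ does not depend on $\eps$, the moderateness bound \ref{enu:moderateStrictDeltaNet} holds trivially (each $\sup_{x}|\partial^\alpha\psi(x)|$ is a constant, hence $O(b_\eps^p)$ for any $p\ge 0$, using $b_\eps\to+\infty$). The rescaling claim for $\psi_\eps^b:=b_\eps^{-1}\odot\psi_\eps$ is a routine change of variables: $\odot$ preserves the integral by the $\frac{1}{r^n}$ factor, preserves evenness and support scaling, and multiplies derivative sup-norms by powers of $b_\eps$, which is exactly what is absorbed by \ref{enu:moderateStrictDeltaNet}; the vanishing moments are scale-invariant.

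The two properties that require real work are the one on the negative part, \ref{enu:smallNegPartStrictDeltaNet} ($\int|\psi_\eps|\le 1+\eta$ eventually), and the one-dimensional half-mass condition \ref{enu:int1Dim} ($\int_{-\infty}^0\psi_\eps=d$). The tension is that a compactly supported function whose higher moments all vanish \emph{cannot} be nonnegative (its Fourier transform would be flat at $0$ yet a genuine density), so $\psi$ necessarily takes negative values and $\int|\psi|>1$ strictly. I expect this to be the main obstacle. The resolution is to decouple the number of cancelled moments from $\eps$: for each $\eps$, let $\psi_\eps$ cancel moments only up to some order $j(\eps)\to\infty$, chosen so that as the flatness order increases the $L^1$-norm of the correction tends to $1$. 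Concretely, one approximates the nonnegative mollifier $\chi$ by $\psi_\eps$ agreeing with it more and more closely in $L^1$ while still killing the first $j(\eps)$ moments; then $\int|\psi_\eps|\to 1$, giving \ref{enu:smallNegPartStrictDeltaNet} for every fixed $\eta$ and $\eps$ small, and simultaneously \ref{enu:momentsStrictDeltaNet} read ``$\forall j\,\forall^0\eps$'' rather than for all $j$ at fixed $\eps$. Finally, for $n=1$, property \ref{enu:int1Dim} is arranged by adding to $\psi_\eps$ an even corrector is impossible (an even function always has $\int_{-\infty}^0=\tfrac12\int$); instead one drops evenness for this case and adds an \emph{odd} smooth bump $\theta$ supported in $B_1(0)$ with $\int_{-\infty}^0\theta=d-\tfrac12$ and all moments vanishing, whose $L^1$-contribution can be taken arbitrarily small so as not to spoil \ref{enu:smallNegPartStrictDeltaNet}. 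The moderateness and support conditions are preserved throughout, completing the verification.
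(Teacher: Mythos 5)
Your proposal rests on the choice $\psi:=\mathcal{F}^{-1}(\beta)$, $\beta\in\mathcal{D}(\R)$, and you then assert that ``compact support in $B_{1}(0)$ and evenness are immediate from the construction.'' That assertion is false, and it breaks the construction at the root: by Paley--Wiener, the inverse Fourier transform of a compactly supported function extends to an entire function, hence is never compactly supported unless it vanishes identically, so property \ref{enu:suppStrictDeltaNet} fails for this $\psi$. Moreover the flat-Fourier-transform route cannot be repaired inside $\mathcal{D}(B_{1}(0))$: if $\psi$ is compactly supported then $\widehat{\psi}$ is entire, so $\widehat{\psi}(0)=1$ together with $\partial^{\alpha}\widehat{\psi}(0)=0$ for all $|\alpha|\ge1$ forces $\widehat{\psi}\equiv1$, contradicting Riemann--Lebesgue. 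Thus \emph{no} compactly supported function kills all moments; each $\psi_{\eps}$ can only kill moments up to a finite order $j(\eps)$, with $j(\eps)\to\infty$ as $\eps\to0^{+}$ --- which is exactly the quantifier structure of \ref{enu:momentsStrictDeltaNet}, and is why the paper's closing remark about $\mathcal{F}^{-1}(\beta)$ is attached only to the non-compactly-supported properties \ref{enu:intOneStrictDeltaNet}--\ref{enu:smallNegPartStrictDeltaNet}. You do eventually adopt the $j(\eps)\to\infty$ device (correctly), but your primary construction, on which you verify \ref{enu:suppStrictDeltaNet}--\ref{enu:momentsStrictDeltaNet}, does not satisfy \ref{enu:suppStrictDeltaNet} at all.

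The two repairs you then propose also fail as stated. For \ref{enu:smallNegPartStrictDeltaNet} you want $\psi_{\eps}$ to approximate the \emph{fixed} nonnegative bump $\chi$ in $L^{1}$ while killing $j(\eps)\to\infty$ moments; this is impossible: on functions supported in $\overline{B_{1}(0)}$ one has $\left|\int x^{\alpha}f(x)\,\diff{x}\right|\le\Vert f\Vert_{L^{1}}$, so $L^{1}$-convergence $\psi_{\eps}\to\chi$ would force every moment of $\chi$ of order $\ge1$ to vanish, contradicting $\chi\ge0$, $\int\chi=1$ (already $\int|x|^{2}\chi>0$). The correct mechanism (the one behind the proofs in the works the paper cites for this section --- note the paper itself gives no proof of this lemma) is to let the comparison bump \emph{concentrate}, $\chi_{\delta}:=\delta^{-n}\odot\chi$ with $\delta=\delta(\eps)\to0$: then the moments to be cancelled are already $O(\delta)$, so the moment-killing correction costs only $O(\delta)$ in $L^{1}$, giving $\int|\psi_{\eps}|\le1+\eta$. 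The same concentration is what makes \ref{enu:c_n} nontrivial: $\psi_{\eps}(0)=c_{n}$ cannot be arranged by ``rescaling'' (dilation trades the value at $0$ against the support and integral constraints) and needs an additive, moment-corrected spike of small $L^{1}$ norm. Finally, for \ref{enu:int1Dim} your odd corrector $\theta$ with $\int_{-\infty}^{0}\theta=d-\tfrac{1}{2}$ necessarily satisfies $\Vert\theta\Vert_{L^{1}}\ge|2d-1|>0$ when $d\ne\tfrac{1}{2}$, so its $L^{1}$ contribution cannot be ``taken arbitrarily small''; compatibility with \ref{enu:smallNegPartStrictDeltaNet} must come from cancellation, i.e.\ building the non-even net directly as a small perturbation of a concentrated nonnegative bump carrying mass $d$ on $(-\infty,0]$. (Your observations that are correct and worth keeping: the finite-order/diagonal structure of \ref{enu:momentsStrictDeltaNet}, and the fact that \ref{enu:int1Dim} with $d\ne\tfrac{1}{2}$ is incompatible with evenness, so it requires a separate, non-even choice of net.)
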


\noindent Concerning embeddings of Schwartz distributions, we have
the following result, where $\csp{\Omega}:=\{[x_{\eps}]\in[\Omega]\mid\exists K\Subset\Omega\,\forall^{0}\eps:\ x_{\eps}\in K\}$
is called the set of \emph{compactly supported points in }$\Omega\subseteq\R^{n}$.
Note that $\csp{\Omega}=\left\{ x\in[\Omega]\mid x\text{ is finite}\right\} $
(see Def.~\ref{def:nonArchNumbs}).
\begin{thm}
\label{thm:embeddingD'}Under the assumptions of Lemma \ref{lem:strictDeltaNet},
let $\Omega\subseteq\R^{n}$ be an open set and let $(\psi_{\eps}^{b})$
be the net defined in \ref{lem:strictDeltaNet}. Then the mapping
\begin{equation}
\iota_{\Omega}^{b}:T\in\mathcal{E}'(\Omega)\mapsto\left[\left(T\ast\psi_{\eps}^{b}\right)(-)\right]\in\gsf(\csp{\Omega},\rti)\label{eq:embE'}
\end{equation}
uniquely extends to a sheaf morphism of real vector spaces 
\[
\iota^{b}:\mathcal{D}'\ra\gsf(\csp{-},\rti),
\]
and satisfies the following properties:
\begin{enumerate}
\item \label{enu:embSmooth}If $b\in\rti_{>0}$ is a strong infinite number,
then $\iota^{b}|_{\Coo(-)}:\Coo(-)\ra\gsf(\csp{-},\RC{\rho})$ is
a sheaf morphism of algebras and $\iota_{\Omega}^{b}(f)(x)=f(x)$
for all smooth functions $f\in\Coo(\Omega)$ and all $x\in\Omega$;
\item \label{enu:supportDistr}If $T\in\mathcal{E}'(\Omega)$ then $\text{\text{\emph{supp}}}(T)=\text{\emph{\text{stsupp}}}(\iota_{\Omega}^{b}(T))$,
where
\begin{equation}
\text{\emph{stsupp}}(f):=\left(\bigcup\left\{ \Omega'\subseteq\Omega\mid\Omega'\text{ open},\ f|_{\Omega'}=0\right\} \right)^{\text{c}}\label{eq:stSupp}
\end{equation}
for all $f\in\gsf(\csp{\Omega},\rcrho)$.
\item \label{enu:D'}Let $b\in\rti_{>0}$ be a strong infinite number. Then
$\big[\int_{\Omega}\iota_{\Omega}^{b}(T)_{\eps}(x)\cdot\phi(x)\,\diff{x}\big]=\langle T,\phi\rangle$
for all $\phi\in\mathcal{D}(\Omega)$ and all $T\in\mathcal{D}'(\Omega)$;
\item $\iota^{b}$ commutes with partial derivatives, i.e.~$\partial^{\alpha}\left(\iota_{\Omega}^{b}(T)\right)=\iota_{\Omega}^{b}\left(\partial^{\alpha}T\right)$
for each $T\in\mathcal{D}'(\Omega)$ and $\alpha\in\N$.
\item Similar results also hold for the embedding of tempered distributions:
setting
\[
\mathcal{S}'(\Omega):=\left\{ T\in\mathcal{D}'(\Omega)\mid\exists\tilde{T}\in\mathcal{S}'(\R):\ \tilde{T}|_{\Omega}=T\text{ in }\mathcal{D}'(\Omega)\right\} ,
\]
we have
\[
\iota_{\Omega}^{b}:T\in\mathcal{S}'(\Omega)\mapsto\left[\left(\tilde{T}\ast\psi_{\eps}^{b}\right)|_{\Omega}(-)\right]\in\gsf(\csp{\Omega},\rti),
\]
where $\tilde{T}\in\mathcal{S}'(\R)$, $\tilde{T}|_{\Omega}=T$ in
$\mathcal{D}'(\Omega)$, is any extension of $T$.
\end{enumerate}
\end{thm}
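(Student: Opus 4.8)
The plan is to first verify that for each $T\in\mathcal{E}'(\Omega)$ the net $(T\ast\psi_\eps^{b})$ genuinely defines an element of $\gsf(\csp{\Omega},\rti)$, then to globalize to all of $\mathcal{D}'$ by a sheaf-theoretic localization argument, and finally to dispatch the enumerated properties. For well-definedness on $\mathcal{E}'(\Omega)$, I would invoke the structure theorem for compactly supported distributions to write $T=\sum_{|\alpha|\le m}\partial^{\alpha}g_{\alpha}$ with continuous functions $g_{\alpha}$ supported in a fixed compact $K\Subset\Omega$, so that $\partial^{\beta}(T\ast\psi_\eps^{b})=\sum_{\alpha}g_{\alpha}\ast(\partial^{\alpha+\beta}\psi_\eps^{b})$. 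Since $\psi_\eps^{b}=b_\eps^{-1}\odot\psi_\eps$ satisfies $\sup_{x}|\partial^{\gamma}\psi_\eps^{b}(x)|=b_\eps^{n+|\gamma|}\sup_{x}|\partial^{\gamma}\psi_\eps(x)|=O(b_\eps^{n+|\gamma|+p})$ by Lemma~\ref{lem:strictDeltaNet}.\ref{enu:moderateStrictDeltaNet}, and since $b=[b_\eps]\in\rti$ is $\rho$-moderate, the elementary estimate $\|g_{\alpha}\ast\partial^{\gamma}\psi_\eps^{b}\|_{\infty}\le\|g_{\alpha}\|_{L^{1}}\cdot\|\partial^{\gamma}\psi_\eps^{b}\|_{\infty}$ shows that every $\partial^{\beta}(T\ast\psi_\eps^{b})(x_\eps)$ is $\rho$-moderate when $(x_\eps)$ runs over a compactly supported point. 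This checks the three defining conditions of Definition~\ref{def:netDefMap}, and independence from representatives is then automatic by Theorem~\ref{thm:propGSF}.

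For the extension to a sheaf morphism on all of $\mathcal{D}'$, the crucial geometric fact is that $\operatorname{supp}(\psi_\eps^{b})\subseteq\Eball_{b_\eps^{-1}}(0)$ with $b_\eps^{-1}\to0$. Given $T\in\mathcal{D}'(\Omega)$ and $x\in\csp{\Omega}$ with representative $(x_\eps)$ eventually inside a compact $K\Subset\Omega$, I would pick a cutoff $\chi\in\mathcal{D}(\Omega)$ equal to $1$ on an open neighborhood $U$ of $K$ and define $\iota_{\Omega}^{b}(T):=\iota_{\Omega}^{b}(\chi T)$ locally. To see this is independent of $\chi$, if two cutoffs both equal $1$ on $U$, then $(\chi_{1}-\chi_{2})T$ is supported at Euclidean distance $\ge\delta>0$ from $K$; as soon as $b_\eps^{-1}<\delta$ the convolution $\big((\chi_{1}-\chi_{2})T\big)\ast\psi_\eps^{b}$ vanishes on a neighborhood of $K$, hence at $x_\eps$ for $\eps$ small. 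This local consistency is precisely the gluing condition for a sheaf morphism of real vector spaces, and uniqueness of the extension is forced because any such extension must agree with $\iota_{\Omega}^{b}$ on $\mathcal{E}'(\Omega)$ and commute with restrictions.

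It then remains to verify the listed properties. Commutation with $\partial^{\alpha}$ is just the identity $\partial^{\alpha}(T\ast\psi_\eps^{b})=(\partial^{\alpha}T)\ast\psi_\eps^{b}$ read at the level of representatives. For \ref{enu:D'}, testing against $\phi\in\mathcal{D}(\Omega)$ and using Fubini together with $\int\psi_\eps^{b}=1$ (Lemma~\ref{lem:strictDeltaNet}.\ref{enu:intOneStrictDeltaNet}) rewrites $\int(T\ast\psi_\eps^{b})\phi$ as $\langle T,\check{\psi}_\eps^{b}\ast\phi\rangle$, which converges to $\langle T,\phi\rangle$ because $(\psi_\eps^{b})$ is a delta net, the correction being $\rho$-negligible by the vanishing moments \ref{enu:momentsStrictDeltaNet}. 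Property \ref{enu:supportDistr} follows from the same support-shrinking argument as above, applied in both directions. The tempered case is obtained by running these computations on an extension $\tilde{T}\in\mathcal{S}'(\R)$, with independence from the chosen extension guaranteed by the locality already established.

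The step I expect to be the main obstacle is \ref{enu:embSmooth}, where the strong-infinite hypothesis on $b$ is essential. Here one must show that for $f\in\Coo(\Omega)$ the error $f\ast\psi_\eps^{b}-f$ and all its derivatives are $\rho$-negligible. Expanding $f$ by Taylor's formula on the shrinking support $\Eball_{b_\eps^{-1}}(0)$ and using that $\int y^{\alpha}\psi_\eps^{b}(y)\,\diff{y}=b_\eps^{-|\alpha|}\int z^{\alpha}\psi_\eps(z)\,\diff{z}=0$ for $1\le|\alpha|\le j$ (from \ref{enu:momentsStrictDeltaNet}), the leading error becomes $O(b_\eps^{-(j+1)})$ for every $j$. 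Since $b$ strong infinite means $b\ge\diff{\rho}^{-r}$ for some $r\in\R_{>0}$, for each $N$ one chooses $j$ with $r(j+1)\ge N$, whence the error is bounded by $\rho_\eps^{N}$ for $\eps$ small; this is exactly where the strength of $b$ is consumed. The algebra-morphism property is then immediate, since $\iota_{\Omega}^{b}(fg)-\iota_{\Omega}^{b}(f)\cdot\iota_{\Omega}^{b}(g)$ is built from such negligible smoothing errors, and $\iota_{\Omega}^{b}(f)(x)=f(x)$ holds pointwise on $\Omega$.
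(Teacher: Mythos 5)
A preliminary remark on the comparison itself: the paper contains no proof of Thm.~\ref{thm:embeddingD'}. It is one of the recalled preliminaries of Sec.~\ref{sec:Basic-notions}, and the paper explicitly defers all proofs of that section to the references \cite{GiKu18,GiKu15,GIO1,GiKu16}. Your proposal can therefore only be measured against the standard construction in that literature, and at the level of architecture it matches it: structure theorem plus Young's inequality for moderateness on $\mathcal{E}'(\Omega)$; locality of $\text{supp}(\psi_\eps^{b})\subseteq\Eball_{b_\eps^{-1}}(0)$ for cutoff-independence, gluing and uniqueness; Taylor expansion plus the vanishing moments of Lem.~\ref{lem:strictDeltaNet}.\ref{enu:momentsStrictDeltaNet} plus the strong-infinite hypothesis on $b$ for the negligibility estimates. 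In particular, your identification of where the strength of $b$ is consumed (choosing $j$ with $r(j+1)\ge N$) is exactly the crux of item \ref{enu:embSmooth}.

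Three spots are genuinely thinner than they need to be. First, your localization assigns to each compactly supported point a value through a point-dependent cutoff and verifies consistency, but by Def.~\ref{def:netDefMap} an element of $\gsf(\csp{\Omega},\rti)$ requires one net of smooth functions defining the map at all points of $\csp{\Omega}$ simultaneously; to pass from your consistent local data to such a net you must either invoke the sheaf property of $\gsf(\csp{-},\rti)$ (itself a nontrivial theorem of the cited references) or run an explicit diagonal argument over an exhaustion $K_{m}\nearrow\Omega$ with cutoffs $\chi_{m}$, for which your locality computation is precisely the needed ingredient. Second, in item \ref{enu:supportDistr} only the inclusion $\text{stsupp}(\iota_{\Omega}^{b}(T))\subseteq\text{supp}(T)$ follows from support shrinking; the reverse inclusion is not ``the same argument applied in the other direction'', because vanishing of the GSF $\iota_{\Omega}^{b}(T)$ on $\csp{\Omega'}$ only means its \emph{values} are $\rho$-negligible, not that the defining net vanishes. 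To conclude $T|_{\Omega'}=0$ you must pair with $\phi\in\mathcal{D}(\Omega')$, bound $\int(T\ast\psi_\eps^{b})\phi$ by the negligible supremum over $\text{supp}(\phi)$, and use the classical convergence $\psi_\eps^{b}\ast\phi\to\phi$ in $\mathcal{D}(\Omega')$, which needs only $b_\eps\to+\infty$. Third, in item \ref{enu:D'} the correction $\langle T,\psi_\eps^{b}\ast\phi-\phi\rangle$ is negligible not ``by the vanishing moments'' alone: the moments give the rate $O(b_\eps^{-(j+1)})$, which is negligible only because $b$ is strong infinite (for $b_\eps=-\log\rho_\eps$ this would fail); you state this mechanism correctly for item \ref{enu:embSmooth}, and it must be invoked here as well. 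With these three repairs your outline is a sound reconstruction of the theorem.
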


Concerning the embedding of Colombeau generalized functions (CGF),
we recall that the special Colombeau algebra on $\Omega$ is defined
as the quotient $\gs(\Omega):=\mathcal{E}_{M}(\Omega)/\ns(\Omega)$
of \emph{moderate nets} over \emph{negligible nets}, where the former
is 
\[
\mathcal{E}_{M}(\Omega):=\{(u_{\eps})\in\cinfty(\Omega)^{I}\mid\forall K\Subset\Omega\,\forall\alpha\in\N^{n}\,\exists N\in\N:\sup_{x\in K}|\partial^{\alpha}u_{\eps}(x)|=O(\eps^{-N})\}
\]
and the latter is 
\[
\ns(\Omega):=\{(u_{\eps})\in\cinfty(\Omega)^{I}\mid\forall K\Subset\Omega\,\forall\alpha\in\N^{n}\,\forall m\in\N:\sup_{x\in K}|\partial^{\alpha}u_{\eps}(x)|=O(\eps^{m})\}.
\]
Using $\rho=(\eps)$, we have the following compatibility result:
\begin{thm}
\label{thm:inclusionCGF}A Colombeau generalized function $u=(u_{\eps})+\ns(\Omega)^{d}\in\gs(\Omega)^{d}$
defines a GSF $u:[x_{\eps}]\in\csp{\Omega}\longrightarrow[u_{\eps}(x_{\eps})]\in\Rtil^{d}$.
This assignment provides a bijection of $\gs(\Omega)^{d}$ onto $\gsf(\csp{\Omega},\rti^{d})$
for every open set $\Omega\subseteq\R^{n}$.
\end{thm}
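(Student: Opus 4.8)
The plan is to proceed in three steps, throughout using $\rho=(\eps)$, so that $\rho$-moderateness and $\rho$-negligibility coincide with the Colombeau notions on compact sets.

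\emph{The assignment is well defined and lands in $\gsf(\csp{\Omega},\rti^{d})$.} First I would verify the three conditions of Def.~\ref{def:netDefMap} for the constant net $\Omega_\eps=\Omega$. If $x=[x_\eps]\in\csp{\Omega}$ then $x_\eps\in K$ for $\eps$ small, for some $K\Subset\Omega$; since $\delta:=d(K,\R^n\setminus\Omega)\in\R_{>0}$ we get $d(x_\eps,\R^n\setminus\Omega)\ge\delta>\rho_\eps^{q}$ eventually for every $q$, whence $\csp{\Omega}\subseteq\sint{\Omega}=\langle\Omega\rangle$ by Thm.~\ref{thm:strongMembershipAndDistanceComplement}.\ref{enu:stronglyIntSetsDistance}, which is condition (i). For condition (iii), Colombeau moderateness on the compact $K$ gives $|\partial^\alpha u_\eps(x_\eps)|\le\sup_{y\in K}|\partial^\alpha u_\eps(y)|=O(\eps^{-N})$, so $(\partial^\alpha u_\eps(x_\eps))\in\R_\rho^{d}$; the case $\alpha=0$ yields (ii). Independence from the representative of $x$ is exactly the first item of Thm.~\ref{thm:propGSF}, while independence from the representative $(u_\eps)$ follows because a net in $\ns(\Omega)^{d}$ satisfies $|u_\eps(x_\eps)-u'_\eps(x_\eps)|\le\sup_{y\in K}|u_\eps(y)-u'_\eps(y)|=O(\eps^m)$ for all $m$.

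\emph{Injectivity.} It suffices to show that if $(w_\eps):=(u_\eps-u'_\eps)$ is moderate and $[w_\eps(x_\eps)]=0$ for all $x\in\csp{\Omega}$, then $(w_\eps)\in\ns(\Omega)^{d}$. By the standard reduction of negligibility to order zero for moderate nets (see \cite{GKOS}), it is enough to prove $\sup_{y\in K}|w_\eps(y)|=O(\eps^m)$ for every $m\in\N$ and every $K\Subset\Omega$. Arguing by contradiction via \eqref{eq:negation}, a failure produces $K$, $m$ and $L\subzero I$ with $\sup_{y\in K}|w_\eps(y)|>\eps^m$ for $\eps\in L$. Choosing $x_\eps\in K$ to maximize $|w_\eps|$ over $K$ for every $\eps$ gives $x:=[x_\eps]\in\csp{\Omega}$ with $|w_\eps(x_\eps)|=\sup_{y\in K}|w_\eps(y)|>\eps^m$ for $\eps\in L$, contradicting $[w_\eps(x_\eps)]=0$. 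Hence $w$ is negligible and the map is injective.

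\emph{Surjectivity.} Given $f\in\gsf(\csp{\Omega},\rti^{d})$, Thm.~\ref{thm:propGSF}.\ref{enu:globallyDefNet} provides a globally defined net $v_\eps\in\cinfty(\R^n,\R^d)$ with $f=[v_\eps(-)]|_{\csp{\Omega}}$. The only point to check is $(v_\eps|_\Omega)\in\mathcal{E}_M(\Omega)^{d}$, for then $u:=(v_\eps|_\Omega)+\ns(\Omega)^{d}$ satisfies $u([x_\eps])=[v_\eps(x_\eps)]=f([x_\eps])$. If $\sup_{y\in K}|\partial^\alpha v_\eps(y)|$ failed to be $O(\eps^{-N})$ for some $K\Subset\Omega$ and $\alpha$, then each $L_N:=\{\eps\mid\sup_{y\in K}|\partial^\alpha v_\eps(y)|>\eps^{-N}\}$ would have $0$ as accumulation point; extracting $\eps_N\downarrow0$ with $\eps_N\in L_N$ and letting $x_\eps\in K$ maximize $|\partial^\alpha v_\eps|$ over $K$ for every $\eps$ yields $x:=[x_\eps]\in\csp{\Omega}$ along which $|\partial^\alpha v_{\eps_N}(x_{\eps_N})|>\eps_N^{-N}$, so $(\partial^\alpha v_\eps(x_\eps))$ is not $\rho$-moderate, contradicting Def.~\ref{def:netDefMap}(iii) for $f$. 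Thus $(v_\eps|_\Omega)$ is moderate and $f$ lies in the image. The first step is routine bookkeeping; the substance lies in the \emph{uniformization} of the last two steps, converting the pointwise-on-generalized-points data encoded in GSF and in Colombeau equivalence into uniform-on-compacta estimates, both reducing to manufacturing a single compactly supported point whose value net violates the required bound. I expect the diagonal extraction in surjectivity — producing a net that is genuinely non-moderate rather than merely non-moderate on a cofinal set — to demand the most care.
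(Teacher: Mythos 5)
Your proof is correct. The paper itself does not prove this statement---it is recalled among the preliminaries, with proofs deferred to the cited sources \cite{GiKu15,GIO1}---and your argument is essentially the standard one found there: Colombeau moderateness on compact sets gives well-definedness, injectivity is the point-value characterization of negligibility (which you reprove inline via the maximizer-plus-cofinal-set argument instead of citing \cite{OK}), and surjectivity follows from the globally defined net of Thm.~\ref{thm:propGSF}.\ref{enu:globallyDefNet} together with your diagonal extraction, which correctly produces a single point $[x_\eps]\in\csp{\Omega}$ (the maximizers being chosen for all $\eps$, with the estimate violated along $(\eps_N)$) witnessing non-moderateness.
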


\begin{example}
\label{exa:deltaCompDelta}~
\begin{enumerate}
\item \label{enu:deltaH}Let $\delta\in\gsf(\csp{\R^{n}},\rcrho)$ and $H\in\gsf(\csp{\R},\rcrho)$
be the $\iota^{b}$-embeddings of the Dirac delta and of the Heaviside
function. Then $\delta(x)=b^{n}\cdot\psi(b\cdot x)$, where $\psi(x):=[\psi_{\eps}(x_{\eps})]$
is called \emph{$n$-dimensional Colombeau mollifier}. Note that $\delta$
is an even function because of Lem.~\ref{lem:strictDeltaNet}.\ref{enu:suppStrictDeltaNet}.
We have that $\delta(0)=c_{n}b^{n}$ is a strong infinite number and
$\delta(x)=0$ if $|x|>r$ for some $r\in\R_{>0}$ because of Lem.~\ref{lem:strictDeltaNet}.\ref{enu:suppStrictDeltaNet}
(see Lem.~\ref{lem:strictDeltaNet}.\ref{enu:c_n} for the definition
of $c_{n}\in\R_{>0}$). If $n=1$, by the intermediate value theorem
(see \cite{GIO1}), $\delta$ takes any value in the interval $[0,b]\subseteq\rcrho$.
Similar properties can be stated e.g.~for $\delta^{2}(x)=b^{2}\cdot\psi(b\cdot x)^{2}$.
Using these formulas, we can simply consider $\delta\in\gsf(\rti^{n},\rcrho)$
and $H\in\gsf(\rti,\rcrho)$.
\item Analogously, we have $H(x)=1$ if $x>r$ for some $r\in\R_{>0}$;
$H(x)=0$ if $x<-r$ for some $r\in\R_{>0}$, and finally $H(0)=\frac{1}{2}$
because of Lem.~\ref{lem:strictDeltaNet}.\ref{enu:suppStrictDeltaNet}.
By the intermediate value theorem, $H$ takes any value in the interval
$[0,1]\subseteq\rcrho$.
\item If $n=1$, The composition $\delta\circ\delta\in\gsf(\rcrho,\rcrho)$
is given by $(\delta\circ\delta)(x)=b\psi\left(b^{2}\psi(bx)\right)$
and is an even function. If $|x|>r$ for some $r\in\R_{>0}$, then
$(\delta\circ\delta)(x)=b$. Since $(\delta\circ\delta)(0)=0$, again
using the intermediate value theorem, we have that $\delta\circ\delta$
takes any value in the interval $[0,b]\subseteq\rcrho$. Suitably
choosing the net $(\psi_{\eps})$ it is possible to have that if $0\le x\le\frac{1}{kb}$
for some $k\in\N_{>1}$ (hence $x$ is infinitesimal), then $(\delta\circ\delta)(x)=0$.
If $x=\frac{k}{b}$ for some $k\in\N_{>0}$, then $x$ is still infinitesimal
but $(\delta\circ\delta)(x)=b$. Analogously, one can deal with compositions
such as $H\circ\delta$ and $\delta\circ H$.
\end{enumerate}
\noindent See Fig.~\ref{fig:MollifierHeaviside} for a graphical
representations of $\delta$ and $H$. The infinitesimal oscillations
shown in this figure can be proved to actually occur as a consequence
of Lem.~\ref{lem:strictDeltaNet}.\ref{enu:momentsStrictDeltaNet}
which is a necessary property to prove Thm.~\ref{thm:embeddingD'}.\ref{enu:embSmooth},
see \cite{GIO1,GiLu16}. It is well-known that the latter property
is one of the core ideas to bypass the Schwartz's impossibility theorem,
see e.g.~\cite{GKOS}.
\end{example}

\noindent \begin{center}
\begin{figure}
\label{fig: Col_mol}
\begin{centering}
\includegraphics[scale=0.15]{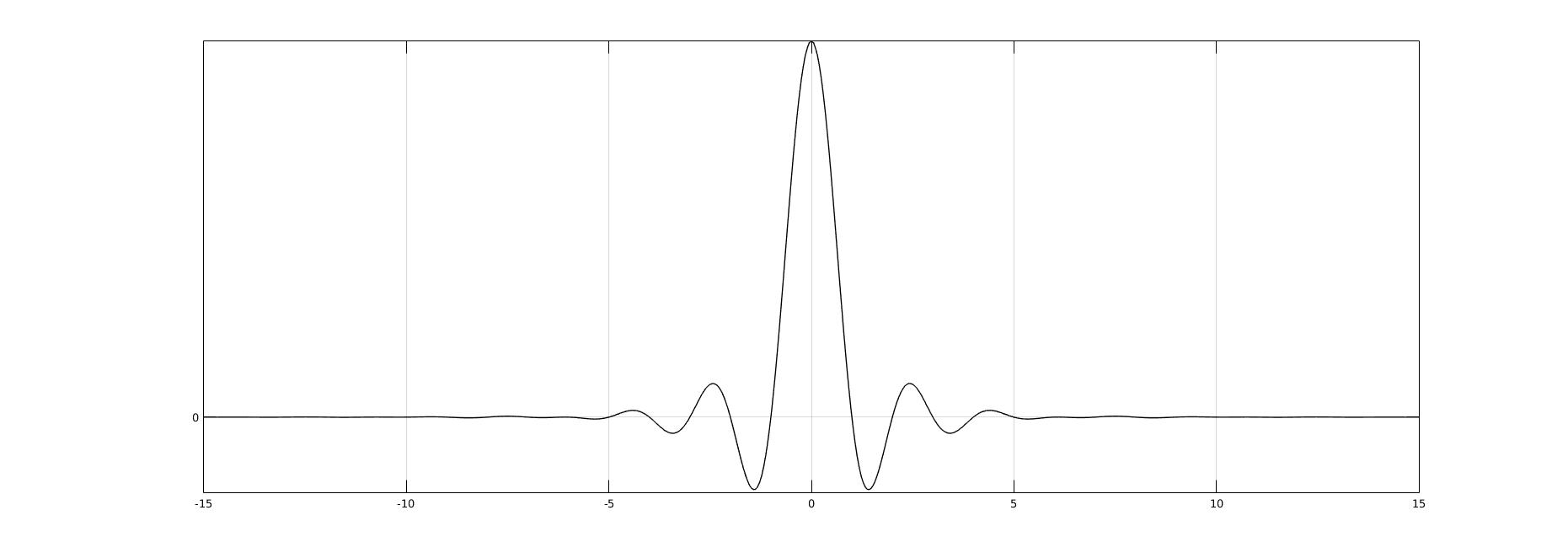}
\par\end{centering}
\begin{centering}
\includegraphics[scale=0.15]{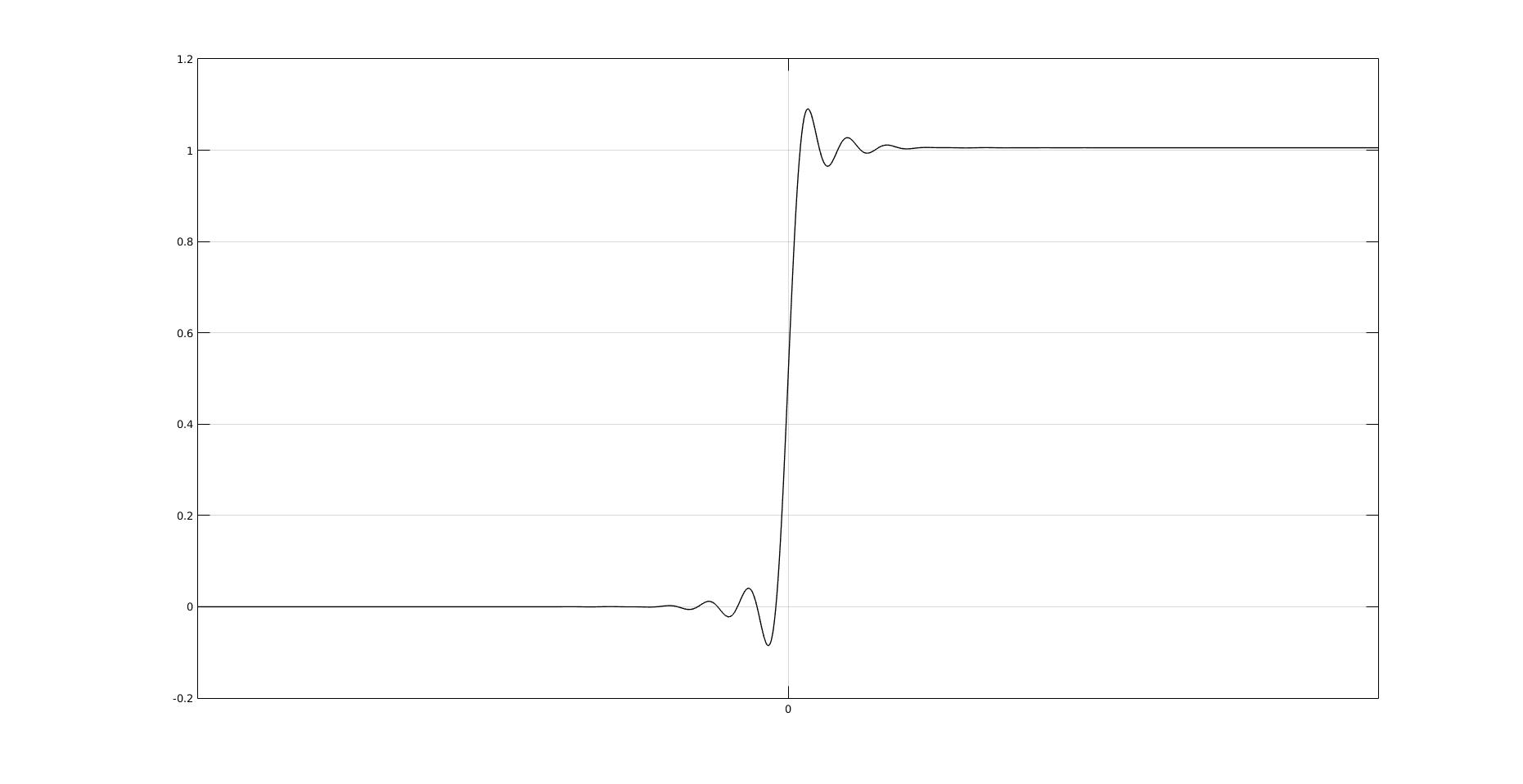}
\par\end{centering}
\caption{\label{fig:MollifierHeaviside}Representations of Dirac delta and
Heaviside function}
\end{figure}
\par\end{center}

\subsection{Functionally compact sets and multidimensional integration}

\subsubsection{\label{subsec:EVTandFcmp}Extreme value theorem and functionally
compact sets}

For GSF, suitable generalizations of many classical theorems of differential
and integral calculus hold: intermediate value theorem, mean value
theorems, suitable sheaf properties, local and global inverse function
theorems, Banach fixed point theorem and a corresponding Picard-Lindelöf
theorem both for ODE and PDE, see \cite{GiKu15,GiKu16,GIO1,LuGi17,GiLu16}.

Even though the intervals $[a,b]\subseteq\rcrho$, $a$, $b\in\R$,
are not compact in the sharp topology (see \cite{GiKu18}), analogously
to the case of smooth functions, a GSF satisfies an extreme value
theorem on such sets. In fact, we have:
\begin{thm}
\label{thm:extremeValues}Let $f\in\Gcinf(X,\rcrho)$ be a GSF defined
on the subset $X$ of $\rcrho^{n}$. Let $\emptyset\ne K=[K_{\eps}]\subseteq X$
be an internal set generated by a sharply bounded net $(K_{\eps})$
of compact sets $K_{\eps}\Subset\R^{n}$ , then 
\begin{equation}
\exists m,M\in K\,\forall x\in K:\ f(m)\le f(x)\le f(M).\label{eq:epsExtreme}
\end{equation}
\end{thm}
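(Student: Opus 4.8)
The plan is to reduce the generalized extreme value theorem to the classical extreme value theorem applied $\eps$-wise, and then show that the resulting nets of maximizers and minimizers generate bona fide generalized points lying in $K$. Concretely, let $f$ be defined by a net $(f_\eps)\in\cinfty(\Omega_\eps,\R)$ and let $K=[K_\eps]$ with $(K_\eps)$ a sharply bounded net of nonempty compact sets $K_\eps\Subset\R^n$. For each fixed $\eps$, since $f_\eps$ is continuous on the compact set $K_\eps$, the classical extreme value theorem yields points $m_\eps,M_\eps\in K_\eps$ with
\[
\forall y\in K_\eps:\ f_\eps(m_\eps)\le f_\eps(y)\le f_\eps(M_\eps).
\]
First I would check that $(m_\eps)$ and $(M_\eps)$ are $\rho$-moderate: this follows immediately from sharp boundedness of $(K_\eps)$, i.e.~$|m_\eps|,|M_\eps|\le\rho_\eps^{-N}$ for $\eps$ small. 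Hence $m:=[m_\eps]$ and $M:=[M_\eps]$ are well-defined elements of $\rcrho^n$, and by Def.~\ref{def:internalStronglyInternal}.(i) they belong to $K=[K_\eps]$ since $m_\eps,M_\eps\in K_\eps$ for $\eps$ small.

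The core step is to transfer the pointwise inequalities to $K$. The subtlety is that an arbitrary $x\in K$ need not be represented by a net living inside $K_\eps$ \emph{for the particular representative chosen}; however, by Def.~\ref{def:internalStronglyInternal}.(i), any $x=[x_\eps]\in[K_\eps]$ does admit a representative with $x_\eps\in K_\eps$ for $\eps$ small. For such a representative the classical inequality gives $f_\eps(m_\eps)\le f_\eps(x_\eps)\le f_\eps(M_\eps)$ for $\eps$ small, and passing to classes (using that $f(m)=[f_\eps(m_\eps)]$, $f(x)=[f_\eps(x_\eps)]$, $f(M)=[f_\eps(M_\eps)]$ are independent of representatives by Thm.~\ref{thm:propGSF}) yields $f(m)\le f(x)\le f(M)$ in the order of $\rcrho$. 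This proves \eqref{eq:epsExtreme}.

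The step I expect to be the main obstacle is the moderateness and well-definedness of the extremal values themselves: one must ensure that $(f_\eps(m_\eps))$ and $(f_\eps(M_\eps))$ are $\rho$-moderate nets so that $f(m),f(M)\in\rcrho$. This is exactly guaranteed by condition (iii) in Def.~\ref{def:netDefMap} together with $m,M\in K\subseteq X$, since moderateness of $(f_\eps(x_\eps))$ is required for every generalized point $x\in X$; because $m,M\in K\subseteq X$ are genuine points of $X$, their images are automatically moderate. A secondary technical point is that the choice of $m_\eps,M_\eps$ for each $\eps$ is not canonical, but since we only claim \emph{existence} of $m,M\in K$, any measurable-free pointwise selection suffices; no uniformity in $\eps$ beyond moderateness is needed. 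Finally, I would remark that the argument uses nothing about $f$ beyond continuity of its defining net, so the same proof applies verbatim to the vector case by applying it componentwise when an order is available, though here the statement is for scalar-valued $f$.
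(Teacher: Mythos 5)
Your overall strategy---apply the classical Weierstrass theorem $\eps$-wise on $K_{\eps}$, obtain moderateness of the nets of extremal points from sharp boundedness, and transfer the $\eps$-wise inequalities to equivalence classes---is the standard one; note the paper itself contains no proof of this statement (it defers to \cite{GiKu18,GiKu15,GIO1,GiKu16}, where essentially this argument is carried out). However, as written your proof has a genuine gap at its very first step. You apply the extreme value theorem to ``$f_{\eps}$ continuous on the compact set $K_{\eps}$'', but $f_{\eps}$ is only defined on $\Omega_{\eps}$, and nothing in the hypotheses guarantees $K_{\eps}\subseteq\Omega_{\eps}$ for a fixed $\eps$, nor even for $\eps$ small without a separate argument. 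The hypotheses only give an inclusion at the level of generalized points, $K\subseteq X\subseteq\sint{\Omega_{\eps}}$ (condition (i) of Def.~\ref{def:netDefMap}); by Thm.~\ref{thm:strongMembershipAndDistanceComplement}.\ref{enu:stronglyIntSetsDistance}, this means that for each \emph{point} $x=[x_{\eps}]\in K$ there is an exponent $q$ with $d(x_{\eps},\Omega_{\eps}^{\text{c}})>\rho_{\eps}^{q}$ for $\eps$ small, but $q$ depends on the point, so no $\eps$-wise inclusion of the \emph{sets} $K_{\eps}$ in $\Omega_{\eps}$ follows directly. Consequently the minimizers and maximizers $m_{\eps},M_{\eps}$ of $f_{\eps}$ on $K_{\eps}$, on which your whole construction rests, need not exist as defined.

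The gap is real but repairable in two ways. The clean repair is to invoke Thm.~\ref{thm:propGSF}.\ref{enu:globallyDefNet} at the outset: replace $(f_{\eps})$ by a globally defined net $v_{\eps}\in\cinfty(\R^{n},\R)$ with $f=[v_{\eps}(-)]|_{X}$; then $v_{\eps}$ is continuous on every compact $K_{\eps}$, and your argument goes through verbatim (moderateness of $(v_{\eps}(m_{\eps}))$ again follows from condition (iii) of Def.~\ref{def:netDefMap}, since $m\in K\subseteq X$). Alternatively, one proves the sub-lemma that $K=[K_{\eps}]\subseteq\sint{\Omega_{\eps}}$, with $(K_{\eps})$ sharply bounded and each $K_{\eps}$ compact, forces $K_{\eps}\subseteq\Omega_{\eps}$ for $\eps$ small: if not, then for every $q\in\N$ there are cofinally many $\eps$ admitting a point of $K_{\eps}$ at distance at most $\rho_{\eps}^{q}$ from $\Omega_{\eps}^{\text{c}}$; a diagonal selection of such points (moderate precisely because $(K_{\eps})$ is sharply bounded, completed by arbitrary points of $K_{\eps}$ elsewhere) yields a point of $K$ violating strong membership in $\sint{\Omega_{\eps}}$, a contradiction. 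With either repair in place, the remainder of your proof---moderateness of $(m_{\eps})$, $(M_{\eps})$, the membership $m,M\in K$, representative-independence of the values via Thm.~\ref{thm:propGSF}, and the passage from the $\eps$-wise inequalities to the order of $\rcrho$---is correct.
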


We shall use the assumptions on $K$ and $(K_{\eps})$ given in this
theorem to introduce a notion of ``compact subset'' which behaves
better than the usual classical notion of compactness in the sharp
topology.
\begin{defn}
\label{def:functCmpt-1} A subset $K$ of $\rcrho^{n}$ is called
\emph{functionally compact}, denoted by $K\fcmp\rcrho^{n}$, if there
exists a net $(K_{\eps})$ such that
\begin{enumerate}
\item \label{enu:defFunctCmpt-internal-1}$K=[K_{\eps}]\subseteq\rcrho^{n}$.
\item \label{enu:defFunctCmpt-sharpBound-1}$\exists R\in\rcrho_{>0}:\ K\subseteq B_{R}(0)$,
i.e.~$K$ is sharply bounded.
\item \label{enu:defFunctCmpt-cmpt-1}$\forall\eps\in I:\ K_{\eps}\Subset\R^{n}$.
\end{enumerate}
If, in addition, $K\subseteq U\subseteq\rcrho^{n}$ then we write
$K\fcmp U$. Finally, we write $[K_{\eps}]\fcmp U$ if \ref{enu:defFunctCmpt-sharpBound-1},
\ref{enu:defFunctCmpt-cmpt-1} and $[K_{\eps}]\subseteq U$ hold.
Any net $(K_{\eps})$ such that $[K_{\eps}]=K$ is called a \emph{representative}
of $K$.
\end{defn}

\noindent We motivate the name \emph{functionally compact subset}
by noting that on this type of subsets, GSF have properties very similar
to those that ordinary smooth functions have on standard compact sets.
\begin{rem}
\noindent \label{rem:defFunctCmpt}\ 
\begin{enumerate}
\item \label{enu:rem-defFunctCmpt-closed}By Thm.~\ref{thm:strongMembershipAndDistanceComplement}.\ref{enu:internalAreClosed},
any internal set $K=[K_{\eps}]$ is closed in the sharp topology and
hence functionally compact sets are always closed. In particular,
the open interval $(0,1)\subseteq\rcrho$ is not functionally compact
since it is not closed.
\item \label{enu:rem-defFunctCmpt-ordinaryCmpt}If $H\Subset\R^{n}$ is
a non-empty ordinary compact set, then the internal set $[H]$ is
functionally compact. In particular, $[0,1]=\left[[0,1]_{\R}\right]$
is functionally compact.
\item \label{enu:rem-defFunctCmpt-empty}The empty set $\emptyset=\widetilde{\emptyset}\fcmp\rcrho$.
\item \label{enu:rem-defFunctCmpt-equivDef}$\rcrho^{n}$ is not functionally
compact since it is not sharply bounded.
\item \label{enu:rem-defFunctCmpt-cmptlySuppPoints}The set of compactly
supported points $\csp{\R}$ is not functionally compact because the
GSF $f(x)=x$ does not satisfy the conclusion \eqref{eq:epsExtreme}
of Thm.~\ref{thm:extremeValues}.
\end{enumerate}
\end{rem}

\noindent In the present paper, we need the following properties of
functionally compact sets.
\begin{thm}
\label{thm:image}~
\begin{enumerate}
\item Let $K\subseteq X\subseteq\rcrho^{n}$, $f\in\Gcinf(X,\rcrho^{d})$.
Then $K\fcmp\rcrho^{n}$ implies $f(K)\fcmp\rcrho^{d}$.
\item \label{enu:fcmpIntCup}Let $K$, $H\fcmp\rti^{n}$. If $K\cup H$
is an internal set, then it is a functionally compact set. If $K\cap H$
is an internal set, then it is a functionally compact set.
\item \label{enu:fcmpSub}Let $H\subseteq K\fcmp\rti^{n}$, then if $H$
is an internal set, then $H\fcmp\rti^{n}$.
\end{enumerate}
\end{thm}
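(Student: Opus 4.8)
The plan is to prove the three parts by exhibiting, in each case, an explicit representative net of compact sets for the target set, so that the conditions of Definition~\ref{def:functCmpt-1} can be checked directly; the logical backbone is that part~\ref{enu:fcmpSub} is the key lemma from which part~\ref{enu:fcmpIntCup} follows at once, while part~(i) is handled separately. For part~\ref{enu:fcmpSub}, let $H\subseteq K\fcmp\rti^n$ with $H=[H_\eps]$ internal and $K\subseteq B_R(0)$, $R=[R_\eps]\in\rcrho_{>0}$. First I would invoke Thm.~\ref{thm:strongMembershipAndDistanceComplement}.\ref{enu:internalGeneratedByClosed} to assume without loss of generality that each $H_\eps$ is closed. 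The crucial move is to set $H'_\eps:=H_\eps\cap\overline{\Eball_{R_\eps}(0)}$: as the intersection of a closed set with a closed Euclidean ball, each $H'_\eps$ is compact, giving condition \ref{enu:defFunctCmpt-cmpt-1}. Since $H'_\eps\subseteq H_\eps$ we get $[H'_\eps]\subseteq H$, and for the reverse inclusion I would take $x=[x_\eps]\in H$ with representative $x_\eps\in H_\eps$; from $H\subseteq K\subseteq B_R(0)$ we have $|x|<R$, so Lem.~\ref{lem:mayer} applied to $R-|x|$ yields $|x_\eps|<R_\eps$ for $\eps$ small, whence $x_\eps\in H'_\eps$ and $x\in[H'_\eps]$. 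Thus $H=[H'_\eps]$ is internal, sharply bounded (being contained in $B_R(0)$) and has compact fibres, i.e.~$H\fcmp\rti^n$.

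Part~\ref{enu:fcmpIntCup} then reduces to part~\ref{enu:fcmpSub}. With representatives $K=[K_\eps]$, $H=[H_\eps]$, I note that $[K_\eps\cup H_\eps]$ is internal, sharply bounded (by $R_K\vee R_H$) and has compact fibres $K_\eps\cup H_\eps$, hence is functionally compact; moreover $K\cup H\subseteq[K_\eps\cup H_\eps]$. Since $K\cup H$ is assumed internal, part~\ref{enu:fcmpSub} gives $K\cup H\fcmp\rti^n$. The intersection case is even more direct: $K\cap H\subseteq K\fcmp\rti^n$ is internal by hypothesis, so part~\ref{enu:fcmpSub} applies verbatim.

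For part~(i), using Thm.~\ref{thm:propGSF}.\ref{enu:globallyDefNet} I may assume $f$ is defined by a net $f_\eps\in\cinfty(\R^n,\R^d)$ of globally defined smooth maps, and applying the representative trick of part~\ref{enu:fcmpSub} to $K$ itself I may also assume $(K_\eps)$ is sharply bounded as a net. The candidate is $f(K)=[f_\eps(K_\eps)]$: each $f_\eps(K_\eps)$ is compact by continuity of $f_\eps$, and the inclusion $f(K)\subseteq[f_\eps(K_\eps)]$ is immediate. For the reverse inclusion, a point $[y_\eps]\in[f_\eps(K_\eps)]$ satisfies $y_\eps=f_\eps(x_\eps)$ with $x_\eps\in K_\eps$; sharp boundedness of $(K_\eps)$ makes $(x_\eps)$ moderate with $[x_\eps]\in K$, so $[y_\eps]=f([x_\eps])\in f(K)$. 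Finally, sharp boundedness of $f(K)$ follows from the extreme value theorem (Thm.~\ref{thm:extremeValues}) applied to the GSF $|f|^2=\sum_{i}f_i^2\in\Gcinf(K,\rcrho)$, which produces $M\in K$ with $|f(x)|\le|f(M)|$ for all $x\in K$, so $f(K)\subseteq B_{|f(M)|+1}(0)$.

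The main obstacle, and the point where a naive attempt fails, is the choice of representative in part~\ref{enu:fcmpSub}: intersecting $H_\eps$ with the compact fibres $K_\eps$ (rather than with the ball $\overline{\Eball_{R_\eps}(0)}$) can strictly shrink the generated internal set, since $H_\eps$ may consist of points lying negligibly outside $K_\eps$; intersecting instead with a Euclidean ball of the sharp-bound radius is precisely what preserves $H$ while enforcing compactness. Getting this representative right, and propagating the sharply-bounded-net condition into part~(i) so that preimages of points of $[f_\eps(K_\eps)]$ genuinely land in $K$, are the only non-formal steps.
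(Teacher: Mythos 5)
The paper itself contains no proof of Thm.~\ref{thm:image}: it appears among the preliminaries of Sec.~\ref{sec:Basic-notions}, whose proofs are explicitly deferred to the cited literature, so your argument has to be judged on its own merits. On those merits the architecture is sound and is essentially the standard one: part \ref{enu:fcmpSub} via the normalized representative $H'_{\eps}:=H_{\eps}\cap\overline{\Eball_{R_{\eps}}(0)}$ (after passing to closures by Thm.~\ref{thm:strongMembershipAndDistanceComplement}.\ref{enu:internalGeneratedByClosed}), with Lem.~\ref{lem:mayer} giving $|x_{\eps}|<R_{\eps}$ for $\eps$ small so that $[H'_{\eps}]=H$; part \ref{enu:fcmpIntCup} by reduction to \ref{enu:fcmpSub}; and part (i) via globally defined nets (Thm.~\ref{thm:propGSF}.\ref{enu:globallyDefNet}), the identity $f(K)=[f_{\eps}(K'_{\eps})]$ for a sharply bounded representative \emph{net}, and the extreme value theorem for the sharp bound of the image. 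Parts \ref{enu:fcmpSub} and (i) are correct as written (in (i) one should only dispose separately of the trivial case $K=\emptyset$, since Thm.~\ref{thm:extremeValues} assumes $K\ne\emptyset$; then $f(K)=\emptyset\fcmp\rti^{d}$ by Rem.~\ref{rem:defFunctCmpt}.\ref{enu:rem-defFunctCmpt-empty}).

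There is, however, one step that is false as literally stated: in part \ref{enu:fcmpIntCup} you take \emph{arbitrary} representatives and claim that $[K_{\eps}\cup H_{\eps}]$ is sharply bounded by $R_{K}\vee R_{H}$. The set-level bound $[K_{\eps}]\subseteq B_{R_{K}}(0)$ does not control the individual elements of $K_{\eps}$: these sets may contain points which never assemble into moderate nets, and are therefore invisible in $[K_{\eps}]$, but which become visible after taking the union with $H_{\eps}$. Concretely, take $L\subzero I$ with $I\setminus L\subzero I$, and set $K_{\eps}:=\{\rho_{\eps}^{-j}\mid j\in\N,\ j\le1/\eps\}$ for $\eps\in L$, $K_{\eps}:=\{\rho_{\eps}^{-1/\eps}\}$ otherwise, and define $H_{\eps}$ by exchanging the roles of $L$ and $I\setminus L$. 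Each $K_{\eps}$, $H_{\eps}$ is finite, hence compact, and $[K_{\eps}]=[H_{\eps}]=\emptyset$ because any admissible net equals the non-moderate value $\rho_{\eps}^{-1/\eps}$ on a co-final set; thus $K=H=\emptyset$ and $(K_{\eps})$, $(H_{\eps})$ are legitimate representatives in the sense of Def.~\ref{def:functCmpt-1}. Yet $[K_{\eps}\cup H_{\eps}]\supseteq\left\{ \diff{\rho}^{-j}\mid j\in\N\right\}$ (for fixed $j$ the constant-exponent net lies in $K_{\eps}\cup H_{\eps}$ for $\eps\le1/j$), which is not sharply bounded, so your intermediate object is not functionally compact. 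The theorem's conclusion is of course untouched here ($K\cup H=\emptyset$), but the step fails. The repair is one line and uses your own key device: first normalize both representatives as in part \ref{enu:fcmpSub}, i.e.~$K'_{\eps}:=K_{\eps}\cap\overline{\Eball_{R_{K,\eps}}(0)}$ and $H'_{\eps}:=H_{\eps}\cap\overline{\Eball_{R_{H,\eps}}(0)}$; then every $[x_{\eps}]\in[K'_{\eps}\cup H'_{\eps}]$ satisfies $|x_{\eps}|\le\max(R_{K,\eps},R_{H,\eps})$ for $\eps$ small, so $[K'_{\eps}\cup H'_{\eps}]$ is sharply bounded and the reduction to part \ref{enu:fcmpSub} goes through. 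You explicitly propagate the normalized representative into part (i), but omit it precisely where it is also needed, namely the union case of part \ref{enu:fcmpIntCup}; the intersection case needs no such care, since $K\cap H\subseteq K$ directly.
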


\noindent As a corollary of this theorem and Rem.\ \eqref{rem:defFunctCmpt}.\ref{enu:rem-defFunctCmpt-ordinaryCmpt}
we get
\begin{cor}
\label{cor:intervalsFunctCmpt}If $a$, $b\in\rcrho$ and $a\le b$,
then $[a,b]\fcmp\rcrho$.
\end{cor}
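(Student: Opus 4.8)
The goal is to prove Corollary~\ref{cor:intervalsFunctCmpt}: if $a$, $b\in\rcrho$ and $a\le b$, then the interval $[a,b]\fcmp\rcrho$. The plan is to exhibit an explicit representative net of compact real intervals whose internal set is exactly $[a,b]$, and then verify the three defining conditions of functional compactness (Def.~\ref{def:functCmpt-1}) by invoking Thm.~\ref{thm:image} together with the characterization of internal sets from Thm.~\ref{thm:strongMembershipAndDistanceComplement}.

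First I would fix representatives $a=[a_{\eps}]$ and $b=[b_{\eps}]$. The natural candidate net is $K_{\eps}:=[a_{\eps},b_{\eps}]_{\R}$ (the ordinary real interval), which is compact for those $\eps$ with $a_{\eps}\le b_{\eps}$; for the remaining $\eps$ one can set $K_{\eps}:=\{a_{\eps}\}$ so that each $K_{\eps}\Subset\R$ is nonempty and compact, giving condition \ref{enu:defFunctCmpt-cmpt-1} immediately. Since $a$, $b$ are moderate, there is $N\in\N$ with $|a_{\eps}|,|b_{\eps}|\le\rho_{\eps}^{-N}$ for $\eps$ small, whence every point of $K_{\eps}$ has absolute value $\le\rho_{\eps}^{-N}$; this yields sharp boundedness $K\subseteq B_R(0)$ with $R:=[\rho_{\eps}^{-N}]+1$, i.e.~condition \ref{enu:defFunctCmpt-sharpBound-1}.

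The key step is to verify that the internal set $[K_{\eps}]$ coincides with the order interval $\{x\in\rcrho\mid a\le x\le b\}=[a,b]$. The inclusion $[K_{\eps}]\subseteq[a,b]$ follows because if $x=[x_{\eps}]$ with $x_{\eps}\in[a_{\eps},b_{\eps}]_{\R}$ for $\eps$ small, then $a_{\eps}\le x_{\eps}\le b_{\eps}$ for $\eps$ small, and by the characterization of the order $\le$ on $\rcrho$ (equivalently by Lem.~\ref{lem:mayer} applied to $x-a$ and $b-x$) we get $a\le x\le b$. For the reverse inclusion I would take $x$ with $a\le x\le b$ and use the distance characterization Thm.~\ref{thm:strongMembershipAndDistanceComplement}.\ref{enu:internalSetsDistance}: I must show $d(x_{\eps},K_{\eps})\le\rho_{\eps}^q$ eventually, for every $q$. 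The clean way is to replace $x_{\eps}$ by the clamped net $\bar{x}_{\eps}:=\min(\max(x_{\eps},a_{\eps}),b_{\eps})\in K_{\eps}$; the inequalities $a\le x\le b$ translate (using the representative-based description of $\le$ and the negation lemmas of Section~\ref{subsec:subpoints}) into $|x_{\eps}-\bar{x}_{\eps}|\le\rho_{\eps}^q$ for $\eps$ small, for each $q\in\N$, so that $[\bar{x}_{\eps}]=x$ and $x\in[K_{\eps}]$. This shows $[a,b]=[K_{\eps}]$, which is internal.

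The main obstacle is precisely this last identification, where the mismatch between the \emph{order} description of $[a,b]$ and the \emph{net-membership} description of $[K_{\eps}]$ must be reconciled; the subtlety is that $a\le x\le b$ is a statement about \emph{some} representatives (or about all, via the negligible-shift definition of $\le$), and one must produce a \emph{single} net lying in each $K_{\eps}$ and equal to $x$ up to $\sim_\rho$. The clamping construction handles this uniformly, and the quantitative estimate $|x_\eps-\bar x_\eps|\le\rho_\eps^q$ is exactly the content of $a\le x$ and $x\le b$ read through Lem.~\ref{lem:mayer}.\ref{enu:There-exists-a}. Once $[a,b]=[K_{\eps}]$ is established with the three conditions of Def.~\ref{def:functCmpt-1} verified, the corollary is immediate; alternatively, one can shortcut the boundedness and compactness bookkeeping by noting $[a,b]$ is the image of the functionally compact set $[0,1]=[[0,1]_\R]$ (Rem.~\ref{rem:defFunctCmpt}.\ref{enu:rem-defFunctCmpt-ordinaryCmpt}) under the GSF $r\mapsto a+r(b-a)$ and applying Thm.~\ref{thm:image}, reducing the whole proof to the already-known functional compactness of $[0,1]$.
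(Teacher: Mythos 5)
Your proof is correct, but your primary route is genuinely different from the paper's. The paper disposes of this corollary in one line: it invokes Thm.~\ref{thm:image} together with Rem.~\ref{rem:defFunctCmpt}.\ref{enu:rem-defFunctCmpt-ordinaryCmpt}, i.e.\ it views $[a,b]$ as the image of the functionally compact set $[0,1]=\left[[0,1]_{\R}\right]$ under the GSF $r\mapsto a+r\cdot(b-a)$ --- exactly the ``shortcut'' you mention in your last sentence. Your main argument instead verifies Def.~\ref{def:functCmpt-1} directly: you exhibit the representative net $K_{\eps}=[a_{\eps},b_{\eps}]_{\R}$ (with the singleton fallback on the negligible exceptional set where $a_{\eps}>b_{\eps}$), get sharp boundedness from moderateness, and prove the key identification $\{x\mid a\le x\le b\}=[K_{\eps}]$ by clamping, $\bar{x}_{\eps}:=\min(\max(x_{\eps},a_{\eps}),b_{\eps})$, with the negligible-shift estimate $|x_{\eps}-\bar{x}_{\eps}|\le\rho_{\eps}^{q}$. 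What your approach buys is precisely the detail the paper glosses over: the image argument produces the \emph{segment} $\{a+r(b-a)\mid r\in[0,1]\}$, and identifying this with the \emph{order} interval $\{x\mid a\le x\le b\}$ requires representative-level work of the same nature as your clamping step (one must produce, for each $x$ with $a\le x\le b$, a parameter $r\in[0,1]$, dividing by $b_{\eps}-a_{\eps}$ where possible and handling the degenerate $\eps$'s separately); so your ``shortcut'' is not actually shorter once made rigorous, and your direct proof also establishes the independently useful fact $[a,b]=\left[[a_{\eps},b_{\eps}]_{\R}\right]$, in the spirit of \eqref{eq:closureBall}. Two small citation corrections: the inclusion $[K_{\eps}]\subseteq[a,b]$ and the clamping estimate follow from the definition of $\le$ (pointwise inequality up to a $\rho$-negligible shift), not from Lem.~\ref{lem:mayer}, which concerns strict positivity and invertibility; and in the degenerate case $K_{\eps}=\{a_{\eps}\}$ your clamping formula should be replaced by $\bar{x}_{\eps}:=a_{\eps}$ so that $\bar{x}_{\eps}$ actually lies in $K_{\eps}$ (the estimate survives since $a_{\eps}-b_{\eps}$ is negligible there).
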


\noindent Let us note that $a$, $b\in\rcrho$ can also be infinite
numbers, e.g.~$a=\diff{\rho}^{-N}$, $b=\diff{\rho}^{-M}$ or $a=-\diff{\rho}^{-N}$,
$b=\diff{\rho}^{-M}$ with $M>N$, so that e.g.~$[-\diff{\rho}^{-N},\diff{\rho}^{-M}]\supseteq\R$.
Finally, in the following result we consider the product of functionally
compact sets:
\begin{thm}
\noindent \label{thm:product}Let $K\fcmp\rti^{n}$ and $H\fcmp\rti^{d}$,
then $K\times H\fcmp\rti^{n+d}$. In particular, if $a_{i}\le b_{i}$
for $i=1,\ldots,n$, then $\prod_{i=1}^{n}[a_{i},b_{i}]\fcmp\rti^{n}$.
\end{thm}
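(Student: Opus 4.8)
The plan is to take representatives of $K$ and $H$ and form their Cartesian products level by level. Concretely, by Def.~\ref{def:functCmpt-1} I would fix nets $(K_\eps)$, $(H_\eps)$ with $K_\eps\Subset\R^{n}$, $H_\eps\Subset\R^{d}$, $[K_\eps]=K$, $[H_\eps]=H$, together with radii $R_{1},R_{2}\in\rcrho_{>0}$ such that $K\subseteq B_{R_{1}}(0)$ and $H\subseteq B_{R_{2}}(0)$. The claim is that the net $(K_\eps\times H_\eps)$ witnesses $K\times H\fcmp\rti^{n+d}$, and I will verify the three defining conditions in turn, identifying $\rcrho^{n+d}$ with $\rcrho^{n}\times\rcrho^{d}$ so that a representative of a point $z=(x,y)$ is a pair $(x_\eps,y_\eps)$ with $[x_\eps]=x$, $[y_\eps]=y$.

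The heart of the argument is the set-theoretic identity $[K_\eps\times H_\eps]=K\times H$, which simultaneously establishes that $K\times H$ is internal (condition \ref{enu:defFunctCmpt-internal-1}). For the inclusion $\supseteq$, given $x\in K$ and $y\in H$ I pick representatives with $x_\eps\in K_\eps$ and $y_\eps\in H_\eps$ for $\eps$ small (these exist by the definition of the internal sets $[K_\eps]$, $[H_\eps]$); then $(x_\eps,y_\eps)$ represents $(x,y)$ and lies in $K_\eps\times H_\eps$ for $\eps$ small. For $\subseteq$, a representative $(x_\eps,y_\eps)$ of $z=(x,y)$ with $(x_\eps,y_\eps)\in K_\eps\times H_\eps$ for $\eps$ small immediately gives $x_\eps\in K_\eps$ and $y_\eps\in H_\eps$ for $\eps$ small, whence $x\in K$ and $y\in H$. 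Both directions rest on the elementary equivalence
\[
\big(\forall^{0}\eps:\ x_\eps\in K_\eps\ \wedge\ y_\eps\in H_\eps\big)\ \iff\ \big(\forall^{0}\eps:\ x_\eps\in K_\eps\big)\ \wedge\ \big(\forall^{0}\eps:\ y_\eps\in H_\eps\big),
\]
valid because the family of eventual (i.e.\ $\forall^{0}\eps$) conditions is closed under finite intersection.

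It then remains to check the other two conditions. Condition \ref{enu:defFunctCmpt-cmpt-1} is immediate, since a product of the compact sets $K_\eps\Subset\R^{n}$ and $H_\eps\Subset\R^{d}$ is compact in $\R^{n+d}$. For the sharp boundedness \ref{enu:defFunctCmpt-sharpBound-1} I would use the triangle inequality for the generalized Euclidean norm: for $(x,y)\in K\times H$ one has $|(x,y)|\le|x|+|y|<R_{1}+R_{2}$, so that $K\times H\subseteq B_{R}(0)$ with $R:=R_{1}+R_{2}\in\rcrho_{>0}$. This completes the verification that $K\times H\fcmp\rti^{n+d}$. Finally, the ``in particular'' statement follows by induction on $n$: each factor satisfies $[a_{i},b_{i}]\fcmp\rti$ by Cor.~\ref{cor:intervalsFunctCmpt}, and applying the product result just proved $n-1$ times yields $\prod_{i=1}^{n}[a_{i},b_{i}]\fcmp\rti^{n}$.

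I do not expect a genuine obstacle here. The only mildly delicate point is the internal-set identity, where one must remember that membership in an internal set requires merely \emph{one} suitable representative; hence the two inclusions should be argued with the appropriate quantifier on representatives (and with the displayed $\forall^{0}$-equivalence) rather than by a naive componentwise rewriting.
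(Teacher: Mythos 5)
Your proof is correct. The paper states Thm.~\ref{thm:product} as a recalled preliminary without proof (the proofs for Sec.~\ref{sec:Basic-notions} are delegated to the cited references), and your argument --- checking the three conditions of Def.~\ref{def:functCmpt-1} for the net $(K_{\eps}\times H_{\eps})$, with the internal-set identity $[K_{\eps}\times H_{\eps}]=K\times H$ handled by the correct quantification over representatives, sharp boundedness via $|(x,y)|\le|x|+|y|<R_{1}+R_{2}$, and the interval case by Cor.~\ref{cor:intervalsFunctCmpt} plus induction --- is exactly the standard argument those references give.
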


Applying the extreme value theorem Thm.~\ref{thm:extremeValues}
to the first derivative, we also have the following
\begin{thm}
\label{thm:meanValue}Let $a$, $b\in\rcrho^{n}$, $a<b$, $f\in{}^{\rho}\Gcinf([a,b],\rcrho)$
be a GSF. Then
\begin{enumerate}
\item \label{enu:meanValue}$\exists c\in[a,b]:\ f(b)-f(a)=(b-a)\cdot f'(c)$.
\item \label{enu:finiteIncr}Setting $M:=\max_{c\in[a,b]}\left|f'(c)\right|\in\rti$,
we hence have $\forall x,y\in[a,b]:\ \left|f(x)-f(y)\right|\le M\cdot\left|x-y\right|$.
\end{enumerate}
\end{thm}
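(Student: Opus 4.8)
The plan is to handle the two assertions separately, reducing the first to Taylor's formula and building the second from the extreme value theorem together with the fundamental theorem of calculus. For part~(1) I would simply invoke the Lagrange form of Taylor's theorem, Thm.~\ref{thm:Taylor}.\ref{enu:LagrangeRest}, in the case $n=0$: with $h:=b-a$ it yields $\xi\in[a,b]$ such that $f(a+h)=f(a)+\diff{f}(\xi)\cdot h$, i.e.\ $f(b)-f(a)=\diff{f}(\xi)(b-a)=(b-a)\cdot f'(\xi)$, which is the claim with $c:=\xi$. The alternative route, matching the sentence preceding the statement, is to parametrize the segment by $g(t):=f(a+t(b-a))\in\gsf([0,1],\rcrho)$, write $f(b)-f(a)=g(1)-g(0)=\int_0^1 g'(t)\,\diff{t}$ via Thm.~\ref{thm:intRules}.\ref{enu:foundamental}, apply the extreme value theorem Thm.~\ref{thm:extremeValues} to the GSF $g'$ on the functionally compact interval $[0,1]$ to obtain $t_m,t_M$ with $g'(t_m)\le g'(t)\le g'(t_M)$, and then sandwich $\int_0^1 g'$ between $g'(t_m)$ and $g'(t_M)$ by monotonicity of the integral (Thm.~\ref{thm:intRules}.\ref{enu:intMonotone}); the intermediate value theorem for GSF (\cite{GIO1}) then produces $t_0$ with $g'(t_0)=f(b)-f(a)$, and $c:=a+t_0(b-a)$ works.

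For part~(2) I would first produce $M$. Since $[a,b]=\{a+r(b-a)\mid r\in[0,1]\}$ is the image of the functionally compact set $[0,1]\fcmp\rcrho$ under a GSF, Thm.~\ref{thm:image} gives $[a,b]\fcmp\rcrho^n$; applying the extreme value theorem to the GSF $c\mapsto|f'(c)|^2=\sum_i(\partial_i f)(c)^2$ (which is a genuine GSF, being a polynomial in the components of $\diff{f}$) yields a maximizer, and hence a well-defined $M=\max_{c\in[a,b]}|f'(c)|\in\rcrho$ satisfying $|f'(c)|\le M$ for all $c\in[a,b]$.

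Then, for $x,y\in[a,b]$, I note that the whole segment $[x,y]$ lies in $[a,b]$ (a convex combination of convex combinations of $a$ and $b$ again has coefficient in $[0,1]$), set $\gamma(t):=x+t(y-x)$ and $g:=f\circ\gamma\in\gsf([0,1],\rcrho)$, and compute by the chain rule (Thm.~\ref{thm:rulesDer}) that $g'(t)=\diff{f}(\gamma(t))\cdot(y-x)$. A Cauchy--Schwarz estimate for generalized numbers gives $|g'(t)|\le|f'(\gamma(t))|\cdot|y-x|\le M|y-x|$, i.e.\ $-M|y-x|\le g'(t)\le M|y-x|$ for all $t\in[0,1]$. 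Integrating these \emph{constant} bounds (Thm.~\ref{thm:intRules}.\ref{enu:homog}, \ref{enu:intMonotone}) and using $f(y)-f(x)=\int_0^1 g'(t)\,\diff{t}$ yields $-M|y-x|\le f(y)-f(x)\le M|y-x|$, whence $|f(x)-f(y)|\le M|x-y|$.

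The point requiring care is that the order on $\rcrho$ is not total, so the classical Rolle-based proof of the mean value theorem is unavailable; the argument must instead be routed through Taylor's formula (or through the extreme value theorem, monotone integration, and the GSF intermediate value theorem), which is exactly why the statement is phrased as a consequence of Thm.~\ref{thm:extremeValues}. A second technical subtlety is that $|f'|$ is \emph{not} itself a GSF, since the Euclidean norm fails to be smooth at the origin; I therefore apply the extreme value theorem to the smooth square $|f'|^2$ to obtain $M$, and in the Lipschitz estimate I integrate the constant bounds $\pm M|x-y|$ rather than $|g'|$, thereby avoiding any need for $|g'|$ to be a GSF.
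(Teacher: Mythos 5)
The paper itself does not prove this theorem: it is one of the recalled preliminaries of Section~\ref{sec:Basic-notions}, whose proofs are delegated to the references cited at the start of that section, and the only indication of the intended argument is the lead-in sentence (``Applying the extreme value theorem Thm.~\ref{thm:extremeValues} to the first derivative\ldots''). Measured against that, your proof is correct, and your second route for part~(1) --- parametrize by $g(t)=f(a+t(b-a))$, write $f(b)-f(a)=\int_0^1 g'$ via Thm.~\ref{thm:intRules}.\ref{enu:foundamental}, sandwich the integral between $g'(t_m)$ and $g'(t_M)$ using Thm.~\ref{thm:extremeValues} and monotonicity, and finish with the intermediate value theorem for GSF --- is precisely the argument the paper is pointing at; it is also the right one, since, as you note, Rolle-type reasoning is unavailable in the partially ordered ring $\rti$. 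Your construction of $M$ in part~(2) by applying the extreme value theorem to $|f'|^2$ rather than to $|f'|$ is a genuine necessity, not pedantry: the paper itself remarks (just after Def.~\ref{def:integrableMap}) that $|f|$ of a GSF is in general not a GSF, and the notation $\max_{c\in[a,b]}|f'(c)|$ in the statement silently presupposes the existence you are establishing.

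Two minor caveats. First, the Taylor route for part~(1) is the weaker of your two options: Thm.~\ref{thm:Taylor} is stated for $f$ defined on a sharply open $U$ with $[a,b]\subseteq U$, whereas here $f$ is given only on $[a,b]$; the same formal mismatch affects the chain rule (Thm.~\ref{thm:rulesDer}) and the integral rules (Thm.~\ref{thm:intRules}), which are stated for sharply open domains. The paper glosses over this point itself --- it writes $f'(c)$ for $f\in{}^{\rho}\Gcinf([a,b],\rcrho)$, and Thm.~\ref{thm:existenceUniquenessPrimitives} does operate on closed intervals --- so this is not a gap in your argument so much as a reason to prefer the EVT route, whose key ingredient Thm.~\ref{thm:extremeValues} is stated for arbitrary domains $X$. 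Second, once $M$ exists, part~(2) follows in one line from part~(1) (this is the force of the ``hence'' in the statement): your convexity observation gives $[x,y]\subseteq[a,b]$, so part~(1) applied to $x,y$ yields $f(x)-f(y)=(x-y)\cdot f'(c)$ for some $c\in[x,y]$, and Cauchy--Schwarz gives $\left|f(x)-f(y)\right|\le|x-y|\,|f'(c)|\le M\,|x-y|$. Your re-derivation through the integral is correct, but it repeats work that part~(1) has already done.
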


A theory of compactly supported GSF has been developed in \cite{GiKu18},
and it closely resembles the classical theory of LF-spaces of compactly
supported smooth functions.

\subsubsection{\label{subsec:Multidimensional-integration}Multidimensional integration}

Finally, to define FT of multivariable GSF we have to introduce multidimensional
integration on suitable subsets of $\rcrho^{n}$ (see \cite{GIO1}).
\begin{defn}
\label{def:intOverCompact}Let $\mu$ be a measure on $\R^{n}$ and
let $K$ be a functionally compact subset of $\RC{\rho}^{n}$. Then,
we call $K$ $\mu$-\emph{measurable} if the limit 
\begin{equation}
\mu(K):=\lim_{m\to\infty}[\mu(\overline{\Eball}_{\rho_{\eps}^{m}}(K_{\eps}))]\label{eq:muMeasurable}
\end{equation}
exists for some representative $(K_{\eps})$ of $K$. Here $m\in\N$,
the limit is taken in the sharp topology on $\RC{\rho}$, and $\overline{\Eball}_{r}(A):=\{x\in\R^{n}:d(x,A)\le r\}$.

\label{def:integrableMap}Let $K\fcmp\RC{\rho}^{n}$. Let $(\Omega_{\eps})$
be a net of open subsets of $\R^{n}$, and $(f_{\eps})$ be a net
of continuous maps $f_{\eps}$: $\Omega_{\eps}\longrightarrow\R$.
Then we say that 
\[
(f_{\eps})\textit{ defines a generalized integrable map}:K\longrightarrow\RC{\rho}
\]
if
\begin{enumerate}
\item $K\subseteq\sint{\Omega_{\eps}}$ and $[f_{\eps}(x_{\eps})]\in\RC{\rho}$
for all $[x_{\eps}]\in K$.
\item $\forall(x_{\eps}),(x'_{\eps})\in\R_{\rho}^{n}:\ [x_{\eps}]=[x'_{\eps}]\in K\ \Rightarrow\ (f_{\eps}(x_{\eps}))\sim_{\rho}(f_{\eps}(x'_{\eps}))$.
\end{enumerate}
\noindent If $f\in\Set(K,\RC{\rho})$ is such that 
\begin{equation}
\forall[x_{\eps}]\in K:\ f\left([x_{\eps}]\right)=\left[f_{\eps}(x_{\eps})\right]%\label{eq:f-u-relations}{equation}{equation}{equation}{equation}{equation}{equation}{equation}
\end{equation}
we say that $f:K\longrightarrow\RC{\rho}$ is a \emph{generalized
integrable function}.

\noindent We will again say that $f$ \emph{is defined by the net}
$(f_{\eps})$ or that the net $(f_{\eps})$ \emph{represents} $f$.
The set of all these generalized integrable functions will be denoted
by $\GI(K,\RC{\rho})$.
\end{defn}

\noindent E.g., if $f=[f_{\eps}(-)]|_{K}\in\gsf(K,\RC{\rho})$, then
both $f$ and $|f|=[|f_{\eps}(-)|]|_{K}$ are integrable on $K$ (but
note that, in general, $|f|$ is not a GSF).

\noindent In the following result, we show that this definition generates
a correct notion of multidimensional integration for GSF.
\begin{thm}
\label{thm:muMeasurableAndIntegral}Let $K\subseteq\RC{\rho}^{n}$
be $\mu$-measurable.
\begin{enumerate}
\item \label{enu:indepRepr}The definition of $\mu(K)$ is independent of
the representative $(K_{\eps})$.
\item \label{enu:existsRepre}There exists a representative $(K_{\eps})$
of $K$ such that $\mu(K)=[\mu(K_{\eps})]$.
\item \label{enu:epsWiseDefInt}Let $(K_{\eps})$ be any representative
of $K$ and let $f=[f_{\eps}(-)]|_{K}\in\GI(K,\RC{\rho})$. Then 
\[
\int_{K}f\,\diff{\mu}:=\lim_{m\to\infty}\biggl[\int_{\overline{\Eball}_{\rho_{\eps}^{m}}(K_{\eps})}f_{\eps}\,\diff{\mu}\biggr]\in\rcrho
\]
exists and its value is independent of the representative $(K_{\eps})$.
\item \label{enu:existsReprDefInt}There exists a representative $(K_{\eps})$
of $K$ such that 
\begin{equation}
\int_{K}f\,\diff{\mu}=\biggl[\int_{K_{\eps}}f_{\eps}\,\diff{\mu}\biggr]\in\rcrho\label{eq:measurable}
\end{equation}
for each $f=[f_{\eps}(-)]|_{K}\in\GI(K,\RC{\rho})$. From \eqref{eq:measurable},
it also follows that $\left|\int_{K}f\,\diff{\mu}\right|\le\int_{K}\left|f\right|\,\diff{\mu}$.
\item \label{enu:int-ndimInt}If $K=\prod_{i=1}^{n}[a_{i},b_{i}]$, then
$K$ is $\lambda$-measurable ($\lambda$ being the Lebesgue measure
on $\R^{n}$) and for all for each $f=[f_{\eps}(-)]|_{K}\in\GI(K,\RC{\rho})$
we have
\begin{equation}
\int_{K}f\,\diff{\lambda}=\left[\int_{a_{1,\eps}}^{b_{1,\eps}}\,dx_{1}\dots\int_{a_{n,\eps}}^{b_{n,\eps}}f_{\eps}(x_{1},\dots,x_{n})\,\diff{x_{n}}\right]\in\rcrho\label{eq:intInt}
\end{equation}
for any representatives $(a_{i,\eps})$, $(b_{i,\eps})$ of $a_{i}$
and $b_{i}$ respectively. Therefore, if $n=1$, this notion of integral
coincides with that of Thm.~\ref{thm:existenceUniquenessPrimitives}
and Def.~\ref{def:integral}. Note that \eqref{eq:intInt} also directly
implies Fubini's theorem for this type of integrals.
\item Let $K\subseteq\RC{\rho}^{n}$ be $\lambda$-measurable, where $\lambda$
is the Lebesgue measure, and let $\phi\in\gsf(K,\RC{\rho}^{d})$ be
such that $\phi^{-1}\in\gsf(\phi(K),\RC{\rho}^{n})$. Then $\phi(K)$
is $\lambda$-measurable and 
\[
\int_{\phi(K)}f\,\diff{\lambda}=\int_{K}(f\circ\phi)\left|\det(\diff{\phi})\right|\,\diff{\lambda}
\]
for each $f\in\GI(\phi(K),\RC{\rho})$.
\end{enumerate}
\end{thm}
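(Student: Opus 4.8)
The determinant $\det(\diff{\phi})$ is only meaningful when $\diff{\phi}(x)$ is a square matrix, so throughout I read the hypothesis with $d=n$, i.e. $\phi\in\gsf(K,\rcrho^{n})$ is a bijection onto $\phi(K)$ with GSF inverse $\phi^{-1}\in\gsf(\phi(K),\rcrho^{n})$. The overall strategy is the one used repeatedly in this paper: reduce everything to the classical change-of-variables formula applied $\eps$-wise to representing nets, and then verify that both sides survive the passage to the quotient ring $\rcrho$.

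First I would record the two easy facts. That $\phi(K)\fcmp\rcrho^{n}$ is immediate from Thm~\ref{thm:image}(1). That $\det(\diff{\phi}(x))$ is an invertible generalized number for every $x\in K$ follows from the chain rule Thm~\ref{thm:rulesDer}: differentiating $\phi^{-1}\circ\phi=\mathrm{id}_{K}$ gives $\diff{(\phi^{-1})}(\phi(x))\cdot\diff{\phi}(x)=\mathrm{id}$, so $\diff{\phi}(x)$ is invertible over $\rcrho$ and $\det(\diff{\phi}(x))\in\rcrho^{*}$. Applying the extreme value theorem Thm~\ref{thm:extremeValues} to the GSF $x\mapsto\det(\diff{(\phi^{-1})}(\phi(x)))$ on the functionally compact set $K$ produces a uniform bound, hence a single $c\in\rcrho_{>0}$ with $|\det(\diff{\phi}(x))|\ge c$ for all $x\in K$; by Lem~\ref{lem:mayer} this lets me pick representatives $(\phi_{\eps})$, $(K_{\eps})$ with $|\det D\phi_{\eps}|\ge\frac{1}{2}\rho_{\eps}^{m}$ on a neighborhood of $K_{\eps}$ for $\eps$ small, and with $\int_{K}g\,\diff{\lambda}=[\int_{K_{\eps}}g_{\eps}\,\diff{\lambda}]$ in the sense of Thm~\ref{thm:muMeasurableAndIntegral}.\ref{enu:existsReprDefInt}.

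The heart of the argument, and the step I expect to be the main obstacle, is to upgrade the GSF-level invertibility of $\phi$ to a genuine $\eps$-wise diffeomorphism property: I need $\phi_{\eps}$ to be injective with nonvanishing Jacobian on a compact neighborhood $U_{\eps}\supseteq\overline{\Eball}_{\rho_{\eps}^{m'}}(K_{\eps})$, for $\eps$ small. The nonvanishing Jacobian is already in hand; injectivity on the whole neighborhood does \emph{not} follow from pointwise invertibility of $\diff{\phi}$ alone and must be extracted from the existence of the GSF inverse, e.g. via the global inverse function theorem for GSF of \cite{GiKu16,GIO1} together with the uniform bi-Lipschitz estimates for $\phi$ and $\phi^{-1}$ furnished by Thm~\ref{thm:meanValue} on the functionally compact sets $K$ and $\phi(K)$. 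Once $\phi_{\eps}|_{U_{\eps}}$ is a diffeomorphism onto its image, the classical change-of-variables formula yields, $\eps$-wise,
\[
\int_{\phi_{\eps}(K_{\eps})}f_{\eps}\,\diff{\lambda}=\int_{K_{\eps}}(f_{\eps}\circ\phi_{\eps})\,|\det D\phi_{\eps}|\,\diff{\lambda}.
\]

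It then remains to take classes on both sides. On the right, $(f_{\eps}\circ\phi_{\eps})\,|\det D\phi_{\eps}|$ represents $(f\circ\phi)\,|\det(\diff{\phi})|\in\GI(K,\rcrho)$, since composition is compatible with representatives (Thm~\ref{thm:propGSF}) and $\diff{\phi}=[D\phi_{\eps}]$ by Fermat--Reyes Thm~\ref{thm:FR-forGSF}.\ref{enu:defDer}; hence by Thm~\ref{thm:muMeasurableAndIntegral}.\ref{enu:existsReprDefInt} the right-hand class equals $\int_{K}(f\circ\phi)|\det(\diff{\phi})|\,\diff{\lambda}$. On the left, one checks $\phi(K)=[\phi_{\eps}(K_{\eps})]$ (the inclusion $\supseteq$ uses that $K=[K_{\eps}]$ is internal and sharply bounded, so any $y_{\eps}=\phi_{\eps}(x_{\eps})$ with $x_{\eps}\in K_{\eps}$ lifts to $\phi([x_{\eps}])\in\phi(K)$). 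For this to compute $\int_{\phi(K)}f\,\diff{\lambda}$ I must first know $\phi(K)$ is $\lambda$-measurable: using the uniform bi-Lipschitz bounds I would sandwich the fattenings $\overline{\Eball}_{\rho_{\eps}^{m}}(\phi_{\eps}(K_{\eps}))$ between images $\phi_{\eps}(\overline{\Eball}_{\rho_{\eps}^{m\pm\ell}}(K_{\eps}))$, compute the $\lambda$-measure of the latter by the $\eps$-wise formula as $\int_{\overline{\Eball}_{\rho_{\eps}^{m\pm\ell}}(K_{\eps})}|\det D\phi_{\eps}|\,\diff{\lambda}$, and invoke the $\lambda$-measurability of $K$ (Def~\ref{def:intOverCompact}) together with moderateness of $|\det D\phi_{\eps}|$ to conclude that the defining limit $\lim_{m\to\infty}[\lambda(\overline{\Eball}_{\rho_{\eps}^{m}}(\phi_{\eps}(K_{\eps})))]$ exists. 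Taking $f=1$ in this computation also yields $\lambda(\phi(K))=\int_{K}|\det(\diff{\phi})|\,\diff{\lambda}$ as a byproduct, and combining the two sides gives the claimed identity.
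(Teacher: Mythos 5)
Your proposal addresses only the last item of the theorem. The statement has six parts, and parts (i)--(v) --- independence of $\mu(K)$ from the representative, existence of a good representative, existence and well-definedness of $\int_K f\,\diff{\mu}$, the $\eps$-wise formula \eqref{eq:measurable}, and the iterated-integral formula \eqref{eq:intInt} --- are nowhere argued; indeed your proof of the change-of-variables item invokes them repeatedly (you cite Thm.~\ref{thm:muMeasurableAndIntegral}.\ref{enu:existsReprDefInt} twice, and the sandwich argument needs the existence of the limit from item (iii)). Using earlier items to prove a later one is legitimate, but then those earlier items must themselves be established, and they carry real content (a Cauchy-type argument in the sharp topology for the existence of the limit in (iii), a diagonal construction of the special representative in (ii) and (iv), and Fubini via \eqref{eq:intInt} in (v)). Note also that the paper itself never proves this theorem: it is a preliminary of Section~\ref{sec:Basic-notions} whose proof is delegated to \cite{GIO1}, so the benchmark is the argument there, which --- like yours --- reduces to the classical $\eps$-wise change of variables; your overall strategy for the last item is therefore the expected one.

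Within that item there are two genuine gaps. First, you obtain invertibility of $\det(\diff{\phi}(x))$ by differentiating $\phi^{-1}\circ\phi=\mathrm{id}_K$ via the chain rule, but Thm.~\ref{thm:rulesDer} is stated only for sharply open domains: $K$ is functionally compact, hence sharply closed and possibly with empty sharp interior, and two GSF that agree on such a set need not have equal differentials there, so the identity $\diff{(\phi^{-1})}(\phi(x))\cdot\diff{\phi}(x)=\mathrm{id}$ does not follow from the cited results and needs a separate argument at the level of representatives. Second --- and this is the crux, as you yourself note --- the passage from GSF-invertibility of $\phi$ to $\eps$-wise injectivity of $\phi_\eps$ with nonvanishing Jacobian on the fattened sets $\overline{\Eball}_{\rho_\eps^{m}}(K_\eps)$ is deferred to ``the global inverse function theorem for GSF''; but the inverse function theorems of \cite{GiKu16,GIO1} run in the opposite direction (from invertibility of the differential to the existence of a local or global inverse), whereas what you need is that the mere existence of a set-theoretic GSF inverse forces representatives to be diffeomorphisms on $\eps$-wise neighborhoods. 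That implication is essentially the whole difficulty of this item, and in your write-up it remains a black box; without it both the identification $\phi(K)=[\phi_\eps(K_\eps)]$ as used in your integral formula and the measurability sandwich collapse (for instance, writing a point of $\overline{\Eball}_{\rho_\eps^m}(\phi_\eps(K_\eps))$ as $\phi_\eps(x)$ presupposes exactly the $\eps$-wise surjectivity onto a fattened neighborhood that is in question).
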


\noindent In order to state a continuity property for this notion
of integration, we have to introduce \emph{hypernatural numbers and
hyperlimits} as follows
\begin{defn}
\label{def:hyperfiniteN}~
\begin{enumerate}
\item \label{enu:hypernatural}$\hypNr:=\left\{ [n_{\eps}]\in\rcrho\mid n_{\eps}\in\N\ \forall\eps\right\} $.
Elements of $\hypNr$ are called \emph{hypernatural numbers} or \emph{hyperfinite
numbers}. We clearly have $\N\subseteq\hypNr$, but among hypernatural
numbers we also have infinite numbers.
\item $\N_{\rho}:=\left\{ (n_{\eps})\in\R_{\rho}\mid n_{\eps}\in\N\ \forall\eps\right\} $.
\item \label{enu:hyperlimit}A map $x:\hyperN{\sigma}\ra\rcrho$, whose
domain is the set of hyperfinite numbers $\hyperN{\sigma}$ is called
a ($\sigma-$) \emph{hypersequence} (of elements of $\rcrho$) and
denoted by $(x_{n})_{n\in\hyperN{\sigma}}$, or simply $(x_{n})_{n}$
if the gauge on the domain is clear from the context. Let $\sigma$,
$\rho$ be two gauges, $x:\hyperN{\sigma}\ra\rcrho$ be a hypersequence
and $l\in\rcrho$. We say that $l$ is the \emph{hyperlimit} of $(x_{n})_{n}$
as $n$$\rightarrow\infty$ and $n$$\in\hyperN{\sigma}$, if
\[
\forall q\in N\,\exists M\in\hyperN{\sigma}\,\forall n\in\hyperN{\sigma}_{\geq M}:\ |x_{n}-l|<\diff{\rho}^{q}.
\]
It can be easily proved that there exists at most one hyperlimit,
and in this case it is denoted by $\hyperlim{\rho}{\sigma}x_{n}=l$.
Note that $\diff{\rho}<\frac{1}{n}$ if $n\in\N_{>0}$ so that $\frac{1}{n}\not\to0$
in the sharp topology. On the contrary $\hyperlim{\rho}{\rho}\frac{1}{n}=0$
because $\hypNr$ contains arbitrarily large infinite hypernatural
numbers.
\end{enumerate}
\end{defn}

The following continuity result once again underscores that functionally
compact sets (even if they can be unbounded from a classical point
of view) behaves as compact sets for GSF.
\begin{thm}
\label{thm:contResult}Let $K\subseteq\RC{\rho}^{n}$ be a $\mu$-measurable
functionally compact set and $f_{n}\in\gsf(K,\rcrho^{d})$ for all
$n\in\hypNs$. Then, if the hyperlimit $\hyperlim{\rho}{\sigma}f_{n}(x)$
exists for each $x\in K$, then the convergence is uniform on $K$
and $\hyperlim{\rho}{\sigma}f_{n}\in\gsf(K,\rcrho^{d})$. Finally
\begin{equation}
\hyperlim{\rho}{\sigma}\int_{K}f_{n}\,\diff{x_{n}}=\int_{K}\hyperlim{\rho}{\sigma}f_{n}\,\diff{x_{n}}.\label{eq:ContinuityProperty}
\end{equation}
\end{thm}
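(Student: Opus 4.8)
The plan is to set $f(x):=\hyperlim{\rho}{\sigma}f_{n}(x)\in\rcrho^{d}$ for each $x\in K$, a well-defined set-theoretic map because hyperlimits are unique (Def.~\ref{def:hyperfiniteN}.\ref{enu:hyperlimit}). I would then prove the three assertions in the stated order---uniform convergence, $f\in\gsf(K,\rcrho^{d})$, and the exchange \eqref{eq:ContinuityProperty}---the first being the substantial one and the other two essentially formal consequences of it.

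First I would reduce to representatives adapted to the functional compactness of $K$. Fix a representative $(K_{\eps})$ with $K_{\eps}\Subset\R^{n}$, and for each $n\in\hypNs$ a net $(f_{n,\eps})$ defining $f_{n}$ as in Def.~\ref{def:netDefMap} (globally, via Thm.~\ref{thm:propGSF}.\ref{enu:globallyDefNet}). For $n,m\in\hypNs$ the map $|f_{n}-f_{m}|^{2}=\sum_{i\le d}(f_{n}^{i}-f_{m}^{i})^{2}$ is a GSF $K\to\rcrho$, so by the extreme value theorem (Thm.~\ref{thm:extremeValues}) it attains a maximum on $K$; write $s_{n,m}:=\max_{x\in K}|f_{n}(x)-f_{m}(x)|\in\rcrho$, represented $\eps$-wise by $\max_{x\in K_{\eps}}|f_{n_{\eps},\eps}(x)-f_{m_{\eps},\eps}(x)|$. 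Pointwise existence of the hyperlimit yields, for every $x$, a pointwise hyper-Cauchy condition $\forall q\,\exists M\,\forall n,m\ge M:\ |f_{n}(x)-f_{m}(x)|<\diff{\rho}^{q}$.

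The heart of the argument---and the step I expect to be the main obstacle---is to upgrade this to the \emph{uniform} hyper-Cauchy condition $\forall q\,\exists M\in\hypNs\,\forall n,m\ge M:\ s_{n,m}<\diff{\rho}^{q}$; once this is available, letting $m\to\infty$ in the hyperlimit sense gives $\forall q\,\exists M\,\forall n\ge M\,\forall x\in K:\ |f_{n}(x)-f(x)|<\diff{\rho}^{q}$, which is exactly uniform convergence. The difficulty is the classical one: the pointwise control is at the \emph{moving} maximiser $c_{n,m}\in K$ furnished by Thm.~\ref{thm:extremeValues}, and uniformity is not automatic. To overcome it I would transfer the problem to the genuinely compact sets $K_{\eps}$ and construct a single threshold $M=[M_{\eps}]\in\hypNs$ by an $\eps$-wise diagonal procedure, using compactness of each $K_{\eps}$ to pass from pointwise to uniform control at fixed $\eps$ and the $\rho$-moderateness of the data to assemble the $\eps$-wise thresholds into one hypernatural. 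This is precisely where functional compactness is essential: since $K=[K_{\eps}]$ with $K_{\eps}\Subset\R^{n}$, at each level $\eps$ one works on ordinary compacta, where the usual finite-subcover/uniform-continuity mechanism applies.

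Granting uniform convergence, I would obtain $f\in\gsf(K,\rcrho^{d})$ by exhibiting a defining net: choosing thresholds $M^{(q)}\in\hypNs$ from uniform convergence and diagonalising in $q$ and $\eps$, set $g_{\eps}:=f_{N_{\eps},\eps}$ for a suitable $(N_{\eps})$ with $[N_{\eps}]\in\hypNs$, so that $[g_{\eps}(x_{\eps})]=f(x)$ for all $x=[x_{\eps}]\in K$; the moderateness of all derivatives at points of $K$ required by Def.~\ref{def:netDefMap} then follows from that of the $f_{n}$ together with the uniform estimates, using Thm.~\ref{thm:propGSF}.\ref{enu:globallyDefNet}. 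Finally, \eqref{eq:ContinuityProperty} is routine: $f$ and each $f_{n}-f$ are GSF, hence integrable on the $\mu$-measurable $K$, and Thm.~\ref{thm:muMeasurableAndIntegral}.\ref{enu:existsReprDefInt} with monotonicity (Thm.~\ref{thm:intRules}.\ref{enu:intMonotone}) give $\left|\int_{K}f_{n}\,\diff{\mu}-\int_{K}f\,\diff{\mu}\right|\le\int_{K}|f_{n}-f|\,\diff{\mu}\le\mu(K)\cdot s_{n}$, where $s_{n}:=\max_{K}|f_{n}-f|$. As $\mu(K)\in\rcrho$ is moderate, say $\mu(K)\le\diff{\rho}^{-N}$, uniform convergence gives $s_{n}<\diff{\rho}^{q+N}$ for $n\ge M$, so the right-hand side is $<\diff{\rho}^{q}$; since $q$ is arbitrary, this is exactly $\hyperlim{\rho}{\sigma}\int_{K}f_{n}\,\diff{\mu}=\int_{K}f\,\diff{\mu}$.
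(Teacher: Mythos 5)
First, a point of comparison: the paper itself gives no proof of Thm.~\ref{thm:contResult} --- the text immediately following the statement defers to \cite{GIO1} for the proof and to \cite{MTAG} for hyperlimits --- so your proposal can only be judged on its own merits. Your skeleton (define $f$ pointwise, upgrade to uniform convergence via a hyper-Cauchy condition and the extreme value theorem, then deduce $f\in\gsf(K,\rcrho^{d})$ and the integral exchange) is the natural one, and your final step is indeed routine once the first two are in place. The genuine gap sits exactly where you located the heart of the argument, and the mechanism you propose there does not work. You claim that ``at each level $\eps$ one works on ordinary compacta, where the usual finite-subcover/uniform-continuity mechanism applies''; but there is no such classical mechanism: pointwise convergence of smooth functions on an ordinary compact set does \emph{not} imply uniform convergence (a moving bump of height $1$ supported in $[1/(n+1),1/n]$ converges pointwise to $0$ on $[0,1]$ but not uniformly); Dini needs monotonicity, Arzel\`a--Ascoli needs equicontinuity, and you have neither. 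Worse, the hypothesis does not localize to fixed $\eps$ at all: the index set is $\hyperN{\sigma}$, whose elements are themselves $\eps$-indexed nets, and the hyperlimit at a generalized point $[x_{\eps}]$ is an asymptotic condition as $\eps\to0^{+}$, so the phrase ``pointwise control at fixed $\eps$'' does not even parse. What actually makes the statement true is that the hypothesis holds at \emph{all generalized points}, including points assembled from nets of maximizers, combined with the internality of $K$. A workable route: assume the uniform hyper-Cauchy condition fails for some $q$; take a countable cofinal family $M_{j}\in\hyperN{\sigma}$ (e.g.\ $M_{j}:=[\lceil\sigma_{\eps}^{-j}\rceil]$), indices $n_{j},m_{j}\ge M_{j}$ and EVT maximizers $c_{j}\in K$ with $|f_{n_{j}}(c_{j})-f_{m_{j}}(c_{j})|\ge_{L_{j}}\diff{\rho}^{q}$ for some $L_{j}\subzero I$; then \emph{glue} the moving maximizers into a single point $\bar{c}=[\bar{c}_{\eps}]\in K$ by setting $\bar{c}_{\eps}:=c_{j(\eps),\eps}\in K_{\eps}$ for a diagonal choice $j(\eps)$, interleaved so that each set $\{\eps\mid j(\eps)=j\}\cap L_{j}$ is co-final; the pointwise hyper-Cauchy condition then fails at $\bar{c}$, contradicting existence of the hyperlimit there. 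This gluing --- legitimate precisely because $K=[K_{\eps}]$ is internal --- is what functional compactness buys you, not any finite subcover.

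A second, fixable, defect: your notation $f_{n_{\eps},\eps}$ (and later $g_{\eps}:=f_{N_{\eps},\eps}$) conflates the hypersequence index with the defining-net index. A hypersequence is an arbitrary map $n\in\hyperN{\sigma}\mapsto f_{n}\in\gsf(K,\rcrho^{d})$; there is no coherence between the member $f_{[N_{\eps}]}$ and the diagonal of the defining nets of the finite-index members, and the hyperlimit hypothesis constrains nothing at finite indices $j\in\N$, so your construction of a defining net for $f$ does not produce $f$. The clean repair: from uniform convergence pick thresholds $M^{(q)}\in\hyperN{\sigma}$, then choose a single $N\in\hyperN{\sigma}$ with $N\ge M^{(q)}$ for all $q\in\N$ (a diagonal over representatives of countably many hypernaturals, preserving $\sigma$-moderateness); then $|f_{N}(x)-f(x)|\le\diff{\rho}^{q}$ for all $x\in K$ and all $q$, hence $f=f_{N}\in\gsf(K,\rcrho^{d})$, and no new defining net is needed. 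With that and the gluing argument above, your outline of the remaining steps (including the estimate $\left|\int_{K}f_{n}\,\diff{\mu}-\int_{K}f\,\diff{\mu}\right|\le\mu(K)\cdot\max_{x\in K}|f_{n}(x)-f(x)|$) goes through.
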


\noindent For the proof of this theorem see \cite{GIO1}, and for
the notion of hyperlimit see \cite{MTAG}.

\section{Convolution on $\rcrho^{n}$\label{sec:Convolution}}

In this section, we define and study convolution $f*g$ of two GSF,
where $f$ or $g$ is compactly supported. Compactly supported GSF
were introduced in \cite{GiKu18} for the gauge $\rho_{\eps}=\eps$.
For an arbitrary gauge, we here define and study the notions needed
for the HFT as well as for the study of convolution of GSF.
\begin{defn}
Assume that $X\subseteq\rti^{n}$, $Y\subseteq\rti^{d}$ and $f\in\gsf\left(X,Y\right)$,
then
\begin{enumerate}
\item \label{enu:support}$\text{supp}\left(f\right):=\overline{\left\{ x\in X\mid\left|f\left(x\right)\right|>0\right\} }$,
where $\overline{\left(\cdot\right)}$ denotes the relative closure
in $X$ with respect to the sharp topology, is called the \emph{support}
of $f$. We recall (see just after Def.~\ref{def:RCGN} and Lem.~\ref{lem:mayer})
that $x>0$ means that $x\in\rti_{\ge0}$ is positive and invertible.
\item \label{enu:exterior}For $A\subseteq\rcrho$ we call the set $\text{ext}\left(A\right):=\left\{ x\in\rcrho\mid\forall a\in A:\ \left|x-a\right|>0\right\} $
the \emph{strong exterior of }$A$. Recalling Lem.~\ref{lem:mayer},
if $x\in\text{ext}(A)$, then $|x-a|\ge\diff{\rho}^{q}$ for all $a\in A$
and for some $q=q(a)\in\N$.
\item \label{enu:CSgsf}Let $H\fcmp\rti^{n}$, we say that $f\in\Dgsf\left(H,Y\right)$
if $f\in\gsf(\rti^{n},Y)$ and $\text{supp}\left(f\right)\subseteq H$.
We say that $f\in\Dgsf(\rti^{n},Y)$ if $f\in\Dgsf\left(H,Y\right)$
for some $H\fcmp\rti^{n}$. Such an $f$ is called \emph{compactly
supported}; for simplicity we set $\Dgsf(H):=\Dgsf(H,\rccrho)$. Note
that $\text{supp}(f)$ is clearly always closed, and if $f\in\Dgsf\left(H,Y\right)$
then it is also sharply bounded. However, in general it is not an
internal set so it is not a functionally compact set. Accordingly,
the theory of multidimensional integration of Sec.~\ref{subsec:Multidimensional-integration}
does not allow us to consider $\int_{\text{supp}(f)}f$ even if $f$
is compactly supported.
\end{enumerate}
\end{defn}

\begin{rem}
~
\begin{enumerate}
\item Note that the notion of \emph{standard support} $\text{stsupp}\left(f\right)$
as defined in Thm. \ref{thm:embeddingD'} and the present notion $\text{supp}\left(f\right)$
of support, as defined above, are different. The main distinction
is that $\text{stsupp}\left(f\right)\subseteq\mathbb{R}^{n}$ while
$\text{supp}\left(f\right)\subseteq\rcrho^{n}$. Moreover if we consider
a CGF $f\in\gsf(\csp{\Omega},\rti^{d})$, then $\text{supp}\left(f\right)\cap\Omega\subseteq\text{stsupp}\left(f\right)$.
\item Since $\delta\left(0\right)>0$ then $\delta|_{B_{r}\left(0\right)}>0$
for some $r\in\rcrho_{>0}$ by the sharp continuity of $\delta$,
i.e.~Thm\@.~\ref{thm:propGSF}.\ref{enu:GSF-cont}, hence $B_{r}\left(0\right)\subseteq\text{supp}\left(\delta\right)$,
whereas $\text{stsupp}\left(\delta\right)=\left\{ 0\right\} $. Example
\ref{exa:deltaCompDelta}.\ref{enu:deltaH} also yields that $\text{supp}(\delta)\subseteq[-r,r]^{n}$
for all $r\in\R_{>0}$.
\item \label{enu:rapDecrComptSupp}Any rapidly decreasing function $f\in\mathcal{S}(\R^{n})$
satisfies the inequality $0\leq f\left(x\right)\leq\left|x\right|^{-q}$,
$\forall q\in\mathbb{N}$, for $\left|x\right|$ finite sufficiently
large. Therefore, for all strongly infinite $x$, we have $f\left(x\right)=0$
i.e., $f\in\Dgsf\left(\rcrho^{n}\right)$.
\end{enumerate}
\end{rem}

\begin{lem}
\label{lem:extOpen}Let $\emptyset\ne H\Subset_{f}\rcrho^{n}$. Then
$\text{\emph{ext}}\left(H\right)$ is sharply open.
\end{lem}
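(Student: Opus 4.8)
The plan is to show that every point of $\text{ext}(H)$ has a sharp ball around it still contained in $\text{ext}(H)$, the essential point being to upgrade the pointwise positivity $|x-a|>0$ (whose lower bound $\diff{\rho}^{q}$ a priori depends on $a$, as noted right after the definition of strong exterior) into a single uniform lower bound valid for all $a\in H$ simultaneously. The tool that produces this uniformity is the extreme value theorem on functionally compact sets, Thm.~\ref{thm:extremeValues}.

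First I would fix $x\in\text{ext}(H)$ and introduce the auxiliary function $g(a):=|x-a|^{2}=\sum_{i=1}^{n}(x_{i}-a_{i})^{2}$. Being polynomial in $a$, it is a genuine GSF $g\in\gsf(\rcrho^{n},\rcrho)$ --- in contrast to the norm $a\mapsto|x-a|$ itself, which fails to be smooth at $a=x$ and is therefore \emph{not} a GSF, so the extreme value theorem cannot be applied to it directly. Since $\emptyset\ne H\fcmp\rcrho^{n}$, Thm.~\ref{thm:extremeValues} applies to $g$ on $H$ and yields a minimum point $m\in H$ with $g(m)\le g(a)$ for all $a\in H$.

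Next I would extract the uniform bound. As $m\in H$ and $x\in\text{ext}(H)$, we have $|x-m|>0$, and since positive invertible numbers are closed under products (a direct consequence of Lem.~\ref{lem:mayer}), $\mu:=g(m)=|x-m|^{2}>0$ is positive and invertible; its square root $\delta:=\sqrt{\mu}\in\rcrho_{>0}$ then exists and is positive invertible. By minimality of $g(m)$ and monotonicity of the square root we obtain $|x-a|\ge\delta$ for every $a\in H$, which is the desired uniform estimate. Finally I would close the ball: setting $r:=\delta/2\in\rcrho_{>0}$, for any $y\in B_{r}(x)$ and any $a\in H$ the reverse triangle inequality for the generalized norm gives $|y-a|\ge|x-a|-|x-y|\ge\delta-r=\delta/2$. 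Since $\delta/2$ is positive invertible and $|y-a|\ge\delta/2$, Lem.~\ref{lem:mayer} forces $|y-a|>0$; as $a\in H$ was arbitrary this means $y\in\text{ext}(H)$, whence $B_{r}(x)\subseteq\text{ext}(H)$ and $\text{ext}(H)$ is sharply open.

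The main obstacle is exactly the passage carried out in the first two steps: without functional compactness one only has $|x-a|\ge\diff{\rho}^{q(a)}$ with $q$ depending on $a$, so no single radius $r$ works across all of $H$. The extreme value theorem --- which holds on functionally compact sets even when they are unbounded from the classical viewpoint --- is precisely what converts the family of $a$-dependent bounds into one uniform $\delta\in\rcrho_{>0}$. Once that is in hand, the remainder is the standard $\eps$-free triangle-inequality argument transported verbatim to $\rcrho$, using only the norm properties listed after Def.~\ref{def:setOfRadii} and the characterization of positive invertible numbers in Lem.~\ref{lem:mayer}.
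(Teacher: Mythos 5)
Your proof is correct, and its skeleton --- manufacture a uniform lower bound $|x-a|\ge\delta>0$ valid for all $a\in H$, then conclude with $r:=\delta/2$ and the reverse triangle inequality plus Lem.~\ref{lem:mayer} --- is exactly the paper's. Where you genuinely differ is in how the uniform bound is obtained. The paper never invokes the extreme value theorem: it drops to representatives, sets $d_{\eps}:=d(x_{\eps},H_{\eps})$, and uses the classical fact that the distance from $x_{\eps}$ to the nonempty compact set $H_{\eps}\Subset\R^{n}$ is attained at some $h_{\eps}\in H_{\eps}$; the generalized point $h:=[h_{\eps}]\in H$ then satisfies $|x-h|=[d_{\eps}]=:d>0$ (because $x\in\text{ext}(H)$ and $h\in H$), and $|x-a|\ge d$ for all $a\in H$ since every $a\in[H_{\eps}]$ admits a representative with $a_{\eps}\in H_{\eps}$. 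You instead stay entirely internal: Thm.~\ref{thm:extremeValues} applied to the squared distance $g(a)=|x-a|^{2}$ produces the minimizer $m\in H$, and your observation that the squaring is forced (because $a\mapsto|x-a|$ is not a GSF) is exactly right. Your route better reflects the paper's theme that functionally compact sets behave like genuine compact sets for GSF and avoids all representative bookkeeping; its small overhead is that you need square roots of positive invertible elements of $\rcrho$ to exist and be monotone, which is true but nowhere stated in the paper. You can erase even that overhead by noticing that your $\delta=\sqrt{g(m)}$ is simply $|x-m|$, so all that is needed is the implication $|x-m|^{2}\le|x-a|^{2}\Rightarrow|x-m|\le|x-a|$ for nonnegative generalized numbers, immediate upon choosing nonnegative representatives. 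The paper's argument, by contrast, is more elementary --- nothing beyond Lem.~\ref{lem:mayer} and the definition of internal sets --- at the price of explicit $\eps$-wise reasoning.
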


\begin{proof}
If $x=\left[x_{\eps}\right]\in\text{ext}\left(H\right)$, we set $d_{\eps}:=d\left(x_{\eps},H_{\eps}\right)$
where $H=\left[H_{\eps}\right]$ and $\emptyset\ne H_{\eps}\Subset\mathbb{R}^{n}$
for all $\eps$ (because $H\ne\emptyset$). Then $\exists h_{\eps}\in H_{\eps}:\ d:=d\left(x_{\eps},h_{\eps}\right)$,
we set $h:=\left[h_{\eps}\right]\in H$ and $\left|x-h\right|=\left[d_{\eps}\right]=:d>0$
because $x\in\text{ext}(H)$ and $h\in H$. Now, by taking $r:=\frac{d}{2}>0$,
we prove that $B_{r}\left(x\right)\subseteq\text{ext}\left(H\right)$.
Pick $y\in B_{r}\left(x\right)$, then for all $a\in H$, we have
$\left|y-a\right|\ge|x-a|-|y-x|\ge d-\frac{d}{2}>0$.
\end{proof}
\begin{thm}
\label{thm:DerivativeIsZero}Let $H\fcmp\rti^{n}$ and $f\in\gsf(\rti^{n},\rccrho)$,
then the following properties hold:
\begin{enumerate}
\item \label{enu:equivCmptSupp}$f\in\Dgsf\left(H\right)$ if and only if
$f|_{\text{\emph{ext}}(H)}=0$.
\end{enumerate}
If $f\in\Dgsf\left(H\right)$, $x\in\rti^{n}$ and $\alpha\in\mathbb{N}^{n}$,
then:
\begin{enumerate}[resume]
\item \label{enu:derZeroExt}$\partial^{\alpha}f\left(x\right)=0$ for all
$x\in\text{\emph{ext}}(H)$.
\item \label{enu:derZeroBound}If $H\subseteq[-h,h]^{n}$ then $\partial^{\alpha}f(x)=0$
whenever $x_{p}\ge h$ or $x_{p}\le-h$ for some $p=1,\ldots,n$.
\item \label{enu:intBound}If $H\subseteq[-h,h]^{n}\subseteq\prod_{p=1}^{n}[a_{p},b_{p}]$,
then
\[
\intop_{a_{1}}^{b_{1}}\,\diff{x_{1}}\ldots\intop_{a_{n}}^{b_{n}}f\left(x\right)\,\diff{x_{n}}=\intop_{-h}^{h}\,\diff{x_{1}}\ldots\intop_{-h}^{h}f\left(x\right)\,\diff{x_{n}}
\]
\end{enumerate}
\end{thm}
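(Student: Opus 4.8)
The plan is to prove \ref{enu:equivCmptSupp} first, then deduce \ref{enu:derZeroExt} from it through the differential calculus, \ref{enu:derZeroBound} from \ref{enu:derZeroExt} by sharp continuity, and finally \ref{enu:intBound} from \ref{enu:derZeroBound} by splitting the iterated integral. The geometric engine throughout is a distance description of the strong exterior. Writing $H=[H_{\eps}]$ with $\emptyset\neq H_{\eps}\Subset\R^{n}$ (the case $H=\emptyset$ being trivial, as then $\text{ext}(H)=\rti^{n}$ and $\text{supp}(f)=\emptyset$), I would first record that
\[
\text{ext}(H)=\left\{ x\in\rcrho^{n}\mid d(x,H)>0\right\},\qquad d(x,H):=[d(x_{\eps},H_{\eps})].
\]
The inclusion $\supseteq$ is immediate since $d(x,H)\le|x-a|$ for every $a\in H$; for $\subseteq$ one picks $a_{\eps}\in H_{\eps}$ realising $d(x_{\eps},H_{\eps})$ (possible as $H_{\eps}$ is compact), sets $a:=[a_{\eps}]\in H$, and uses $|x-a|=d(x,H)$ together with $|x-a|>0$. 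Combined with Thm~\ref{thm:strongMembershipAndDistanceComplement}.\ref{enu:internalSetsDistance}, i.e.\ $H=\{x\mid d(x,H)=0\}$, this partitions $\rti^{n}$ into $H$, the open set $\text{ext}(H)$, and the points whose distance to $H$ is nonzero but non-invertible.

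For \ref{enu:equivCmptSupp}, the direction $f\in\Dgsf(H)\Rightarrow f|_{\text{ext}(H)}=0$ goes as follows: if $x\in\text{ext}(H)$ then $d(x,H)>0$, so $x\notin H\supseteq\text{supp}(f)$; since $\text{ext}(H)$ is sharply open (Lem~\ref{lem:extOpen}) there is $B_{r}(x)\subseteq\text{ext}(H)$ disjoint from $\text{supp}(f)=\overline{\{|f|>0\}}$, whence $\neg(|f(z)|>0)$ for every $z\in B_{r}(x)$. The remaining — and I expect most delicate — step is to upgrade ``$|f|$ is nowhere invertible on the ball'' to ``$f\equiv0$ on the ball''; this is where the subpoint trichotomy (Lem~\ref{lem:negationsSubpoints}, Lem~\ref{lem:trich1st}) together with the sharp continuity of $f$ (Thm~\ref{thm:propGSF}.\ref{enu:GSF-cont}) must be used to close the gap between non-invertibility and nullity. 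For the converse $f|_{\text{ext}(H)}=0\Rightarrow\text{supp}(f)\subseteq H$, I would show $\{|f|>0\}\subseteq H$ and then invoke that $H$ is sharply closed (Thm~\ref{thm:strongMembershipAndDistanceComplement}.\ref{enu:internalAreClosed}). Suppose $|f(y)|>0$ but $y\notin H$; then $d(y,H)\neq0$, so by Lem~\ref{lem:negationsSubpoints} there is $L\subzero I$ with $d(y,H)>_{L}0$. Extend $y|_{L}$ to a full point $w$ by setting $w_{\eps}:=y_{\eps}$ for $\eps\in L$ and choosing $w_{\eps}$ with $d(w_{\eps},H_{\eps})\ge1$ for $\eps\notin L$ (possible as the net $(H_{\eps})$ is bounded). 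By Lem~\ref{lem:mayer} one gets $d(w,H)>0$, hence $w\in\text{ext}(H)$ and $f(w)=0$; since $w=_{L}y$ this forces $|f(y)|=_{L}0$, contradicting $|f(y)|>0$, which entails $|f(y)|>_{L}0$ for every $L$.

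To obtain \ref{enu:derZeroExt} I would feed \ref{enu:equivCmptSupp} into the Fermat--Reyes theorem: on the sharply open set $\text{ext}(H)$ we have $f\equiv0$, so for $x\in\text{ext}(H)$ and a coordinate direction $v$, the relation $f(x+hv)=f(x)+h\cdot r(x,h)$ of Thm~\ref{thm:FR-forGSF}.\ref{enu:existenceRatio} gives $h\cdot r(x,h)=0$ for all small $h$, whence $r\equiv0$ on a neighbourhood of $(x,0)$ and $\tfrac{\partial f}{\partial v}(x)=r(x,0)=0$. Since partial derivatives are again GSF that vanish identically on $\text{ext}(H)$, iterating yields $\partial^{\alpha}f|_{\text{ext}(H)}=0$ for all $\alpha$.

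For \ref{enu:derZeroBound}, note that when $H\subseteq[-h,h]^{n}$ every point $y$ with $y_{p}>h$ lies in $\text{ext}(H)$ (as $|y-a|\ge y_{p}-a_{p}\ge y_{p}-h>0$ for all $a\in H$), so $\partial^{\alpha}f$ vanishes on the sharply open set $\{y_{p}>h\}$; because $\{x_{p}\ge h\}\subseteq\overline{\{y_{p}>h\}}$ (any $B_{s}(x)$ contains $x+\tfrac{s}{2}e_{p}$) and $\partial^{\alpha}f$ is sharply continuous with $\{0\}$ closed, it vanishes on $\{x_{p}\ge h\}$; the case $x_{p}\le-h$ is symmetric. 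Finally, \ref{enu:intBound} follows by taking $\alpha=0$ in \ref{enu:derZeroBound}: since $a_{p}\le-h$ and $b_{p}\ge h$, additivity of the integral (Thm~\ref{thm:intRules}.\ref{enu:additivityDomain}) splits each $\int_{a_{p}}^{b_{p}}$ into $\int_{a_{p}}^{-h}+\int_{-h}^{h}+\int_{h}^{b_{p}}$, and on the outer two ranges the integrand vanishes because $x_{p}\le-h$ or $x_{p}\ge h$; peeling the coordinates one at a time via Fubini (Thm~\ref{thm:muMeasurableAndIntegral}.\ref{enu:int-ndimInt}) collapses the box $\prod_{p}[a_{p},b_{p}]$ to $[-h,h]^{n}$. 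The principal obstacle is thus the non-invertibility--versus--nullity step in the forward part of \ref{enu:equivCmptSupp}, everything else being a routine consequence once the distance characterization of $\text{ext}(H)$ is in hand.
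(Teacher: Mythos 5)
Your converse half of \ref{enu:equivCmptSupp} and your items \ref{enu:derZeroExt}, \ref{enu:derZeroBound}, \ref{enu:intBound} are correct. The converse even takes a slightly different (and arguably cleaner) route than the paper: you prove $\left\{ y\mid|f(y)|>0\right\} \subseteq H$ directly, by completing the subpoint $y|_{L}$ to a point $w\in\text{ext}(H)$, and then take sharp closures using Thm.~\ref{thm:strongMembershipAndDistanceComplement}.\ref{enu:internalAreClosed}, whereas the paper approximates a hypothetical point of $\text{supp}(f)\setminus H$ by points of $\{|f|>0\}$ and applies Lem.~\ref{lem:extOpen} over $\rti|_{L}$; your \ref{enu:derZeroExt}--\ref{enu:intBound} (Fermat-Reyes, perturbation into $\text{ext}(H)$ plus sharp continuity, splitting of the iterated integral) coincide with the paper's arguments.

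The genuine gap is the forward half of \ref{enu:equivCmptSupp}: you defer exactly the step that carries all the content, and the statement you reduce it to --- ``$|f|$ nowhere invertible on $B_{r}(x)$ implies $f(x)=0$'' --- is not merely unproven, it is false. Indeed, let $L\subzero I$ be such that $I\setminus L\subzero I$ as well, and let $f$ be the GSF defined by the net of constant functions $f_{\eps}\equiv1$ for $\eps\in L$ and $f_{\eps}\equiv0$ for $\eps\notin L$. Then $f(z)=e:=[e_{\eps}]$ for every $z\in\rti^{n}$, where $e_{\eps}=1$ if $\eps\in L$ and $e_{\eps}=0$ otherwise. By Lem.~\ref{lem:mayer}, $e$ is not invertible (it vanishes on the co-final set $I\setminus L$), so $\left\{ z\mid|f(z)|>0\right\} =\emptyset$ and every ball misses $\{|f|>0\}$; yet $e\ne0$, since $(e_{\eps})$ equals $1$ on the co-final set $L$ and hence is not negligible. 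So $|f|$ is nowhere invertible while $f$ vanishes at no point: no combination of Lem.~\ref{lem:negationsSubpoints}, Lem.~\ref{lem:trich1st} and sharp continuity can bridge this, because once you have weakened the hypothesis $\text{supp}(f)\subseteq H$ to ``$B_{r}(x)$ is disjoint from $\{|f|>0\}$'', the conclusion simply no longer holds. Note that already your ``trivial'' case $H=\emptyset$ is the whole problem: there the claim reads $\text{supp}(f)=\emptyset\Rightarrow f=0$, which this $f$ refutes.

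For comparison, the paper closes this point with a move absent from your proposal: assuming $f(x)\ne0$, it extracts $|f(x)|>_{L}\diff{\rho}^{q}$ and then \emph{completes the subpoint}, setting $\bar{x}_{\eps}:=x_{\eps}$ for $\eps\in L$ and $\bar{x}_{\eps}:=x_{\eps_{0}}$ otherwise, and claims $|f(\bar{x})|>\diff{\rho}^{q}$; this places $\bar{x}\in\{|f|>0\}\subseteq\text{supp}(f)\subseteq H$, and then $x\in\text{ext}(H)$ forces $|x-\bar{x}|>0$, which is impossible because $\bar{x}|_{L}=x|_{L}$. That completion trick is the missing idea in your write-up. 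However, your instinct that this is the delicate point is well founded: for the $f$ above \emph{no} choice of $\bar{x}$ makes $|f(\bar{x})|$ invertible (one would need $|f_{\eps}(\bar{x}_{\eps})|>\rho_{\eps}^{q}$ also for $\eps\notin L$, where $f_{\eps}\equiv0$), and since this $f$ satisfies $\text{supp}(f)=\emptyset\subseteq H$ while $f|_{\text{ext}(H)}\ne0$, the forward implication read literally with the invertibility-based definition of support cannot hold; the paper's claim $|f(\bar{x})|>\diff{\rho}^{q}$ is exactly where this issue is buried. A watertight formulation reads $\text{supp}(f)$ as the sharp closure of $\left\{ x\mid f(x)\ne0\right\} $, equivalently of $\left\{ x\mid|f(x)|\sbpt{>}0\right\} $; under that reading your entire reduction becomes unnecessary, since $f(x)\ne0$ puts $x$ itself in $\text{supp}(f)\subseteq H$, contradicting $x\in\text{ext}(H)$ because $|x-x|=0$ is not invertible.
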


\begin{proof}
\ref{enu:equivCmptSupp}: Assume that $\text{supp}(f)\subseteq H$
and $x=[x_{\eps}]\in\text{ext}(H)$, but $f(x)\ne0$. This implies
that $|f(x)|\not\le0$ because always $|f(x)|\ge0$. Thereby, Lem.~\ref{lem:negationsSubpoints}
yields $|f(x)|>_{L}0$ for some $L\subzero I$. Applying Lem.~\ref{lem:mayer}
for the ring $\rti|_{L}$ we get $|f(x)|>_{L}\diff{\rho}^{q}$ for
some $q\in\R_{>0}$, i.e.~$\left|f_{\eps}(x_{\eps})\right|>\rho_{\eps}^{q}$
for all $\eps\in L_{\le\eps_{0}}$. Define $\bar{x}_{\eps}:=x_{\eps}$
for all $\eps\in L$ and $\bar{x}_{\eps}:=x_{\eps_{0}}$ otherwise,
so that $\bar{x}:=[\bar{x}_{\eps}]\in\rti^{n}$ and $|f(\bar{x})|>\diff{\rho}^{q}$.
This yields $\bar{x}\in\text{supp}(f)\subseteq H$, and hence $|x-\bar{x}|>0$,
which is impossible by construction because $\bar{x}|_{L}=x|_{L}$
and because of Lem.~\ref{lem:mayer}.

Vice versa, assume that $f|_{\text{ext}(H)}=0$ and take $x=[x_{\eps}]\in\text{supp}(f)\setminus H$.
The property
\[
\forall q\in\R_{>0}\,\forall^{0}\eps:\ d(x_{\eps},H_{\eps})\le\rho_{\eps}^{q}
\]
cannot hold, because for $q\to+\infty$ Thm\@.~\ref{thm:strongMembershipAndDistanceComplement}.\ref{enu:internalSetsDistance}
would imply $x\in H=[H_{\eps}]$. Therefore, for some $q\in\R_{>0}$
and some $L\subzero I$, we have $d(x_{\eps},H_{\eps})\ge\rho_{\eps}^{q}$
for all $\eps\in L$. Thereby, if $a=[a_{\eps}]\in H$ where $a_{\eps}\in H_{\eps}$
for all $\eps$, we get $d(x_{\eps},a_{\eps})\ge d(x_{\eps},H_{\eps})\ge\rho_{\eps}^{q}$
for all $\eps\in L$, i.e.~$x|_{L}\in\text{ext}(H)|_{L}$. Applying
Lem.~\ref{lem:extOpen} for the ring $\rti|_{L}$ we get
\begin{equation}
B_{r}(x)|_{L}\subseteq\text{ext}(H)|_{L}\label{eq:extLBall}
\end{equation}
for some $r\in\rti_{>0}$. From $x\in\text{supp}(f)$, we get the
existence of a sequence $(x_{p})_{p\in\N}$ of points of $\left\{ x\in\rti^{n}\mid|f(x)|>0\right\} $
such that $x_{p}\to x$ as $p\to+\infty$ in the sharp topology. Therefore,
$x_{p}\in B_{r}(x)$ for $p\in\N$ sufficiently large. Thereby, $x_{p}|_{L}\in\text{ext}(H)|_{L}$
from \eqref{eq:extLBall} and hence $f(x_{p})|_{L}=\left[\left(f_{\eps}(x_{p\eps})\right)_{\eps\in L}\right]=0$,
which contradicts $|f(x_{p})|>0$.

Property \ref{enu:derZeroExt} follows by induction on $|\alpha|\in\N$
using Thm.~\ref{thm:FR-forGSF}. We prove property \ref{enu:derZeroBound}
for the case $x_{p}\ge h$, the other case being similar. We consider
\[
\bar{x}_{q}:=(x_{1},\ptind^{p-1},x_{p-1},x_{p}+\diff{\rho}^{q},x_{p+1},\ldots,x_{n})\quad\forall q\in\N.
\]
Then $|\bar{x}_{q}-a|\ge|x_{p}+\diff{\rho}^{q}-a_{p}|\ge\diff{\rho}^{q}$
for all $a\in[-h,h]^{n}\supseteq H$ because $x_{p}\ge h\ge a_{p}$.
Therefore, $\bar{x}_{q}\in\text{ext}(H)$ and hence $\partial^{\alpha}f(\bar{x}_{q})=0$
from the previous \ref{enu:derZeroExt}. The conclusion now follows
from the sharp continuity of the GSF $\partial^{\alpha}f$ (Thm.~\ref{thm:propGSF}.\ref{enu:GSF-cont}).

\ref{enu:intBound}: The inclusion $\pm(h,\ldots,h)\in[-h,h]^{n}\subseteq\prod_{p=1}^{n}[a_{p},b_{p}]$
implies $a_{p}\le-h$ and $b_{p}\ge h$ for all $p=1,\ldots,n$. Using
Thm.~\ref{thm:muMeasurableAndIntegral}.\ref{enu:int-ndimInt}, we
can write
\begin{align*}
\intop_{a_{1}}^{b_{1}}\,\diff{x_{1}}\ldots\intop_{a_{n}}^{b_{n}}f\left(x\right)\,\diff{x_{n}} & =\intop_{a_{1}}^{b_{1}}\,\diff{x_{1}}\ldots\intop_{a_{n-1}}^{b_{n-1}}\,\diff{x_{n-1}}\intop_{a_{n}}^{-h}f\left(x\right)\,\diff{x_{n}}+\\
 & \phantom{=}\intop_{a_{1}}^{b_{1}}\,\diff{x_{1}}\ldots\intop_{a_{n-1}}^{b_{n-1}}\,\diff{x_{n-1}}\intop_{-h}^{+h}f\left(x\right)\,\diff{x_{n}}+\\
 & \phantom{=}\intop_{a_{1}}^{b_{1}}\,\diff{x_{1}}\ldots\intop_{a_{n-1}}^{b_{n-1}}\,\diff{x_{n-1}}\intop_{h}^{b_{n}}f\left(x\right)\,\diff{x_{n}}.
\end{align*}
But if $x_{n}\in[a_{n},-h]$ or $x_{n}\in[h,b_{n}]$, then property
\ref{enu:derZeroBound} yields $f(x)=0$ and we obtain
\[
\intop_{a_{1}}^{b_{1}}\,\diff{x_{1}}\ldots\intop_{a_{n}}^{b_{n}}f\left(x\right)\,\diff{x_{n}}=\intop_{a_{1}}^{b_{1}}\,\diff{x_{1}}\ldots\intop_{a_{n-1}}^{b_{n-1}}\,\diff{x_{n-1}}\intop_{-h}^{h}f\left(x\right)\,\diff{x_{n}}.
\]
Proceeding in the same way with all the other integrals we get the
claim.
\end{proof}
\noindent In particular, if $T\in\mathcal{E}'(\Omega)$, then Thm.~\ref{thm:DerivativeIsZero}.\ref{enu:equivCmptSupp}
implies that $\iota_{\Omega}^{b}(T)\in\Dgsf\left(\rti^{n}\right)$.
Also observe that $f(x)=e^{-x^{2}}$, $x\in\left\{ x\in\rti\mid\exists N\in\N:\ x^{2}\ge N\log\diff{\rho}\right\} $,
satisfies $f(x)\le x^{-q}$ for all infinite $x$ and all $q\in\N$.
Therefore
\[
\forall Q\in\N:\ f\in\Dgsf\left([-\diff{\rho}^{-Q},\diff{\rho}^{-Q}]\right).
\]

\noindent Based on these results, we can define
\begin{defn}
\label{def:intCmpSupp}Let $f\in\Dgsf(\rti^{n})$, then
\begin{equation}
\int f:=\int_{\rti^{n}}f:=\intop_{a_{1}}^{b_{1}}\,\diff{x_{1}}\ldots\intop_{a_{n}}^{b_{n}}f\left(x\right)\,\diff{x_{n}}\label{eq:intCmpSupp}
\end{equation}
where $\text{supp}(f)\subseteq\prod_{p=1}^{n}[a_{p},b_{p}]$. This
equality does not depend on $a_{p}$, $b_{p}$ because of Thm.~\ref{thm:DerivativeIsZero}.\ref{enu:intBound}.
\end{defn}

\noindent Note that we can also write \eqref{eq:intCmpSupp} as
\begin{equation}
\int f=\lim_{\substack{a_{p}\to-\infty\\
b_{p}\to+\infty\\
p=1,\ldots,n
}
}\ \intop_{a_{1}}^{b_{1}}\,\diff{x_{1}}\ldots\intop_{a_{n}}^{b_{n}}f\left(x\right)\,\diff{x_{n}}=\lim_{h\to+\infty}\intop_{-h}^{h}\,\diff{x_{1}}\ldots\intop_{-h}^{h}f\left(x\right)\,\diff{x_{n}}\label{eq:intCmpSuppLimit}
\end{equation}
even if we are actually considering limits of eventually constant
functions. Using this notion of integral of a compactly supported
GSF, we can also write the value of a distribution $\langle T,\phi\rangle$
as an integral: let $b\in\rti_{>0}$ be a strong infinite number,
$\Omega\subseteq\R^{n}$ be an open set, $T\in\mathcal{D}'(\Omega)$
and $\phi\in\mathcal{D}(\Omega$), with $\text{supp}(\phi)\subseteq\prod_{i=1}^{n}[a_{i},b_{i}]_{\R}=:J$.
Then from Thm.~\ref{thm:embeddingD'}.\ref{enu:D'} and Thm.~\ref{thm:muMeasurableAndIntegral}.\ref{enu:int-ndimInt}
we get
\begin{equation}
\langle T,\phi\rangle=\int_{[J]}\iota_{\Omega}^{b}(T)(x)\cdot\phi(x)\,\diff{x}=\int\iota_{\Omega}^{b}(T)(x)\cdot\phi(x)\,\diff{x},\label{eq:pairTphiAsInt}
\end{equation}
where the equalities are in $\rti$.
\begin{defn}
\label{def:convolution}Let $f$, $g\in\gsf(\rti^{n})$, with $f\in\Dgsf(\rti^{n})$
or $g\in\Dgsf(\rti^{n})$. In the former case, by Thm.~\ref{thm:propGSF}.\ref{enu:category}
and Thm.~\ref{thm:DerivativeIsZero}.\ref{enu:equivCmptSupp}, for
all $x\in\rti^{n}$, $f\cdot g(x-\cdot)\in\Dgsf(\rti^{n})$ with $\text{supp}\left(f\cdot g(x-\cdot)\right)\subseteq\text{supp}(f)\fcmp\rti^{n}$.
Moreover, $\text{supp}\left(f(x-\cdot)\cdot g\right)\subseteq x-\text{supp}(f)\fcmp\rti^{n}$.
Similarly, we can argue in the latter case, and we can hence define
\begin{equation}
\left(f\ast g\right)\left(x\right):=\intop f\left(y\right)g\left(x-y\right)\,\diff{y}=\intop f\left(x-y\right)g\left(y\right)\,\diff{y}\quad\forall x\in\rti^{n}.\label{eq:defConvolution}
\end{equation}

\noindent Note that directly from Thm\@.~\ref{thm:existenceUniquenessPrimitives}
and Def.~\ref{def:intCmpSupp}, it follows that $f*g\in\gsf(\rti^{n})$.
The next theorems provide the usual basic properties of convolution
suitably formulated in our framework. We start by studying how the
convolution is in relation to the supports of its factors:
\end{defn}

\begin{thm}
\label{thm:convSupp}Let $f$, $g$, $h\in\Dgsf(\rti^{n})$. Then
the following properties hold:
\begin{enumerate}
\item \label{enu:supp1}Let $\text{\emph{supp}}(f)\subseteq[-a,a]^{n}$,
$\text{\emph{supp}}(g)\subseteq[-b,b]^{n}$, $a$, $b\in\rti_{>0}$,
and $x\in\rti^{n}$. Set $L_{x}:=[-a,a]^{n}\cap\left(x-[-b,b]^{n}\right)$,
then
\begin{align}
\text{\emph{supp}}\left(f\cdot g(x-\cdot)\right) & \subseteq L_{x}=\prod_{p=1}^{n}[\max(-a,x_{p}-b),\min(a,x_{p}+b)]\label{eq:supp0}\\
\left(f\ast g\right)\left(x\right) & =\intop_{L_{x}}f\left(y\right)g\left(x-y\right)\,\diff{y}.\label{eq:supp1}
\end{align}
\item \label{enu:supp2}$\text{\emph{supp}}(f*g)\subseteq\overline{\text{\emph{supp}}(f)+\text{\emph{supp}}(g)}$,
therefore $f*g\in\Dgsf(\rti^{n})$.
\end{enumerate}
\end{thm}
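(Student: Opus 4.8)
The plan is to handle the two assertions separately, reducing each to the support results of Thm.~\ref{thm:DerivativeIsZero} and to the iterated--integral representation of Thm.~\ref{thm:muMeasurableAndIntegral}.\ref{enu:int-ndimInt}. For \ref{enu:supp1} I would first obtain the support inclusion from the behaviour of supports under products and translations. Writing $h_{x}:=f\cdot g(x-\cdot)$, one has $\text{supp}(h_{x})\subseteq\text{supp}(f)\cap\text{supp}(g(x-\cdot))$, since $|f\cdot g(x-\cdot)|=|f|\cdot|g(x-\cdot)|$ is invertible only where both factors are (Lem.~\ref{lem:mayer}) and $\overline{A\cap B}\subseteq\overline{A}\cap\overline{B}$. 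As $y\mapsto x-y$ is a sharp homeomorphism equal to its own inverse, $\text{supp}(g(x-\cdot))=x-\text{supp}(g)\subseteq x-[-b,b]^{n}$, whence $\text{supp}(h_{x})\subseteq[-a,a]^{n}\cap(x-[-b,b]^{n})=L_{x}$. The explicit product form of $L_{x}$ then follows componentwise, using that the intersection of two boxes is the box of the intersected sides and the lattice identity $[c,d]\cap[e,f]=[c\vee e,\,d\wedge f]$, valid in $\rti$ since $c\vee e$ and $d\wedge f$ are the supremum and infimum of $\{c,e\}$ and $\{d,f\}$ in the order of $\rti$.

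For the integral formula \eqref{eq:supp1} I would start from $(f\ast g)(x)=\int h_{x}$ (Def.~\ref{def:convolution} and Def.~\ref{def:intCmpSupp}), evaluate $\int h_{x}$ as the iterated integral over the symmetric box $[-a,a]^{n}\supseteq\text{supp}(h_{x})$, and then shrink each one--dimensional interval down to the corresponding factor $[\max(-a,x_{p}-b),\min(a,x_{p}+b)]$ of $L_{x}$. Since $\text{supp}(h_{x})\subseteq L_{x}\fcmp\rti^{n}$ (Thm.~\ref{thm:product}) we have $h_{x}\in\Dgsf(L_{x})$, so the perturbation argument proving Thm.~\ref{thm:DerivativeIsZero}.\ref{enu:derZeroBound} applies to the (possibly non--symmetric) box $L_{x}=\prod_{p}[c_{p},d_{p}]$ and yields $\partial^{\alpha}h_{x}(y)=0$ whenever $y_{p}\le c_{p}$ or $y_{p}\ge d_{p}$; splitting $\int_{-a}^{a}=\int_{-a}^{c_{p}}+\int_{c_{p}}^{d_{p}}+\int_{d_{p}}^{a}$ in each variable then kills the two outer pieces exactly as in the proof of Thm.~\ref{thm:DerivativeIsZero}.\ref{enu:intBound}, and Thm.~\ref{thm:muMeasurableAndIntegral}.\ref{enu:int-ndimInt} re-identifies the surviving iterated integral with $\int_{L_{x}}f(y)g(x-y)\,\diff{y}$. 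The one point requiring care is that on subpoints where the two cubes do not overlap one has $c_{p}\sbpt{>}d_{p}$, so $L_{x}$ degenerates and $h_{x}\sbpt{=}0$; there both sides vanish, and the identity persists after recombining subpoints.

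For \ref{enu:supp2} I would prove the stronger set inclusion $\{x\mid|(f\ast g)(x)|>0\}\subseteq\text{supp}(f)+\text{supp}(g)=:S$ and then pass to sharp closures, noting that $\text{supp}(f\ast g)$ is by definition the closure of the left--hand set and $\overline{S}$ is closed. So suppose $|(f\ast g)(x)|>0$; by \eqref{eq:supp1} and $|\int|\le\int|\cdot|$ we get $\int_{L_{x}}|f(y)g(x-y)|\,\diff{y}>0$, and by Thm.~\ref{thm:muMeasurableAndIntegral}.\ref{enu:existsReprDefInt} together with Lem.~\ref{lem:mayer} there is $m\in\N$ with $\int_{L_{x,\eps}}|f_{\eps}(y)g_{\eps}(x_{\eps}-y)|\,\diff{y}\ge\rho_{\eps}^{m}$ for $\eps$ small, where $L_{x,\eps}:=[-a_{\eps},a_{\eps}]^{n}\cap(x_{\eps}-[-b_{\eps},b_{\eps}]^{n})$ represents $L_{x}$. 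Choosing $y_{\eps}\in L_{x,\eps}$ maximizing the integrand and using $\lambda(L_{x,\eps})\le(2a_{\eps})^{n}\le\rho_{\eps}^{-N}$, I obtain $|f_{\eps}(y_{\eps})g_{\eps}(x_{\eps}-y_{\eps})|\ge\rho_{\eps}^{m+N}$, so $y:=[y_{\eps}]\in\rti^{n}$ satisfies $|f(y)|\cdot|g(x-y)|>0$; hence both $|f(y)|>0$ and $|g(x-y)|>0$, i.e.~$y\in\text{supp}(f)$ and $x-y\in\text{supp}(g)$, giving $x\in S$. Taking closures yields $\text{supp}(f\ast g)\subseteq\overline{S}$, and since $\overline{S}\subseteq[-a-b,a+b]^{n}\fcmp\rti^{n}$ (Thm.~\ref{thm:product}) the support of $f\ast g$ lies in a functionally compact set, so $f\ast g\in\Dgsf(\rti^{n})$.

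I expect the extraction of the generalized point $y=[y_{\eps}]$ in the last step to be the main obstacle: positivity of an integral over the non--Archimedean set $L_{x}$ does not by itself single out a generalized point where the integrand is invertible, and one genuinely needs the compactness of the representatives $L_{x,\eps}$ together with the moderateness of their volumes to turn the lower bound $\rho_{\eps}^{m}$ on the integral into the pointwise bound $\rho_{\eps}^{m+N}$ on the integrand. The remaining subpoint bookkeeping in the degenerate case of \eqref{eq:supp1} is routine but should be stated explicitly.
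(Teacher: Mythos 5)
Your proof is correct and follows essentially the same route as the paper's: the same pointwise invertibility argument gives \eqref{eq:supp0}, the integral formula \eqref{eq:supp1} is obtained by integrating over a box containing $\text{supp}\left(f\cdot g(x-\cdot)\right)$, and \ref{enu:supp2} is proved by extracting a point $y$ with $|f(y)g(x-y)|>0$, concluding $x\in\text{supp}(f)+\text{supp}(g)$, taking sharp closures, and using Thm.~\ref{thm:product} and Thm.~\ref{thm:image} for functional compactness. The only differences are executional: where the paper bounds $|(f*g)(x)|\le\lambda(H)\cdot\max_{y\in H}|f(y)g(x-y)|$ by invoking the extreme value Thm.~\ref{thm:extremeValues} on a functionally compact $H\supseteq\text{supp}(f)$, you re-derive the same bound $\eps$-wise via the classical extreme value theorem together with moderateness of $\lambda(L_{x,\eps})$, and you spell out the non-symmetric-box and degenerate-subpoint bookkeeping that the paper compresses into its citation of Def.~\ref{def:intCmpSupp}.
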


\begin{proof}
\ref{enu:supp1}: If $|f(t)g(x-t)|>0$, then $t\in\text{supp}(f)$
and $x-t\in\text{supp}(g)$. Therefore, $\text{supp}\left(f\cdot g(x-\cdot)\right)\subseteq[-a,a]^{n}\cap\left(x-[-b,b]^{n}\right)$.
As in the case of real numbers, we can say that if $t\in[-a,a]^{n}\cap\left(x-[-b,b]^{n}\right)$,
then $-a\le t_{p}\le a$ and $-b\le x_{p}-t_{p}\le b$ for all $p=1,\ldots,n$.
Therefore, $t_{p}\in[\max(-a,x_{p}-b),\min(a,x_{p}+b)]$. Similarly,
we can prove that also $L_{x}\subseteq[-a,a]^{n}\cap\left(x-[-b,b]^{n}\right)$.
The conclusion \eqref{eq:supp0} now follows from Def.~\ref{def:intCmpSupp}.
For completeness, recall that in general $\text{supp}(f)$ and $\text{supp}(g)$
are not functionally compact sets and our integration theory allows
to integrate only over the latter kind of sets. This justifies our
formulation of the present property using intervals.

\ref{enu:supp2}: Since $f$ and $g$ are compactly supported, we
have $\text{supp}(f)\subseteq H$ and $\text{supp}(g)\subseteq L$
for some $H$, $L\fcmp\rti^{n}$. Assume that $\left|(f*g)(x)\right|>0$.
Then, by Thm.~\ref{thm:intRules}.\ref{enu:intMonotone}, Thm.~\ref{thm:muMeasurableAndIntegral}.\ref{enu:int-ndimInt}
and the extreme value Thm.~\ref{thm:extremeValues}, we get
\[
0<\left|(f*g)(x)\right|\le\lambda(H)\cdot\max_{y\in H}|f(y)g(x-y)|,
\]
where $\lambda$ is the extension of the Lebesgue measure given by
Def.~\ref{def:intOverCompact}. Therefore, there exists $y\in H$
such that $0<\lambda(H)\cdot\left|f(y)g(x-y)\right|$. This implies
that $y\in\text{supp}(f)$ and $x-y\in\text{supp}(g)$. Thereby, $x=y+(x-y)\in\text{supp}(f)+\text{supp}(g)$.
Taking the sharp closure we get the conclusion. Finally, $\overline{\text{supp}(f)+\text{supp}(g)}\subseteq\overline{H+L}=H+L$
and $H+L\fcmp\rti^{n}$ because it is the image under the sum $+$
of $H\times L$ (see Thm.~\ref{thm:product} and Thm.~\ref{thm:image}).
\end{proof}
Now, we consider algebraic properties of convolution and its relations
with derivations and integration:
\begin{thm}
\label{thm:convAlgDiffInt}Let $f$, $g$, $h\in\gsf(\rti^{n})$ and
assume that at least two of them are compactly supported. Then the
following properties hold:
\begin{enumerate}
\item \label{enu:Commutative:conv}$f\ast g=g*f$.
\item \label{enu:Associative:conv}$\left(f\ast g\right)\ast h=f\ast\left(g\ast h\right)$.
\item \label{enu:Distributive:conc}$f\ast\left(h+g\right)=f\ast h+f\ast g$.
\item \label{enu:complex_conjugate}$\overline{f*g}=\overline{f}*\overline{g}$
\item \label{enu:translation_conv}${\displaystyle t\oplus\left(f*g\right)=\left(t\oplus f\right)*g=f*\left(t\oplus g\right)}$
where $t\oplus f$ is the translation of the function $f$ by $t$
defined by $\left(t\oplus f\right)\left(x\right)=f\left(x-t\right)$
(see Sec.~\ref{subsec:Embedding}).
\item \label{enu:differen_conv}$\frac{\partial}{\partial x_{p}}\left(f*g\right)=\frac{\partial f}{\partial x_{p}}*g=f*\frac{\partial g}{\partial x_{p}}$
for all $p=1,\ldots,n$.
\item \label{enu:integration_conv}If both $f$ and $g$ are compactly supported,
then
\[
\intop\left(f\ast g\right)\left(x\right)\,\diff{x}=\left(\intop f\left(x\right)\,\diff{x}\right)\left(\intop g\left(x\right)\,\diff{x}\right).
\]
\end{enumerate}
\end{thm}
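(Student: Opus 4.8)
The plan is to reduce every identity to the defining nets $f_{\eps},g_{\eps},h_{\eps}$, which are ordinary smooth functions, and there invoke the classical theorems of calculus. The enabling fact is that, for each fixed value of the outer variable, every integrand occurring in \eqref{eq:defConvolution} is compactly supported in its integration variable (Def.~\ref{def:convolution}), so that by Thm.~\ref{thm:DerivativeIsZero}.\ref{enu:intBound} and Def.~\ref{def:intCmpSupp} the corresponding integral equals an integral over a box $[-h,h]^{n}$, $h\in\rti_{>0}$ (possibly infinite), hence by Thm.~\ref{thm:muMeasurableAndIntegral}.\ref{enu:int-ndimInt} equals the $\eps$-wise integral $\big[\int_{[-h_{\eps},h_{\eps}]^{n}}(\cdots)_{\eps}\big]$. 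On these $\eps$-level integrals over ordinary compact boxes the classical identities hold verbatim, and passing to equivalence classes yields the claim.

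With this reduction the routine items are immediate. Identity \ref{enu:Commutative:conv} is already recorded as the two equivalent forms in \eqref{eq:defConvolution}, produced by the reflection–translation substitution $y\mapsto x-y$ (a GSF diffeomorphism with Jacobian $\pm1$, Thm.~\ref{thm:muMeasurableAndIntegral}(vii)); \ref{enu:Distributive:conc} follows from linearity of the integral, Thm.~\ref{thm:intRules}.\ref{enu:additivityFunction}; \ref{enu:complex_conjugate} holds because complex conjugation commutes with the $\eps$-wise integral of the representatives; \ref{enu:translation_conv} follows from the translation substitution inside \eqref{eq:defConvolution}; and \ref{enu:differen_conv} follows by differentiating under the integral sign, Thm.~\ref{thm:intRules}.\ref{enu:derUnderInt}, applied on $[-h,h]^{n}$, together with the fact that $\partial/\partial x_{p}$ of a GSF is computed representative-wise (Thm.~\ref{thm:FR-forGSF}.\ref{enu:defDer}).

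The substantive cases are associativity \ref{enu:Associative:conv} and the integration formula \ref{enu:integration_conv}, both resting on Fubini and a translation change of variables. For \ref{enu:Associative:conv} I would expand both sides as iterated integrals,
\begin{align*}
((f\ast g)\ast h)(x)&=\intop\Big(\intop f(z)\,g(y-z)\,\diff{z}\Big)h(x-y)\,\diff{y},\\
(f\ast(g\ast h))(x)&=\intop f(z)\Big(\intop g(w)\,h(x-z-w)\,\diff{w}\Big)\diff{z},
\end{align*}
restrict each double integral to a common box on which the compactly supported integrand is concentrated, exchange the order of integration by Fubini (Thm.~\ref{thm:muMeasurableAndIntegral}.\ref{enu:int-ndimInt}), and then match the two expressions through the substitution $w=y-z$ (a translation, hence a GSF diffeomorphism of unit Jacobian, Thm.~\ref{thm:muMeasurableAndIntegral}(vii)). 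For \ref{enu:integration_conv} I would write $\int(f\ast g)=\int\!\int f(y)\,g(x-y)\,\diff{y}\,\diff{x}$, swap the order by Fubini, and evaluate the inner integral via the translation invariance $\int g(x-y)\,\diff{x}=\int g(z)\,\diff{z}$ (substitution $z=x-y$), leaving $\big(\int f\big)\big(\int g\big)$.

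The main obstacle is bookkeeping of domains rather than any genuinely new analytic difficulty: one must exhibit a single functionally compact box $[-h,h]^{n}$ that simultaneously dominates the supports of all inner and outer integrands (using that two of the three factors are compactly supported, so that by Thm.~\ref{thm:convSupp}.\ref{enu:supp2} the inner convolution is again compactly supported), verify that each intermediate integrand defines a generalized integrable map in the sense of Def.~\ref{def:integrableMap} so that the $\eps$-wise reduction and Fubini are legitimate, and check that the translation substitutions respect the strongly internal domains underlying the integrals. Once these domains are fixed, the exchange of integration order and the substitutions become purely $\eps$-level statements about smooth, compactly concentrated integrands, to which the classical Fubini and change-of-variables theorems apply directly.
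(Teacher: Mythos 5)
Your proposal is correct and follows essentially the same route as the paper: both reduce each identity to the box integrals of Def.~\ref{def:intCmpSupp}, and both rest on the same key tools — the translation/reflection change of variables, Fubini via Thm.~\ref{thm:muMeasurableAndIntegral}.\ref{enu:int-ndimInt}, differentiation under the integral sign (Thm.~\ref{thm:intRules}.\ref{enu:derUnderInt}), and the support bookkeeping of Thm.~\ref{thm:convSupp}. Your explicit $\eps$-wise framing is only a restatement of how those GSF-level theorems are themselves established, so it does not constitute a genuinely different argument.
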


\begin{proof}
\ref{enu:Commutative:conv}: We assume, e.g., that $f\in\Dgsf(\rti^{n})$.
Take $h\in\rti_{>0}$ such that $\text{supp}(f)\subseteq[-h,h]^{n}$.
By \eqref{eq:supp1} and Def.~\ref{def:intCmpSupp}, we can write
\[
\left(f\ast g\right)\left(x\right)=\intop_{-h}^{h}\,\diff{y_{1}}\ldots\intop_{-h}^{h}f\left(y\right)g\left(x-y\right)\,\diff{y_{n}}.
\]
We can now proceed as in the classical case, i.e.~considering the
change of variable $z=x-y$ (Thm.~\ref{thm:changeOfVariablesInt}).
We get
\[
\left(f\ast g\right)\left(x\right)=\intop_{x_{1}-h}^{x_{1}+h}\,\diff{z_{1}}\ldots\intop_{x_{n}-h}^{x_{n}+h}f\left(x-z\right)g\left(z\right)\,\diff{z_{n}}.
\]
Taking the limit $h\to+\infty$ (see \eqref{eq:intCmpSuppLimit}),
we obtain the desired equality. Similarly, we can also prove \ref{enu:Associative:conv}
and \ref{enu:Distributive:conc}.

As usual, \ref{enu:complex_conjugate} is a straightforward consequence
of the definition of complex conjugate.

\ref{enu:translation_conv}: The usual proof applies, in fact
\begin{align}
t\oplus\left(f*g\right)\left(x\right) & =\left(f*g\right)\left(x-t\right)=\intop f\left(y\right)g\left(x-t-y\right)\,\mathrm{d}y=\nonumber \\
 & =\intop f\left(y\right)\left(t\oplus g\right)\left(x-y\right)\,\mathrm{d}y=\left(f\ast\left(t\oplus g\right)\right)\left(x\right).\label{eq:tInside}
\end{align}
Finally, the commutativity property \ref{enu:Commutative:conv} yields
$\left(t\oplus f\right)*g=g*(t\oplus f)$ and applying \eqref{eq:tInside}
$g*(t\oplus f)=t\oplus\left(g*f\right)=t\oplus\left(f*g\right)$.

\ref{enu:differen_conv}: Set $h:=f*g$ and take $x\in\rti^{n}$.
Using differentiation under the integral sign (Thm.~\ref{thm:intRules}.\ref{enu:derUnderInt})
and Def.~\ref{def:intCmpSupp} we get
\[
\frac{\partial}{\partial x_{p}}h\left(x\right)=\intop_{\rcrho^{n}}f\left(y\right)\frac{\partial g}{\partial x_{p}}\left(x-y\right)\,\diff{y}=\left(f\ast\frac{\partial g}{\partial x_{p}}\right)\left(x\right).
\]
Using \ref{enu:Commutative:conv}, we also have $\frac{\partial}{\partial x_{p}}h=\frac{\partial f}{\partial x_{p}}\ast g$.

To prove \ref{enu:integration_conv} we show the case $n=1$, even
if the general one is similar. Let $a$, $b\in\rti_{>0}$ be such
that $\text{supp}(f*g)\subseteq[-a,a]$ (Thm.~\ref{thm:convSupp})
and $\text{supp}(f)\subseteq[-b,b]$. Then
\[
\int(f*g)(x)\,\diff{x}=\int_{-a}^{a}\diff{x}\int_{-b}^{b}f(y)g(x-y)\,\diff{y}.
\]
Using Fubini's Thm.~\ref{thm:muMeasurableAndIntegral}.\ref{enu:int-ndimInt},
we can write
\begin{align*}
\int(f*g)(x)\,\diff{x} & =\int_{-b}^{b}f(y)\int_{-a}^{a}g(x-y)\,\diff{x}\,\diff{y}=\\
 & =\int_{-b}^{b}f(y)\int_{-a-y}^{a-y}g(z)\,\diff{z}\,\diff{y}=\\
 & =\int_{-b}^{b}f(y)\,\diff{y}\int_{-c}^{c}g(z)\,\diff{z},
\end{align*}
where we have taken $a\to+\infty$ or equivalently, considered any
$c\ge a+b$.
\end{proof}
Young's inequality for convolution is based on the generalized Hölder's
inequality, on the inequality $\left|\int_{K}f\,\diff{\mu}\right|\le\int_{K}\left|f\right|\,\diff{\mu}$
(see Thm.~\ref{thm:muMeasurableAndIntegral}.\ref{enu:existsReprDefInt}),
monotonicity of integral (see Thm.~\ref{thm:intRules}.\ref{enu:intMonotone})
and Fubini's theorem (see Thm.~\ref{thm:muMeasurableAndIntegral}.\ref{enu:int-ndimInt}).
Therefore, the usual proofs can be repeated in our setting if we take
sufficient care of terms such as $|f(x)|^{p}$ if $p\in\rti_{\ge1}$:
\begin{defn}
\label{def:pNorm}Let $f\in\Dgsf(\rti^{n})$ and $p\in\rti_{\ge1}$
be a finite number. Then, we set
\[
\Vert f\Vert_{p}:=\left(\int|f(x)|^{p}\,\diff{x}\right)^{1/p}\in\rti_{\ge0}.
\]
Note that $|f|^{p}$ is a generalized integrable function (Def.~\ref{def:intOverCompact})
because $p$ is a finite number (in general the power $x^{y}$ is
not well-defined, e.g.~$\left(\frac{1}{\rho_{\eps}}\right)^{1/\rho_{\eps}}=\rho_{\eps}^{-1/\rho_{\eps}}$
is not $\rho$-moderate).
\end{defn}

\noindent On the other hand, Hölder's inequality, if $\Vert f\Vert_{p}>0$
and $\Vert g\Vert_{q}>0$, is simply based on monotonicity of integral,
Fubini's theorem and Young's inequality for products. The latter holds
also in $\rti_{\ge0}$ because it holds in the entire $\R_{\ge0}$,
see e.g.~\cite{Sch05}.
\begin{thm}[Hölder]
\label{thm:Holder}Let $f_{k}\in\Dgsf(\rti^{n})$ and $p_{k}\in\rti_{\ge1}$
for all $k=1,\ldots,m$ be such that $\sum_{k=1}^{m}\frac{1}{p_{k}}=1$
and $\Vert f_{k}\Vert_{p_{k}}>0$. Then
\[
\left\Vert \prod_{k=1}^{m}f_{k}\right\Vert _{1}\le\prod_{k=1}^{m}\Vert f_{k}\Vert_{p_{k}}.
\]
\end{thm}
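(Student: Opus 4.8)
The plan is to mimic the classical proof of Hölder's inequality via the weighted arithmetic--geometric mean (AM--GM) inequality, carrying out the single genuinely new step — the transfer of the pointwise AM--GM inequality to the ring $\rti$ — by checking it $\eps$-wise. Throughout I write $P:=\prod_{k=1}^{m}f_{k}$, and I note first that $P\in\Dgsf(\rti^{n})$: it is a GSF by Thm.~\ref{thm:propGSF}.\ref{enu:category}, and $\text{supp}(P)\subseteq\bigcap_{k}\text{supp}(f_{k})$ is contained in a functionally compact box, so $P$ is compactly supported and $\Vert P\Vert_{1}=\int\prod_{k}|f_{k}|$ is well defined by Def.~\ref{def:pNorm} and Def.~\ref{def:intCmpSupp}.

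First I would normalize. Since each $\Vert f_{k}\Vert_{p_{k}}>0$, Lem.~\ref{lem:mayer} gives that it is invertible, so $g_{k}:=\Vert f_{k}\Vert_{p_{k}}^{-1}\cdot f_{k}\in\Dgsf(\rti^{n})$ is well defined and, pulling the scalar out of the power and the integral, $\int|g_{k}|^{p_{k}}=\Vert f_{k}\Vert_{p_{k}}^{-p_{k}}\int|f_{k}|^{p_{k}}=1$, hence $\Vert g_{k}\Vert_{p_{k}}=1$. By homogeneity of the integral (Thm.~\ref{thm:intRules}.\ref{enu:homog}) the claim $\Vert P\Vert_{1}\le\prod_{k}\Vert f_{k}\Vert_{p_{k}}$ is equivalent to $\int\prod_{k}|g_{k}|\le1$, so it suffices to treat the normalized case.

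The heart of the proof is the pointwise inequality
\begin{equation}
\prod_{k=1}^{m}|g_{k}(x)|\le\sum_{k=1}^{m}\frac{1}{p_{k}}\,|g_{k}(x)|^{p_{k}}\qquad\forall x\in\rti^{n},\label{eq:YoungPlan}
\end{equation}
which is the weighted AM--GM inequality with weights $1/p_{k}$ and entries $|g_{k}(x)|^{p_{k}}$. To obtain \eqref{eq:YoungPlan} I would fix representatives $(p_{k,\eps})$ with $p_{k,\eps}\ge1$ and $\sum_{k}p_{k,\eps}^{-1}=1$ \emph{exactly} for every $\eps$; this is possible because $\sum_{k}1/p_{k}=1$ in $\rti$ means the discrepancy $\sum_{k}p_{k,\eps}^{-1}-1$ is $\rho$-negligible and can be redistributed among the $p_{k,\eps}$ without altering the $p_{k}$ as elements of $\rti$, and because $|f_{k}|^{p_{k}}$ (hence $\Vert f_{k}\Vert_{p_{k}}$) is insensitive to such a change, $|f_{k,\eps}|$ being $\rho$-moderate. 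With these representatives the classical weighted AM--GM inequality in $\R_{\ge0}$ (a form of Young's inequality for products, see \cite{Sch05}) gives, for every $\eps$ and every $x=[x_{\eps}]$,
\[
\prod_{k}|g_{k,\eps}(x_{\eps})|=\prod_{k}\bigl(|g_{k,\eps}(x_{\eps})|^{p_{k,\eps}}\bigr)^{1/p_{k,\eps}}\le\sum_{k}\frac{1}{p_{k,\eps}}|g_{k,\eps}(x_{\eps})|^{p_{k,\eps}},
\]
and \eqref{eq:YoungPlan} follows from the representative characterization of $\le$ on $\rti$.

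Finally I would integrate \eqref{eq:YoungPlan} over a functionally compact box containing all the supports, using monotonicity (Thm.~\ref{thm:intRules}.\ref{enu:intMonotone}) together with additivity and homogeneity of the integral (Thm.~\ref{thm:intRules}.\ref{enu:additivityFunction},\ref{enu:homog}):
\[
\int\prod_{k}|g_{k}|\le\sum_{k}\frac{1}{p_{k}}\int|g_{k}|^{p_{k}}=\sum_{k}\frac{1}{p_{k}}=1,
\]
where the middle equality uses $\int|g_{k}|^{p_{k}}=1$ and the last one the hypothesis $\sum_{k}1/p_{k}=1$. Undoing the normalization then yields the assertion. The only real obstacle is the transfer step for \eqref{eq:YoungPlan}: the weighted AM--GM inequality is exact only when the weights sum to exactly $1$, so the bookkeeping guaranteeing representatives with $\sum_{k}p_{k,\eps}^{-1}=1$ — and the attendant representative-independence of the generalized powers $|f_{k}|^{p_{k}}$ — is the point requiring care; everything else is a routine transcription of the classical argument.
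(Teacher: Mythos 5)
Your proof is correct and follows essentially the same route as the paper, which does not write out a proof at all but states just before the theorem that Hölder's inequality "is simply based on monotonicity of integral, Fubini's theorem and Young's inequality for products", the latter transferred to $\rti_{\ge0}$ because it holds $\eps$-wise in $\R_{\ge0}$. Your normalization, the weighted AM--GM step (which is exactly the $m$-factor Young inequality for products), and the representative bookkeeping for $\sum_{k}1/p_{k,\eps}=1$ and for the powers $|f_{k}|^{p_{k}}$ supply precisely the details the paper leaves implicit.
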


\ 
\begin{thm}[Young]
\label{thm:convYoung}~Let $f$, $g\in\Dgsf(\rti^{n})$ and $p$,
$q$, $r\in\rti_{\ge1}$ be such that ${\displaystyle {\textstyle {\frac{1}{p}}+{\frac{1}{q}}=1+{\frac{1}{r}}}}$
and $\Vert f\Vert_{p}$, $\Vert g\Vert_{q}>0$, then $\Vert f*g\Vert_{r}\le\Vert f\Vert_{p}\cdot\Vert g\Vert_{q}$.
\end{thm}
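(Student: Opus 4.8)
The plan is to transcribe the classical proof of Young's inequality into $\rti$, reducing each analytic step to one of the integration results of Section~\ref{sec:Basic-notions}. Fix $x\in\rti^{n}$. By Theorem~\ref{thm:convSupp}.\ref{enu:supp1} the convolution is an integral over the functionally compact box $L_{x}$, so $|f(\cdot)\,g(x-\cdot)|$ is a generalized integrable function there, and Theorem~\ref{thm:muMeasurableAndIntegral}.\ref{enu:existsReprDefInt} gives the pointwise bound $|(f*g)(x)|\le\int|f(y)|\,|g(x-y)|\,\diff{y}$. I would then split the integrand exactly as in the real case,
\[
|f(y)|\,|g(x-y)|=\bigl(|f(y)|^{p}|g(x-y)|^{q}\bigr)^{1/r}\cdot|f(y)|^{1-p/r}\cdot|g(x-y)|^{1-q/r},
\]
and apply the generalized Hölder inequality (Theorem~\ref{thm:Holder}) to these three factors with the exponents $r$, $\frac{q}{q-1}$, $\frac{p}{p-1}$, whose reciprocals sum to $1$ thanks to the hypothesis $\frac1p+\frac1q=1+\frac1r$, and where the positivity hypotheses $\Vert f\Vert_{p},\Vert g\Vert_{q}>0$ are precisely what makes Theorem~\ref{thm:Holder} applicable. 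Using translation invariance of the integral (the change of variables $z=x-y$, a special case of Theorem~\ref{thm:changeOfVariablesInt}) to evaluate the last two factors, this yields
\[
|(f*g)(x)|\le\Bigl(\int|f(y)|^{p}|g(x-y)|^{q}\,\diff{y}\Bigr)^{1/r}\,\Vert f\Vert_{p}^{\,1-p/r}\,\Vert g\Vert_{q}^{\,1-q/r}.
\]

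Next I would raise this inequality to the $r$-th power (licit since $t\mapsto t^{r}$ is monotone on $\rti_{\ge0}$ and $r$ is finite), integrate in $x$ over a box containing $\text{supp}(f*g)$, and combine monotonicity of the integral (Theorem~\ref{thm:intRules}.\ref{enu:intMonotone}) with Fubini's theorem (Theorem~\ref{thm:muMeasurableAndIntegral}.\ref{enu:int-ndimInt}). Swapping the order of integration and again using translation invariance in the inner integral gives
\[
\int|(f*g)(x)|^{r}\,\diff{x}\le\Vert f\Vert_{p}^{\,r-p}\,\Vert g\Vert_{q}^{\,r-q}\int|f(y)|^{p}\Bigl(\int|g(x-y)|^{q}\,\diff{x}\Bigr)\diff{y}=\Vert f\Vert_{p}^{\,r}\,\Vert g\Vert_{q}^{\,r},
\]
and taking $r$-th roots produces $\Vert f*g\Vert_{r}\le\Vert f\Vert_{p}\Vert g\Vert_{q}$.

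I expect the genuine difficulty to be entirely bookkeeping about the exponents and the power terms, exactly as the remark preceding the statement warns. First, every power $|f(y)|^{a}$ with $a\in\rti$ finite must be read as a generalized integrable function (it is \emph{not} a GSF): its defining net $(|f_{\eps}(\cdot)|^{a_{\eps}})$ is $\rho$-moderate precisely because $a$ is finite, which is where the hypothesis that $p,q,r$ are finite is used. Second, and more seriously, the two auxiliary Hölder exponents above are the conjugates $q'=\frac{q}{q-1}$ and $p'=\frac{p}{p-1}$, which lie in $\rti_{\ge1}$ and are finite only when $p-1$ and $q-1$ are invertible, i.e.\ when $p>1$ and $q>1$. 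Since the hypotheses only give $p,q\ge1$, the degenerate cases $p=1$ or $q=1$ --- which may moreover occur only on a cofinal subset $L\subzero I$ --- must be handled separately: on such subpoints Young's inequality reduces to the $L^{1}$-type bound $\Vert f*g\Vert_{q}\le\Vert f\Vert_{1}\Vert g\Vert_{q}$, provable directly by Minkowski's integral inequality (monotonicity plus Fubini), and the three regimes $p=_{L}1$, $q=_{L}1$, $p>_{L}1\wedge q>_{L}1$ are recombined via the trichotomy of Lemma~\ref{lem:trich1st} together with the fact that $\le$ holds in $\rti$ iff it holds on every $L\subzero I$. Alternatively, one can bypass the subpoint analysis entirely by fixing representatives $(p_{\eps}),(q_{\eps})\ge1$, defining $(r_{\eps})$ through the \emph{exact} relation $\frac{1}{r_{\eps}}:=\frac{1}{p_{\eps}}+\frac{1}{q_{\eps}}-1\in(0,1]$ (a representative of $r$), applying the classical Young inequality $\eps$-wise to $f_{\eps}$, $g_{\eps}$, and assembling the resulting moderate nets through Theorem~\ref{thm:muMeasurableAndIntegral}; this route dissolves the degenerate-exponent obstacle automatically, since classical Young needs no invertibility of $p-1$ or $q-1$.
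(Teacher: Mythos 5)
Your proposal follows exactly the route the paper intends: the paper gives no written proof of Young's inequality, only the preceding remark that ``the usual proofs can be repeated in our setting'' using generalized H\"older (Thm.~\ref{thm:Holder}), the triangle inequality for integrals (Thm.~\ref{thm:muMeasurableAndIntegral}.\ref{enu:existsReprDefInt}), monotonicity of the integral (Thm.~\ref{thm:intRules}.\ref{enu:intMonotone}) and Fubini's theorem (Thm.~\ref{thm:muMeasurableAndIntegral}.\ref{enu:int-ndimInt}) --- precisely the ingredients you assemble, with the same classical three-factor splitting. Your extra treatment of the degenerate exponents ($p=_{L}1$ or $q=_{L}1$ on subpoints, or the $\eps$-wise alternative via representatives of $p$, $q$, $r$) supplies exactly the ``sufficient care of terms such as $|f(x)|^{p}$'' that the paper warns about but does not carry out.
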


\noindent In the following theorem, we consider when the equality
$\left(\delta*f\right)(x)=f(x)$ holds. As we will see later in Sec.~\ref{sec:The-Riemann-Lebesgue-lemma},
as a consequence of the Riemann-Lebesgue lemma we necessarily have
a limitation concerning the validity of this equality.
\begin{thm}
\label{thm:convIdentity}Let $\delta$ be the $\iota_{\R^{n}}^{b}$-embedding
of the $n$-dimensional Dirac delta (see Thm.~\ref{thm:embeddingD'}).
Assume that $f\in\gsf\left(\rcrho^{n}\right)$ satisfies, at the point
$x\in\rcrho^{n}$, the condition
\begin{equation}
\exists r\in\R_{>0}\,\exists M,c\in\rcrho\,\forall y\in\overline{B}_{r}(x)\,\forall j\in\mathbb{N}:\ \left|\diff{}^{j}f\left(y\right)\right|\leq Mc^{j},\label{eq:regular}
\end{equation}
\[
\frac{b}{c}\text{ is a large infinite number}
\]
i.e. in a \emph{finite }neighborhood of $x$ all its differentials
$\diff{^{j}}f(y)$ are bounded by a suitably small polynomial $Mc^{j}$
(such a function $f$ will be called\emph{ bounded by a tame polynomial
at} $x$). Then $\left(\delta\ast f\right)(x)=f(x)$.
\end{thm}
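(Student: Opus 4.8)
The plan is to reduce the statement to a Taylor expansion of $f$ at $x$, exploiting that $\delta$ has infinitesimal support. By Example~\ref{exa:deltaCompDelta}.\ref{enu:deltaH} we may take the representative $\delta_\eps(y)=b_\eps^n\psi_\eps(b_\eps y)$; since $\mathrm{supp}(\psi_\eps)\subseteq B_1(0)$ (Lem.~\ref{lem:strictDeltaNet}.\ref{enu:suppStrictDeltaNet}), we have $\delta_\eps(y)=0$ for $|y|>1/b_\eps$, so in $(\delta*f)(x)=\int\delta(y)f(x-y)\,\diff{y}$ the variable $y$ is effectively confined to $\overline{B}_{1/b}(0)$, an infinitesimal ball. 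In particular, for such $y$ the whole segment $[x,x-y]$ lies in $\overline{B}_r(x)$, so the tame bound \eqref{eq:regular} applies along it. First I would record three facts, all obtained from Lem.~\ref{lem:strictDeltaNet} via the change of variables $z=b_\eps y$: namely $\int\delta=1$ (from \ref{enu:intOneStrictDeltaNet}); $\int\delta(y)\,y^\alpha\,\diff{y}=b^{-|\alpha|}\int\psi(z)\,z^\alpha\,\diff{z}=0$ for $\eps$ small whenever $1\le|\alpha|\le j$ with $j$ finite (from \ref{enu:momentsStrictDeltaNet}); and $\int|\delta|\le1+\eta$ for every $\eta\in\R_{>0}$ (from \ref{enu:smallNegPartStrictDeltaNet}).

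To obtain the \emph{exact} equality in $\rti$, I would prove $|(\delta*f)(x)-f(x)|\le\diff{\rho}^q$ for every $q\in\N$. Fix $q$ and a finite $N\in\N$ to be calibrated later, and set $P_N(y):=\sum_{j=0}^{N}\frac{1}{j!}\diff{^{j}}f(x)\cdot(-y)^{j}$ and $R_N:=f(x-\cdot)-P_N$, both GSF in $y$; since $\delta$ is compactly supported, $\delta P_N$ and $\delta R_N$ are integrable and, by linearity, $(\delta*f)(x)=\int\delta\,P_N+\int\delta\,R_N$. The first integral reduces to its $j=0$ summand: expanding each $\diff{^{j}}f(x)\cdot(-y)^{j}$ into monomials $y^\alpha$ with $|\alpha|=j$ and using the vanishing moments above, every term with $1\le|\alpha|\le N$ disappears (this holds even when $\partial^\alpha f(x)$ is infinite, since $0\cdot z=0$), so $\int\delta\,P_N=f(x)\int\delta=f(x)$. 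Hence $(\delta*f)(x)-f(x)=\int\delta(y)R_N(y)\,\diff{y}$.

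It remains to estimate this remainder. By Thm.~\ref{thm:Taylor}.\ref{enu:LagrangeRest}, for each $y$ there is $\xi=\xi(y)\in[x,x-y]\subseteq\overline{B}_r(x)$ with $R_N(y)=\frac{1}{(N+1)!}\diff{^{N+1}}f(\xi)\cdot(-y)^{N+1}$, whence $|R_N(y)|\le\frac{1}{(N+1)!}\|\diff{^{N+1}}f(\xi)\|\,|y|^{N+1}\le\frac{Mc^{N+1}}{(N+1)!}|y|^{N+1}$ by \eqref{eq:regular}. Using $|\int g|\le\int|g|$ (Thm.~\ref{thm:muMeasurableAndIntegral}.\ref{enu:existsReprDefInt}), $|y|\le1/b$ on $\mathrm{supp}(\delta)$, and monotonicity (Thm.~\ref{thm:intRules}.\ref{enu:intMonotone}), I get
\[
\left|(\delta*f)(x)-f(x)\right|\le\frac{Mc^{N+1}}{(N+1)!}\,b^{-(N+1)}\int|\delta|\le\frac{M(1+\eta)}{(N+1)!}\Big(\frac{c}{b}\Big)^{N+1}.
\]
The hypothesis that $b/c$ is a (strong) infinite number gives $c/b\le\diff{\rho}^{a}$ for some $a\in\R_{>0}$, and by moderateness $M\le\diff{\rho}^{-m}$ for some $m\in\N$; thus the right-hand side is $\le(1+\eta)\diff{\rho}^{a(N+1)-m}$, which is $\le\diff{\rho}^{q}$ once $N$ is taken with $a(N+1)-m\ge q+1$ (and $\eps$ small). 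As $q$ is arbitrary, $(\delta*f)(x)=f(x)$. The main obstacle — and the reason both hypotheses are needed — is this final calibration: the remainder carries the possibly infinite factor $M$ and the differentials $\diff{^{N+1}}f(\xi)$ may themselves be infinite, so the estimate only closes because $b/c$ infinite of polynomial strength forces $(c/b)^{N+1}$ to beat $M$ for a suitably large \emph{finite} $N=N(q)$; this is exactly why no single $N$ works simultaneously for all $q$.
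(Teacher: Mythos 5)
Your proof is correct and follows essentially the same route as the paper's: Taylor expansion of $f$ at $x$, annihilation of the intermediate terms by the vanishing moments of the mollifier (Lem.~\ref{lem:strictDeltaNet}.\ref{enu:momentsStrictDeltaNet}), and an estimate of the remainder via the tame-polynomial bound \eqref{eq:regular} combined with $b/c$ being a strong infinite number. The only cosmetic differences are that the paper uses the integral-remainder form of Thm.~\ref{thm:Taylor}, rescales via $t=by$ so that all support considerations are carried by $\psi$ on $[-1,1]^{n}$ (which makes your informal step ``$y$ is effectively confined to $\overline{B}_{1/b}(0)$'' exact at the representative level), and lets the Taylor order tend to $+\infty$, whereas you use the Lagrange form with a finite order $N$ calibrated to each target power $\diff{\rho}^{q}$ --- differences of bookkeeping, not of substance.
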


\begin{proof}
Considering that $\delta(y)=b^{n}\psi(by)$, where $\psi$ is the
considered $n$-dimensional Colombeau mollifier and $b$ is a strong
infinite number. (see Example \ref{exa:deltaCompDelta}.\ref{enu:deltaH}),
we have:
\begin{align*}
\left(\delta\ast f\right)\left(x\right)-f\left(x\right) & =\int f\left(x-y\right)\delta\left(y\right)\,\diff{y}-f\left(x\right)\int\delta\left(y\right)\,\diff{y}=\\
 & =\int\left(f\left(x-y\right)-f\left(x\right)\right)\delta\left(y\right)\,\diff{y}=\\
 & =\int_{\left[-\frac{r}{\sqrt{n}},\frac{r}{\sqrt{n}}\right]^{n}}\left(f\left(x-y\right)-f\left(x\right)\right)\delta\left(y\right)\,\diff{y}=\\
 & =\int_{\left[-\frac{r}{\sqrt{n}},\frac{r}{\sqrt{n}}\right]^{n}}\left(f\left(x-y\right)-f\left(x\right)\right)b^{n}\psi\left(by\right)\,\diff{y},
\end{align*}
 where $r\in\rti_{>0}$ is the radius from \eqref{eq:regular}, so
that $\text{supp}(\delta)\subseteq\left[-\frac{r}{\sqrt{n}},\frac{r}{\sqrt{n}}\right]^{n}$
since $r\in\R_{>0}$. By changing the variable $by=t$, and setting
$H:=\left[-\frac{br}{\sqrt{n}},\frac{br}{\sqrt{n}}\right]^{n}$we
have
\[
\left(f\ast\delta\right)\left(x\right)-f\left(x\right)=\int_{H}\left(f\left(x-\frac{t}{b}\right)-f\left(x\right)\right)\psi\left(t\right)\,\diff{t}.
\]
Using Taylor's formula (Thm.~\ref{thm:Taylor}.\ref{enu:integralRest})
up to an arbitrary order $q\in\N$, we get
\begin{multline}
\int_{H}\left(f\left(x-\frac{t}{b}\right)-f\left(x\right)\right)\psi\left(t\right)\,\diff{t}=\int_{H}\sum_{0<\left|\alpha\right|\leq q}\frac{1}{\alpha!}\left(-\frac{t}{b}\right)^{\alpha}\partial^{\alpha}f\left(x\right)\psi\left(t\right)\,\diff{t}+\\
+\int_{H}\frac{1}{(q+1)!}\int_{0}^{1}\left(1-z\right)^{q}\diff{^{q+1}f}\left(x-z\frac{t}{b}\right)\left(-\frac{t}{b}\right)^{q+1}\psi\left(t\right)\,\diff{z}\,\diff{t}.\label{eq:tay}
\end{multline}
But \ref{enu:suppStrictDeltaNet} and \ref{enu:momentsStrictDeltaNet}
of Lem.~\ref{lem:strictDeltaNet} yield:
\[
\int_{H}t^{\alpha}\psi(t)\,\diff{t}=\left[\int_{\left[-\frac{b_{\eps}r}{\sqrt{n}},\frac{b_{\eps}r}{\sqrt{n}}\right]^{n}}t^{\alpha}\psi_{\eps}(t)\,\diff{t}\right]=\left[\int t^{\alpha}\psi_{\eps}(t)\,\diff{t}\right]=0\quad\forall|\alpha|\le q,
\]
where we also used that $\frac{b_{\eps}r}{\sqrt{n}}>1$ for $\eps$
sufficiently small because $b>0$ is an infinite number and $r\in\R_{>0}$.
Thereby, in \eqref{eq:tay} we only have to consider the remainder
\begin{multline*}
R_{q}\left(x\right):=\int_{H}\frac{1}{(q+1)!}\int_{0}^{1}\left(1-z\right)^{q}\diff{^{q+1}f}\left(x-z\frac{t}{b}\right)\left(-\frac{t}{b}\right)^{q+1}\psi\left(t\right)\,\diff{z}\,\diff{t}=\\
=\frac{(-1)^{q+1}}{b^{q+1}(q+1)!}\int_{H}\int_{0}^{1}\left(1-z\right)^{q}\diff{^{q+1}f}\left(x-z\frac{t}{b}\right)t^{q+1}\psi\left(t\right)\,\diff{z}\,\diff{t}.
\end{multline*}
For all $z\in(0,1)$ and $t\in H=\left[-\frac{r}{\sqrt{n}},\frac{r}{\sqrt{n}}\right]^{n}$,
we have $\left|\frac{zt}{b}\right|\leq\left|\frac{t}{b}\right|\leq\frac{\sqrt{n}|t|_{\infty}}{b}\le\frac{rb}{b}=r$
and hence $x-z\frac{t}{b}\in\overline{B}_{r}(x)$. Thereby, assumption
\eqref{eq:regular} yields $\diff{^{q+1}f}\left(x-z\frac{t}{b}\right)\le Mc^{q+1}$,
and hence 
\begin{align*}
\left|R_{q}\left(x\right)\right| & \leq b^{-q-1}\frac{Mc^{q+1}}{(q+1)!}\intop_{H}\left|t^{q+1}\psi\left(t\right)\right|\,\diff{t}=\\
 & =\left(\frac{b}{c}\right)^{-q-1}\frac{M}{(q+1)!}\intop_{[-1,1]^{n}}\left|t^{q+1}\psi\left(t\right)\right|\,\diff{t}\le\\
 & \le\left(\frac{b}{c}\right)^{-q-1}\frac{M}{(q+1)!}\intop_{[-1,1]^{n}}\left|\psi\left(t\right)\right|\,\diff{t}\le\\
 & \le\left(\frac{b}{c}\right)^{-q-1}\frac{2M}{(q+1)!},
\end{align*}
where we used \ref{enu:suppStrictDeltaNet} and \ref{enu:smallNegPartStrictDeltaNet}
of Lem.~\ref{lem:strictDeltaNet} and $\frac{br}{\sqrt{n}}>1$. We
can now let $q\rightarrow+\infty$ considering that $\frac{b}{c}>\diff{\rho}^{-s}$
for some $s\in\R_{>0}$, so that $\left|R_{q}\left(x\right)\right|\rightarrow0$
and hence $\left(\delta*f\right)(x)=f(x)$.
\end{proof}
\begin{example}
\label{exa:tamePol}~
\begin{enumerate}
\item If $f_{\omega}(x)=e^{-ix\omega}$, $b\ge\diff{\rho}^{-r}$ and $\omega\in\rti$
satisfies $|\omega|\le\diff{\rho}^{-s}$ with $s<r$ (e.g.~if $\omega$
is a weak infinite number, see Def.~\ref{def:stronWeak}), then $\frac{b}{|\omega|}\ge\diff{\rho}^{-(r-s)}$
and $f_{\omega}$ is bounded by a tame polynomial at each point $x\in\rti$.
On the contrary, e.g.~if $b=\diff{\rho}^{-r}$ and $\left|\omega\right|\ge\diff{\rho}^{-r}$,
then $\frac{b}{|\omega|}\le1$ and $f_{\omega}$ is not bounded by
a tame polynomial at any $x\in\rti$.
\item If $f\in\gsf(\rti^{n})$ has always finite derivatives at a finite
point $x\in\rti^{n}$ (e.g.~it originates from the embedding of an
ordinary smooth function), then it suffices to take $c=\diff{\rho}^{-r-1}$
to prove that $f$ is bounded by a tame polynomial at $x$. Similarly,
we can argue if $f$ is polynomially bounded for $x\to\infty$ and
$x\in\rti^{n}$ is not finite.
\item The Dirac delta $\delta(x)=b^{n}\psi(bx)$ is not bounded by a tame
polynomial at $x=0$. This also shows that, generally speaking, the
embedding of a compactly supported distribution is not bounded by
a tame polynomial. Below we will show that indeed $\delta*\delta\ne\delta$,
even if we clearly have $\left(\delta*\delta\right)(x)=\delta(x)=0$
for all $x\in\rti^{n}$ such that $|x|\ge r\in\R_{>0}$.
\item \label{enu:intAgainstDelta}If $f\in\gsf\left(\rcrho^{n}\right)$
is bounded by a tame polynomial at $0$, then since $\delta$ is an
even function (see Example \ref{exa:deltaCompDelta}.\ref{enu:deltaH}),
we have:
\begin{equation}
\int\delta(x)\cdot f(x)\,\diff{x}=\int\delta(0-x)\cdot f(x)\,\diff{x}=\left(f*\delta\right)(0)=f(0).\label{eq:deltaAgainstsFncn}
\end{equation}
\end{enumerate}
\end{example}

Finally, the following theorem considers the relations between convolution
of distributions and their embedding as GSF:
\begin{thm}
\label{thm:compatConv}Let $S\in\mathcal{E}'(\R^{n})$, $T\in\mathcal{D}'(\R^{n})$
and $b\in\rti_{>0}$ be a strong positive infinite number, then for
all $\phi\in\mathcal{D}(\R^{n})$:
\begin{enumerate}
\item \label{enu:convDist1}$\langle S*T,\phi\rangle=\int\iota_{\R^{n}}^{b}(S)(x)\cdot\iota_{\R^{n}}^{b}(T)(y)\cdot\phi(x+y)\,\diff{x}\,\diff{y}=\int\left(\iota_{\R^{n}}^{b}(S)*\iota_{\R^{n}}^{b}(T)\right)(z)\cdot\phi(z)\,\diff{z}.$
\item \label{enu:convDist2}$T*\phi=\iota_{\R^{n}}^{b}(T)*\phi$.
\end{enumerate}
\end{thm}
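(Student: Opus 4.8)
The plan is to pass to $\eps$-wise representatives via the multidimensional integration theory of Section~\ref{subsec:Multidimensional-integration} and then to recognize the resulting classical integrals as pairings of the distribution $S*T$ (resp.\ the smooth function $T*\phi$) against mollified test functions. Throughout I would write $S*\psi_\eps^b$ and $T*\psi_\eps^b$ for the nets defining $\iota_{\R^n}^b(S)$ and $\iota_{\R^n}^b(T)$ (here $\tilde T=T$ since $\Omega=\R^n$), and set $\chi_\eps:=\psi_\eps^b*\psi_\eps^b$, which is again an even mollifier of scale $\sim b_\eps^{-1}$ with $\int\chi_\eps=1$.

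For the \emph{second} equality in \ref{enu:convDist1} I would argue purely formally and with no estimates: since $\iota_{\R^n}^b(S)\in\Dgsf(\rti^n)$ and $\phi$ are compactly supported, the integrand is a compactly supported integrable function, so unwinding the definition \eqref{eq:defConvolution} of $*$, writing $\int(\iota_{\R^n}^b(S)*\iota_{\R^n}^b(T))(z)\phi(z)\,\diff{z}=\int\int\iota_{\R^n}^b(S)(x)\,\iota_{\R^n}^b(T)(z-x)\,\phi(z)\,\diff{x}\,\diff{z}$ and applying Fubini (Thm.~\ref{thm:muMeasurableAndIntegral}.\ref{enu:int-ndimInt}) together with the translation $y=z-x$ in the inner variable turns this into $\int\int\iota_{\R^n}^b(S)(x)\,\iota_{\R^n}^b(T)(y)\,\phi(x+y)\,\diff{x}\,\diff{y}$. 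This is exact in $\rti$. For the \emph{first} equality, Def.~\ref{def:intCmpSupp} and Thm.~\ref{thm:muMeasurableAndIntegral}.\ref{enu:int-ndimInt} reduce the double integral to the class of $[I_\eps]$ with $I_\eps=\int\int(S*\psi_\eps^b)(x)\,(T*\psi_\eps^b)(y)\,\phi(x+y)\,\diff{x}\,\diff{y}$ an ordinary integral. Using that $S*\psi_\eps^b\in\mathcal{D}(\R^n)$ and the classical identity $\int\int a(x)c(y)\phi(x+y)\,\diff{x}\,\diff{y}=\langle a*c,\phi\rangle$, together with associativity of convolution for the family $S\in\mathcal{E}'$, $T\in\mathcal{D}'$, $\psi_\eps^b\in\mathcal{D}$, I would rewrite $I_\eps=\langle (S*T)*\chi_\eps,\phi\rangle=\langle S*T,\chi_\eps*\phi\rangle$, the last step using that $\chi_\eps$ is even.

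The crux, and the step I expect to be the main obstacle, is then to show $[\langle S*T,\chi_\eps*\phi\rangle]=\langle S*T,\phi\rangle$ in $\rti$, i.e.\ that $\langle S*T,\chi_\eps*\phi-\phi\rangle$ is $\rho$-negligible. The plan is the standard mollifier estimate with vanishing moments: fixing a compact neighbourhood $K'$ of $\text{supp}\,\phi$ (which contains $\text{supp}(\chi_\eps*\phi-\phi)$ for $\eps$ small) and the order $k$ of $S*T$ on $K'$, one has $|\langle S*T,\chi_\eps*\phi-\phi\rangle|\le C\sum_{|\gamma|\le k}\sup|\partial^\gamma(\chi_\eps*\phi-\phi)|$. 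A Taylor expansion of $\phi$, combined with the fact that by Lem.~\ref{lem:strictDeltaNet}.\ref{enu:momentsStrictDeltaNet} the net $\chi_\eps$ has vanishing moments of every order $1\le|\alpha|\le N$ once $\eps$ is small, yields $\sup|\partial^\gamma(\chi_\eps*\phi-\phi)|=O(b_\eps^{-(N+1)})$. Since $b$ is a strong infinite number, $b_\eps\ge\rho_\eps^{-r}$ for some $r\in\R_{>0}$ and $\eps$ small, so the bound is $O(\rho_\eps^{r(N+1)})$; given $m\in\N$ one chooses $N$ with $r(N+1)\ge m$ to obtain negligibility. This is precisely where both the moment conditions on the mollifier and the strong-infinite hypothesis on $b$ enter, and where care is needed to keep the implicit constants uniform in $\eps$.

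For \ref{enu:convDist2}, I would again reduce to representatives: by Def.~\ref{def:convolution} and Def.~\ref{def:intCmpSupp} (with $\phi$ as the compactly supported factor), $(\iota_{\R^n}^b(T)*\phi)(x)=\big[\int(T*\psi_\eps^b)(y)\,\phi(x_\eps-y)\,\diff{y}\big]=\big[((T*\psi_\eps^b)*\phi)(x_\eps)\big]$. The classical associativity and commutativity of convolution for $T\in\mathcal{D}'$ and $\psi_\eps^b,\phi\in\mathcal{D}$ give $(T*\psi_\eps^b)*\phi=(T*\phi)*\psi_\eps^b$, and the right-hand net is exactly the one defining the embedding $\iota_{\R^n}^b(T*\phi)$ of the smooth function $T*\phi$. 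Hence $\iota_{\R^n}^b(T)*\phi=\iota_{\R^n}^b(T*\phi)=T*\phi$; unlike \ref{enu:convDist1}, this part is an exact identity already at representative level and requires no asymptotic estimate.
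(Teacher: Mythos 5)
Your argument is correct, but it follows a genuinely different route from the paper's. The paper's proof of \ref{enu:convDist1} is essentially three lines: starting from the classical definition $\langle S*T,\phi\rangle=\langle S(x),\langle T(y),\phi(x+y)\rangle\rangle$, it applies the pairing formula \eqref{eq:pairTphiAsInt} (i.e.~Thm.~\ref{thm:embeddingD'}.\ref{enu:D'}) twice, first in $y$ for $T$ and then in $x$ for $S$, and finishes with the same Fubini/change-of-variables step that you use; part \ref{enu:convDist2} is likewise a direct pointwise application of \eqref{eq:pairTphiAsInt} at each $x\in\csp{\R^{n}}$. You never invoke \eqref{eq:pairTphiAsInt}; instead you work with representatives, reduce the middle term to $\left[\langle S*T,\chi_{\eps}*\phi\rangle\right]$ via the doubled mollifier $\chi_{\eps}:=\psi_{\eps}^{b}*\psi_{\eps}^{b}$ and classical rearrangement of convolutions (legitimate, since $T$ is the only factor without compact support), and then prove from scratch that $\langle S*T,\chi_{\eps}*\phi-\phi\rangle$ is $\rho$-negligible by Taylor expansion, vanishing moments and $b_{\eps}\ge\rho_{\eps}^{-r}$. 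In effect you re-derive the special case of Thm.~\ref{thm:embeddingD'}.\ref{enu:D'} that the paper simply cites: your proof is longer but self-contained, and it makes explicit where the moment conditions of Lem.~\ref{lem:strictDeltaNet}.\ref{enu:momentsStrictDeltaNet} and the strong-infiniteness of $b$ actually enter, which the paper's proof hides inside the embedding theorem. Two details you should spell out to make the argument airtight: first, the vanishing of $\int y^{\alpha}\chi_{\eps}(y)\,\diff{y}$ for $1\le|\alpha|\le N$ is not literally Lem.~\ref{lem:strictDeltaNet}.\ref{enu:momentsStrictDeltaNet} but follows from it via the binomial identity $\int y^{\alpha}(\psi_{\eps}^{b}*\psi_{\eps}^{b})(y)\,\diff{y}=\sum_{\beta\le\alpha}\binom{\alpha}{\beta}\int u^{\beta}\psi_{\eps}^{b}(u)\,\diff{u}\cdot\int v^{\alpha-\beta}\psi_{\eps}^{b}(v)\,\diff{v}$, in which every summand contains a moment with nonzero multi-index; second, in \ref{enu:convDist2} your final identification $\iota_{\R^{n}}^{b}(T*\phi)=T*\phi$ must hold as an equality of GSF, i.e.~at every point of $\csp{\R^{n}}$ and not only at the real points covered by the literal statement of Thm.~\ref{thm:embeddingD'}.\ref{enu:embSmooth}; this is again exactly the moment/strong-infinite estimate you already proved, applied uniformly on compact sets, so it is available to you but deserves a sentence.
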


\begin{proof}
\ref{enu:convDist1}: Using \eqref{eq:pairTphiAsInt}, we have
\begin{align*}
\langle S*T,\phi\rangle & =\langle S(x),\langle T(y),\phi(x+y)\rangle\rangle=\langle S(x),\int\iota_{\R^{n}}^{b}(T)(y)\phi(x+y)\,\diff{y}\rangle=\\
 & =\int\iota_{\R^{n}}^{b}(S)(x)\int\iota_{\R^{n}}^{b}(T)(y)\phi(x+y)\,\diff{y}\,\diff{x}=\\
 & =\int\left(\iota_{\R^{n}}^{b}(S)*\iota_{\R^{n}}^{b}(T)\right)(z)\phi(z)\,\diff{z},
\end{align*}
where, in the last step, we used the change of variables $x=z-y$
and Fubini's theorem.

\ref{enu:convDist2}: For all $x\in\csp{\R^{n}}$, using again \eqref{eq:pairTphiAsInt},
we have $\left(T*\phi\right)(x)=\langle T(y),\phi(x-y)\rangle=\int\iota_{\R^{n}}^{b}(T)(y)\phi(x-y)\,\diff{y}=\left(\iota_{\R^{n}}^{b}(T)*\phi\right)(x)$.
\end{proof}
\noindent We note that an equality of the type $\iota_{\R^{n}}^{b}(S*T)=\iota_{\R^{n}}^{b}(S)*\iota_{\R^{n}}^{b}(T)$
cannot hold because from Thm.~\ref{thm:convAlgDiffInt}.\ref{enu:Associative:conv}
it would imply $1*(\delta'*H)=(1*\delta')*H$ as distributions. Considering
their embeddings, we have $\iota_{\R^{n}}^{b}(1)*\left(\iota_{\R^{n}}^{b}(\delta')*\iota_{\R^{n}}^{b}(H)\right)=\iota_{\R^{n}}^{b}(1)*\left(\iota_{\R^{n}}^{b}(\delta)*\iota_{\R^{n}}^{b}(\delta)\right)=\left(\iota_{\R^{n}}^{b}(1)*\iota_{\R^{n}}^{b}(\delta')\right)*\iota_{\R^{n}}^{b}(H)=\left(\iota_{\R^{n}}^{b}(1')*\iota_{\R^{n}}^{b}(\delta)\right)*\iota_{\R^{n}}^{b}(H)=0$.
In particular, at the term $\iota_{\R^{n}}^{b}(\delta)*\iota_{\R^{n}}^{b}(\delta)$
we cannot apply Thm.~\ref{thm:convIdentity} because $\delta^{(j)}(x)=b^{j+1}\psi^{(j)}(bx)$.
This also implies that $\iota_{\R^{n}}^{b}(\delta)*\iota_{\R^{n}}^{b}(\delta)\ne\iota_{\R^{n}}^{b}(\delta)$
because otherwise we would have $0=\iota_{\R^{n}}^{b}(1)*\left(\iota_{\R^{n}}^{b}(\delta)*\iota_{\R^{n}}^{b}(\delta)\right)=\iota_{\R^{n}}^{b}(1)*\iota_{\R^{n}}^{b}(\delta)=\int\delta=1$.

\section{Hyperfinite Fourier transform\label{sec:Hyperfinite-Fourier-transform}}
\begin{defn}
\label{def:HyperfiniteFT}Let $k\in\rti_{>0}$ be a positive infinite
number. Let $f\in\gsf(K,\rccrho)$, we define the $n$-dimensional
\emph{hyperfinite Fourier transform (HFT) $\mathcal{F}_{k}(f)$ of
$f$} \emph{on }$K:=\left[-k,k\right]^{n}$ as follows: 
\begin{equation}
\mathcal{F}_{k}\left(f\right)\left(\omega\right):=\intop_{K}f\left(x\right)e^{-ix\cdotp\omega}\,\diff{x}=\intop_{-k}^{k}\,\diff{x_{1}}\ldots\intop_{-k}^{k}f\left(x_{1},\ldots,x_{n}\right)e^{-ix\cdotp\omega}\,\diff{x_{n}},\label{eq:Hpfinite_FT}
\end{equation}

\noindent where $x=\left(x_{1}\ldots x_{n}\right)\in K$ and $\omega=\left(\omega_{1}\ldots\omega_{n}\right)\in\rcrho^{n}$.
As usual, the product $x\cdotp\omega$ on $\rcrho^{n}$ denotes the
dot product $x\cdotp\omega=\sum_{j=1}^{n}x_{j}\omega_{j}\in\rti$.
For simplicity, in the following we will also use the notation $\gsf(X):=\gsf(X,\rccrho)$.
If $f\in\Dgsf(X)$ and $\text{supp}(f)\subseteq K=[-k,k]^{n}$, based
on Def.~\ref{def:intCmpSupp}, we can use the simplified notation
$\mathcal{F}(f):=\mathcal{F}_{k}(f)$.
\end{defn}

In the following, $k=[k_{\eps}]\in\rcrho_{>0}$ will always denote
a positive infinite number, and we set $K:=\left[-k,k\right]^{n}\fcmp\rcrho^{n}$.

The adjective \emph{hyperfinite} can be motivated as follows: on the
one hand, $k\in\rti$ is an infinite number, but on the other hand
we already mentioned that GSF behave on a functionally compact set
like $K$ as if it were a compact set. Similarly to the case of hyperfinite
numbers $\hypNr$ (see Def.~\ref{def:hyperfiniteN}), the adjective
\emph{hyperfinite }is frequently used to denote mathematical objects
which are in some sense infinite but behave, from several points of
view, as bounded ones.
\begin{thm}
\label{thm:base}Let $f\in\gsf\left(K\right)$, then the following
properties hold:
\begin{enumerate}
\item \label{enu:epsIntFT}Let $\omega=[\omega_{\eps}]\in\rcrho^{n}$ and
let $f$ be defined by the net $(f_{\eps})$. Then we have: 
\[
\mathcal{F}_{k}\left(f\right)\left(\omega\right)=\left[\intop_{-k_{\eps}}^{k_{\eps}}\,\diff{x_{1}}\ldots\intop_{-k_{\eps}}^{k_{\eps}}f_{\eps}\left(x_{1},\ldots,x_{n}\right)e^{-ix\cdotp\omega_{\eps}}\,\diff{x_{n}}\right]=\left[\hat{\mathcal{F}}(\chi_{K_{\eps}}f_{\eps})(\omega_{\eps})\right]\in\rccrho,
\]
where $\hat{\mathcal{F}}:\mathcal{S}(\R^{n})\ra\mathcal{S}(\R^{n})$
is the classical FT, and $\chi_{K_{\eps}}$ is the characteristic
function of $K_{\eps}$.
\item \label{enu:bound}$\forall\omega\in\rti^{n}:\ \left|\mathcal{F}_{k}(f)(\omega)\right|\le\int_{K}\left|f(x)\right|\,\diff{x}=\Vert f\Vert_{1}$,
so that the HFT is always sharply bounded.
\item \label{enu:Fourier_Map}$\mathcal{F}_{k}:\gsf\left(K\right)\longrightarrow\gsf\left(\rcrho^{n}\right)$.
\end{enumerate}
\end{thm}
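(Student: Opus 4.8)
The plan is to verify the three items in turn, each time reducing to the multidimensional integration theory of Thm.~\ref{thm:muMeasurableAndIntegral}. For \ref{enu:epsIntFT}, fix $\omega=[\omega_\eps]\in\rcrho^n$. The map $x\mapsto e^{-ix\cdot\omega}$ is a GSF on $K$ (defined by the net $x_\eps\mapsto e^{-ix_\eps\cdot\omega_\eps}$), so by Thm.~\ref{thm:propGSF}.\ref{enu:category} the integrand $h:=f\cdot e^{-i(-)\cdot\omega}$ is a GSF, hence lies in $\GI(K,\rccrho)$ with representative $h_\eps(x)=f_\eps(x)e^{-ix\cdot\omega_\eps}$. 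Since $K=\prod_{i=1}^{n}[-k,k]$ is a product of intervals, Thm.~\ref{thm:muMeasurableAndIntegral}.\ref{enu:int-ndimInt}, applied with the representatives $-k_\eps$, $k_\eps$ of $-k$, $k$, yields exactly the first equality. The second equality is merely the classical Fourier integral formula: $\chi_{K_\eps}f_\eps$ is a compactly supported integrable function, so $\intop_{K_\eps}f_\eps(x)e^{-ix\cdot\omega_\eps}\,\diff{x}=\hat{\mathcal{F}}(\chi_{K_\eps}f_\eps)(\omega_\eps)$.

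For \ref{enu:bound}, note that if $\omega\in\rti^n$ then $x\cdot\omega\in\rti$ is real for every $x\in K\subseteq\rti^n$, whence $|e^{-ix\cdot\omega}|=[\,|e^{-ix_\eps\cdot\omega_\eps}|\,]=1$. Applying the inequality $\left|\int_K g\,\diff{\mu}\right|\le\int_K|g|\,\diff{\mu}$ from Thm.~\ref{thm:muMeasurableAndIntegral}.\ref{enu:existsReprDefInt} to $g=h$ gives $|\mathcal{F}_k(f)(\omega)|\le\int_K|f(x)|\cdot|e^{-ix\cdot\omega}|\,\diff{x}=\int_K|f(x)|\,\diff{x}=\Vert f\Vert_1$. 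Since the right-hand side is a single generalized number not depending on $\omega$, the whole image of $\mathcal{F}_k(f)$ is contained in $\overline{B_{\Vert f\Vert_1}(0)}$, i.e.~the HFT is sharply bounded.

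The substance is in \ref{enu:Fourier_Map}, where I exhibit a net of smooth functions defining $\mathcal{F}_k(f)$ as a GSF on $\rcrho^n$. Set $g_\eps(\omega):=\intop_{K_\eps}f_\eps(x)e^{-ix\cdot\omega}\,\diff{x}$ for $\omega\in\R^n$, where $K_\eps=[-k_\eps,k_\eps]^n$. Because $K_\eps$ is compact and the integrand is jointly smooth, differentiation under the integral sign is legitimate and shows $g_\eps\in\cinfty(\R^n,\Cc)$ with $\partial^\alpha g_\eps(\omega)=\intop_{K_\eps}f_\eps(x)(-ix)^\alpha e^{-ix\cdot\omega}\,\diff{x}$ for every $\alpha\in\N^n$; this is the classical fact that the Fourier transform of a compactly supported integrable function is entire. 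By \ref{enu:epsIntFT} we have $\mathcal{F}_k(f)(\omega)=[g_\eps(\omega_\eps)]$, so $(g_\eps)$ defines the set-theoretic map $\omega\mapsto\mathcal{F}_k(f)(\omega)$ in the sense of Def.~\ref{def:netDefMap}, the inclusion condition holding since $\Omega_\eps=\R^n$ gives $\sint{\R^n}=\rcrho^n$. It remains to check $\rho$-moderateness of the value and of all derivative nets: from $|x|\le\sqrt{n}\,k_\eps$ on $K_\eps$ we get $|\partial^\alpha g_\eps(\omega_\eps)|\le(\sqrt{n}\,k_\eps)^{|\alpha|}\intop_{K_\eps}|f_\eps(x)|\,\diff{x}$, and since $k=[k_\eps]$ and $\int_K|f|=[\intop_{K_\eps}|f_\eps|]$ (by \ref{enu:int-ndimInt} again) are both moderate, the product is moderate for every $\alpha$. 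Hence all three conditions of Def.~\ref{def:netDefMap} hold and $\mathcal{F}_k(f)\in\gsf(\rcrho^n)$, independence of the representative being automatic from Thm.~\ref{thm:propGSF}.

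The main obstacle is this last step. The key observation is that the defining net $g_\eps$ is genuinely \emph{smooth} in $\omega$ — the Fourier transform of a compactly supported integrable function being entire — so that the GSF structure is inherited; the accompanying moderateness estimates are then routine, resting only on the moderateness of the infinite endpoint $k$ and of $\int_K|f|$.
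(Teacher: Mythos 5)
Your proof is correct and follows essentially the same route as the paper: item \ref{enu:epsIntFT} by closure of GSF under composition/products together with Thm.~\ref{thm:muMeasurableAndIntegral}.\ref{enu:int-ndimInt}, item \ref{enu:bound} as a direct consequence of the $\eps$-wise formula via the triangle inequality, and item \ref{enu:Fourier_Map} by exhibiting the defining net $g_{\eps}$ and checking moderateness of all $\partial^{\alpha}g_{\eps}(\omega_{\eps})$ through differentiation under the integral sign. The only difference is that you write out explicitly the moderateness estimates (using $|x^{\alpha}|\le(\sqrt{n}\,k_{\eps})^{|\alpha|}$ and the moderateness of $k$ and $\int_{K}|f|$) that the paper compresses into ``we can now proceed as above.''
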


\begin{proof}
\ref{enu:epsIntFT}: For all $\omega\in\rcrho^{n}$ fixed, the map
$x\in K\mapsto f\left(x\right)e^{-ix\cdotp\omega}$ is a GSF by the
closure with respect to composition, i.e.~Thm.~\ref{thm:propGSF}.\ref{enu:category}.
Therefore, we can apply Thm.~\ref{thm:muMeasurableAndIntegral}.\ref{enu:int-ndimInt}.

To prove \ref{enu:Fourier_Map}, we have to show that $\mathcal{F}_{k}(f):\rcrho^{n}\ra\rccrho$
is defined by a net $\left(\mathcal{F}_{k}\right)_{\eps}\in\cinfty\left(\mathbb{R}^{n},\Cc\right)$
(see Def.~\ref{def:netDefMap}). We can naturally define such a net
as
\[
\left(\mathcal{F}_{k}\right)_{\eps}\left(y\right):=\intop_{-k_{\eps}}^{k_{\eps}}\,\diff{x_{1}}\ldots\intop_{-k_{\eps}}^{k_{\eps}}f_{\eps}\left(x_{1},\ldots,x_{n}\right)e^{-ix\cdotp y}\,\diff{x_{n}}\quad\forall y\in\R^{n},
\]
and we claim it satisfies the following properties:
\begin{enumerate}[label=(\alph*)]
\item \label{enu:hypotheis1}$\left[\left(\mathcal{F}_{k}\right)_{\eps}\left(\omega_{\eps}\right)\right]\in\rccrho$,
$\forall\left[\omega_{\eps}\right]\in\rcrho^{n}$.
\item \label{enu:hypothesis2}$\forall\left[\omega_{\eps}\right]\in\rcrho^{n}\,\forall\alpha\in\mathbb{N}^{n}:\ \left(\partial^{\alpha}\left(\mathcal{F}_{k}\right)_{\eps}\left(\omega_{\eps}\right)\right)\in\Cc_{\rho}$.
\end{enumerate}
Claim \ref{enu:hypotheis1} is justified by \ref{enu:epsIntFT} above.
From \ref{enu:epsIntFT} it directly follows \ref{enu:bound}. In
order to prove \ref{enu:hypothesis2}, we use the standard derivation
under the integral sign to have
\[
\partial^{\alpha}\left(\mathcal{F}_{k}\right)_{\eps}\left(\omega_{\eps}\right)=\intop_{-k_{\eps}}^{k_{\eps}}\,\diff{x_{1}}\ldots\intop_{-k_{\eps}}^{k_{\eps}}f_{\eps}\left(x_{1},\ldots,x_{n}\right)e^{-ix\cdotp\omega_{\eps}}(-ix^{\alpha})\,\diff{x_{n}}.
\]
We can now proceed as above to prove \ref{enu:hypothesis2} and hence
the claim \ref{enu:Fourier_Map}.
\end{proof}

\subsection{\label{subsec:The-heuristic-motivation}The heuristic motivation
of the FT in a non-Archimedean setting}

Frequently, the formula for the definition of the FT (e.g.~for rapidly
decreasing functions) is informally motivated using its relations
with Fourier series. In order to replicate a similar argument for
GSF, we need the notion of \emph{hyperseries}. In fact, exactly as
the ordinary limit $\lim_{n\in\N}a_{n}$ is not well suited for the
sharp topology (because of its infinitesimal neighbourhoods) and we
have to consider hyperlimits $\hyperlim{\rho}{\sigma}a_{n}$ (see
Def.~\ref{def:hyperfiniteN}.\ref{enu:hyperlimit}), likewise to
study series of $a_{n}\in\rccrho$, $n\in\N$, we have to consider
\begin{align*}
\hypersum{\rho}{\sigma}a_{n} & :=\hyperlimarg{\rho}{\sigma}{N}\sum_{n=0}^{N}a_{n}\in\rccrho,\\
\hypersumZ{\rho}{\sigma}a_{n} & :=\hyperlimarg{\rho}{\sigma}{N}\sum_{n=-N}^{N}a_{n}\in\rccrho,
\end{align*}
where $\hyperZ{\sigma}:=\hypNs\cup\left(-\hypNs\right)\subseteq\RC{\sigma}$.
The main problem in this definition is how to define the \emph{hyperfinite
sums} $\sum_{n=M}^{N}a_{n}\in\rccrho$ for arbitrary hypernatural
numbers $N$, $M\in\hypNs$ and starting from suitable \emph{ordinary}
sequences $\left(a_{n}\right)_{n\in\N}$ of $\rccrho$. However, this
can be done, and the resulting notion extends several classical theorems,
see \cite{TiwGio21}.

Only for this section, we hence assume that $f\in\Dgsf([-T,T])$,
$T\in\rti_{>0}$, can be written as a Fourier hyperseries
\[
f(t)=\hypersumZ{\rho}{\sigma}c_{n}e^{2\pi i\frac{n}{T}t}\quad\forall t\in(-T,T),
\]
where $\sigma$ is another gauge such that $\sigma_{\eps}\le\rho_{\eps}^{q}$
for all $q\in\N$ and for $\eps$ small (so that $\R_{\rho}\subseteq\R_{\sigma}$,
see Def.~\ref{def:RCGN}). Using Thm.~\ref{thm:contResult} to exchange
hyperseries and integration, for each $h\in\hyperZ{\sigma}$, we have
\[
\int_{-T}^{T}f(t)e^{-2\pi i\frac{h}{T}t}\,\diff{t}=\hypersumZ{\rho}{\sigma}c_{n}\int_{-T}^{T}e^{2\pi i\frac{t}{T}(n-h)}\,\diff{t}=2T\cdot c_{h}.
\]
That is $c_{h}=\frac{1}{2T}\mathcal{F}(f)\left(2\pi\frac{h}{T}\right)$.

It is also well-known that, informally, if $T$ is ``sufficiently
large'', then the Fourier coefficients $c_{n}$ ``approximate''
the FT scaled by $\frac{1}{2T}$ and dilated by $2\pi$. Using our
non-Archimedean language, this can be formalized as follows: Let $\omega=[\omega_{\eps}]\in\rti$,
and assume that $T=[T_{\eps}]$ is an infinite number, then setting
$h_{\omega}:=\left[\text{int}\left(\omega_{\eps}\cdot T_{\eps}\right)\right]\in\hyperZ{\rho}$
(here we use $\R_{\rho}\subseteq\R_{\sigma}$), we have $\omega_{\eps}\le\frac{h_{\omega\eps}}{T_{\eps}}\le\omega_{\eps}+\frac{1}{T_{\eps}}$,
so that $\frac{h_{\omega}}{T}\approx\omega$ because $T$ is an infinite
number. By Thm.~\ref{thm:base}, $\mathcal{F}(f)$ is a GSF. Let
$a$, $b$, $c$, $d\in$$\rti$, with $a<c<d<b$, and set $M:=\max_{\omega\in[2\pi a,2\pi b]}\mathcal{F}(f)'(\omega)$.
Using Lem.~\ref{lem:mayer}, we can find $q\in\N$ such that $c-a\ge\diff{\rho}^{q}$
and $b-d\ge\diff{\rho}^{q}$. Assume that $T$ is sufficiently large
so that the following conditions hold
\[
\frac{1}{T}\le\diff{\rho}^{q},\quad\frac{M}{T}\approx0.
\]
Then, for all $\omega\in[c,d]$, we have $\frac{h_{\omega}}{T}\le\omega+\frac{1}{T}\le d+\diff{\rho}^{q}\le b$,
and $\frac{h_{\omega}}{T}\ge\omega\ge c>a$, so that $\frac{h_{\omega}}{T}$,
$\omega\in[a,b]$. From the mean value theorem Thm.~\ref{thm:meanValue},
we hence have
\[
\left|\mathcal{F}(f)\left(2\pi\frac{h_{\omega}}{T}\right)-\mathcal{F}(f)\left(2\pi\omega\right)\right|\le2\pi M\left|\frac{h_{\omega}}{T}-\omega\right|\le2\pi\frac{M}{T}\approx0.
\]
We hence proved that
\[
\exists Q\in\N\,\forall T\ge\diff{\rho}^{-Q}:\ c_{h_{\omega}}\approx\frac{1}{2T}\mathcal{F}(f)(2\pi\omega).
\]
Finally, note that since $T$ is an infinite number, if $h_{\omega}\in\Z$,
then necessarily $\omega$ must be infinitesimal; on the contrary,
if $\omega\ge r\in\R_{\ne0}$, then necessarily $h_{\omega}\in\hyperZ{\sigma}\setminus\Z$
is an infinite integer number.

Therefore, with the precise meaning given above, the heuristic relations
between Fourier coefficients and HFT holds also for GSF.

\section{\label{sec:The-Riemann-Lebesgue-lemma}The Riemann-Lebesgue lemma
in a non-linear setting}

The following result represents the Riemann-Lebesgue lemma in our
framework. It immediately highlights an important difference with
respect to the classical approach since it states that the HFT of
a very large class of compactly supported GSF is still compactly supported
(see also Thm.~\ref{thm:uncertainty} for a classical formulation
of the uncertainty inequality for GSF).
\begin{lem}
\label{lem:Rieman-Lebesgue}Let $H\fcmp\rti^{n}$ and $f\in\Dgsf\left(H\right)$
be a compactly supported GSF. Assume that
\begin{equation}
\exists C,b\in\rti_{>0}\,\forall x\in H\,\forall j\in\N:\ \left|\diff{^{j}}f(x)\right|\le C\cdot b^{j}.\label{eq:derPol}
\end{equation}
For all $N_{1},\ldots,N_{n}\in\N$ and $\omega\in\rti^{n}$, if $\omega_{1}^{N_{1}}\cdot\ldots\cdot\omega_{n}^{N_{n}}$
is invertible, then
\begin{equation}
\left|\mathcal{F}(f)(\omega)\right|\le\frac{1}{\left|\omega_{1}^{N_{1}}\cdot\ldots\cdot\omega_{n}^{N_{n}}\right|}\cdot\int_{H}\left|\partial_{1}^{N_{1}}\ldots\partial_{n}^{N_{n}}f(x)\right|\,\diff{x}.\label{eq:R-Lineq}
\end{equation}
Therefore
\begin{equation}
\lim_{\omega\to\infty}\left|\mathcal{F}(f)(\omega)\right|=0.\label{eq:R-Llim}
\end{equation}
Actually, \eqref{eq:R-Lineq} yields the stronger result:
\begin{equation}
\exists Q\in\N:\ \mathcal{F}(f)\in\Dgsf\left(\overline{B_{\diff{\rho}^{-Q}}(0)}\right).\label{eq:HFTCmptSupp}
\end{equation}
\end{lem}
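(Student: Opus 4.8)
The plan is to obtain \eqref{eq:R-Lineq} by repeated integration by parts, and then to squeeze out \eqref{eq:HFTCmptSupp} (and hence \eqref{eq:R-Llim}) from it by sending the order of differentiation to infinity. First I would establish the inequality. Fix $\omega$ with $\omega_1^{N_1}\cdots\omega_n^{N_n}$ invertible, so that every $\omega_j$ with $N_j\ge 1$ is invertible, and pick $h\in\rti_{>0}$ with $\text{supp}(f)\subseteq H\subseteq[-h,h]^n$ and $h<k$. By Thm~\ref{thm:DerivativeIsZero}.\ref{enu:derZeroBound} every partial derivative $\partial^\alpha f$ vanishes as soon as one coordinate reaches $\pm k$, so when I write $\mathcal{F}(f)(\omega)=\int_K f(x)e^{-ix\cdot\omega}\,\diff{x}$ as an iterated integral (Thm~\ref{thm:muMeasurableAndIntegral}.\ref{enu:int-ndimInt}) and integrate by parts $N_j$ times in each variable $x_j$ (Thm~\ref{thm:intRules}.\ref{enu:intByParts}), all boundary terms disappear and each step contributes a factor $\tfrac{1}{i\omega_j}$. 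This gives the identity
\begin{equation*}
\mathcal{F}(f)(\omega)=\frac{1}{\prod_{j=1}^{n}(i\omega_j)^{N_j}}\int_K \partial_1^{N_1}\cdots\partial_n^{N_n}f(x)\,e^{-ix\cdot\omega}\,\diff{x},
\end{equation*}
and taking moduli with $|e^{-ix\cdot\omega}|=1$, the estimate $\big|\int_K g\big|\le\int_K|g|$ (Thm~\ref{thm:muMeasurableAndIntegral}.\ref{enu:existsReprDefInt}) and $\int_K=\int_H$ on the support yields exactly \eqref{eq:R-Lineq}.

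The substantial part is \eqref{eq:HFTCmptSupp}. Since $C$, $b$ and $\lambda(H)$ are moderate, fix $A,P\in\N$ with $C\,\lambda(H)\le\diff{\rho}^{-A}$ and $b\le\diff{\rho}^{-P}$ and put $Q:=P+1$. Using the one-variable case of the identity above in a single coordinate $p$ (whose boundary terms again vanish by Thm~\ref{thm:DerivativeIsZero}.\ref{enu:derZeroBound}) together with $|\partial_p^{N}f(x)|\le|\diff{^{N}}f(x)|\le C b^{N}$ on $H$ (Thm~\ref{thm:extremeValues} and \eqref{eq:derPol}), I get, for each $p$ with $\omega_p$ invertible,
\begin{equation*}
|\mathcal{F}(f)(\omega)|\le\frac{C\,b^{N}\,\lambda(H)}{|\omega_p|^{N}}\qquad\forall N\in\N.
\end{equation*}
To prove the support claim I would show $\{\omega\mid|\mathcal{F}(f)(\omega)|>0\}\subseteq\overline{B_{\diff{\rho}^{-Q}}(0)}=\{|\omega|\le\diff{\rho}^{-Q}\}$, a sharply closed set by \eqref{eq:closureBall}, so that the same inclusion holds for its closure, i.e.\ for $\text{supp}(\mathcal{F}(f))$. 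Arguing by contraposition, let $|\omega|\not\le\diff{\rho}^{-Q}$; by Lem~\ref{lem:negationsSubpoints} there is $L\subzero I$ with $|\omega|>_L\diff{\rho}^{-Q}$, hence $\mu:=|\omega_1|\vee\cdots\vee|\omega_n|\ge_L\tfrac{1}{\sqrt n}\diff{\rho}^{-Q}$. The maximum need not be attained by one fixed coordinate, so I split $L$ into the cofinal pieces $L_p$ on which $|\omega_p|=_{L_p}\mu$; on each $L_p$ the number $\omega_p$ is invertible and, working in $\rti|_{L_p}$, the displayed bound gives $|\mathcal{F}(f)(\omega)|\le_{L_p}(\sqrt n)^{N}\diff{\rho}^{-A+(Q-P)N}=(\sqrt n)^{N}\diff{\rho}^{-A+N}$. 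Because $(\sqrt n\,\diff{\rho})^{N}\le\diff{\rho}^{N/2}$ for $\eps$ small, letting $N\to\infty$ forces $|\mathcal{F}(f)(\omega)|\le_{L_p}\diff{\rho}^{m}$ for every $m\in\N$, i.e.\ $\mathcal{F}(f)(\omega)=_{L_p}0$. As the $L_p$ cover $L$, we get $\mathcal{F}(f)(\omega)=_L 0$ on the cofinal set $L$, so $|\mathcal{F}(f)(\omega)|$ cannot be invertible and $|\mathcal{F}(f)(\omega)|\not>0$. This proves the inclusion, hence \eqref{eq:HFTCmptSupp} since $\overline{B_{\diff{\rho}^{-Q}}(0)}\fcmp\rti^n$ and $\mathcal{F}(f)\in\gsf(\rti^n)$ by Thm~\ref{thm:base}.\ref{enu:Fourier_Map}.

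Finally, \eqref{eq:R-Llim} is a free consequence: by \eqref{eq:HFTCmptSupp} and Thm~\ref{thm:DerivativeIsZero}.\ref{enu:equivCmptSupp}, $\mathcal{F}(f)$ vanishes on $\text{ext}(\overline{B_{\diff{\rho}^{-Q}}(0)})$, hence is identically $0$ for $|\omega|$ strongly larger than $\diff{\rho}^{-Q}$. I expect the two genuinely delicate points to be the clean vanishing of the boundary terms---best handled at the generalized level through Thm~\ref{thm:DerivativeIsZero}.\ref{enu:derZeroBound} rather than $\eps$-wise, since the representatives $f_\eps$ need not be compactly supported---and, above all, upgrading smallness to an exact zero: this is what forces both the passage $N\to\infty$ (using \eqref{eq:derPol} and moderateness to defeat the growth of $b^{N}$) and the descent to the dominant coordinate on subpoints, since $|\omega|$ being large only guarantees that some $|\omega_p|$ is large cofinally.
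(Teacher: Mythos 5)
Your proposal is correct and follows essentially the same route as the paper's proof: repeated integration by parts (with all boundary terms killed by Thm.~\ref{thm:DerivativeIsZero}.\ref{enu:derZeroBound}) gives \eqref{eq:R-Lineq}, and the compact-support claim \eqref{eq:HFTCmptSupp} is obtained exactly as in the paper by passing to the dominant coordinate on cofinal subsets $L_{p}$, applying \eqref{eq:R-Lineq} there with $N_{p}=N\to\infty$, and using moderateness of $C$, $b$, $\lambda(H)$ to defeat the growth of $b^{N}$. The only cosmetic differences are that you verify $\text{supp}(\mathcal{F}(f))\subseteq\overline{B_{\diff{\rho}^{-Q}}(0)}$ directly from the definition of support (contraposition plus sharp closedness of the closed ball), whereas the paper shows $\mathcal{F}(f)$ vanishes identically on $\text{ext}\left(\overline{B_{\diff{\rho}^{-Q}}(0)}\right)$ and invokes Thm.~\ref{thm:DerivativeIsZero}.\ref{enu:equivCmptSupp}, and that you deduce \eqref{eq:R-Llim} from \eqref{eq:HFTCmptSupp} rather than directly from \eqref{eq:R-Lineq}.
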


\begin{proof}
Let us apply integration by parts Thm.~\ref{thm:intRules}.\ref{enu:intByParts}
at the $p$-th integral in \eqref{eq:Hpfinite_FT} (assuming that
$N_{p}>0$):
\begin{align*}
\intop_{-k}^{k}f\left(x\right)e^{-i\omega\cdot x}\,\diff{x_{p}} & =\left.-\frac{f\left(x\right)}{i\omega_{p}}e^{-i\omega\cdot x}\right|_{x_{p}=-k}^{x_{p}=k}+\frac{1}{i\omega_{p}}\intop_{-k}^{k}\partial_{p}f\left(x\right)e^{-i\omega\cdot x}\,\diff{x_{p}}=\\
 & =\frac{1}{i\omega_{p}}\intop_{-k}^{k}\partial_{p}f\left(x\right)e^{-i\omega\cdot x}\,\diff{x_{p}}.
\end{align*}
because Thm.~\ref{thm:DerivativeIsZero}.\ref{enu:derZeroBound}
yields $f(x)=0$ if $x_{p}=\pm k$. Applying the same idea with $N_{p}\in\N$
repeated integrations by parts for each integral in \eqref{eq:Hpfinite_FT},
and using Thm.~\ref{thm:DerivativeIsZero}.\ref{enu:derZeroBound},
we obtain
\[
\mathcal{F}(f)(\omega)=\frac{1}{\omega_{1}^{N_{1}}\cdot\ldots\cdot\omega_{n}^{N_{n}}i^{N_{1}+\ldots+N_{n}}}\int_{K}\partial_{1}^{N_{1}}\ldots\partial_{n}^{N_{n}}f(x)e^{-ix\cdot\omega}\,\diff{\rho}.
\]
Claims \eqref{eq:R-Lineq} and \eqref{eq:R-Llim} both follows from
Thm.~\ref{thm:muMeasurableAndIntegral}.\ref{enu:existsReprDefInt}
and from the closure of GSF with respect to differentiation, i.e.~Thm.~\ref{thm:FR-forGSF}.

To prove \eqref{eq:HFTCmptSupp}, we first recall \eqref{eq:closureBall},
so that $\overline{B_{\diff{\rho}^{-Q}}(0)}\fcmp\rti^{n}$. Let $C$,
$b\in\rti_{>0}$ from \eqref{eq:derPol} and $\lambda(H)\in\rti$,
where $\lambda$ is the Lebesgue measure. Therefore, $b\le\diff{\rho}^{-R}$
for some $R\in\N$, and we can set $Q:=R+1$. We want to prove the
claim using Thm.~\ref{thm:DerivativeIsZero}.\ref{enu:equivCmptSupp},
so that we take $\omega=(\omega_{1},\ldots,\omega_{n})\in\text{ext}\left(\overline{B_{\diff{\rho}^{-Q}}(0)}\right)$.
It cannot be $|\omega|\sbpt{<}\diff{\rho}^{-Q}$ because this would
yield $|\omega-a|\sbpt{=}0$ for some $a\in\overline{B_{\diff{\rho}^{-Q}}(0)}$;
thereby, $|\omega|\ge\diff{\rho}^{-Q}$ by Lem.~\ref{lem:trich1st}.
It always holds $\max_{l=1,\ldots,n}|\omega_{l}|\ge\frac{1}{n}|\omega|$,
i.e.~$\left[\max_{l=1,\ldots,n}|\omega_{l\eps}|\right]\ge\frac{1}{n}\left[|\omega_{\eps}|\right]$,
where $\omega_{l}=[\omega_{l\eps}]$ and $\omega_{\eps}:=\left|(\omega_{1\eps},\ldots,\omega_{n\eps})\right|$.
In general, we cannot say that $|\omega_{p}|=\max_{l=1,\ldots,n}|\omega_{l}|$
for some $p=1,\ldots,n$ because at most this equality holds only
for subpoints. In fact, set $L_{p}:=\left\{ \eps\in I\mid\max_{l=1,\ldots,n}|\omega_{l\eps}|=\left|\omega_{p\eps}\right|\right\} $
and let $P\subseteq\{1,\ldots,n\}$ be the non empty set of all the
indices $p=1,\ldots,n$ such that $L_{p}\subzero I$. We hence have
$|\omega_{p}|=_{L_{p}}\max_{l=1,\ldots,n}|\omega_{l}|\ge\frac{1}{n}|\omega|\ge\frac{1}{n}\diff{\rho}^{-Q}$
for all $p\in P$, and
\begin{equation}
\forall^{0}\eps\,\exists p\in P:\ \eps\in L_{p}.\label{eq:L_p}
\end{equation}
We apply assumption \eqref{eq:derPol} and inequality \eqref{eq:R-Lineq}
with an arbitrary $N_{p}=N\in\N$, $p\in P$, and with $N_{j}=0$
for all $j\ne p$ to get
\begin{align*}
\left|\mathcal{F}(f)(\omega)\right| & \le\frac{1}{|\omega_{p}|^{N}}\cdot\int_{H}\left|\partial_{p}^{N}f(x)\right|\,\diff{x}\le_{L_{p}}n^{N}\cdot\diff{\rho}^{NQ}Cb^{N}\lambda(H)\le\\
 & \le\diff{\rho}^{-1}\cdot\diff{\rho}^{N(Q-R)}C\lambda(H)=\diff{\rho}^{N-1}C\lambda(H).
\end{align*}
For $N\to+\infty$ (in the ring $\rti|_{L_{p}})$, we hence have that
$\mathcal{F}(f)(\omega)=_{L_{p}}0$. From \eqref{eq:L_p} we hence
finally get $\mathcal{F}(f)(\omega)=0$.
\end{proof}
\begin{rem}
\label{rem:R-L}~
\begin{enumerate}
\item Considering that $\delta(t)=b^{n}\psi(bt)$ and that $\psi$ is an
even function (Lem.~\ref{lem:strictDeltaNet}.\ref{enu:suppStrictDeltaNet}),
we have
\begin{equation}
\mathcal{F}(\delta)(\omega)=\int\delta(t)e^{-it\omega}\,\diff{t}=\int\delta(0-t)e^{-it\omega}\,\diff{t}=\left(\delta*e^{-i(-)\omega}\right)(0).\label{eq:deltaConvFT}
\end{equation}
We already know that if $b/|\omega|$ is a strong infinite number,
then the function $f_{\omega}(t)=e^{-it\omega}$ is bounded by a tame
polynomial. Thereby, using Thm.~\ref{thm:convIdentity}, we have
$\mathcal{F}(\delta)(\omega)=f_{\omega}(0)=1$; in particular, $\mathcal{F}(\delta)|_{\R}=1$.
\item On the other hand (taking for simplicity $\psi:=\mathcal{F}^{-1}(\beta)$,
where $\beta\in\Coo(\R)$ is supported e.g.~in $[-1,1]$ and identically
equals $1$ in a neighborhood of $0$, see Thm.~\ref{thm:embeddingD'}),
$\delta^{(j)}(t)=b^{j+1}\psi^{(j)}(bt)$ if $n=1$, and hence for
all $t\in\rti$, we have
\begin{align*}
\delta^{(j)}(t) & =b^{j+1}\psi^{(j)}(bt)=b^{j+1}\cdot\left[\frac{\diff{}^{j}}{\diff{t^{j}}}\left(\frac{1}{2\pi}\int\beta(x)e^{ib_{\eps}tx}\,\diff x\right)\right]=\\
 & =\frac{b^{j+1}}{2\pi}\left[\int(ib_{\eps}x)^{j}\beta(x)e^{ib_{\eps}tx}\ \diff x\right]\\
\left|\delta^{(j)}(t)\right| & \le\frac{b^{2j+1}}{2\pi}\int_{-1}^{1}|x|^{j}\beta(x)\,\diff x=:C(b^{2})^{j}.
\end{align*}
Thus, Dirac's delta satisfies condition \eqref{eq:derPol} and hence
\begin{equation}
\exists Q\in\N:\ \mathcal{F}(\delta)\in\Dgsf(\overline{B_{\diff{\rho}^{-Q}}(0)}).\label{eq:FTDeltaCmptSupp}
\end{equation}
 In the following, we will use the notation $\mathbb{1}:=\mathcal{F}(\delta)$.
\item The previous result also yields that $f*\delta=f$ cannot hold in
general since otherwise, we can argue as in \eqref{eq:deltaConvFT}
to prove that $\mathcal{F}(\delta)(\omega)=1$ for all $\omega\in\rti$,
in contradiction with \eqref{eq:FTDeltaCmptSupp}.
\end{enumerate}
\end{rem}

Inequality \eqref{eq:R-Lineq} can also be stated as a general impossibility
theorem (where we intuitively think $n=1$).
\begin{thm}
\label{thm:R-Limp}Let $(R,\le)$ be an ordered ring and $G$ be an
$R$-module. Assume that we have the following maps (for which we
use notations aiming to draw the interpretation where $G$ is a space
of GF)
\begin{align*}
(-)' & :G\ra G\\
\int & :G\ra R\\
(-)\cdot\exp_{\omega} & :G\ra G\quad\forall\omega\in R\\
|-| & :R\ra R.
\end{align*}
These maps satisfy the following integration by parts formula
\begin{equation}
\int f\cdot\exp_{\omega}=\frac{1}{\omega}\int f'\cdot\exp_{\omega}\label{eq:absIntParts}
\end{equation}
for all invertible $\omega\in R^{*}$, $f\in G$, and
\begin{equation}
|rs|=|r||s|\quad\forall r,s\in R\label{eq:absProd}
\end{equation}
\begin{equation}
\forall f\in G\,\exists C\in R\,\forall\omega\in R^{*}:\ \left|\int f\cdot\exp_{\omega}\right|\le C.\label{eq:boundInt}
\end{equation}
Then for all $f\in G$ and all $N\in\N_{>0}$ there exists $C=C(f,N)\in R$
such that
\begin{equation}
\forall\omega\in R^{*}:\left|\int f\cdot\exp_{\omega}\right|\le\frac{C}{|\omega|^{N}}.\label{eq:absRL}
\end{equation}
Therefore, if $\delta\in G$ satisfies $\frac{C(\delta,N)}{|\omega|^{N}}<1$
for some $\omega\in R$ and some $N\in\N$, then
\[
\left|\int\delta\cdot\exp_{\omega}\right|<1.
\]
\end{thm}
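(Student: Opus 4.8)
The plan is to iterate the integration-by-parts identity \eqref{eq:absIntParts} exactly $N$ times and then invoke the uniform bound \eqref{eq:boundInt} for the $N$-th derivative. Writing $f^{(j)}$ for the $j$-fold application of $(-)'$ to $f$ (so that $f^{(0)}=f$ and $f^{(j+1)}=(f^{(j)})'\in G$), I first establish by induction on $j$ that
\begin{equation}
\int f\cdot\exp_{\omega}=\frac{1}{\omega^{j}}\int f^{(j)}\cdot\exp_{\omega}\qquad\forall\omega\in R^{*},\ \forall j\in\N.\label{eq:iterIBP}
\end{equation}
The base case $j=0$ is trivial, and the inductive step applies \eqref{eq:absIntParts} to $f^{(j)}\in G$ in place of $f$, using that $\omega$, hence $\omega^{j+1}$, is invertible.

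Before taking absolute values I would record the elementary consequences of multiplicativity \eqref{eq:absProd}. Setting $r=s=1$ gives $|1|=|1|^{2}$; since the degenerate case $|1|=0$ would force $|r|=|r\cdot 1|=|r|\,|1|=0$ for every $r\in R$, I may assume $|1|=1$. Then for invertible $\omega$ the identity $|\omega|\cdot|\omega^{-1}|=|\omega\omega^{-1}|=|1|=1$ shows that $|\omega|$ is invertible with $|\omega|^{-1}=|\omega^{-1}|$, so that $\frac{1}{|\omega|^{N}}$ is meaningful and $\left|\frac{1}{\omega^{N}}\right|=\frac{1}{|\omega|^{N}}$.

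With these preliminaries, fix $f\in G$ and $N\in\N_{>0}$. Apply the uniform boundedness hypothesis \eqref{eq:boundInt} to the element $f^{(N)}\in G$: there exists $C=C(f,N)\in R$ with $\left|\int f^{(N)}\cdot\exp_{\omega}\right|\le C$ for all $\omega\in R^{*}$. Combining this with \eqref{eq:iterIBP} for $j=N$ and the multiplicativity of $|-|$ yields, for every $\omega\in R^{*}$,
\[
\left|\int f\cdot\exp_{\omega}\right|=\left|\frac{1}{\omega^{N}}\right|\cdot\left|\int f^{(N)}\cdot\exp_{\omega}\right|\le\frac{C}{|\omega|^{N}},
\]
which is \eqref{eq:absRL}. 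The final assertion is then immediate: if $\delta\in G$ and $\omega\in R$, $N\in\N$ are such that $\frac{C(\delta,N)}{|\omega|^{N}}<1$ (which presupposes $|\omega|$, hence $\omega$, invertible), then chaining \eqref{eq:absRL} for $f=\delta$ with this strict inequality gives $\left|\int\delta\cdot\exp_{\omega}\right|\le\frac{C(\delta,N)}{|\omega|^{N}}<1$.

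There is essentially no hard step here, as the result is a purely formal consequence of the hypotheses. The only point requiring a little care is the bookkeeping around the absolute value — verifying $|1|=1$ and the invertibility of $|\omega|$ — so that the bound $\frac{C}{|\omega|^{N}}$ is well-defined and the passage from $\left|\frac{1}{\omega^{N}}\cdot x\right|$ to $\frac{|x|}{|\omega|^{N}}$ is justified. I would also note that the constant $C(f,N)$ depends on $f$ only through its $N$-th derivative $f^{(N)}$, matching the intended interpretation of \eqref{eq:R-Lineq}.
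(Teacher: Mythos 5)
Your proposal is correct and follows essentially the same route as the paper's own proof: iterate the integration-by-parts formula \eqref{eq:absIntParts} $N$ times to get $\int f\cdot\exp_{\omega}=\frac{1}{\omega^{N}}\int f^{(N)}\cdot\exp_{\omega}$, then apply $|-|$ together with \eqref{eq:absProd} and \eqref{eq:boundInt} applied to $f^{(N)}$. Your additional bookkeeping (the explicit induction, the verification that $|1|=1$ and that $|\omega|$ is invertible with $|\omega|^{-1}=|\omega^{-1}|$) is a welcome tightening of details the paper leaves implicit, but it is not a different argument.
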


\begin{proof}
For $f\in G$, in the usual way we recursively define $f^{(p)}\in G$
using the map $(-)':G\ra G$. Taking formula \eqref{eq:absIntParts}
for $N\in\N_{>0}$ times we get $\int f\cdot\exp_{\omega}=\frac{1}{\omega^{N}}\int f^{(N)}\cdot\exp_{\omega}$.
Applying $|-|$ and using \eqref{eq:absProd} and \eqref{eq:boundInt}
we get the conclusion \eqref{eq:absRL}.
\end{proof}
\noindent Note that we can take $R=\left\{ i\cdot r\mid r\in\rccrho\right\} $
to apply this abstract result to the case of Lem\@.~\ref{lem:Rieman-Lebesgue}.
This result also underscore that in the case $G=\mathcal{D}'(\R)$,
$R=\R$ we cannot have an integration by parts formula such as \eqref{eq:absIntParts}.
Once more, it also underscores that, since \eqref{eq:absIntParts}
holds in our setting, we cannot have $f*\delta=f$ without limitations
because this would imply $\mathcal{F}(\delta)(\omega)=1$ for all
$\omega\in\rti$.
\begin{example}
\label{exa:exp}Let $f\left(x\right)=e^{x}$ for all $\left|x\right|\leq k$,
where $k:=-\log\left(\diff\rho\right)$. The hyperfinite Fourier transform
$\mathcal{F}_{k}$ of $f$ is
\begin{align*}
\mathcal{F}_{k}\left(f\right)\left(\omega\right) & =\frac{e^{k\left(1-i\omega\right)}-e^{-k\left(1-i\omega\right)}}{1-i\omega}=\frac{\diff{\rho^{\left(i\omega-1\right)}}-\diff{\rho^{\left(1-i\omega\right)}}}{1-i\omega}=\\
 & =\frac{1}{1-i\omega}\left(\frac{\diff{\rho}^{i\omega}}{\diff{\rho}}-\frac{\diff{\rho}}{\diff{\rho}^{i\omega}}\right)\quad\forall\omega\in\rti.
\end{align*}
Note that $1-i\omega$, $\omega\in\rti$, is always invertible with
the usual inverse $\frac{1+i\omega}{1+\omega^{2}}$, moreover, $\diff{\rho}^{i\omega}=e^{i\omega\log\diff{\rho}}$
and hence $|\diff{\rho}^{i\omega}|=1$. Therefore, $\mathcal{F}_{k}(f)(\omega)$
is always an infinite complex number for all finite numbers $\omega$.
If $\omega\ge\diff{\rho}^{-1-r}$, $r\in\R_{>0}$, then $\mathcal{F}_{k}(f)(\omega)$
is infinitesimal but not zero. Clearly, $f\notin\Dgsf(K)$.

considering Robinson-Colombeau generalized numbers, the Gaussian
is compactly supported:
\begin{lem}
\label{lem:example}Let $f\left(x\right)=e^{-\frac{\left|x\right|^{2}}{2}}$
for all $x\in\rti^{n}$. Then $f\in\Dgsf\left(\overline{B_{h}(0)}\right)$
for all strong infinite number $h\in\rti_{>0}$. Moreover, $\mathcal{F}\left(f\right)=\left(2\pi\right)^{\frac{n}{2}}f$.
\begin{proof}
The function $f$ satisfies the inequality $0\leq f\left(x\right)\leq\left|x\right|^{-q}$,
$\forall q\in\mathbb{N}$, for $\left|x\right|$ finite sufficiently
large. Therefore, for all strongly infinite $x$, we have $f\left(x\right)=0$
i.e., $f\in\Dgsf\left(\rcrho^{n}\right)$. We first prove the second
claim in dimension $n=1$; denoting by $\hat{\mathcal{F}}$ the classical
Fourier we have
\begin{align*}
\mathcal{F}(f)(\omega) & =\mathcal{F}_{\diff\rho^{-1}}(f)(\omega)=\int_{-\diff\rho^{-1}}^{\diff\rho^{-1}}e^{-x^{2}/2}e^{-i\omega x}\,\diff x=\\
 & =\left[\int_{-\rho_{\eps}^{-1}}^{\rho_{\eps}^{-1}}e^{-x^{2}/2}e^{-i\omega_{\eps}x}\,\diff x\right]\\
 & =\left[\int_{-\rho_{\eps}^{-1}}^{-\infty}e^{-x^{2}/2}e^{-i\omega_{\eps}x}\,\diff x+\hat{\mathcal{F}}\left(e^{-x^{2}/2}\right)(\omega_{\eps})+\int_{+\infty}^{\rho_{\eps}^{-1}}e^{-x^{2}/2}e^{-i\omega_{\eps}x}\,\diff x\right]\\
 & =\left[\sqrt{2\pi}e^{-\omega_{\eps}^{2}/2}-2\int_{\rho_{\eps}^{-1}}^{+\infty}e^{-x^{2}/2}e^{-i\omega_{\eps}x}\,\diff x\right]\\
 & =\sqrt{2\pi}f(\omega)-2\cdot\left[\int_{\rho_{\eps}^{-1}}^{+\infty}e^{-x^{2}/2}e^{-i\omega_{\eps}x}\,\diff x\right].
\end{align*}

\noindent Using L'Hôpital rule we can prove that $\lim_{y\to0^{+}}\frac{\intop_{1/y}^{\pm\infty}e^{-\frac{x^{2}}{2}}\,\diff{x}}{y^{q}}=0$
for all $q\in\N$, thereby $\left[\int_{\rho_{\eps}^{-1}}^{+\infty}e^{-x^{2}/2}e^{-i\omega_{\eps}x}\,\diff x\right]=0$
in $\rti$. In dimension $n>1$, we directly calculate using Fubini's
theorem: 
\begin{alignat*}{1}
\mathcal{F}\left(e^{-\frac{\left|x\right|^{2}}{2}}\right)\left(\omega\right) & =\prod_{j=1}^{n}\intop e^{-ix_{j}\cdot\omega_{j}}e^{-\frac{x_{j}^{2}}{2}}\,\diff{x_{j}}\\
 & =\prod_{j=1}^{n}\mathcal{F}\left(e^{-\frac{x_{j}^{2}}{2}}\right)\left(\omega_{j}\right)=\prod_{j=1}^{n}\left(2\pi\right)^{\frac{1}{2}}e^{-\frac{\omega_{j}^{2}}{2}}=\left(2\pi\right)^{\frac{n}{2}}e^{-\frac{\left|\omega\right|^{2}}{2}}.
\end{alignat*}
\end{proof}
\end{lem}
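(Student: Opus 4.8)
The plan is to establish the two assertions in turn: first that the Gaussian has functionally compact support inside every ball of strongly infinite radius, and then, treating $n=1$ first and lifting to general $n$ by Fubini's theorem, that its hyperfinite Fourier transform is a multiple of itself.

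For the support claim I would argue via Thm.~\ref{thm:DerivativeIsZero}.\ref{enu:equivCmptSupp}, showing that $f$ vanishes on $\text{ext}(\overline{B_h(0)})$ for every strong infinite $h\in\rti_{>0}$. The key observation is that any $x\in\text{ext}(\overline{B_h(0)})$ satisfies $|x|>h$: otherwise $|x|\le_L h$ on some cofinal $L$, and replacing a representative of $x$ by $0$ off $L$ produces a point $a\in\overline{B_h(0)}$ with $|x-a|$ vanishing on $L$, hence non-invertible, contradicting $x\in\text{ext}(\overline{B_h(0)})$. Since $h$ is strong infinite, $|x|>h\ge\diff{\rho}^{-r}$ makes $x$ strongly infinite, so any representative obeys $|x_\eps|\ge\rho_\eps^{-r}$ for $\eps$ small, and then $e^{-|x_\eps|^2/2}\le e^{-\rho_\eps^{-2r}/2}=O(\rho_\eps^m)$ for every $m\in\N$ because $\rho_\eps^{-2r}/2$ eventually exceeds $m\log(1/\rho_\eps)$. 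Thus $f(x)=0$ in $\rti$, as required; this is essentially the rapid-decay argument already available for Schwartz functions.

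For the transform in dimension $n=1$ I would first use the support information to turn $\mathcal{F}$ into a genuine hyperfinite integral on a convenient interval: as $\diff{\rho}^{-1}$ is strong infinite, $f\in\Dgsf(\overline{B_{\diff{\rho}^{-1}}(0)})$ and Def.~\ref{def:intCmpSupp} gives $\mathcal{F}(f)=\mathcal{F}_{\diff{\rho}^{-1}}(f)$. Passing to the $\eps$-level via Thm.~\ref{thm:base}.\ref{enu:epsIntFT}, I would split the truncated integral over $[-\rho_\eps^{-1},\rho_\eps^{-1}]$ as the full-line integral minus its two tails; the full-line integral is exactly the classical transform $\hat{\mathcal{F}}(e^{-x^2/2})(\omega_\eps)=\sqrt{2\pi}\,e^{-\omega_\eps^2/2}$.

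The main obstacle is showing that each tail represents $0$ in $\rti$. I would bound $\left|\int_{\rho_\eps^{-1}}^{+\infty}e^{-x^2/2}e^{-i\omega_\eps x}\,\diff{x}\right|\le\int_{\rho_\eps^{-1}}^{+\infty}e^{-x^2/2}\,\diff{x}$ and then show, by repeated L'Hôpital, that $\lim_{y\to0^+}y^{-q}\int_{1/y}^{+\infty}e^{-x^2/2}\,\diff{x}=0$ for every $q\in\N$; this forces the tail to be $O(\rho_\eps^q)$ for all $q$, i.e.\ $\rho$-negligible. Hence $\mathcal{F}(f)(\omega)=\sqrt{2\pi}\,e^{-\omega^2/2}=\sqrt{2\pi}\,f(\omega)$ for all $\omega\in\rti$. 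Finally, for $n>1$ I would exploit the factorizations $e^{-|x|^2/2}=\prod_{j=1}^n e^{-x_j^2/2}$ and $e^{-ix\cdot\omega}=\prod_{j=1}^n e^{-ix_j\omega_j}$ and apply Fubini (Thm.~\ref{thm:muMeasurableAndIntegral}.\ref{enu:int-ndimInt}) to write the $n$-dimensional transform as a product of one-dimensional ones, giving $\mathcal{F}(f)(\omega)=\prod_{j=1}^n\sqrt{2\pi}\,e^{-\omega_j^2/2}=(2\pi)^{n/2}e^{-|\omega|^2/2}=(2\pi)^{n/2}f(\omega)$.
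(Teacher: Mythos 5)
Your proposal is correct and follows essentially the same route as the paper's proof: reduce $\mathcal{F}(f)$ to $\mathcal{F}_{\diff{\rho}^{-1}}(f)$, split the $\eps$-wise truncated integral into the classical Gaussian transform minus tails, kill the tails by the L'H\^opital negligibility estimate, and lift to $n>1$ by Fubini. Your support argument via Thm.~\ref{thm:DerivativeIsZero}.\ref{enu:equivCmptSupp} and the exterior of the ball is just a more carefully spelled-out version of the paper's rapid-decay one-liner, and your bounding of the oscillatory tail by its absolute value before applying L'H\^opital is, if anything, slightly cleaner than the paper's phrasing.
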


\end{example}

\section{Elementary properties of the hyperfinite Fourier transform\label{sec:Elementary-properties}}

In this section, we list and prove elementary properties of the HFT.
\begin{thm}
\label{thm:thmProperties}(see Sec.~\ref{subsec:Embedding} for the
notations $\odot$ and $\oplus$) Let $f\in\gsf\left(K\right)$ and
$g:\rti^{n}\ra\rccrho$, then
\begin{enumerate}
\item \label{enu:prop1}$\mathcal{F}_{k}\left(f+g\right)=\mathcal{F}_{k}\left(f\right)+\mathcal{F}_{k}\left(g\right)$
if $g\in\gsf(K)$.
\item \label{enu:prop2}$\mathcal{F}_{k}\left(bf\right)=b\mathcal{F}_{k}\left(f\right)$
for all $b\in\rccrho$.
\item \label{enu:prop3}$\mathcal{F}_{k}\left(\overline{f}\right)=\overline{-1\diamond\mathcal{F}_{k}(f)}$,
where $-1\diamond f$ is the \emph{reflection} of $f$, i.e.~$\left(-1\diamond f\right)\left(x\right):=f\left(-x\right)$.
\item \label{enu:prop4}$\mathcal{F}_{k}\left(-1\diamond f\right)=-1\diamond\mathcal{F}_{k}(f)$
\item \label{enu:prop5}$\mathcal{F}_{k}\left(t\diamond g\right)=t\odot\mathcal{F}_{tk}\left(g\right)$
for all $t\in\rti_{>0}$ such that $tk$ is still infinite and $g|_{K}\in\gsf(K)$,
$g|_{tK}\in\gsf(tK)$. Here, $t\diamond g$ is the \emph{dilation
}of $f,$ i.e.~$\left(t\diamond g\right)\left(x\right):=g\left(tx\right)$.
\item \label{enu:prop6}Let $k>h>0$ be infinite numbers, $s\in[-(k-h),k-h]^{n}$,
$f\in\Dgsf([-h,h]^{n})$. Then
\[
\mathcal{F}_{k}\left(s\oplus f\right)=e^{-is\cdot\left(-\right)}\mathcal{F}_{k}\left(f\right)=e^{-is\cdot\left(-\right)}\mathcal{F}_{h}\left(f\right)=e^{-is\cdot\left(-\right)}\mathcal{F}\left(f\right).
\]
In particular, if $h\ge\diff{\rho}^{-p}$, $k\ge\diff{\rho}^{-q}$,
$p$, $q\in\R_{>0}$, $q>p$, and $s\in\csp{\R^{n}}$, then $s\in[-(k-h),k-h]^{n}$.
In particular, $\R^{n}\subseteq[-(k-h),k-h]^{n}$.
\item \label{enu:prop7}$\mathcal{F}_{k}\left(e^{is\cdot\left(-\right)}f\right)=s\oplus\mathcal{F}_{k}\left(f\right)$
for all $s\in\rti^{n}$.
\item \label{enu:prop8}Let $\omega\in\rti^{n}$ and $\alpha\in\N^{n}\setminus\{0\}$.
For $p=1,\ldots,|\alpha|$, define $\beta_{p}=\left(\beta_{p,q}\right)_{q=1,\ldots,n}\in\N^{n}$
with
\begin{align*}
\beta_{0} & :=\alpha\\
\beta_{p+1} & :=(0,\ptind^{j_{p}-1},0,\beta_{p,j_{p}}-1,\beta_{p,j_{p}+1},\ldots,\beta_{p,n})\text{ if }j_{p}:=\min\left\{ q\mid\beta_{p,q}>0\right\} .
\end{align*}
Finally, for all $\bar{f}\in\gsf(K)$ and $j=1,\ldots,n$, set
\begin{align*}
\Delta_{1k}\bar{f}(\omega):= & \left[\bar{f}(x)e^{-ix\cdot\omega}\right]_{x_{1}=-k}^{x_{1}=k}\\
\Delta_{jk}\bar{f}(\omega):= & \intop_{-k}^{k}\,\diff{x_{1}}\ldots\intop_{-k}^{k}\,\diff{x_{j-1}}\intop_{-k}^{k}\,\diff{x_{j+1}}\ldots\intop_{-k}^{k}\left[\bar{f}(x)e^{-ix\cdot\omega}\right]_{x_{j}=-k}^{x_{j}=k}\,\diff{x_{n}}.
\end{align*}
Then, we have
\begin{align}
\mathcal{F}_{k}\left(\partial_{j}f\right) & =i\omega_{j}\mathcal{F}_{k}\left(f\right)+\Delta_{jk}f\quad\forall j=1,\ldots,n\label{eq:DerRule1}\\
\mathcal{F}_{k}\left(\partial^{\alpha}f\right) & =\left(i\omega\right)^{\alpha}\mathcal{F}_{k}\left(f\right)+\sum_{p=0}^{|\alpha|-1}(i\omega)^{\alpha-\beta_{p}}\Delta_{j_{p}k}(\partial^{\beta_{p+1}}f).\label{eq:DerRule}
\end{align}
In particular, if
\begin{equation}
f\left(x_{1},\ldots,x_{j-1},k,x_{j+1}\right)=f\left(x_{1},\ldots,x_{j-1},-k,x_{j+1}\right)=0\quad\forall x\in K,\label{eq:Hpk-k0}
\end{equation}
then 
\[
\mathcal{F}_{k}\left(\partial_{j}f\right)=i\omega_{j}\mathcal{F}_{k}\left(f\right).
\]
\item \label{enu:prop9}$\frac{\partial}{\partial\omega_{j}}\mathcal{F}_{k}\left(f\right)=-i\mathcal{F}_{k}\left(x_{j}f\right)$
for all $j=1,\ldots,n$.
\item \label{enu:prop10}If $f\in\Dgsf(K)$ or $g\in\Dgsf(K)$, then $\mathcal{F}_{k}\left(f\ast g\right)=\mathcal{F}_{k}\left(f\right)\mathcal{F}_{k}\left(g\right)$.
Therefore, if $f\in\Dgsf(\rti^{n})$ and $g\in\Dgsf(\rti^{n})$, then
$\mathcal{F}\left(f\ast g\right)=\mathcal{F}\left(f\right)\mathcal{F}\left(g\right)$.
\item \label{enu:prop11}$\mathcal{F}_{k}\left(s\odot g\right)=s\diamond\mathcal{F}_{\frac{k}{s}}\left(g\right)$
for all invertible $s\in\rti_{>0}$ such that $\frac{k}{s}$ is infinite,
$g|_{K}\in\gsf(K)$ and $g|_{K/s}\in\gsf(K/s)$.
\end{enumerate}
\end{thm}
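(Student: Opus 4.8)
The plan is to treat all eleven items uniformly by reducing each identity to its classical finite-interval counterpart through the $\eps$-wise representation $\mathcal{F}_k(f)(\omega)=[\hat{\mathcal{F}}(\chi_{K_\eps}f_\eps)(\omega_\eps)]$ of Thm.~\ref{thm:base}.\ref{enu:epsIntFT}, invoking the GSF calculus of Section~\ref{sec:Basic-notions} whenever an operation acts on the integrand or on the domain. Items~\ref{enu:prop1} and~\ref{enu:prop2} are immediate from linearity of the integral (Thm.~\ref{thm:intRules}.\ref{enu:additivityFunction},~\ref{enu:homog}). Item~\ref{enu:prop7} follows by absorbing the factor into the kernel, $e^{is\cdot x}e^{-ix\cdot\omega}=e^{-ix\cdot(\omega-s)}$, so that $\mathcal{F}_k(e^{is\cdot(-)}f)(\omega)=\mathcal{F}_k(f)(\omega-s)=(s\oplus\mathcal{F}_k(f))(\omega)$; item~\ref{enu:prop9} follows by differentiating under the integral sign (Thm.~\ref{thm:intRules}.\ref{enu:derUnderInt}), since $\partial_{\omega_j}e^{-ix\cdot\omega}=-ix_je^{-ix\cdot\omega}$.

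The substitution items~\ref{enu:prop3},~\ref{enu:prop4},~\ref{enu:prop5} and~\ref{enu:prop11} are handled by the change-of-variables theorem. For~\ref{enu:prop4} I would substitute $x\mapsto-x$, using that $K=[-k,k]^n$ is invariant under this reflection, to get $\mathcal{F}_k(-1\diamond f)(\omega)=\mathcal{F}_k(f)(-\omega)=(-1\diamond\mathcal{F}_k(f))(\omega)$; item~\ref{enu:prop3} is then pure conjugation, since $\overline{e^{-ix\cdot\omega}}=e^{-ix\cdot(-\omega)}$ identifies $\mathcal{F}_k(\overline f)$ with $\overline{-1\diamond\mathcal{F}_k(f)}$. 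For the dilations, substituting $y=tx$ in~\ref{enu:prop5} (resp.\ $y=x/s$ in~\ref{enu:prop11}) turns the domain $[-k,k]^n$ into $[-tk,tk]^n$ (resp.\ $[-k/s,k/s]^n$) and produces the Jacobian $t^{-n}$ (resp.\ $s^n$), which is exactly the normalisation encoded in $t\odot(-)$ (resp.\ cancels the $s^{-n}$ in $s\odot g$); here one checks that these linear maps satisfy the hypotheses of Thm.~\ref{thm:changeOfVariablesInt} (or of its multidimensional form, the last item of Thm.~\ref{thm:muMeasurableAndIntegral}) and that the new endpoints stay infinite, which is guaranteed by the standing hypotheses that $tk$, resp.\ $k/s$, be infinite.

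Item~\ref{enu:prop6} is slightly more delicate: substituting $y=x-s$ shifts the domain to $[-k,k]^n-s$, yielding the factor $e^{-is\cdot\omega}$ together with the integral of $f(y)e^{-iy\cdot\omega}$ over that shifted box. The hypotheses $f\in\Dgsf([-h,h]^n)$ and $s\in[-(k-h),k-h]^n$ ensure coordinatewise that $[-h,h]^n\subseteq[-k,k]^n-s$, so the shifted domain still captures $\text{supp}(f)$; by Thm.~\ref{thm:DerivativeIsZero}.\ref{enu:intBound} the integral then equals the compactly supported transform, giving the chain $\mathcal{F}(f)(\omega)=\mathcal{F}_h(f)(\omega)=\mathcal{F}_k(f)(\omega)$ and hence all three asserted equalities. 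Item~\ref{enu:prop10} is the convolution theorem: when one factor is compactly supported we have $\mathcal{F}_k=\mathcal{F}$ on it, and the classical computation $\mathcal{F}(f*g)(\omega)=\iint f(y)g(x-y)e^{-ix\cdot\omega}\,\diff{y}\,\diff{x}$, followed by Fubini (Thm.~\ref{thm:muMeasurableAndIntegral}.\ref{enu:int-ndimInt}) and the substitution $z=x-y$, factors the double integral into $\mathcal{F}(f)(\omega)\,\mathcal{F}(g)(\omega)$.

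The genuinely new content, and the main obstacle, is the derivative rule~\ref{enu:prop8}. A single integration by parts (Thm.~\ref{thm:intRules}.\ref{enu:intByParts}) in the $x_j$-variable gives $\mathcal{F}_k(\partial_jf)=i\omega_j\mathcal{F}_k(f)+\Delta_{jk}f$, where the boundary term $\Delta_{jk}f$ --- in contrast with the Riemann--Lebesgue situation of Lem.~\ref{lem:Rieman-Lebesgue} --- need not vanish, since $f$ is not assumed to vanish on the faces $x_j=\pm k$. The general formula~\eqref{eq:DerRule} then follows by induction on $|\alpha|$, stripping off one derivative at a time in the order dictated by the recursively defined multi-indices $\beta_p$ and $j_p=\min\{q\mid\beta_{p,q}>0\}$, and collecting at each stage the factor $(i\omega)^{\alpha-\beta_p}$ against the boundary contribution $\Delta_{j_pk}(\partial^{\beta_{p+1}}f)$; the careful bookkeeping of these nested boundary terms is the part demanding most attention. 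Specialising to case~\eqref{eq:Hpk-k0}, where $f$ vanishes on the faces $x_j=\pm k$, makes every $\Delta$-term drop out and recovers the clean classical identity $\mathcal{F}_k(\partial_jf)=i\omega_j\mathcal{F}_k(f)$.
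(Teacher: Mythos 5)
Your proposal is correct and follows essentially the same route as the paper's own proof: change of variables (Thm.~\ref{thm:changeOfVariablesInt}) for the reflection, dilation and translation items \ref{enu:prop3}--\ref{enu:prop6} and \ref{enu:prop11}, differentiation under the integral sign for \ref{enu:prop9}, Fubini plus the substitution $t=x-y$ for \ref{enu:prop10}, and a single integration by parts followed by induction on $|\alpha|$ with the non-vanishing boundary terms $\Delta_{jk}$ for \ref{enu:prop8}. The only cosmetic difference is your opening framing via the $\eps$-wise representation of Thm.~\ref{thm:base}, which you do not actually need since, like the paper, you carry out all arguments directly with the GSF-level calculus.
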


\begin{proof}
Properties \ref{enu:prop1}-\ref{enu:prop5} can be proved like in
the case of rapidly decreasing smooth functions. For \ref{enu:prop6},
we have
\begin{align*}
\mathcal{F}_{k}\left(s\oplus f\right)\left(\omega\right) & =\mathcal{F}_{k}\left(f\left(x-s\right)\right)\left(\omega\right)=\intop_{K}f\left(x-s\right)e^{-ix\cdot\omega}\,\diff{x}=\\
 & =\intop_{-k}^{k}\,\diff{x_{1}}\ldots\intop_{-k}^{k}f\left(x-s\right)e^{-ix\cdotp\omega}\,\diff{x_{n}}.
\end{align*}
 Considering the change of variable $x-s=u$ we have
\[
\mathcal{F}_{k}\left(s\oplus f\right)\left(\omega\right)=e^{-is\cdot\omega}\intop_{-k-s_{1}}^{k-s_{1}}\,\diff{u_{1}}\ldots\intop_{-k-s_{n}}^{k-s_{n}}f\left(u\right)e^{-iu\cdotp\omega}\,\diff{u_{n}}.
\]
Finally, considering that $k>h$ and $s\in[-k+h,k-h]^{n}$ we have
$k-s_{i}\ge h$, $-h\ge-k-s_{i}$ and $k+s_{i}\ge h$ for all $i=1,\ldots,n$,
so that
\begin{align*}
\intop_{-k-s_{1}}^{k-s_{1}}\,\diff{u_{1}}\ldots\intop_{-k-s_{n}}^{k-s_{n}}f\left(u\right)e^{-iu\cdotp\omega}\,\diff{u_{n}} & =\intop_{-h}^{h}\,\diff{u_{1}}\ldots\intop_{-h}^{h}f\left(u\right)e^{-iu\cdotp\omega}\,\diff{u_{n}}=\\
 & =\intop_{-k}^{k}\,\diff{u_{1}}\ldots\intop_{-k}^{k}f\left(u\right)e^{-iu\cdotp\omega}\,\diff{u_{n}}
\end{align*}
from Def.~\ref{def:intCmpSupp} since $f\in\Dgsf([-h,h]^{n})$.

\ref{enu:prop7} is immediate from the Def. \ref{def:HyperfiniteFT}.

To prove \ref{enu:prop8}, using integration by parts formula, we
have
\begin{align*}
\mathcal{F}_{k}\left(\partial_{j}f\right)\left(\omega\right) & =\intop_{K}\partial_{j}f\left(x\right)e^{-ix\cdot\omega}\,\diff{x}=\intop_{-k}^{k}\,\diff{x_{1}}\ldots\intop_{-k}^{k}\partial_{j}f\left(x\right)e^{-ix\cdotp\omega}\,\diff{x_{n}}=\\
 & =-\intop_{-k}^{k}\,\diff{x_{1}}\ldots\intop_{-k}^{k}f\left(x\right)\left(-i\omega_{j}\right)e^{-ix\cdotp\omega}\,\diff{x_{n}}+\\
 & \phantom{=}+\intop_{-k}^{k}\,\diff{x_{1}}\ldots\intop_{-k}^{k}\,\diff{x_{j-1}}\intop_{-k}^{k}\,\diff{x_{j+1}}\ldots\intop_{-k}^{k}\left[f(x)e^{-ix\cdot\omega}\right]_{x_{j}=-k}^{x_{j}=k}\,\diff{x_{n}}=\\
 & =i\omega_{j}\mathcal{F}_{k}\left(f\right)\left(\omega\right)+\Delta_{jk}f(\omega).
\end{align*}
Therefore, by applying this formula with $\partial_{p}f$ instead
of $f$, we obtain
\[
\mathcal{F}_{k}\left(\partial_{j}\partial_{p}f\right)(\omega)=-\omega_{j}\omega_{p}\mathcal{F}_{k}(f)(\omega)+i\omega_{j}\Delta_{pk}\left(f\right)(\omega)+\Delta_{jk}\left(\partial_{p}f\right)(\omega).
\]
Proceeding similarly by induction on $|\alpha|$, we can prove the
general claim.

To prove \ref{enu:prop9}, we use Thm. \ref{thm:intRules}.\ref{enu:derUnderInt},
i.e.~derivation under the integral sign:
\begin{align*}
\frac{\partial}{\partial\omega_{j}}\mathcal{F}_{k}\left(f\right)\left(\omega\right) & =\frac{\partial}{\partial\omega_{j}}\left(\intop_{-k}^{k}\,\diff{x_{1}}\ldots\intop_{-k}^{k}f\left(x\right)e^{-ix\cdotp\omega}\,\diff{x_{n}}\right)=\\
 & =\intop_{-k}^{k}\,\diff{x_{1}}\ldots\intop_{-k}^{k}\frac{\partial}{\partial\omega_{j}}\left(f\left(x\right)e^{-ix\cdotp\omega}\right)\,\diff{x_{n}}=\\
 & =\intop_{-k}^{k}\,\diff{x_{1}}\ldots\intop_{-k}^{k}-ix_{j}f\left(x\right)e^{-ix\cdotp\omega}\,\diff{x_{n}}=\\
 & =-i\mathcal{F}_{k}\left(x_{j}f\right)\left(\omega\right).
\end{align*}

\ref{enu:prop10}: 
\begin{align*}
\mathcal{F}_{k}\left(\left(f\ast g\right)\right)\left(\omega\right) & =\intop_{K}e^{-ix\omega}\left(f\ast g\right)\left(x\right)\,\diff{x}=\\
 & =\intop_{K}e^{-ix\omega}\intop_{K}f\left(y\right)g\left(x-y\right)\,\diff{y}\,\diff{x}.
\end{align*}

Considering the change of variable $x-y=t$ and using Fubini's theorem,
we have
\begin{align*}
\intop_{K}e^{-i\left(t+y\right)\omega}\intop_{K}f\left(y\right)g\left(t\right)\,\diff{y}\,\diff{t} & =\intop_{K}e^{-iy\omega}f\left(y\right)\,\diff{y}\intop_{K}e^{-it\omega}g\left(t\right)\,\diff{t}=\\
 & =\mathcal{F}_{k}\left(f\right)\left(\omega\right)\mathcal{F}_{k}\left(g\right)\left(\omega\right).
\end{align*}
 Finally, we prove \ref{enu:prop11}:
\[
\mathcal{F}_{k}\left(s\odot g\right)\left(\omega\right)=\mathcal{F}_{k}\left(\frac{1}{s^{n}}g\left(\frac{x}{s}\right)\right)\left(\omega\right)=\intop_{K}e^{-ix\cdot\omega}g\left(\frac{x}{s}\right)\,\frac{\diff{x}}{s^{n}}.
\]

\noindent Considering the change of variable $\frac{x}{s}=y$ we have
\begin{align*}
\intop_{K}e^{-ix\cdot\omega}g\left(\frac{x}{s}\right)\,\frac{\diff{x}}{s^{n}} & =\intop_{-k/s}^{k/s}\,\diff{y_{1}}\ldots\intop_{-k/s}^{k/s}g\left(y\right)e^{-isy\cdotp\omega}\,\diff{y_{n}}=\\
 & =\intop_{K/s}g\left(y\right)e^{-iy\cdotp s\omega}\,\diff{y}=\mathcal{F}_{k/s}\left(g\right)\left(s\omega\right)=\\
 & =\left[s\diamond\mathcal{F}_{k/s}\left(g\right)\right](\omega).
\end{align*}
\end{proof}
We will see in Sec.~\ref{sec:Examples-and-applications} that the
additional term in \eqref{eq:DerRule} plays an important role in
finding \emph{non tempered} solutions of differential equations (like
the exponentials of the trivial ODE $y'=y$). We also note that condition
\eqref{eq:Hpk-k0} is clearly weaker than asking $f$ compactly supported.
For example, setting
\[
l_{j}(x):=\frac{1}{2k}\left[f\left(x\right)|_{x_{j}=k}-f\left(x\right)|_{x_{j}=-k}\right]\cdot(x_{j}+k)+f\left(x\right)|_{x_{j}=-k},
\]
then $\bar{f}:=f-l_{j}$ satisfies \eqref{eq:Hpk-k0}.

\section{\label{sec:The-inverse-hyperfinite}The inverse hyperfinite Fourier
transform}

We naturally define the inverse HFT as follows:
\begin{defn}
Let $f\in\gsf\left(K\right)$, the \emph{inverse} HFT is 
\begin{equation}
\mathcal{F}_{k}^{-1}\left(f\right)(x):=\frac{1}{\left(2\pi\right)^{n}}\intop_{K}f\left(\omega\right)e^{ix\cdot\omega}\,\diff{\omega}\label{eq:InverseFT}
\end{equation}

\noindent for all $x\in\rcrho$. As we proved in Thm.~\ref{thm:base},
we have $\mathcal{F}_{k}^{-1}:\gsf\left(K\right)\longrightarrow\gsf\left(\rcrho^{n}\right)$.
We immediately note that the notation of the inverse function $\mathcal{F}_{k}^{-1}$
is an abuse of language because the codomain of $\mathcal{F}_{k}$
is larger than the domain of $\mathcal{F}_{k}^{-1}$ (and vice versa).
When dealing with inversion properties, it is hence better to think
at
\begin{align*}
\mathcal{F}_{k}|_{K} & :=(-)|_{K}\circ\mathcal{F}_{k}:\gsf\left(K\right)\longrightarrow\gsf\left(K\right)\\
\mathcal{F}_{k}^{-1}|_{K} & :=(-)|_{K}\circ\mathcal{F}_{k}^{-1}:\gsf\left(K\right)\longrightarrow\gsf\left(K\right).
\end{align*}
We will see in Sec.~\ref{sec:Examples-and-applications} that lacking
this precision can easily lead to inconsistencies.

Note that
\begin{equation}
\left(2\pi\right)^{n}\mathcal{F}_{k}^{-1}(f)=\mathcal{F}_{k}\left(-1\diamond f\right)=-1\diamond\mathcal{F}_{k}(f),\label{eq:reflection}
\end{equation}
where $-1\diamond$ denotes the reflection $\left(-1\diamond g\right)(x):=g(-x)$.
\end{defn}

\subsection{The Fourier inversion theorem}

Our main goal is clearly to investigate the relationship between HFT
and its inverse HFT, i.e.~to prove the Fourier inversion theorem
for the HFT. Three important results used in the classical proof of
the Fourier inversion theorem are: the application of approximate
identities for convolution defined by Gaussian like functions (see
\cite[Lem.~4.3]{LeLuGi17} for a similar result), Lebesgue dominated
converge theorem (we can replace it with Thm\@.~\ref{thm:contResult}),
and the translation property of FT. In our setting, the last property
corresponds to Thm.~\ref{thm:thmProperties}.\ref{enu:prop6}, which
works only for compactly supported GSF. A first idea could hence to
avoid proving the inversion theorem firstly at the origin and then
employing the translation property, but to prove it directly at an
arbitrary interior point $y\in\mathring{K}$ using approximate identities
obtained by mollification of a Gaussian function. Unfortunately, this
idea does not work: in fact, if $g(z):=(2\pi)^{-n/2}e^{-\frac{z^{2}}{2}}$,
then our approximate identity would be the mollification $G_{p}:=\frac{1}{p}\odot g$,
where we think $p\in\hypNr$, $p\to+\infty$. We would also need a
function $g_{p}$ such that $\mathcal{F}\left[g_{p}(-,y)\right](x)=(2\pi)^{n/2}G_{p}(y-x)$,
i.e.~$g_{p}(-,y):=e^{iy\cdot\left(-\right)}\cdot\left(t_{p}\diamond g\right)$.
The first problem is that $\text{supp}\left(g_{p}(-,y)\right)\subseteq\overline{B_{p\diff\rho^{-1}}(0)}\uparrow\rti$
as $p\to+\infty$. In an integral of the type $\int_{K}\mathcal{F}_{k}(f)(\omega)g_{p}(\omega,y)\,\diff\omega$
we would therefore need $k$ \emph{non} $\rho$-moderate (see below,
Def.~\ref{def:RCud}) to contain all the support of $g_{p}(-,y)$.
On the other hand, we would also need $\hyperlimarg{\rho}{\rho}{p}g_{p}(\omega,y)=\frac{1}{(2\pi)^{n}}e^{iy\cdot\omega}$
, and it is not hard to prove that $\left|g_{p}(\omega,y)-\frac{1}{(2\pi)^{n}}e^{iy\cdot\omega}\right|\le\diff\rho^{q}$
if $p\ge C_{2}k\diff\rho^{-q/2}$ for some $C_{2}\in\R_{>0}$, and
this implies that $k$ must be $\rho$-moderate.

The idea for a different proof starts from the following calculations
(for $n=1$):
\begin{align*}
\mathcal{F}_{k}(f)(\omega) & =\int_{K}f(x)e^{-ix\omega}\,\diff x=\left[\int_{K_{\eps}}f_{\eps}(x)e^{-ix\omega_{\eps}}\,\diff x\right]\\
 & =\left[\int_{\R}\chi_{K_{\eps}}(x)f_{\eps}(x)e^{-ix\omega_{\eps}}\,\diff x\right]=\left[\hat{\mathcal{F}}\left(\chi_{K_{\eps}}f_{\eps}\right)(\omega_{\eps})\right],
\end{align*}
where $\chi_{K_{\eps}}$ is the characteristic function of $K_{\eps}:=[-k_{\eps},k_{\eps}]$,
and $\hat{\mathcal{F}}$ is the classical Fourier transform. Thereby,
if we take \emph{another} positive infinite number $h=[h_{\eps}]\in\rti$
and set $H:=[-h,h]$, $H_{\eps}:=[-h_{\eps},h_{\eps}]$, then
\begin{align*}
\mathcal{F}_{h}^{-1}\left(\mathcal{F}_{k}(f)\right)(y) & =\frac{1}{2\pi}\int_{H}e^{iy\omega}\left[\hat{\mathcal{F}}\left(\chi_{K_{\eps}}f_{\eps}\right)(\omega)\right]\,\diff\omega\\
 & =\frac{1}{2\pi}\left[\int_{\R}e^{iy_{\eps}\omega}\chi_{H_{\eps}}(\omega)\hat{\mathcal{F}}\left(\chi_{K_{\eps}}f_{\eps}\right)(\omega)\right]\\
 & =\left[\hat{\mathcal{F}}^{-1}\left(\chi_{H_{\eps}}\cdot\hat{\mathcal{F}}\left(\chi_{K_{\eps}}f_{\eps}\right)\right)(y_{\eps})\right]\\
 & =\left[\left(\hat{\mathcal{F}}^{-1}(\chi_{H_{\eps}})*\chi_{K_{\eps}}f_{\eps}\right)(y_{\eps})\right].
\end{align*}
We now compute
\begin{align*}
\hat{\mathcal{F}}^{-1}(\chi_{H_{\eps}})(z) & =\frac{1}{2\pi}\int_{\R}e^{iz\omega}\chi_{H_{\eps}}(\omega)\,\diff\omega=\frac{1}{2\pi}\int_{-h_{\eps}}^{h_{\eps}}e^{iz\omega}\,\diff\omega=\frac{1}{\pi}h_{\eps}S(h_{\eps}z),
\end{align*}
where $S(x)=\frac{1}{2}\int_{-1}^{1}\cos(xt)\,\diff t$ is the smooth
extension of $\frac{\sin(x)}{x}$ at $x=0$. Therefore, we can write
\begin{equation}
\mathcal{F}_{h}^{-1}\left(\mathcal{F}_{k}(f)\right)(y)=\int_{K}\frac{h}{\pi}S(h(y-x))f(x)\,\diff x.\label{eq:beforeSifting}
\end{equation}
For $n\ge1$, we similarly have
\begin{equation}
\hat{\mathcal{F}}^{-1}(\chi_{H_{\eps}})(z)=\frac{1}{\pi^{n}}h_{1\eps}S(z_{1}h_{1\eps})\cdot\ptind^{n}\cdot h_{n\eps}S(z_{n}h_{n\eps})=:\delta_{h_{\eps}}^{n}(z),\label{eq:delta1}
\end{equation}
and
\begin{equation}
\mathcal{F}_{h}^{-1}\left(\mathcal{F}_{k}(f)\right)(y)=\int_{K}\delta_{h}^{n}(y-x)\cdot f(x)\,\diff x.\label{eq:FI-convDirichletDelta}
\end{equation}
We call the GSF ($h$ is an infinite number) $\delta_{h}^{n}$ \emph{Dirichlet
delta function} (recall that the delta sequence $\left(\delta_{n}^{n}\right)_{h\in\N}$
converges to $\delta$ in $\mathcal{D}'$, see e.g.~\cite{BlBr15}).
In fact, these calculations lead us to consider the so-called Dirichlet
sifting theorem
\begin{equation}
\lim_{h\to+\infty}\int_{-\infty}^{+\infty}\delta_{h}^{1}(x)f(x)=f(0),\label{eq:DirichletSifting}
\end{equation}
which holds for $f\in\mathcal{C}^{1}(\R)$ such that $f'$ is bounded
(see e.g.~\cite{Kan98,BlBr15}). Formula \eqref{eq:DirichletSifting}
also justifies why we considered another infinite number $h$; moreover,
in the following proof, we will see that the use of the functionally
compact set $K$ instead of $\int_{-\infty}^{+\infty}$ allows us
to avoid any limitation on the GSF $f$.

\noindent We first need the following results:
\begin{lem}
\label{lem:DirichletMass1}For all sharply interior point $y\in\mathring{K}$,
we have
\[
\lim_{h\to+\infty}\int_{K}\delta_{h}^{n}(y-x)\,\diff x=1.
\]

\noindent Here, the limit is in the sharp topology, i.e.
\[
\forall q\in\N\,\exists\bar{h}\in\rti\,\forall h\in\rti_{\ge\bar{h}}:\ \left|\int_{K}\delta_{h}^{n}(y-x)\,\diff x-1\right|\le\diff\rho^{q}.
\]
\end{lem}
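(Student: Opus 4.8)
The plan is to reduce the whole computation to the classical Dirichlet (sine) integral and then to exploit the sole hypothesis $y\in\mathring K$ to obtain a uniform estimate. Since $S$ is smooth, $\delta_h^n(y-\cdot)$ is a GSF (a product/composition of GSF), and it is integrable on the functionally compact set $K$. Taking $h_1=\dots=h_n=h$ as in the statement, the product structure of $\delta_h^n$ in \eqref{eq:delta1} together with Fubini (Thm.~\ref{thm:muMeasurableAndIntegral}.\ref{enu:int-ndimInt}) gives
\[
\int_K \delta_h^n(y-x)\,\diff{x}=\prod_{j=1}^n \frac{h}{\pi}\int_{-k}^{k} S\bigl(h(y_j-x_j)\bigr)\,\diff{x_j}.
\]
In each one-dimensional factor I would perform the affine change of variable $u=h(x_j-y_j)$ (Thm.~\ref{thm:changeOfVariablesInt}; here $h>0$ is infinite, hence invertible, and the map is an increasing GSF diffeomorphism), and use that $S$ is even. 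This yields $\frac1\pi\int_{-A_j}^{B_j}S(u)\,\diff{u}$ with $A_j:=h(k+y_j)$ and $B_j:=h(k-y_j)$; splitting at $0$ and using evenness once more gives $\frac1\pi\bigl(\mathrm{Si}(A_j)+\mathrm{Si}(B_j)\bigr)$, where $\mathrm{Si}(t):=\int_0^t S(u)\,\diff{u}$ is the generalized sine integral.

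The analytic heart is the quantitative Dirichlet estimate
\[
\Bigl|\mathrm{Si}(t)-\tfrac{\pi}{2}\Bigr|\le \frac{2}{t}\qquad\forall t>0,
\]
obtained classically by a single integration by parts of $\int_t^\infty \tfrac{\sin u}{u}\,\diff{u}$ (equivalently, via Thm.~\ref{thm:intRules}.\ref{enu:intByParts}). Since this inequality holds for every representative net and $\mathrm{Si}$ is bounded with $|\mathrm{Si}'|\le 1$, it transfers to the generalized inequalities $\bigl|\mathrm{Si}(A_j)-\tfrac\pi2\bigr|\le \tfrac{2}{A_j}$ and $\bigl|\mathrm{Si}(B_j)-\tfrac\pi2\bigr|\le \tfrac{2}{B_j}$.

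The decisive use of interiority is the following. As $y\in\mathring K$, there is $r\in\rti_{>0}$ (positive \emph{invertible}) with $B_r(y)\subseteq K=[-k,k]^n$; testing against $y\pm\tfrac r2 e_j$ gives $k\pm y_j\ge \tfrac r2$, whence $A_j,B_j\ge \tfrac r2\,h$. Writing each factor as $1+e_j$, the previous estimates give $|e_j|\le \tfrac1\pi\bigl(\tfrac{2}{A_j}+\tfrac{2}{B_j}\bigr)\le \tfrac{8}{\pi r h}$. Using the telescoping bound $\bigl|\prod_{j=1}^n(1+e_j)-1\bigr|\le C_n\max_j|e_j|$ (for a dimensional constant $C_n$, valid once $\max_j|e_j|\le 1$), one obtains
\[
\Bigl|\int_K \delta_h^n(y-x)\,\diff{x}-1\Bigr|\le \frac{8C_n}{\pi r\,h}.
\]
Hence, given $q\in\N$, choosing $\bar h:=\tfrac{8C_n}{\pi r}\diff{\rho}^{-q}\in\rti_{>0}$ (an admissible positive infinite number, since $r$ is invertible) forces the left-hand side to be $\le \diff{\rho}^{q}$ for every $h\ge\bar h$, which is exactly the asserted sharp limit.

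The step requiring the most care is the \emph{uniformity}: because $k$ is infinite, the endpoints $A_j,B_j$ are themselves infinite, so the bound $|\mathrm{Si}-\tfrac\pi2|\le 2/A_j$ alone does not yield an infinitesimal error — it does so only after $h$ has been forced large. It is precisely the invertibility of the boundary distance $k\pm y_j\ge \tfrac r2$, guaranteed by $y\in\mathring K$ and Lem.~\ref{lem:mayer}, that upgrades $A_j\ge\tfrac r2 h$ into an error of the form $O(1/h)$ with an invertible (hence moderate) constant $\tfrac{8}{\pi r}$. Without interiority one cannot dominate $1/A_j$ by a moderate multiple of $1/h$, and the limit would genuinely fail.
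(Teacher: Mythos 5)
Your proof is correct and follows essentially the same route as the paper's: reduce to the one-dimensional Dirichlet integral by a change of variables, estimate the tails $\int^{\pm\infty}S$ by one integration by parts to get an $O(1/t)$ bound, and use the sharp interiority of $y$ (i.e.\ the invertibility of the distances $k\pm y_j$) to turn this into an error $O(1/h)$ with a moderate constant, whence the sharp limit. The only differences are cosmetic improvements: you treat general $n$ explicitly via the telescoping bound $\bigl|\prod_j(1+e_j)-1\bigr|\le C_n\max_j|e_j|$ (the paper proves $n=1$ and declares $n>1$ similar), and you exhibit the explicit threshold $\bar h=\tfrac{8C_n}{\pi r}\diff{\rho}^{-q}$ rather than leaving the quantification implicit.
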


\begin{proof}
We actually prove the case $n=1$, since $n>1$ is similar. From $y=[y_{\eps}]\in\mathring{K}=(-k,k)$,
we can take the representative $(y_{\eps})$ so that $-k_{\eps}<y_{\eps}<k_{\eps}$
for all $\eps$. We have $\int_{K}\delta_{h}^{1}(y-x)\,\diff x=\left[\frac{h_{\eps}}{\pi}\int_{-k_{\eps}}^{k_{\eps}}S\left(h_{\eps}(y_{\eps}-x)\right)\,\diff x\right]$.
With the change of variables $x'=h_{\eps}(y_{\eps}-x)$, we get
\begin{align*}
\frac{h_{\eps}}{\pi}\int_{-k_{\eps}}^{k_{\eps}}S\left(h_{\eps}(y_{\eps}-x)\right)\,\diff x & =\frac{1}{\pi}\int_{h_{\eps}y_{\eps}-h_{\eps}k_{\eps}}^{h_{\eps}y_{\eps}+h_{\eps}k_{\eps}}S\\
 & =\frac{1}{\pi}\left(\int_{h_{\eps}y_{\eps}-h_{\eps}k_{\eps}}^{-\infty}S+\int_{-\infty}^{+\infty}S+\int_{+\infty}^{h_{\eps}y_{\eps}+h_{\eps}k_{\eps}}S\right)\\
 & =\frac{1}{\pi}\left(\int_{h_{\eps}y_{\eps}-h_{\eps}k_{\eps}}^{-\infty}S+\pi+\int_{+\infty}^{h_{\eps}y_{\eps}+h_{\eps}k_{\eps}}S\right).
\end{align*}
Note that $h_{\eps}y_{\eps}+h_{\eps}k_{\eps}\ge0$ and $h_{\eps}y_{\eps}-h_{\eps}k_{\eps}\le0$
because $-k_{\eps}\le y_{\eps}\le k_{\eps}$. In general, if $0<a\le b$
or $a\le b<0$, we have
\begin{align*}
\left|\int_{a}^{b}S\right| & =\left|\int_{a}^{b}\frac{\sin x}{x}\,\diff x\right|=\left|-\frac{\cos y}{y}\right|_{a}^{b}+\left.\int_{a}^{b}\frac{\cos y}{y^{2}}\,\diff y\right|\\
 & \le\frac{1}{|b|}+\frac{1}{|a|}+\int_{a}^{b}\frac{\diff y}{y^{2}}=\frac{2}{|a|}+\frac{2}{|b|}.
\end{align*}
In our cases, this yields
\begin{align*}
\left|\int_{h_{\eps}y_{\eps}+h_{\eps}k_{\eps}}^{+\infty}S\right| & \le\frac{2}{\left|h_{\eps}y_{\eps}+h_{\eps}k_{\eps}\right|}\\
\left|\int_{-\infty}^{h_{\eps}y_{\eps}-h_{\eps}k_{\eps}}S\right| & \le\frac{2}{\left|h_{\eps}y_{\eps}-h_{\eps}k_{\eps}\right|}
\end{align*}
(recall that $-k_{\eps}\le y_{\eps}\le k_{\eps}$). Therefore
\[
\left|\int_{K}\delta_{h}^{1}(y-x)\,\diff x\right|\le1+\frac{2}{\left|hy+hk\right|}+\frac{2}{\left|hy-hk\right|}\to1
\]
as $h\to+\infty$ because $-k<y<k$.
\end{proof}
\noindent Now, we have to deal with estimations of the convolution
\eqref{eq:FI-convDirichletDelta} on the ``tails'', i.e.~arbitrarily
near $y$:
\begin{lem}
\label{lem:DirichletTails}Let $K\subseteq X\subseteq\rti^{n}$ and
$f\in\gsf(X).$Then for all $\delta\in\rti_{>0}$ such that $B_{\delta}(y)\subseteq K$,
we have:
\begin{enumerate}
\item \label{enu:DirichletTails}$\lim_{h\to+\infty}\int_{-k}^{y-\delta}\delta_{h}^{n}(y-x)\cdot f(x)\,\diff x=0=\lim_{h\to+\infty}\int_{y+\delta}^{k}\delta_{h}^{n}(y-x)\cdot f(x)\,\diff x$.
\item \label{enu:DirchletConvAround_y}$\lim_{h\to+\infty}\left(\mathcal{F}_{h}^{-1}\left(\mathcal{F}_{k}(f)\right)(y)-\int_{y-\delta}^{y+\delta}\delta_{h}^{n}(y-x)\cdot f(x)\,\diff x\right)=0$.
\end{enumerate}
As above, the limits are in the sharp topology.

\end{lem}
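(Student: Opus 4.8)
The plan is to establish \ref{enu:DirichletTails} by a single integration by parts that exposes a decisive factor $\frac{1}{h}$, and then to read off \ref{enu:DirchletConvAround_y} immediately from \ref{enu:DirichletTails} together with the additivity of the integral applied to \eqref{eq:FI-convDirichletDelta}. As in Lem.~\ref{lem:DirichletMass1}, I would carry out the estimates for $n=1$ (with $y\in\mathring{K}$ the fixed interior point for which $B_{\delta}(y)\subseteq K$), the case $n>1$ being analogous through the product structure \eqref{eq:delta1} and Fubini (Thm.~\ref{thm:muMeasurableAndIntegral}.\ref{enu:int-ndimInt}).

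First I would rewrite the left tail kernel. On $[-k,y-\delta]$ we have $y-x\ge\delta>0$, so $y-x$ is invertible and $S(h(y-x))=\frac{\sin(h(y-x))}{h(y-x)}$, whence $\delta_{h}^{1}(y-x)=\frac{\sin(h(y-x))}{\pi(y-x)}$. Since $x\mapsto y-x$ sends this interval into $[\delta,y+k]$, a set bounded away from $0$, the reciprocal $x\mapsto(y-x)^{-1}$ is a GSF by closure under composition (Thm.~\ref{thm:propGSF}.\ref{enu:category}), and hence so is $G:=\frac{f}{\pi(y-\cdot)}$; this is the point where one must invoke the GSF machinery rather than a naive pointwise quotient. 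Thus $\int_{-k}^{y-\delta}\delta_{h}^{1}(y-x)f(x)\,\diff x=\int_{-k}^{y-\delta}\sin(h(y-x))\,G(x)\,\diff x$.

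Next, using $\frac{\diff{}}{\diff x}\left[\frac{1}{h}\cos(h(y-x))\right]=\sin(h(y-x))$ and integration by parts (Thm.~\ref{thm:intRules}.\ref{enu:intByParts}), I would obtain
\[
\int_{-k}^{y-\delta}\sin(h(y-x))\,G(x)\,\diff x=\left[\frac{1}{h}\cos(h(y-x))\,G(x)\right]_{-k}^{y-\delta}-\frac{1}{h}\int_{-k}^{y-\delta}\cos(h(y-x))\,G'(x)\,\diff x.
\]
Every summand carries the factor $\frac{1}{h}$. The boundary values $G(y-\delta)=\frac{f(y-\delta)}{\pi\delta}$ and $G(-k)=\frac{f(-k)}{\pi(y+k)}$ are moderate (note $y+k\ge\delta$ because $B_{\delta}(y)\subseteq K$), the cosines are bounded by $1$, and the remaining integral is bounded, via Thm.~\ref{thm:intRules}.\ref{enu:intMonotone} and the extreme value Thm.~\ref{thm:extremeValues}, by $(y-\delta+k)\cdot\max_{[-k,y-\delta]}|G'|$, which is moderate since $G'(x)=\frac{f'(x)}{\pi(y-x)}+\frac{f(x)}{\pi(y-x)^{2}}$ and $(y-x)^{-1}$ is moderate on the tail. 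The crucial observation is that all these bounds are moderate with a \emph{single} exponent $N\in\N$ independent of $h$, so that $\left|\int_{-k}^{y-\delta}\delta_{h}^{1}(y-x)f(x)\,\diff x\right|\le\frac{1}{h}\diff{\rho}^{-N}$. Given $q\in\N$, taking $\bar h:=\diff{\rho}^{-(N+q)}$ then forces this tail to be $\le\diff{\rho}^{q}$ for all $h\ge\bar h$, which is precisely sharp convergence to $0$ (Def.~\ref{def:hyperfiniteN}.\ref{enu:hyperlimit}); the right tail $[y+\delta,k]$, where $y-x\le-\delta$ and still $|y-x|\ge\delta$, is treated identically, proving \ref{enu:DirichletTails}.

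Finally, \ref{enu:DirchletConvAround_y} follows by applying additivity of the integral (Thm.~\ref{thm:intRules}.\ref{enu:additivityDomain}) to \eqref{eq:FI-convDirichletDelta}, which gives
\[
\mathcal{F}_{h}^{-1}\!\left(\mathcal{F}_{k}(f)\right)(y)-\int_{y-\delta}^{y+\delta}\delta_{h}^{n}(y-x)f(x)\,\diff x=\int_{-k}^{y-\delta}\delta_{h}^{n}(y-x)f(x)\,\diff x+\int_{y+\delta}^{k}\delta_{h}^{n}(y-x)f(x)\,\diff x,
\]
and both right-hand tails are sharply null by \ref{enu:DirichletTails}; since a sum of two sharply null hypersequences is sharply null, the claim follows. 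The step I expect to be the main obstacle is the uniformity in the integration-by-parts estimate: guaranteeing one moderation exponent $N$ independent of $h$ despite the infinite integration length $2k$ and the possibly infinite values of $f$ on $K$, since it is exactly this uniformity that lets the $\frac{1}{h}$ gain prevail in the hyperlimit.
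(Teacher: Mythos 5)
Your proof is correct and takes essentially the same route as the paper: the paper's proof first changes variables to $z=h(y-x)$ and then integrates by parts in $z$, producing exactly the boundary terms $\frac{|f(a)|}{h|y-a|}+\frac{|f(b)|}{h|y-b|}$ and the two integral terms (with $f'$ and with $(y-x)^{-2}$) that you obtain by integrating by parts directly in $x$ with $G=\frac{f}{\pi(y-\cdot)}$, each carrying the decisive factor $\frac{1}{h}$. The uniformity issue you flag at the end is precisely the point the paper emphasizes (the bounds $M_{0}=\max_{K}|f|$, $M_{1}=\max_{K}|f'|$ are $h$-independent because $k$ stays fixed while only $h\to+\infty$), and part \ref{enu:DirchletConvAround_y} is deduced from \ref{enu:DirichletTails} and \eqref{eq:FI-convDirichletDelta} in both proofs in the same way.
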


\begin{proof}
For $y<a\le b\le k$ or $-k\le a\le b<y$, we first consider
\begin{align}
\int_{a}^{b}\delta_{h}^{1}(y & -x)f(x)\,\diff x=\int_{a}^{b}\frac{1}{\pi}\frac{\sin(h(y-x))}{y-x}f(x)\,\diff x=\nonumber \\
 & =-\int_{hy-ha}^{hy-hb}\frac{1}{\pi}\frac{\sin z}{z}f\left(y-\frac{z}{h}\right)\,\diff z=\nonumber \\
 & =\frac{1}{\pi}\left(\left[-\frac{\cos z}{z}f\left(y-\frac{z}{h}\right)\right]_{hy-hb}^{hy-ha}+\int_{hy-hb}^{hy-ha}\cos(z)\frac{\diff{}}{\diff z}\left[\frac{f\left(y-\frac{z}{h}\right)}{z}\right]\,\diff z\right).\label{eq:int_a^bdelta}
\end{align}
The first summand in \eqref{eq:int_a^bdelta} yields
\begin{align}
\left|\left[-\frac{\cos z}{z}f\left(y-\frac{z}{h}\right)\right]_{hy-hb}^{hy-ha}\right| & =\left|-\cos(hy-ha)\frac{f(a)}{hy-ha}+\cos(hy-hb)\frac{f(b)}{hy-hb}\right|\le\nonumber \\
 & \le\frac{|f(a)|}{h}\cdot\frac{1}{|y-a|}+\frac{|f(b)|}{h}\cdot\frac{1}{|y-b|}\label{eq:0}
\end{align}
The second summand in \eqref{eq:int_a^bdelta} yields
\begin{align}
\int_{hy-hb}^{hy-ha}\cos(z)\frac{\diff{}}{\diff z}\left[\frac{f\left(y-\frac{z}{h}\right)}{z}\right]\,\diff z & =-\int_{hy-hb}^{hy-ha}\cos(z)\frac{f'\left(y-\frac{z}{h}\right)}{hz}\ \diff z\label{eq:1}\\
 & \phantom{=}-\int_{hy-hb}^{hy-ha}\cos(z)\frac{f\left(y-\frac{z}{h}\right)}{z^{2}}\ \diff z.\label{eq:2}
\end{align}
If $hy-hb\le z\le hy-ha$, then $-k\le a\le y-\frac{z}{h}\le b\le k$,
so that $\left|f'\left(y-\frac{z}{h}\right)\right|\le\max_{x\in K}\left|f'(x)\right|=:M_{1}\in\rti$
(note that this step would not be so trivial if we had to let $k\to+\infty$).
Thereby, from the mean value theorem applied to \eqref{eq:1}
\[
\left|\int_{hy-hb}^{hy-ha}\cos(z)\frac{f'\left(y-\frac{z}{h}\right)}{hz}\ \diff z\right|\le(b-a)\cdot\frac{M_{1}}{\left|\zeta\right|}
\]
for some $\zeta\in[hy-hb,hy-ha]$ (note that $a\le b$ implies $hy-ha\ge hy-hb$).
In the first case we assumed, i.e.~$y<a\le b\le k$, we have $hy-ha<0$,
so that $|\zeta|=-\zeta\ge ha-hy$, and hence $(b-a)\cdot\frac{M_{1}}{\left|\zeta\right|}\le(b-a)\cdot\frac{M_{1}}{ha-hy}$.
In the second case $-k\le a\le b<y$, we have $|\zeta|=\zeta\ge hy-hb$,
and hence $(b-a)\cdot\frac{M_{1}}{\left|\zeta\right|}\le(b-a)\cdot\frac{M_{1}}{hy-hb}$.
The last term in \eqref{eq:2} can be estimated as
\[
\left|\int_{hy-hb}^{hy-ha}\cos(z)\frac{f\left(y-\frac{z}{h}\right)}{z^{2}}\ \diff z\right|\le M_{0}\int_{hy-hb}^{hy-ha}\frac{\diff z}{z^{2}}=M_{0}\left(\frac{1}{hy-hb}-\frac{1}{hy-ha}\right),
\]
where $M_{0}:=\max_{x\in K}\left|f(x)\right|\in\rti$. Applying these
estimates with $a=-k$ and $b=y-\delta$ (so that the second case
holds), we get
\begin{align*}
\left|\int_{-k}^{y-\delta}\delta_{h}^{n}(y-x)\cdot f(x)\,\diff x\right| & \le\frac{|f(-k)|}{h}\cdot\frac{1}{|y+k|}+\frac{|f(y-\delta)|}{h}\cdot\frac{1}{\delta}+\frac{(y-\delta+k)M_{1}}{h\delta}+\\
 & \phantom{\le}+\frac{M_{0}}{h}\left(\frac{1}{y+k}-\frac{1}{\delta}\right)\le\\
 & \le2\frac{M_{0}}{h}\frac{1}{|y+k|}+\frac{(y-\delta+k)M_{1}}{h\delta}.
\end{align*}
For $h\to+\infty$, this proves the first part of \ref{enu:DirichletTails}.
Once again, note that if $h=k\to+\infty$, in general the term $\frac{|f(-k)|}{k}\not\to0$;
it is hence important that $k$ is fixed and only $h\to+\infty$.
Similarly, we can estimate the other integral in \ref{enu:DirichletTails}
for $a=y+\delta$ and $b=k$ (the first case holds) obtaining
\[
\left|\int_{y+\delta}^{k}\delta_{h}^{n}(y-x)\cdot f(x)\,\diff x\right|\le2\frac{M_{0}}{h}\frac{1}{|y-k|}+\frac{(k-y-\delta)M_{1}}{h\delta}\to0
\]
as $h\to+\infty$.

The claim \ref{enu:DirchletConvAround_y} is proved considering \eqref{eq:FI-convDirichletDelta}
and \ref{enu:DirichletTails}.
\end{proof}
Finally, we have the Fourier inversion theorem:
\begin{thm}
\label{thm:FIT}Let $K\subseteq X\subseteq\rti^{n}$ and $f\in\gsf(X)$.
Then for all sharply interior $y\in\mathring{K}$, we have
\begin{equation}
\lim_{h\to+\infty}\mathcal{F}_{h}^{-1}\left(\mathcal{F}_{k}(f)\right)(y)=\lim_{h\to+\infty}\mathcal{F}_{h}\left(\mathcal{F}_{k}^{-1}(f)\right)(y)=f(y).\label{eq:FIT}
\end{equation}
\end{thm}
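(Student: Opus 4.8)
The plan is to prove the first equality in \eqref{eq:FIT} by the classical sifting argument adapted to the Dirichlet delta, and then to deduce the second equality from the first by a reflection. First I would fix a sharply interior point $y\in\mathring{K}$ and choose $\delta\in\rti_{>0}$ with $B_{\delta}(y)\subseteq K$, which is possible precisely because $y$ is interior. Starting from the representation \eqref{eq:FI-convDirichletDelta}, Lem.~\ref{lem:DirichletTails}.\ref{enu:DirchletConvAround_y} reduces everything to the central contribution, so it suffices to show
\[
\lim_{h\to+\infty}\int_{y-\delta}^{y+\delta}\delta_{h}^{n}(y-x)f(x)\,\diff{x}=f(y).
\]
Writing the integrand as $\delta_{h}^{n}(y-x)\bigl(f(x)-f(y)\bigr)+f(y)\,\delta_{h}^{n}(y-x)$ splits this into a ``mass term'' and a ``main term''. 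For the mass term I would combine Lem.~\ref{lem:DirichletMass1} with the tails estimate Lem.~\ref{lem:DirichletTails}.\ref{enu:DirichletTails} applied to the constant GSF $1$: since the full integral tends to $1$ and both tails tend to $0$, the central mass $\int_{y-\delta}^{y+\delta}\delta_{h}^{n}(y-x)\,\diff{x}$ tends to $1$, so $f(y)$ times (that mass minus $1$) vanishes in the sharp topology.

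The heart of the argument is the main term $A_{h}:=\int_{y-\delta}^{y+\delta}\delta_{h}^{1}(y-x)\bigl(f(x)-f(y)\bigr)\,\diff{x}$, written here in dimension one. The crucial point is that the Dirichlet kernel $\delta_{h}^{1}(z)=\tfrac{1}{\pi}\tfrac{\sin(hz)}{z}$ is \emph{not} a nonnegative approximate identity, its $L^{1}$-mass not staying bounded, so one cannot merely invoke continuity of $f$. Instead I would use a Hadamard factorization coming from the Fermat-Reyes theorem (Thm.~\ref{thm:FR-forGSF}): via the fundamental theorem of calculus (Thm.~\ref{thm:intRules}.\ref{enu:foundamental}) and the change of variables $s=y+t(x-y)$ (Thm.~\ref{thm:changeOfVariablesInt}) one obtains the GSF $\rho(x):=\int_{0}^{1}f'\bigl(y+t(x-y)\bigr)\,\diff{t}$ with $f(x)-f(y)=(x-y)\rho(x)$ on all of $[y-\delta,y+\delta]$. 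This factor cancels the $\tfrac{1}{y-x}$ singularity and turns $A_{h}$ into $-\tfrac{1}{\pi}\int_{y-\delta}^{y+\delta}\sin\!\bigl(h(y-x)\bigr)\rho(x)\,\diff{x}$. A single integration by parts (Thm.~\ref{thm:intRules}.\ref{enu:intByParts}) then produces an overall factor $\tfrac{1}{h}$ multiplying the boundary term and $\int\cos\!\bigl(h(y-x)\bigr)\rho'(x)\,\diff{x}$, all of which are bounded by fixed generalized numbers thanks to the extreme value theorem (Thm.~\ref{thm:extremeValues}) on the functionally compact interval $[y-\delta,y+\delta]$. Hence $|A_{h}|\le C/h$ for some fixed $C\in\rti$, and since $C/h\to0$ in the sharp topology as $h\to+\infty$, we get $A_{h}\to0$. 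The multidimensional case follows the same scheme via Fubini and the product structure $\delta_{h}^{n}=\prod_{j}\delta_{h_{j}}^{1}$ from \eqref{eq:delta1}, using the multivariate factorization $f(x)-f(y)=\sum_{j}(x_{j}-y_{j})\rho_{j}(x)$ and integrating by parts in one variable at a time.

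For the second equality I would avoid repeating this computation and instead use the reflection identity \eqref{eq:reflection}, namely $(2\pi)^{n}\mathcal{F}_{k}^{-1}(f)=\mathcal{F}_{k}(-1\diamond f)$, together with its consequence at level $h$, $\mathcal{F}_{h}(v)(y)=(2\pi)^{n}\mathcal{F}_{h}^{-1}(v)(-y)$. Setting $u:=-1\diamond f$, a GSF on a domain containing the symmetric set $K$, these give
\[
\mathcal{F}_{h}\!\bigl(\mathcal{F}_{k}^{-1}(f)\bigr)(y)=(2\pi)^{-n}\mathcal{F}_{h}\!\bigl(\mathcal{F}_{k}(u)\bigr)(y)=\mathcal{F}_{h}^{-1}\!\bigl(\mathcal{F}_{k}(u)\bigr)(-y).
\]
Since $K=[-k,k]^{n}$ is symmetric, $-y\in\mathring{K}$, so the already proven first equality applied to $u$ at $-y$ yields $\mathcal{F}_{h}^{-1}\bigl(\mathcal{F}_{k}(u)\bigr)(-y)\to u(-y)=f(y)$, as required.

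I expect the main obstacle to be exactly the failure of $\delta_{h}^{n}$ to be a positive approximate identity: every estimate must exploit oscillation, i.e.~the factor $\tfrac{1}{h}$ gained from integration by parts after the Hadamard cancellation, rather than concentration of mass. This is viable only because $k$ is held fixed while $h\to+\infty$, so that all suprema of $f,\rho,\rho'$ over $K$ are fixed finite generalized numbers; as already observed in the proof of Lem.~\ref{lem:DirichletTails}, letting $k$ and $h$ go to infinity simultaneously would destroy these uniform bounds.
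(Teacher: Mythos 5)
Your proposal is correct, and its skeleton coincides with the paper's proof: the same reduction of \eqref{eq:FI-convDirichletDelta} to the central interval via Lem.~\ref{lem:DirichletTails}, the same use of Lem.~\ref{lem:DirichletMass1} for the mass term, the same Fermat--Reyes factorization $f(x)-f(y)=(x-y)\rho(x)$ to cancel the $1/(y-x)$ singularity, and the same reflection identities \eqref{eq:reflection} to deduce the second equality from the first (the paper turns them into the exact identity $\mathcal{F}_{h}\left(\mathcal{F}_{k}^{-1}(f)\right)=\mathcal{F}_{h}^{-1}\left(\mathcal{F}_{k}(f)\right)$ for each $h$, you evaluate the first equality at the reflected function $-1\diamond f$ and the reflected point $-y$; both are fine since $K$ is symmetric). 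The one genuine difference is how the oscillatory main term is killed. The paper bounds $\frac{1}{\pi}\left|\int_{y-\delta}^{y+\delta}\sin(h(y-x))f'[y;y-x]\,\diff x\right|$ by $\frac{2\delta M_{1\delta}}{\pi}$, a bound independent of $h$, and therefore must run a double limit, $h\to+\infty$ for the tails and the mass, then $\delta\to0^{+}$ for the main term, exploiting that the left-hand side is $\delta$-independent. You instead integrate by parts once more, paying with one extra derivative (harmless: $\rho$ is a GSF, so $\rho'$ exists and $\max_{[y-\delta,y+\delta]}|\rho'|\in\rti$ by the extreme value Thm.~\ref{thm:extremeValues}) and gaining an explicit factor $1/h$; hence the main term is $O(C_{\delta}/h)\to0$ in the sharp topology for a \emph{fixed} small $\delta$, so your argument is a single limit in $h$ and even yields a quantitative rate. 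Two small caveats. First, your multidimensional sketch inherits the same gap as the paper's ``$n>1$ is similar'': after integrating by parts in $x_{j}$, the remaining factors $\prod_{l\ne j}\delta_{h}^{1}(y_{l}-x_{l})$ have $L^{1}$-norms growing like a power of $\log h$, so the decay is of order $(\log h)^{n-1}/h$ rather than $1/h$; this still tends to $0$ sharply, but it should be said. Second, as in the paper, $\delta$ must be chosen small enough that the Fermat--Reyes representation of $\rho$ is valid on all of $B_{\delta}(y)\subseteq K$, since the incremental ratio is only guaranteed on a sharp neighborhood of the diagonal.
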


\begin{proof}
To link the integrals of the previous Lem.~\ref{lem:DirichletTails}
with $f(y)$, we use the Fermat-Reyes Thm.~\ref{thm:FR-forGSF}:
for any $\delta\in\rti_{>0}$ sufficiently small such that $B_{\delta}(y)\subseteq K\subseteq X$,
we can write $f(x)=f(y)+(y-x)f'[y;y-x]$ for all $x\in[y-\delta,y+\delta]$.
Therefore
\begin{multline*}
\int_{y-\delta}^{y+\delta}\delta_{h}^{1}(y-x)\cdot f(x)\,\diff x=\int_{y-\delta}^{y+\delta}\delta_{h}^{1}(y-x)\cdot\left(f(y)+(y-x)f'[y;y-x]\right)\,\diff x\\
=f(y)\int_{y-\delta}^{y+\delta}\delta_{h}^{1}(y-x)\,\diff x+\frac{1}{\pi}\int_{y-\delta}^{y+\delta}\sin(h(y-x))f'[y;y-x]\,\diff x.
\end{multline*}
Thereby
\[
\left|\int_{y-\delta}^{y+\delta}\delta_{h}^{1}(y-x)\cdot f(x)\,\diff x-f(y)\int_{y-\delta}^{y+\delta}\delta_{h}^{1}(y-x)\,\diff x\right|\le\frac{1}{\pi}2\delta M_{1\delta},
\]
where $M_{1\delta}:=\max_{x\in[y-\delta,y+\delta]}|f'[x;y-x]|\in\rti$.
Considering \eqref{eq:FI-convDirichletDelta}, we have
\begin{align*}
\left|\mathcal{F}_{h}^{-1}\left(\mathcal{F}_{k}(f)\right)(y)-f(y)\right. & \left.\int_{-k}^{k}\delta_{h}^{1}(y-x)\,\diff x\right|\le\frac{1}{\pi}2\delta M_{1\delta}+\\
 & \left|\int_{-k}^{y-\delta}\delta_{h}^{1}(y-x)\cdot f(x)\,\diff x+\int_{y+\delta}^{k}\delta_{h}^{1}(y-x)\cdot f(x)\,\diff x\right.\\
 & -\left.f(y)\int_{-k}^{y-\delta}\delta_{h}^{1}(y-x)\,\diff x-f(y)\int_{y+\delta}^{k}\delta_{h}^{1}(y-x)\,\diff x\right|.
\end{align*}
Take the limit both for $h\to+\infty$ and for $\delta\to0^{+}$ in
this inequality, considering that the left hand side does not depend
on $\delta$. Using Lem.~\ref{lem:DirichletTails}, we obtain
\[
\lim_{h\to+\infty}\left|\mathcal{F}_{h}^{-1}\left(\mathcal{F}_{k}(f)\right)(y)-f(y)\int_{-k}^{k}\delta_{h}^{1}(y-x)\,\diff x\right|=0.
\]
Finally, Lem.~\ref{lem:DirichletMass1} yields $\lim_{h\to+\infty}f(y)\int_{-k}^{k}\delta_{h}^{1}(y-x)\,\diff x=f(y)$
and this proves the first part of the claim. To prove the second equality
in \eqref{eq:FIT}, we use the general equalities
\begin{align}
(2\pi)^{n}\mathcal{F}_{h}^{-1}(g) & =\mathcal{F}_{h}(-1\diamond g)\qquad\forall g\in\gsf(H)\label{eq:3}\\
(2\pi)^{n}\mathcal{F}_{h}^{-1}(f) & =-1\diamond\mathcal{F}_{k}(f)\qquad\forall f\in\gsf(K).\label{eq:4}
\end{align}
Applying \eqref{eq:3} with $g=\mathcal{F}_{k}(f)|_{H}$, we get
\begin{equation}
(2\pi)^{n}\mathcal{F}_{h}^{-1}(\mathcal{F}_{k}(f))=\mathcal{F}_{h}(-1\diamond\mathcal{F}_{k}(f)).\label{eq:5}
\end{equation}
Thereby, using \eqref{eq:4} and \eqref{eq:5}, we get $(2\pi)^{n}\mathcal{F}_{h}(\mathcal{F}_{k}^{-1}(f))=\mathcal{F}_{h}((2\pi)^{n}\mathcal{F}_{k}^{-1}(f))=\mathcal{F}_{h}(-1\diamond\mathcal{F}_{k}(f))=(2\pi)^{n}\mathcal{F}_{h}^{-1}(\mathcal{F}_{k}(f))$.
\end{proof}
\noindent One way to summarize the meaning of this version of the
Fourier inversion theorem is as follows: If for a smooth function
$g\in\mathcal{C}^{\infty}(\R)$, we want to define
\[
\int_{-\infty}^{+\infty}g(\omega)e^{ix\omega}\,\diff\omega:=\lim_{h\to+\infty}\int_{-h}^{h}g(\omega)e^{ix\omega}\,\diff\omega,
\]
we can assume some sufficiently strong behavior of $g$ at $\pm\infty$,
e.g.~that $g$ is rapidly decreasing. On the one hand, this is only
a sufficient condition deeply linked to the limitations of the Lebesgue
integral, as the function $g(x)=\frac{\sin(x)}{x}$ shows. On the
other hand, \eqref{eq:FIT} shows that this type of limit exists (in
the sharp topology) for all the GSF of the form $g=\mathcal{F}_{k}(f)$,
where $f$ is an \emph{arbitrary} GSF and $k\in\rti$ is \emph{any
fixed infinite number}. Indeed, Thm.~\ref{thm:FIT} is strongly related
to the Dirichlet delta, as \eqref{eq:FI-convDirichletDelta} already
shows. In other words, a key idea of the present work is to consider
the HFT $\mathcal{F}_{h}^{-1}(f)$ for arbitrary $f\in\gsf(H)$, and
to take the limit for $h\to+\infty$ only in the Fourier inversion
theorem.

We can also state the Fourier inversion theorem using a strong equivalence
relation instead of a limit. For this aim, we need the following notions:
\begin{defn}
\label{def:RCud}If $\sigma$ is a gauge \emph{smaller} than $\rho$,
and we write $\sigma\le\rho^{*}$, i.e.~if
\[
\exists R\in\R_{>0}\,\forall^{0}\eps:\ \sigma_{\eps}\le\rho_{\eps}^{R},
\]
then we have $\R_{\rho}\subseteq\R_{\sigma}$ and we can hence consider
the set of $\rho$\emph{-moderate numbers in} $\RC{\sigma}$:
\[
\RCud{\rho}{\sigma}:=\left\{ [x_{\eps}]\in\RC{\sigma}\mid(x_{\eps})\in\R_{\rho}\right\} .
\]
Let $\partial\rho:=[\rho_{\eps}]\in\RC{\sigma}$ denotes the generalized
number in $\RC{\sigma}$ defined by the net $(\rho_{\eps})$. If $x$,
$y\in\RC{\sigma}$, we say that $x$ \emph{is equal up to $\rho$
to $y$}, and we write $x=_{\rho}y$, if
\[
\forall q\in\N:\ |x-y|\le\partial\rho^{q}.
\]
We say that $\sigma$ is an \emph{auxiliary gauge of }$\rho$, and
we write $\sigma\ll\rho$, if
\begin{equation}
\exists Q\in\mathbb{N}\,\forall q\in\mathbb{N}\,\forall^{0}\eps:\ \sigma_{\eps}^{Q}\leq\rho_{\eps}^{q}.\label{eq:sigma-ll-rho}
\end{equation}
Finally, we say that $k\in\RC{\sigma}$ is $\rho$\emph{-immoderate},
and we write $k\gg\partial\rho^{-*}$ if
\[
\forall Q\in\N:\ k\ge\partial\rho^{-Q}.
\]
\end{defn}

\begin{rem}
\label{rem:RCud}~
\begin{enumerate}
\item \label{enu:RCud}Clearly, $\RCud{\rho}{\sigma}$ is a subring of $\RC{\sigma}$,
but in general it is not isomorphic to $\rti$ because the notion
of equality $\sim_{\sigma}$ in $\RC{\sigma}$ is generally stronger
than the one $\sim_{\rho}$ in $\rti$. However, if $[x_{\eps}]_{\sigma}\in\RC{\sigma}$
and $[x_{\eps}]_{\rho}\in\rti$ denotes the equivalence classes generated
by the net $(x_{\eps})\in\R_{\rho}$, then the map
\[
\iota:[x_{\eps}]_{\sigma}\in\RCud{\rho}{\sigma}\mapsto[x_{\eps}]\in\rti
\]
 is surjective and ``injective up to $\rho$'', i.e.~$\iota(x)=\iota(y)$
implies $x=_{\rho}y$. Similarly, we can define
\[
f\in\GSFud{\rho}{\sigma}(X,Y):\Leftrightarrow X\subseteq\RCud{\rho}{\sigma},\ Y\subseteq\RCud{\rho}{\sigma},\ \forall x\in X\,\forall\alpha\in\mathbb{N}^{n}:\ \partial^{\alpha}f\left(x\right)\in\RCud{\rho}{\sigma},
\]
and the map $j:\GSFud{\rho}{\sigma}(X,Y)\ra\gsf(\iota(X),\iota(Y))$
defined by $j(f)(\iota(x)):=\iota(f(x))$ is surjective and satisfies
$j(f)=j(g)$ if and only if $f(x)=_{\rho}g(x)$ for all $x\in X$.
\item \label{enu:RCudExamples}$\sigma_{1\eps}:=\rho_{\eps}^{1/\eps}$,
$\sigma_{2\eps}:=\exp\left(-\frac{1}{\rho_{\eps}}\right)$ and $\sigma_{3\eps}:=\exp\left(-\rho_{\eps}^{-1/\eps}\right)$
are all auxiliary gauges of $\rho$, $k_{j}:=\sigma_{j}^{-1}$ and
$-\log k_{3}$ are $\rho$-immoderate numbers. On the other hand,
if $\sigma$ is an arbitrary gauge, and $\rho_{\eps}:=-\log(\sigma_{\eps})^{-1}$,
then $\sigma\ll\rho$.
\end{enumerate}
\end{rem}

\begin{cor}
\label{cor:FIT=00003D_rho}Let $\sigma\ll\rho$ and $k\gg\partial\rho^{-*}$.
Let $f\in\Gsf{\sigma}(X)$, with $K\subseteq X$. Then for all $h\in\RC{\sigma}$
sufficiently large, we have
\[
\mathcal{F}_{h}^{-1}\left(\mathcal{F}_{k}(f)\right)(y)=_{\rho}\mathcal{F}_{h}\left(\mathcal{F}_{k}^{-1}(f)\right)(y)=_{\rho}f(y)
\]
 for all $y\in K\cap\RCud{\rho}{\sigma}$.
\end{cor}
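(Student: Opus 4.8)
The plan is to \emph{transfer} the Fourier inversion theorem (Thm.~\ref{thm:FIT}) from the gauge $\rho$ to the finer gauge $\sigma$, and then to upgrade the resulting sharp-topology limit in $\RC{\sigma}$ to the pointwise relation $=_{\rho}$ valid for \emph{all} sufficiently large $h$. The conceptual engine is the interplay of the two gauges: under $\sigma\ll\rho$ a single power of the $\sigma$-infinitesimal $\diff{\sigma}$ is already $\rho$-negligible, and this is exactly what collapses the hidden $\forall q$ quantifier carried by the limit.

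First I would invoke the gauge-independence of the entire construction (noted explicitly after Def.~\ref{def:RCGN}): every notion and result of Sec.~\ref{sec:Basic-notions}--\ref{sec:The-inverse-hyperfinite}, in particular Thm.~\ref{thm:FIT} together with the auxiliary Lem.~\ref{lem:DirichletMass1} and Lem.~\ref{lem:DirichletTails}, holds verbatim with $\rho$ replaced by $\sigma$. Since $f\in\Gsf{\sigma}(X)$ with $K\subseteq X\subseteq\RC{\sigma}^{n}$, and $k\in\RC{\sigma}$ is a positive infinite number (indeed $k\ge\partial\rho^{-1}$ and $\rho_{\eps}^{-1}\to+\infty$), Thm.~\ref{thm:FIT} in the $\sigma$-world gives, for every sharply interior $y\in\mathring{K}$,
\[
\lim_{h\to+\infty}\mathcal{F}_{h}^{-1}\left(\mathcal{F}_{k}(f)\right)(y)=\lim_{h\to+\infty}\mathcal{F}_{h}\left(\mathcal{F}_{k}^{-1}(f)\right)(y)=f(y),
\]
the limits being taken in the sharp topology of $\RC{\sigma}$, whose infinitesimal neighbourhoods are measured by $\diff{\sigma}$.

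I would then check that the requirement $y\in\mathring{K}$ is automatic for the points in the statement, and that this is precisely where $k\gg\partial\rho^{-*}$ enters. If $y\in K\cap\RCud{\rho}{\sigma}$ then $y$ is $\rho$-moderate, so $|y|\le\partial\rho^{-N}$ for some $N\in\N$; since $k$ is $\rho$-immoderate, $k\ge\partial\rho^{-(N+1)}$, whence for each coordinate $k-y_{i}\ge\partial\rho^{-(N+1)}-\partial\rho^{-N}=\partial\rho^{-N}(\partial\rho^{-1}-1)>0$ and similarly $y_{i}+k>0$. Hence $y\in(-k,k)^{n}=\mathring{K}$: the immoderateness of $k$ guarantees that the $\rho$-moderate window $\RCud{\rho}{\sigma}$ sits strictly inside $[-k,k]^{n}$.

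The heart of the argument is the quantifier collapse. Unfolding the first $\sigma$-limit gives
\[
\forall q\in\N\,\exists\bar{h}\in\RC{\sigma}\,\forall h\in\RC{\sigma}_{\ge\bar{h}}:\ \left|\mathcal{F}_{h}^{-1}\left(\mathcal{F}_{k}(f)\right)(y)-f(y)\right|\le\diff{\sigma}^{q}.
\]
Let $Q\in\N$ be the witness of $\sigma\ll\rho$, so that $\diff{\sigma}^{Q}\le\partial\rho^{q}$ in $\RC{\sigma}$ for \emph{every} $q\in\N$ (this is precisely \eqref{eq:sigma-ll-rho}). Applying the displayed convergence with the single value $q=Q$ produces a threshold $\bar{h}:=\bar{h}_{Q}$ such that, for all $h\ge\bar{h}$,
\[
\left|\mathcal{F}_{h}^{-1}\left(\mathcal{F}_{k}(f)\right)(y)-f(y)\right|\le\diff{\sigma}^{Q}\le\partial\rho^{q}\quad\forall q\in\N,
\]
which is by definition $\mathcal{F}_{h}^{-1}\left(\mathcal{F}_{k}(f)\right)(y)=_{\rho}f(y)$. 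The same reasoning applied to the second $\sigma$-limit gives $\mathcal{F}_{h}\left(\mathcal{F}_{k}^{-1}(f)\right)(y)=_{\rho}f(y)$ beyond a possibly larger threshold, and taking the maximum of the two thresholds concludes. The point I expect to be the genuine obstacle is exactly this passage of quantifiers: Thm.~\ref{thm:FIT} only supplies ``$\forall q\,\exists\bar{h}_{q}$'' convergence, while the corollary demands the uniform ``$\exists\bar{h}\,\forall h\ge\bar{h}$ (for all $q$)'' encoded in $=_{\rho}$; the collapse is legitimate only because $\sigma\ll\rho$ pushes \emph{all} powers of $\partial\rho$ below the single power $\diff{\sigma}^{Q}$. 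Had we assumed merely $\sigma\le\rho^{*}$, the thresholds $\bar{h}_{q}$ would genuinely depend on $q$ and no single $h$ would work, so I would stress that the two standing hypotheses play complementary roles: $k\gg\partial\rho^{-*}$ secures $y\in\mathring{K}$, while $\sigma\ll\rho$ drives the quantifier collapse.
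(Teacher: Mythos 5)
Your proposal is correct and follows essentially the same route as the paper's proof: apply Thm.~\ref{thm:FIT} with $\sigma$ playing the role of $\rho$, note that $y\in K\cap\RCud{\rho}{\sigma}$ together with $k\gg\partial\rho^{-*}$ forces $y\in\mathring{K}$, and then use \eqref{eq:sigma-ll-rho} to get $\diff\sigma^{Q}\le\partial\rho^{q}$ for all $q\in\N$, which collapses the $\sigma$-sharp limit into the relation $=_{\rho}$ for all sufficiently large $h$. The only difference is one of detail: you make explicit the two steps the paper compresses into two lines, namely the verification that $\rho$-moderateness of $y$ plus $\rho$-immoderateness of $k$ yield $k-y_{i}>0$, and the quantifier collapse obtained by instantiating the limit at the single exponent $Q$.
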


\begin{proof}
From Thm.~\ref{thm:FIT} (with $\sigma$ in the play of $\rho$),
we have that for all $h\in\RC{\sigma}$ sufficiently large
\[
\left|\mathcal{F}_{h}^{-1}\left(\mathcal{F}_{k}(f)\right)(y)-f(y)\right|\le\diff\sigma^{Q}\le\partial\rho^{q}
\]
for all $q\in\N$ from \eqref{eq:sigma-ll-rho}. Note that $y\in K\cap\RCud{\rho}{\sigma}$
and $k\gg\partial\rho^{-*}$ imply $y\in\mathring{K}$ (in the $\sigma$-sharp
topology).
\end{proof}
The following results allows one to have independence from $k$ or
both $k$ and $h$.
\begin{cor}
\label{cor:FITComptSuppGSF}If $f\in\Dgsf(\rti)$, there exists $k\in\rti_{>0}$
infinite such that
\[
\lim_{h\to+\infty}\mathcal{F}_{h}^{-1}\left(\mathcal{F}(f)\right)(y)=\lim_{h\to+\infty}\mathcal{F}_{h}\left(\mathcal{F}^{-1}(f)\right)(y)=f(y)
\]
for all $y\in\mathring{K}$.
\end{cor}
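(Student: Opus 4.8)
The plan is to deduce the statement from the Fourier inversion theorem Thm.~\ref{thm:FIT} by exploiting the single extra feature available here: for a \emph{compactly supported} GSF the transform $\mathcal{F}_{k}(f)$ is independent of the (sufficiently large) infinite number $k$, so that the simplified notation $\mathcal{F}(f)$ is legitimate and the conclusion of Thm.~\ref{thm:FIT} can be read off directly.

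First I would fix the support. Since $f\in\Dgsf(\rti)$, there is $H\fcmp\rti$ with $\text{supp}(f)\subseteq H$, and by Def.~\ref{def:functCmpt-1} the set $H$ is sharply bounded, so $H\subseteq(-R,R)$ for some $R\in\rti_{>0}$. Choosing $k:=R+\diff{\rho}^{-1}$, the number $k$ is a positive infinite number (a positive number plus the infinite $\diff{\rho}^{-1}$) and $\text{supp}(f)\subseteq[-R,R]\subseteq[-k,k]=:K$.

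Next I would identify the two simplified notations with their $k$-indexed counterparts. By Def.~\ref{def:HyperfiniteFT} together with Thm.~\ref{thm:DerivativeIsZero}.\ref{enu:intBound} (which guarantees that the iterated integral over $[-k,k]$ is unchanged as soon as the interval contains $\text{supp}(f)$), we have $\mathcal{F}(f)=\mathcal{F}_{k}(f)$. Moreover, since $K$ is symmetric, $\text{supp}(-1\diamond f)=-\text{supp}(f)\subseteq K$, so the same argument applied to $-1\diamond f$ together with the reflection identity \eqref{eq:reflection} shows that $\mathcal{F}_{k}^{-1}(f)$ is likewise $k$-independent, justifying the notation $\mathcal{F}^{-1}(f)=\mathcal{F}_{k}^{-1}(f)$.

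Finally, because $f\in\Dgsf(\rti)\subseteq\gsf(\rti)$, I can apply Thm.~\ref{thm:FIT} with $X:=\rti$, so that $K\subseteq X$ and $f\in\gsf(X)$ as required. For every sharply interior point $y\in\mathring{K}=(-k,k)$ this yields
\[
\lim_{h\to+\infty}\mathcal{F}_{h}^{-1}\left(\mathcal{F}_{k}(f)\right)(y)=\lim_{h\to+\infty}\mathcal{F}_{h}\left(\mathcal{F}_{k}^{-1}(f)\right)(y)=f(y),
\]
and substituting $\mathcal{F}_{k}(f)=\mathcal{F}(f)$ and $\mathcal{F}_{k}^{-1}(f)=\mathcal{F}^{-1}(f)$ from the previous step gives exactly the claimed equalities. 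I do not expect any genuine obstacle: the entire content is the observation that compact support removes the $k$-dependence, which is precisely why this is phrased as a corollary of Thm.~\ref{thm:FIT}; the only point needing a little care is the legitimacy of the inverse-transform notation $\mathcal{F}^{-1}(f)$, which is settled by the reflection formula \eqref{eq:reflection}.
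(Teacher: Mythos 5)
Your proposal is correct and is essentially the proof the paper intends: the paper states this result without an explicit proof precisely because, once one chooses $k$ with $\text{supp}(f)\subseteq K=[-k,k]$ (possible by sharp boundedness of a functionally compact set containing the support), the simplified notations $\mathcal{F}(f)=\mathcal{F}_{k}(f)$ and $\mathcal{F}^{-1}(f)=\mathcal{F}_{k}^{-1}(f)$ from Def.~\ref{def:HyperfiniteFT} and Def.~\ref{def:intCmpSupp} make the claim an immediate instance of Thm.~\ref{thm:FIT} with $X=\rti$. Your additional care about the $k$-independence of the inverse transform (via the reflection identity \eqref{eq:reflection}, or equally directly via Thm.~\ref{thm:DerivativeIsZero}.\ref{enu:intBound}) is exactly the right detail to check and introduces no gap.
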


\begin{cor}
\label{cor:FIT-RLlemma}Let $H\fcmp\rti^{n}$ and $f\in\Dgsf(H)$.
Assume that
\[
\exists C,b\in\rti_{>0}\,\forall x\in H\,\forall j\in\N:\ \left|\diff{^{j}f}(x)\right|\le C\cdot b^{j}.
\]
Then
\[
\mathcal{F}^{-1}\left(\mathcal{F}(f)\right)(y)=\mathcal{F}\left(\mathcal{F}^{-1}(f)\right)(y)=f(y)\qquad\forall y\in H.
\]
\end{cor}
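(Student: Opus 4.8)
The plan is to deduce this corollary from the Fourier inversion Thm.~\ref{thm:FIT} by using the Riemann--Lebesgue Lem.~\ref{lem:Rieman-Lebesgue} to kill the two hyperlimits appearing there: under the stated tame bound on the differentials, both $\mathcal{F}(f)$ and $\mathcal{F}^{-1}(f)$ turn out to be \emph{compactly supported}, whence the inner transforms $\mathcal{F}_{h}^{-1}(\mathcal{F}(f))$ and $\mathcal{F}_{h}(\mathcal{F}^{-1}(f))$ are eventually constant in $h$ and their sharp limits are trivial.

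First I would fix the ambient interval so that Thm.~\ref{thm:FIT} is available at every point of $H$. Since $H\fcmp\rti^{n}$ is sharply bounded, pick $R\in\rti_{>0}$ with $H\subseteq[-R,R]^{n}$ and choose the infinite number $k:=R+\diff{\rho}^{-1}>R$, $K:=[-k,k]^{n}$. Then $H\subseteq[-R,R]^{n}\subseteq(-k,k)^{n}=\mathring{K}$, so each $y\in H$ is sharply interior to $K$. As $\text{supp}(f)\subseteq H\subseteq K$, the unsubscripted $\mathcal{F}(f)$ of Def.~\ref{def:HyperfiniteFT} is $\mathcal{F}_{k}(f)$, its value being independent of the admissible $k$ by Thm.~\ref{thm:DerivativeIsZero}.\ref{enu:intBound}.

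Next, Lem.~\ref{lem:Rieman-Lebesgue} applied to $f$ (its hypothesis being exactly the assumed bound $|\diff{^{j}f}(x)|\le C\cdot b^{j}$) produces $Q\in\N$ with $\mathcal{F}(f)\in\Dgsf(\overline{B_{\diff{\rho}^{-Q}}(0)})$. Hence for every infinite $h\ge\diff{\rho}^{-Q}$ we have $\text{supp}(\mathcal{F}(f))\subseteq\overline{B_{\diff{\rho}^{-Q}}(0)}\subseteq[-h,h]^{n}$, and the independence of the integration bounds for a compactly supported integrand (Def.~\ref{def:intCmpSupp}, Thm.~\ref{thm:DerivativeIsZero}.\ref{enu:intBound}) gives
\[
\mathcal{F}_{h}^{-1}\left(\mathcal{F}(f)\right)(y)=\frac{1}{(2\pi)^{n}}\intop_{[-h,h]^{n}}\mathcal{F}(f)(\omega)e^{iy\cdot\omega}\,\diff{\omega}=\mathcal{F}^{-1}\left(\mathcal{F}(f)\right)(y).
\]
Thus $h\mapsto\mathcal{F}_{h}^{-1}(\mathcal{F}(f))(y)$ is constant for $h\ge\diff{\rho}^{-Q}$, so its sharp limit as $h\to+\infty$ is $\mathcal{F}^{-1}(\mathcal{F}(f))(y)$; comparing with the first equality of Thm.~\ref{thm:FIT}, which asserts this limit equals $f(y)$, yields $\mathcal{F}^{-1}(\mathcal{F}(f))(y)=f(y)$ for all $y\in H$.

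For the second equality I would observe, via the reflection identity \eqref{eq:reflection}, that $(2\pi)^{n}\mathcal{F}^{-1}(f)=-1\diamond\mathcal{F}(f)$; since $\mathcal{F}(f)$ is supported in the symmetric ball $\overline{B_{\diff{\rho}^{-Q}}(0)}$, so is $\mathcal{F}^{-1}(f)$. Repeating the eventually-constant argument with $\mathcal{F}_{h}$ in place of $\mathcal{F}_{h}^{-1}$ shows $\lim_{h\to+\infty}\mathcal{F}_{h}(\mathcal{F}^{-1}(f))(y)=\mathcal{F}(\mathcal{F}^{-1}(f))(y)$, which by the second equality of Thm.~\ref{thm:FIT} is again $f(y)$. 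The only delicate points are bookkeeping, not analysis: guaranteeing $H\subseteq\mathring{K}$ so that inversion holds at \emph{every} $y\in H$, and invoking Thm.~\ref{thm:DerivativeIsZero}.\ref{enu:intBound} to enlarge the integration interval past the support. I expect no substantial obstacle; the entire content is that Riemann--Lebesgue collapses both hyperlimits of Thm.~\ref{thm:FIT} into eventually constant hypersequences.
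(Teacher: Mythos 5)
Your proof is correct and takes essentially the same approach as the paper: the paper's own (one-line) proof likewise invokes the Riemann--Lebesgue Lem.~\ref{lem:Rieman-Lebesgue} to conclude that $\mathcal{F}(f)$ is compactly supported, so that the hyperlimits in the inversion Thm.~\ref{thm:FIT} become limits of eventually constant quantities and hence give pointwise equalities. Your write-up merely supplies the bookkeeping the paper leaves implicit (choosing $k$ so that $H\subseteq\mathring{K}$, and using the reflection identity \eqref{eq:reflection} to get compact support of $\mathcal{F}^{-1}(f)$ for the second equality), all of which is sound.
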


\begin{proof}
See the Riemann-Lebesgue Lem.~\ref{lem:Rieman-Lebesgue}, which guarantees
that also $\mathcal{F}(f)$ is compactly supported.
\end{proof}
In the following result, we summarize some properties of Dirichlet
delta function:
\begin{cor}
\label{cor:Dirichlet-delta}Let $K\subseteq X\subseteq\rti^{n}$,
$f\in\gsf(X)$, $h\in\rti$ be a positive infinite number, and $\delta\in(0,1]$.
Then, the following properties hold:
\begin{enumerate}
\item \label{enu:delta1}$\delta_{h}^{n}=\delta_{n}^{1}\cdot\ldots\cdot\delta_{n}^{1}$;
\item \label{enu:mass1}$\lim_{h\to+\infty}\int_{-k}^{k}\delta_{h}^{n}(x)\,\diff x=1$;
\item \label{enu:tails}$\lim_{h\to+\infty}\int_{-k}^{-\delta}\delta_{h}^{n}(x)\,\diff x=\lim_{h\to+\infty}\int_{\delta}^{k}\delta_{h}^{n}(x)\,\diff x=0$;
\item \label{enu:sifting}$\lim_{h\to+\infty}\int_{-k}^{k}\delta_{h}^{n}(x)f(x)\,\diff x=f(0)$
if $0\in\mathring{K}$;
\item \label{enu:conv_k}$\lim_{h\to+\infty}\int_{-k}^{k}\delta_{h}^{n}(y-x)f(x)=:\left(\delta_{h}^{n}*_{k}f\right)(y)=f(y)$
for all $y\in\mathring{K}$;
\item \label{enu:FT1}$\lim_{h\to+\infty}\mathcal{F}_{k}\left(\delta_{h}^{n}\right)=1$
and $\mathcal{F}_{h}^{-1}(1)=\delta_{h}^{n}$;
\item \label{enu:IFT}$\mathcal{F}_{h}^{-1}\left(\mathcal{F}_{k}(f)\right)=\mathcal{F}_{h}\left(\mathcal{F}_{k}^{-1}(f)\right)=\delta_{h}^{n}*_{k}f$
\end{enumerate}
If $\sigma\ll\rho$, and $k\in\RC{\sigma}$, $k\gg\partial\rho^{-*}$,
then in the previous properties we can replace the limits with $=_{\rho}$
and with $h\in\RC{\sigma}$ sufficiently large.

\end{cor}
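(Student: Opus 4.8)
The plan is to assemble all seven properties from the facts already established for the Dirichlet delta, namely the defining formula \eqref{eq:delta1}, the convolution identity \eqref{eq:FI-convDirichletDelta}, Lemmas~\ref{lem:DirichletMass1} and \ref{lem:DirichletTails}, and the Fourier inversion Thm.~\ref{thm:FIT}; no genuinely new estimate is required. Property \ref{enu:delta1} is immediate from \eqref{eq:delta1}, which already exhibits $\delta_{h}^{n}(z)$ as the product $\prod_{j=1}^{n}\frac{1}{\pi}h_{j}S(z_{j}h_{j})$, i.e.~as a product of one-dimensional factors $\delta_{h}^{1}$ evaluated at the respective coordinates. Since $k$ is infinite we have $0\in\mathring{K}$, so property \ref{enu:mass1} is exactly Lem.~\ref{lem:DirichletMass1} specialised to $y=0$, and property \ref{enu:tails} is Lem.~\ref{lem:DirichletTails}.\ref{enu:DirichletTails} specialised to $y=0$ and to the constant GSF $f\equiv1$ (the hypothesis $B_{\delta}(0)\subseteq K$ holding because $\R^{n}\subseteq K$).

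For the sifting and convolution properties I would first record that $\delta_{h}^{n}$ is even: $S$ is even, hence each factor $\delta_{h}^{1}$ is even and so is the product. Property \ref{enu:conv_k} is then immediate, since by \eqref{eq:FI-convDirichletDelta} the fixed-$h$ convolution satisfies $(\delta_{h}^{n}*_{k}f)(y)=\int_{K}\delta_{h}^{n}(y-x)f(x)\,\diff{x}=\mathcal{F}_{h}^{-1}(\mathcal{F}_{k}(f))(y)$, whose sharp limit as $h\to+\infty$ equals $f(y)$ by Thm.~\ref{thm:FIT}. Property \ref{enu:sifting} then follows by taking $y=0$ and using evenness: $\int_{K}\delta_{h}^{n}(x)f(x)\,\diff{x}=\int_{K}\delta_{h}^{n}(0-x)f(x)\,\diff{x}=\mathcal{F}_{h}^{-1}(\mathcal{F}_{k}(f))(0)\to f(0)$. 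Property \ref{enu:IFT} merely collects the two facts just used: the equality $\mathcal{F}_{h}^{-1}(\mathcal{F}_{k}(f))=\mathcal{F}_{h}(\mathcal{F}_{k}^{-1}(f))$ is precisely the identity established from \eqref{eq:3}--\eqref{eq:5} in the proof of Thm.~\ref{thm:FIT}, while $\mathcal{F}_{h}^{-1}(\mathcal{F}_{k}(f))=\delta_{h}^{n}*_{k}f$ is \eqref{eq:FI-convDirichletDelta} read with the notation of \ref{enu:conv_k}; note that this holds for fixed $h$, before any limit is taken.

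Property \ref{enu:FT1} is the one point requiring a little care, and is where I expect the only real subtlety. The identity $\mathcal{F}_{h}^{-1}(1)=\delta_{h}^{n}$ is exactly the computation carried out just before \eqref{eq:delta1}, since $\mathcal{F}_{h}^{-1}(1)(x)=\frac{1}{(2\pi)^{n}}\int_{H}e^{ix\cdot\omega}\,\diff{\omega}=[\hat{\mathcal{F}}^{-1}(\chi_{H_{\eps}})(x_{\eps})]=\delta_{h}^{n}(x)$. For $\lim_{h\to+\infty}\mathcal{F}_{k}(\delta_{h}^{n})=1$ one cannot simply invert the previous identity, because $\mathcal{F}_{k}$ and $\mathcal{F}_{h}^{-1}$ involve the two \emph{different} infinite numbers $k$ and $h$, and Thm.~\ref{thm:FIT} controls only the specific composition $\mathcal{F}_{h}^{-1}\circ\mathcal{F}_{k}$. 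Instead I would apply the already-proved sifting property \ref{enu:sifting} to the GSF $x\mapsto e^{-ix\cdot\omega}\in\gsf(\rti^{n})$ (legitimate since $K\subseteq\rti^{n}$): for each fixed $\omega$, $\mathcal{F}_{k}(\delta_{h}^{n})(\omega)=\int_{K}\delta_{h}^{n}(x)e^{-ix\cdot\omega}\,\diff{x}\to e^{-i\cdot0\cdot\omega}=1$ as $h\to+\infty$, which is the claim.

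Finally, for the $=_{\rho}$ version under $\sigma\ll\rho$ and $k\gg\partial\rho^{-*}$, I would follow verbatim the template of Cor.~\ref{cor:FIT=00003D_rho}: run each of the above limit arguments with the gauge $\sigma$ playing the role of $\rho$, obtaining bounds of the form $\le\diff\sigma^{Q}$ for $h\in\RC{\sigma}$ sufficiently large, and then invoke \eqref{eq:sigma-ll-rho} to convert each such bound into $\le\partial\rho^{q}$ for every $q\in\N$, i.e.~into an equality $=_{\rho}$. As in Cor.~\ref{cor:FIT=00003D_rho}, the conditions $k\gg\partial\rho^{-*}$ and $y\in\RCud{\rho}{\sigma}$ together ensure that $y$ is $\sigma$-sharply interior to $K$, so the interiority hypotheses of the limit statements are satisfied.
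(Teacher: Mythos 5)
Your proposal is correct and follows essentially the same route as the paper: each item is reduced to \eqref{eq:delta1}, Lem.~\ref{lem:DirichletMass1}, Lem.~\ref{lem:DirichletTails}, the inversion Thm.~\ref{thm:FIT} together with \eqref{eq:beforeSifting}--\eqref{eq:FI-convDirichletDelta}, and the template of Cor.~\ref{cor:FIT=00003D_rho} for the $=_{\rho}$ version. The only (harmless) difference is bookkeeping: you obtain \ref{enu:sifting} from \ref{enu:conv_k} via evenness of $\delta_{h}^{n}$ and then use \ref{enu:sifting} for the first half of \ref{enu:FT1}, whereas the paper reads both \ref{enu:sifting} and \ref{enu:conv_k} directly off Thm.~\ref{thm:FIT} and proves \ref{enu:FT1} from \ref{enu:conv_k} with $f(x)=e^{-i\omega x}$.
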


\begin{proof}
For \ref{enu:delta1}, see \eqref{eq:delta1}. For \ref{enu:mass1},
see Lem.~\ref{lem:DirichletMass1}. For \ref{enu:tails}, see Lem.~\ref{lem:DirichletTails}.
Property \ref{enu:sifting} is exactly Thm.~\ref{thm:FIT} with $y=0$
and considering \eqref{eq:beforeSifting}; the same for \ref{enu:conv_k}.
To prove the first equality of \ref{enu:FT1}, use \ref{enu:conv_k}
with $f(x)=e^{-i\omega x}$; for the second one, simply compute $\mathcal{F}_{h}^{-1}(1)$
and use \eqref{eq:delta1} for $n=1$. Finally, \ref{enu:IFT} can
be proved as in \eqref{eq:FI-convDirichletDelta}.
\end{proof}

\subsection{Parseval's relation, Plancherel's identity and the uncertainty principle}
\begin{thm}
\label{thm:MainPropertiesHFT}Let $h\in\rcrho_{>0}$ be an infinite
number and set $H:=\left[-h,h\right]^{n}$. Let $f\in\gsf\left(K\right)$
and $g\in\gsf\left(H\right)$. Then
\begin{enumerate}
\item \label{enu:IntegralProp}$\intop_{H}\mathcal{F}_{k}\left(f\right)\left(\omega\right)g\left(\omega\right)\,\diff{\omega}=\intop_{K}f\left(x\right)\mathcal{F}_{h}\left(g\right)\left(x\right)\,\diff{x}$.
\item \label{enu:hom}$\mathcal{F}_{k}|_{K}:\gsf(K)\ra\gsf(K)$ is an injective
homeomorphism such that
\begin{equation}
\forall f\in\gsf(K)\,\exists g\in\gsf(K):\ \lim_{h\to+\infty}\mathcal{F}_{h}(g)=f.\label{eq:surj}
\end{equation}
\item \label{enu:FF}$\lim_{h\to+\infty}\mathcal{F}_{h}|_{H}(\mathcal{F}_{k}|_{K}(f))=(2\pi)^{n}\left(-1\diamond f\right)$
and $\lim_{h\to+\infty}\mathcal{F}_{h}^{-1}|_{H}(\mathcal{F}_{k}^{-1}|_{K}(f))=(2\pi)^{-n}\left(-1\diamond f\right)$,
where we recall that $-1\diamond f$ is the reflection of $f$. 
\end{enumerate}
If $H=K$, then
\begin{enumerate}[resume]
\item \label{enu:Parseval's-relation}(Parseval's relation) $(2\pi)^{n}\intop_{K}f\overline{g}=\lim_{h\to+\infty}\intop_{K}\mathcal{F}_{h}\left(f\right)\overline{\mathcal{F}_{k}\left(g\right)}$.
\item \label{enu:Plancherel=002019s-identity}(Plancherel’s identity) $(2\pi)^{n}\intop_{K}\left|f\right|^{2}=\lim_{h\to+\infty}\intop_{K}\left|\mathcal{F}_{h}\left(f\right)\right|^{2}$.
\item \label{enu:five}$\intop_{K}fg=\lim_{h\to+\infty}\intop_{K}\mathcal{F}_{h}\left(f\right)\mathcal{F}_{k}^{-1}\left(g\right)$.
\end{enumerate}
In the assumptions of Cor.~\ref{cor:FIT=00003D_rho}, we can also
write all the relations involving $\lim_{h\to+\infty}$ using $=_{\rho}$
instead. Finally, using Thm.~\ref{thm:contResult}, we can also take
the limit under the integral sign.

\end{thm}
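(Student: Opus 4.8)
The plan is to treat part \ref{enu:IntegralProp} (the multiplication formula) as the cornerstone and to reduce everything else to it together with the Fourier inversion Thm~\ref{thm:FIT} (in its Dirichlet-delta form \eqref{eq:FI-convDirichletDelta} and Cor~\ref{cor:Dirichlet-delta}), the conjugation/reflection identities \ref{enu:prop3} and \eqref{eq:reflection}, and the exchange-of-hyperlimit-and-integral Thm~\ref{thm:contResult}. For \ref{enu:IntegralProp}, both sides equal the iterated integral $\int_K\int_H f(x)g(\omega)e^{-ix\cdot\omega}\,\diff{\omega}\,\diff{x}$; since $K\times H\fcmp\rcrho^{2n}$ and $(x,\omega)\mapsto f(x)g(\omega)e^{-ix\cdot\omega}$ is a GSF, Fubini (Thm~\ref{thm:muMeasurableAndIntegral}.\ref{enu:int-ndimInt}) applies and yields the identity directly. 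For \ref{enu:FF} I would combine inversion with the reflection identity: writing $(2\pi)^n\mathcal{F}_h^{-1}=-1\diamond\mathcal{F}_h$ from \eqref{eq:reflection}, Thm~\ref{thm:FIT} gives $\lim_{h\to+\infty}(2\pi)^{-n}\bigl(-1\diamond\mathcal{F}_h(\mathcal{F}_k(f))\bigr)=f$, and applying the continuous, involutive reflection $-1\diamond$ produces $\lim_{h\to+\infty}\mathcal{F}_h(\mathcal{F}_k(f))=(2\pi)^n(-1\diamond f)$; the inverse statement is symmetric.

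For \ref{enu:hom}, injectivity of $\mathcal{F}_k|_K$ is immediate from Thm~\ref{thm:FIT}: if $\mathcal{F}_k(f)=\mathcal{F}_k(g)$ on $K$ then $f=g$ on $\mathring K$ by taking $\lim_{h\to+\infty}\mathcal{F}_h^{-1}$, and hence on all of $K$ because every boundary point is the sharp limit of interior points $x\mp\diff{\rho}^m e_j$ and GSF are sharply continuous (Thm~\ref{thm:propGSF}.\ref{enu:GSF-cont}). Continuity of $\mathcal{F}_k|_K$ in both directions I would read off the $\eps$-wise description $\mathcal{F}_k(f)=[\hat{\mathcal{F}}(\chi_{K_\eps}f_\eps)]$ of Thm~\ref{thm:base}.\ref{enu:epsIntFT}, the classical FT being a homeomorphism of the relevant net spaces. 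Finally, given $f\in\gsf(K)$ the choice $g:=\mathcal{F}_k^{-1}(f)|_K\in\gsf(K)$ (well defined by Thm~\ref{thm:base}.\ref{enu:Fourier_Map}) satisfies $\lim_{h\to+\infty}\mathcal{F}_h(g)=f$ by Thm~\ref{thm:FIT}, which is exactly \eqref{eq:surj}.

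Parts \ref{enu:Parseval's-relation}--\ref{enu:five} I would obtain from the single master identity \ref{enu:five}: combining the multiplication formula \ref{enu:IntegralProp} with Fubini collapses the two exponential kernels in $\int_K\mathcal{F}_h(f)\,\mathcal{F}_k^{-1}(g)$ into a factor $\tfrac{1}{(2\pi)^n}\int e^{i\omega\cdot(u-x)}\,\diff{\omega}$, i.e.\ a Dirichlet delta $\delta^n$; carrying the free infinite parameter to the hyperlimit turns this into the sifting operator $\delta_h^n*_k(\cdot)$ of Cor~\ref{cor:Dirichlet-delta}.\ref{enu:conv_k}, producing $\int_K fg$. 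Parseval \ref{enu:Parseval's-relation} then follows by replacing $g$ with $\overline g$ and using $\overline{\mathcal{F}_k(g)}=(2\pi)^n\mathcal{F}_k^{-1}(\overline g)$ (an instance of \ref{enu:prop3} and \eqref{eq:reflection}), and Plancherel \ref{enu:Plancherel=002019s-identity} is the diagonal case $g=f$. The interchange of $\lim_{h\to+\infty}$ with $\int_K$ throughout is justified by Thm~\ref{thm:contResult}, and the $=_\rho$ reformulations follow verbatim from Cor~\ref{cor:FIT=00003D_rho}, replacing each hyperlimit by $=_{\rho}$ and each ``$h\to+\infty$'' by ``$h\in\RC{\sigma}$ sufficiently large''.

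The main obstacle is the careful bookkeeping of the two infinite parameters: the fixed $k$ defining $K$ and the auxiliary $h$ carried to the hyperlimit. The sifting in Cor~\ref{cor:Dirichlet-delta}.\ref{enu:conv_k} holds only for the parameter genuinely sent to infinity (a single fixed infinite number does \emph{not} sift, exactly as the tail estimates in Lem.~\ref{lem:DirichletTails} require $h\to+\infty$ with $k$ frozen), so in each computation one must ensure the Dirichlet kernel that appears is the one attached to $h$ rather than to $k$, and one must keep $f,g$ defined on a domain $X\supseteq[-h,h]^n$ large enough to form $\mathcal{F}_h(f)$ for all large $h$ (as in the hypothesis $K\subseteq X$ of Thm~\ref{thm:FIT}). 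The second recurring technicality is transferring identities from the sharp interior $\mathring K$ to all of $K$, which is handled uniformly by sharp continuity of GSF together with the fact that $\diff{\rho}^m\to0$ sharply.
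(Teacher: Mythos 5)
Your proofs of parts \ref{enu:IntegralProp}, \ref{enu:hom} and \ref{enu:FF} are correct and essentially the paper's own: \ref{enu:IntegralProp} is Fubini on $K\times H\fcmp\rti^{2n}$, \ref{enu:hom} combines Thm.~\ref{thm:FIT} with sharp continuity exactly as the paper does (and both treatments leave the topological content of ``homeomorphism'' essentially unargued), and your \ref{enu:FF} differs from the paper's only in that you apply the reflection identity \eqref{eq:reflection} to the outer transform rather than to the inner function.

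The genuine gap is in the Dirichlet-sifting proof of your master identity \ref{enu:five}, and it is precisely the trap you flag in your closing paragraph. In the displayed expression $\int_{K}\mathcal{F}_{h}(f)\,\mathcal{F}_{k}^{-1}(g)$ the $\omega$-integration runs over the \emph{frozen} box $K$, so Fubini collapses the two exponentials into
\[
\frac{1}{(2\pi)^{n}}\int_{K}e^{i\omega\cdot(u-x)}\,\diff{\omega}=\delta_{k}^{n}(u-x),
\]
the Dirichlet kernel attached to $k$, while the free parameter $h$ only enlarges the domain of the $x$-integration, where it produces no sifting. One is thus left with $\int_{K}g(u)\,\mathcal{F}_{k}^{-1}\left(\mathcal{F}_{h}(f)\right)(u)\,\diff{u}$, i.e.\ inversion with the parameters crossed the wrong way (inner parameter $\to+\infty$, outer parameter frozen), which Thm.~\ref{thm:FIT} does not cover: its proof, through the tail estimates of Lem.~\ref{lem:DirichletTails}, needs the kernel parameter to be the one tending to infinity. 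Indeed the step genuinely fails: take $n=1$, $f(x)=x^{2}$, and $g=\delta$ embedded with $b$ so large that $\mathcal{F}_{k}^{-1}(\delta)=\frac{1}{2\pi}$ on $K$; then a direct computation gives
\[
\int_{K}\mathcal{F}_{h}(f)\,\mathcal{F}_{k}^{-1}(g)=\frac{2\sin(kh)}{\pi k^{2}}-\frac{2h\cos(kh)}{\pi k},
\]
which oscillates with unbounded amplitude as $h\to+\infty$, whereas $\int_{K}fg=\int x^{2}\delta(x)\,\diff{x}=0$ by the moment properties of the mollifier. Since you derive \ref{enu:Parseval's-relation} and \ref{enu:Plancherel=002019s-identity} from \ref{enu:five}, the gap propagates to all three.

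The repair, and the real point of comparison with the paper, is that the infinite parameter destined for the hyperlimit must govern the $\omega$-domain (equivalently, the outer transform), not the transform of $f$: your argument proves, verbatim and correctly, that $\int_{K}fg=\lim_{h\to+\infty}\int_{[-h,h]^{n}}\mathcal{F}_{k}(f)\,\mathcal{F}_{k}^{-1}(g)$, because then the collapsed kernel is $\delta_{h}^{n}$ and Cor.~\ref{cor:Dirichlet-delta}.\ref{enu:conv_k} together with Thm.~\ref{thm:contResult} finish the proof. This is also what the paper's own terse proof produces when unwound: applying \ref{enu:IntegralProp} with $\overline{\mathcal{F}_{k}(g)}$ in the role of $g$ puts the dummy parameter on the outer transform, $\int_{K}f\cdot\mathcal{F}_{h}\left(\overline{\mathcal{F}_{k}(g)}\right)$, after which Thm.~\ref{thm:thmProperties}.\ref{enu:prop3} and \ref{enu:FF} apply. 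With that correction, your route (\ref{enu:five} as master identity via Dirichlet sifting, then \ref{enu:Parseval's-relation} via the conjugation $\overline{\mathcal{F}_{k}(g)}=(2\pi)^{n}\mathcal{F}_{k}^{-1}(\overline{g})$, then \ref{enu:Plancherel=002019s-identity} as the diagonal case) is a legitimate and arguably more transparent alternative to the paper's order, which proves Parseval first and obtains the other two as consequences.
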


\begin{proof}
\ref{enu:IntegralProp} follows from Def.~\ref{def:HyperfiniteFT}
and Fubini's theorem.

In order to prove \ref{enu:hom}, we assume $\mathcal{F}_{k}(f)=\mathcal{F}_{k}(g)$,
so that $\mathcal{F}_{h}^{-1}\left(\mathcal{F}_{k}(f)\right)=\mathcal{F}_{h}^{-1}\left(\mathcal{F}_{k}(g)\right)$
and hence $f=g$ on $\mathring{K}$ by the inversion Thm.~\ref{thm:FIT}.
The equality on the entire $K$ follows by sharp continuity. If $f\in\gsf(K)$,
set $g:=\mathcal{F}_{k}^{-1}(f)|_{K}\in\gsf(K)$, then \eqref{eq:surj}
follows again from the Fourier inversion theorem.

To prove \ref{enu:FF}, using \eqref{eq:reflection} we have $\mathcal{F}_{h}\left(\mathcal{F}_{k}(f)\right)=\mathcal{F}_{h}\left(\mathcal{F}_{k}(-1\diamond(-1\diamond f))\right)=\mathcal{F}_{h}\left(-1\diamond\mathcal{F}_{k}(-1\diamond f)\right)=(2\pi)^{n}\mathcal{F}_{h}\left(\mathcal{F}_{k}^{-1}(-1\diamond f)\right)\to(2\pi)^{n}(-1\diamond f)$as
$h\to+\infty$.

To prove \ref{enu:Parseval's-relation}, use \ref{enu:IntegralProp}
with $\overline{\mathcal{F}_{k}\left(g\right)}$ instead of $g$,
then Thm.~\ref{thm:thmProperties}.\ref{enu:prop3}, and finally
\ref{enu:FF}.

Plancherel’s identity \ref{enu:Plancherel=002019s-identity} is a
trivial consequence of \ref{enu:Parseval's-relation}.

Finally, \ref{enu:five} follows from \ref{enu:IntegralProp} and
the inversion Thm.~\ref{thm:FIT}.
\end{proof}
We close this section with a proof of the uncertainty principle:
\begin{thm}
\label{thm:uncertainty}If $\psi\in\Dgsf(\rti)$, then
\begin{enumerate}
\item If $\psi\in\Dgsf(H)\cap\Dgsf(K)$, then
\[
\intop_{H}\omega^{2}\left|\mathcal{F}\left(\psi\right)\left(\omega\right)\right|^{2}\,\diff{\omega}=\intop_{K}\omega^{2}\left|\mathcal{F}\left(\psi\right)\left(\omega\right)\right|^{2}\,\diff{\omega}=:\intop\omega^{2}\left|\mathcal{F}\left(\psi\right)\left(\omega\right)\right|^{2}\,\diff{\omega}
\]
\item \label{enu:uncertainty}$\left(\intop x^{2}\left|\psi\left(x\right)\right|^{2}\,\diff{x}\right)\left(\intop\omega^{2}\left|\mathcal{F}\left(\psi\right)\left(\omega\right)\right|^{2}\,\diff{\omega}\right)\geq\frac{1}{4}\Vert\psi\Vert_{2}\Vert\mathcal{F}(\psi)\Vert_{2}$.
\end{enumerate}
\end{thm}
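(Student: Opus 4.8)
The plan is to transport the classical variational proof of the Heisenberg uncertainty inequality into the ring $\rti$, using the GSF calculus developed above and exploiting the feature special to this framework: by Riemann--Lebesgue the transform of a compactly supported GSF is again compactly supported. Throughout $n=1$ and $K=[-k,k]$, and I take $\psi$ to satisfy the derivative bound \eqref{eq:derPol}, which is exactly the hypothesis making Lem.~\ref{lem:Rieman-Lebesgue} applicable, so that by \eqref{eq:HFTCmptSupp} we have $\mathcal{F}(\psi)\in\Dgsf(\overline{B_{\diff\rho^{-Q}}(0)})$ for some $Q\in\N$. Part (i) is then essentially this compact-support statement: since $\psi\in\Dgsf(H)\cap\Dgsf(K)$ the unsubscripted transform $\mathcal{F}(\psi)=\mathcal{F}_h(\psi)=\mathcal{F}_k(\psi)$ is well defined by Def.~\ref{def:intCmpSupp}, and $\omega\mapsto\omega^{2}|\mathcal{F}(\psi)(\omega)|^{2}$ vanishes wherever $\mathcal{F}(\psi)$ does, hence is compactly supported. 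Provided $H$ and $K$ both contain the fixed frequency support $\overline{B_{\diff\rho^{-Q}}(0)}$, Def.~\ref{def:intCmpSupp} yields $\int_{H}\omega^{2}|\mathcal{F}(\psi)|^{2}=\int_{K}\omega^{2}|\mathcal{F}(\psi)|^{2}$, and this common value is the integral $\int\omega^{2}|\mathcal{F}(\psi)|^{2}$ of the compactly supported integrand, which legitimizes the unsubscripted notation used in (ii).

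For part (ii) I would begin with the integration-by-parts identity. Integrating $x\tfrac{\diff{}}{\diff x}|\psi|^{2}=2x\,\mathrm{Re}(\bar\psi\psi')$ over $K$ via Thm.~\ref{thm:intRules}.\ref{enu:intByParts}, the boundary term $[x|\psi|^{2}]_{-k}^{k}$ vanishes because $\psi(\pm k)=0$ (Thm.~\ref{thm:DerivativeIsZero}.\ref{enu:derZeroBound}), giving $2\,\mathrm{Re}\int_{K}x\bar\psi\psi'\,\diff x=-\Vert\psi\Vert_{2}^{2}$. Combining $\bigl|\int x\bar\psi\psi'\bigr|\le\int|x\psi|\,|\psi'|$ (Thm.~\ref{thm:muMeasurableAndIntegral}) with Hölder's inequality Thm.~\ref{thm:Holder} for $m=2$, $p_{1}=p_{2}=2$ gives the Cauchy--Schwarz bound $\bigl|\int_{K}x\bar\psi\psi'\bigr|\le\Vert x\psi\Vert_{2}\,\Vert\psi'\Vert_{2}$, and therefore
\[
\Vert\psi\Vert_{2}^{4}\le 4\Vert x\psi\Vert_{2}^{2}\,\Vert\psi'\Vert_{2}^{2}=4\Bigl(\int x^{2}|\psi|^{2}\Bigr)\Bigl(\int|\psi'|^{2}\Bigr).
\]

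The next step is to convert $\int|\psi'|^{2}$ into a frequency-side quantity. Since $\psi$ is compactly supported in the interior of $K$, the boundary condition \eqref{eq:Hpk-k0} holds, so the derivative rule Thm.~\ref{thm:thmProperties}.\ref{enu:prop8} collapses to $\mathcal{F}(\psi')=i\omega\,\mathcal{F}(\psi)$, whence $\int|\mathcal{F}(\psi')|^{2}=\int\omega^{2}|\mathcal{F}(\psi)|^{2}$. Applying Plancherel's identity (Thm.~\ref{thm:MainPropertiesHFT}) to $\psi'$ and to $\psi$ then gives $2\pi\int|\psi'|^{2}=\int\omega^{2}|\mathcal{F}(\psi)|^{2}$ and $2\pi\int|\psi|^{2}=\Vert\mathcal{F}(\psi)\Vert_{2}^{2}$; here the $\lim_{h\to+\infty}$ in Plancherel is harmless because $\mathcal{F}(\psi)$ and $\mathcal{F}(\psi')$ are compactly supported, so the truncated integrands stabilize for $h$ large and the limits are eventually constant. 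Substituting these relations into the displayed inequality and tracking the $2\pi$ factors yields $\bigl(\int x^{2}|\psi|^{2}\bigr)\bigl(\int\omega^{2}|\mathcal{F}(\psi)|^{2}\bigr)\ge\frac{1}{4}\Vert\psi\Vert_{2}^{2}\,\Vert\mathcal{F}(\psi)\Vert_{2}^{2}$, which is the asserted bound.

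The main obstacle is precisely this interplay between the truncated integrals over functionally compact sets and the $\lim_{h\to+\infty}$ built into Plancherel: classically every integral runs over all of $\R$ and Plancherel is an exact isometry, whereas here one must argue that the Riemann--Lebesgue compact-support conclusion forces all the $h$-limits to be eventually constant, so that they combine cleanly with the (limit-free) integration-by-parts and Cauchy--Schwarz estimates. A secondary technical point is the positivity requirement $\Vert f_{k}\Vert_{p_{k}}>0$ in Thm.~\ref{thm:Holder}: the degenerate cases in which $\Vert x\psi\Vert_{2}$ or $\Vert\psi'\Vert_{2}$ fails to be invertible must be handled separately (there the inequality is either immediate or forces $\psi$ to be $\rho$-negligible), and one should also verify that $|\psi|^{2}$ and $x^{2}|\psi|^{2}$ are genuinely generalized integrable in the sense of Def.~\ref{def:pNorm} before invoking the norm estimates.
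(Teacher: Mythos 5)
Your proof of part (i) has a genuine gap: it is routed through the Riemann--Lebesgue Lem.~\ref{lem:Rieman-Lebesgue}, hence through the tame derivative bound \eqref{eq:derPol}, which you impose on $\psi$ at the outset but which is \emph{not} a hypothesis of Thm.~\ref{thm:uncertainty}; the statement concerns every $\psi\in\Dgsf(H)\cap\Dgsf(K)$. Even granting \eqref{eq:derPol}, your argument needs $H$ and $K$ to contain the frequency support $\overline{B_{\diff{\rho}^{-Q}}(0)}$ coming from \eqref{eq:HFTCmptSupp}, and this can fail: $h$ or $k$ may be a weak infinite number such as $-\log\diff{\rho}$, which is smaller than every $\diff{\rho}^{-Q}$, so that $\overline{B_{\diff{\rho}^{-Q}}(0)}\not\subseteq[-h,h]$. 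The same restriction silently enters your part (ii): the domain-free integrals $\int\omega^{2}\left|\mathcal{F}(\psi)\right|^{2}$ and $\int\left|\mathcal{F}(\psi')\right|^{2}$ are defined by Def.~\ref{def:intCmpSupp} only for compactly supported integrands, and without Riemann--Lebesgue (i.e., without \eqref{eq:derPol}) you have no compact support on the frequency side.

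The paper proves (i) without any frequency-side support information, and this is the decisive difference. Since $\psi\in\Dgsf(H)\cap\Dgsf(K)$, also $\psi'$ is compactly supported (Thm.~\ref{thm:DerivativeIsZero}); Plancherel on $H$ plus the boundary-free derivative rule (Thm.~\ref{thm:thmProperties}.\ref{enu:prop8} with $\Delta_{1k}\psi=0$) give $\int_{H}\omega^{2}\left|\mathcal{F}(\psi)\right|^{2}=2\pi\int_{H}\left|\psi'\right|^{2}$, and likewise on $K$; then $\int_{H}\left|\psi'\right|^{2}=\int_{K}\left|\psi'\right|^{2}$ because the \emph{space-side} integrand $\left|\psi'\right|^{2}$ is compactly supported, so Def.~\ref{def:intCmpSupp} yields independence of the integration domain. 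This proves (i) in full generality and at the same time fixes the meaning of $\int\omega^{2}\left|\mathcal{F}(\psi)\right|^{2}$ in (ii) as this common value, not as an integral of a compactly supported function. With that reading, your part (ii) coincides with the paper's argument (integration by parts, Cauchy--Schwarz via Thm.~\ref{thm:Holder}, then \eqref{eq:varHFT} and Plancherel), and it can be freed of \eqref{eq:derPol}: the $h$-limits in Plancherel stabilize because $\psi$ and $\psi'$ are compactly supported on the space side, which makes $\mathcal{F}_{h}(\psi)$ and $\mathcal{F}_{h}(\psi')$ eventually independent of $h$; compact support of the transforms is irrelevant to this point, since the integration domain in Plancherel is the fixed set $K$. (Also, what you and the paper actually derive is the bound $\frac{1}{4}\Vert\psi\Vert_{2}^{2}\Vert\mathcal{F}(\psi)\Vert_{2}^{2}$; the unsquared norms in the statement are evidently a misprint.)
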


\begin{proof}
Properties \ref{enu:derZeroExt} and \ref{enu:equivCmptSupp} of Thm.~\ref{thm:DerivativeIsZero}
imply that also $\psi'\in\Dgsf(H)$. Therefore, Plancherel's identity
Thm.~\ref{thm:MainPropertiesHFT}.\ref{enu:Plancherel=002019s-identity}
yields
\[
\int_{H}\left|\psi'\right|^{2}=\frac{1}{2\pi}\int_{H}\left|\mathcal{F}(\psi')\right|^{2}.
\]
But $\left|\mathcal{F}(\psi')\right|^{2}=\omega^{2}\left|\mathcal{F}(\psi)\right|^{2}$
from Thm.~\ref{thm:thmProperties}.\ref{enu:prop8} because $\psi$
is compactly supported\emph{ }and hence $\Delta_{1k}\psi=0$. Therefore
\begin{equation}
\int_{H}\left|\psi'\right|^{2}=\frac{1}{2\pi}\int_{H}\omega^{2}\left|\mathcal{F}(\psi)(\omega)\right|^{2}\,\diff{\omega}.\label{eq:varHFT}
\end{equation}
At the same result we arrive considering $K$ instead of $H$. Finally,
we apply Def.~\ref{def:intCmpSupp} of integral of a compactly supported
GSF, which yields independence from the functionally compact integration
domain.

To prove inequality \ref{enu:uncertainty}, we assume that $\psi\in\Dgsf(K)$;
using integration by parts, we get:
\begin{align*}
\int x\overline{\psi(x)}\psi'(x)\,\diff{x} & =\int_{-k}^{k}x\overline{\psi(x)}\psi'(x)\,\diff{x}=\\
 & =\left[x\overline{\psi(x)}\psi(x)\right]_{x=-k}^{x=k}-\int\psi(x)\left(\overline{\psi(x)}+x\overline{\psi'(x)}\right)\,\diff{x}=\\
 & =-\int\left[\left|\psi(x)\right|^{2}+x\psi(x)\overline{\psi'(x)}\right]\,\diff{x}.
\end{align*}
Thereby
\begin{align*}
\int\left|\psi(x)\right|^{2}\,\diff{x} & =-2\text{Re}\left(\int x\psi(x)\overline{\psi'(x)}\,\diff{x}\right)\le\\
 & \le2\left|\text{Re}\left(\int x\psi(x)\overline{\psi'(x)}\,\diff{x}\right)\right|\le\\
 & \le2\int\left|x\psi(x)\overline{\psi'(x)}\right|\,\diff{x}.
\end{align*}
Where we used the triangle inequality for integrals (see Thm.~\ref{thm:muMeasurableAndIntegral}.\ref{enu:existsReprDefInt}).
Using Cauchy-Schwarz inequality (see Thm.~\ref{thm:Holder}), we
hence obtain
\begin{align*}
\left(\int\left|\psi(x)\right|^{2}\,\diff{x}\right)^{2} & \le4\left(\int\left|x\psi(x)\overline{\psi'(x)}\right|\,\diff{x}\right)^{2}\le\\
 & \le4\left(\int x^{2}\left|\psi(x)\right|^{2}\,\diff{x}\right)\left(\int\left|\psi'(x)\right|^{2}\,\diff{x}\right).
\end{align*}
From this, thanks to \eqref{eq:varHFT} and Plancherel's equality,
the claim follows.
\end{proof}
\noindent Note that if $\Vert\psi\Vert_{2}\in\rti$ is invertible,
then also $\Vert\mathcal{F}(\psi)\Vert_{2}$ is invertible by Plancherel's
equality, and we can hence write the uncertainty principle in the
usual normalized form.
\begin{example}
\label{exa:uncertaintyDelta}On the contrary with respect the classical
formulation in $L^{2}(\R)$ of the uncertainty principle, in Thm.~\ref{thm:uncertainty}
we can e.g.~consider $\psi=\delta\in\Dgsf(\rti)$, and we have
\[
\int x^{2}\delta(x)^{2}\,\diff{x}=\left[\int_{-1}^{1}x^{2}b_{\eps}^{2}\psi_{\eps}(b_{\eps}x)^{2}\,\diff{x}\right]
\]
where $\psi(x)=[\psi_{\eps}(x_{\eps})]$ is a Colombeau mollifier
and $b=[b_{\eps}]\in\rti$ is a strong infinite number (see Example
\ref{exa:deltaCompDelta}). Since normalizing the function $\eps\mapsto b_{\eps}^{2}\psi_{\eps}(b_{\eps}x)^{2}$
we get an approximate identity, we have $\lim_{\eps\to0^{+}}\int_{-1}^{1}x^{2}b_{\eps}^{2}\psi_{\eps}(b_{\eps}x)^{2}\,\diff{x}=0$,
and hence $\int x^{2}\delta(x)^{2}\,\diff{x}\approx0$ is an infinitesimal.
The uncertainty principle Thm.~\ref{thm:uncertainty} implies that
it is an invertible infinitesimal. Considering the HFT $\mathbb{1}=\mathcal{F}(\delta)\in\Dgsf(\rti)$,
we have
\[
\int\omega^{2}\mathbb{1}(\omega)^{2}\,\diff{\omega}\ge\int_{-r}^{r}\omega^{2}\,\diff{\omega}=2\frac{r^{3}}{3}\quad\forall r\in\R_{>0}.
\]
Thereby, $\int\omega^{2}\mathbb{1}(\omega)^{2}\,\diff{\omega}$ is
an infinite number.
\end{example}

\section{\label{sec:preservation}Preservation of classical Fourier transform}

It is natural to inquire the relations between classical FT of tempered
distributions and our HFT.

Let us start with a couple of exploring examples:
\begin{enumerate}
\item $\mathcal{F}_{k}(1)(\omega)=\int_{-k}^{k}1\cdot e^{-ix\omega}\,\diff{x}=\int_{-k}^{k}\cos(x\omega)\,\diff{x}$.
If $L\subzero I$ and $\omega|_{L}$ is invertible (see Sec.~\ref{subsec:subpoints}
for the language of subpoints), then $\mathcal{F}_{k}(1)(\omega)=_{L}2\frac{\sin(k\omega)}{\omega}$;
if $\omega=_{L}0$, then $\mathcal{F}_{k}(1)(\omega)=2k$. Classically,
we have $\hat{1}=2\pi\delta$.
\item \label{enu:HFT_H}$\mathcal{F}_{k}(H)(\omega)=\int_{-k}^{k}H(x)e^{-ix\omega}\,\diff{x}$.
Assuming that $\omega|_{L}$ is invertible on $L\subzero I$, we have
$\mathcal{F}_{k}(H)(\omega)=_{L}\frac{i}{\omega}e^{-ik\omega}-\frac{i}{\omega}\mathbb{1}(\omega)$.
Classically, we have $\hat{H}=\pi\delta-\frac{i}{\omega}$. Therefore,
if $k$ is a strong infinite number and $\omega$ is far from the
origin, $|\omega|\ge r\in\R_{>0}$, we have $\mathcal{F}_{k}(H)(\omega)=\iota_{\R}^{b}\left(\hat{H}\right)(\omega)$
(here $\iota_{\R}^{b}$ is an embedding of $\mathcal{D}'(\R)$ into
$\gsf(\csp{\R})$, see Sec.~\ref{sec:preservation}). However, the
latter equality does not hold if $\omega\approx0$.
\end{enumerate}
Since classically we do not have infinite numbers, the former of these
examples leads us to the following idea
\[
\mathcal{F}(1\cdot\mathbb{1})=\mathcal{F}(\mathcal{F}(\delta))=2\pi\left(-1\diamond\delta\right)=2\pi\delta.
\]
Note that if $f\in\gsf(K)$, then $\left(f\cdot\mathbb{1}\right)(\omega)=f(\omega)$
for all finite point $\omega\in K$. We therefore call $f\cdot\mathbb{1}$
the \emph{finite part of $f$}. The same idea works for $e^{iax}$
and hence also for $\sin$, $\cos$. Let us now consider $\delta\cdot\mathbb{1}$:
\[
\mathcal{F}(\delta\cdot\mathbb{1})(\omega)=\int\delta(x)\mathcal{F}(\delta)(x)e^{-ix\omega}\,\diff{x}.
\]
We recall that integrating against $\delta$ yields the evaluation
of the second factor at $0$ only if the latter is bounded by a tame
polynomial at $0$ (see Example \ref{exa:tamePol}.\ref{enu:intAgainstDelta}).
But the function $x\mapsto\mathcal{F}(\delta)(x)e^{-ix\omega}$ is
bounded by a tame polynomial at $x=0$ for all $\omega$, and we get
$\mathcal{F}(\delta\cdot\mathbb{1})(\omega)=1$. Being bounded by
a tame polynomial is, in general, a necessary assumption because
\begin{align*}
\mathcal{F}(H\cdot\mathbb{1})(\omega) & =\int H(x)\cdot\mathcal{F}(\delta)(x)e^{-ix\omega}\,\diff{x}=\\
 & =\int\delta(x)\mathcal{F}_{k}(H\cdot e^{-i(-)\omega})(x)\,\diff{x}=\\
 & =\int\delta(x)\mathcal{F}_{k}(H)(x+\omega)\,\diff{x},
\end{align*}
but $\mathcal{F}_{k}(H)(x+\omega)=\frac{i}{x+\omega}e^{-ik(x+\omega)}-\frac{i}{x+\omega}\mathbb{1}(x+\omega)$
is not bounded by a tame polynomial at $x=0$ if $\omega\approx0$
because of the terms $\frac{1}{\omega}$.

These exploratory examples lead us to the following
\begin{thm}
\label{thm:preservationFinitePart}Let $f\in\gsf(K)$, and assume
that $\mathcal{F}_{k}(f)$ is bounded by a tame polynomial at $\omega\in\rti^{n}$.
Then $\mathcal{F}(f\cdot\mathbb{1})(\omega)=\mathcal{F}_{k}(f)(\omega)$.
\end{thm}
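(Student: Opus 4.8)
The plan is to peel off the two Fourier transforms one at a time, exchange the order of integration by Fubini, and then integrate against $\delta$ using the tame-polynomial hypothesis, exactly as in the exploratory computation for $\delta\cdot\mathbb{1}$ preceding the statement. First I would record the two features of $\mathbb{1}=\mathcal{F}(\delta)$ that do the work: by Rem.~\ref{rem:R-L} the function $\mathbb{1}$ is compactly supported, and by definition $\mathbb{1}(x)=\mathcal{F}(\delta)(x)=\intop\delta(t)e^{-itx}\,\diff{t}$ for every $x\in\rti^{n}$, where $\delta$ is even and itself compactly supported. Since $\mathbb{1}$ is compactly supported and $f\in\gsf(K)$, the finite part $f\cdot\mathbb{1}$ is a compactly supported GSF with support inside $K$; hence $\mathcal{F}(f\cdot\mathbb{1})=\mathcal{F}_{k}(f\cdot\mathbb{1})$ and, by Def.~\ref{def:intCmpSupp}, the transform may be written as an integral over $K$.

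Then the core computation reads
\[
\mathcal{F}(f\cdot\mathbb{1})(\omega)=\intop_{K}f(x)\Bigl(\intop\delta(t)e^{-itx}\,\diff{t}\Bigr)e^{-ix\omega}\,\diff{x}.
\]
Taking a box $[-s,s]^{n}$ with $s\in\R_{>0}$ containing $\text{supp}(\delta)$ and applying Fubini (Thm.~\ref{thm:muMeasurableAndIntegral}.\ref{enu:int-ndimInt}) on the functionally compact product $K\times[-s,s]^{n}$, I would swap the integrals to obtain
\[
\mathcal{F}(f\cdot\mathbb{1})(\omega)=\intop\delta(t)\Bigl(\intop_{K}f(x)e^{-ix(\omega+t)}\,\diff{x}\Bigr)\,\diff{t}=\intop\delta(t)\,\mathcal{F}_{k}(f)(\omega+t)\,\diff{t}.
\]
This is the step where the choice of the inner representation of $\mathbb{1}$ pays off, since the inner integral is literally $\mathcal{F}_{k}(f)$ translated by $\omega$.

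It then remains to show that $g(t):=\mathcal{F}_{k}(f)(\omega+t)$ is bounded by a tame polynomial at $0$, so that Example~\ref{exa:tamePol}.\ref{enu:intAgainstDelta} applies and gives $\intop\delta(t)g(t)\,\diff{t}=g(0)=\mathcal{F}_{k}(f)(\omega)$. Here $g\in\gsf(\rti^{n})$, being the composition of $\mathcal{F}_{k}(f)\in\gsf(\rti^{n})$ (Thm.~\ref{thm:base}.\ref{enu:Fourier_Map}) with the translation $t\mapsto\omega+t$, and since that translation has identity linear part one has $\diff{}^{j}g(t)=\diff{}^{j}\mathcal{F}_{k}(f)(\omega+t)$ for all $j$. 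The hypothesis furnishes $r\in\R_{>0}$ and $M,c\in\rcrho$ with $b/c$ a large infinite number and $\bigl|\diff{}^{j}\mathcal{F}_{k}(f)(y)\bigr|\le Mc^{j}$ for all $y\in\overline{B}_{r}(\omega)$ and $j\in\N$; for $t\in\overline{B}_{r}(0)$ we have $\omega+t\in\overline{B}_{r}(\omega)$, whence $\bigl|\diff{}^{j}g(t)\bigr|\le Mc^{j}$ with the very same $r,M,c$, so $g$ is bounded by a tame polynomial at $0$.

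I expect the only genuine care — rather than a deep obstacle — to lie in the bookkeeping: verifying that $f\cdot\mathbb{1}$ really is compactly supported inside $K$ so that the compactly-supported integral of Def.~\ref{def:intCmpSupp} is in force and $\mathcal{F}=\mathcal{F}_{k}$, and that Fubini is legitimate for these generalized integrals on $K\times[-s,s]^{n}$. The translation-invariance of the tame-polynomial condition is the one-line chain-rule check carried out above, and the final evaluation against $\delta$ is exactly the content of Example~\ref{exa:tamePol}.\ref{enu:intAgainstDelta}.
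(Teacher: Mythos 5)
Your proof is correct and follows essentially the same route as the paper's: the paper invokes the exchange formula Thm.~\ref{thm:MainPropertiesHFT}.\ref{enu:IntegralProp} together with the modulation property Thm.~\ref{thm:thmProperties}.\ref{enu:prop7}, whereas you inline precisely these two steps as a direct Fubini computation yielding $\int\delta(t)\,\mathcal{F}_{k}(f)(\omega+t)\,\diff{t}$. The decisive final step --- evaluating against $\delta$ via Example~\ref{exa:tamePol}.\ref{enu:intAgainstDelta} after translating the tame-polynomial hypothesis from $\omega$ to $0$ --- is identical in both arguments (the paper leaves that translation check implicit; you spell it out).
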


\begin{proof}
It suffices to apply Thm.~\ref{thm:MainPropertiesHFT}.\ref{enu:IntegralProp}:
\begin{align*}
\mathcal{F}(f\cdot\mathbb{1})(\omega) & =\int f(x)\mathcal{F}(\delta)(x)e^{-ix\cdot\omega}\,\diff{x}=\\
 & =\int\delta(x)\mathcal{F}_{k}\left(f\cdot e^{-i(-)\cdot\omega}\right)(x)\,\diff{x}=\\
 & =\int\delta(x)\mathcal{F}_{k}\left(f\right)(x+\omega)\,\diff{x}=\mathcal{F}_{k}\left(f\right)(\omega).
\end{align*}
\end{proof}
Since $\frac{\partial}{\partial x_{j}}\left[\mathcal{F}_{k}(f)\right](\omega)=-i\mathcal{F}_{k}(x_{j}f)(\omega)$,
we have the following sufficient condition for $\mathcal{F}_{k}(f)$
being bounded by a tame polynomial at $\omega\in\rti^{n}$:
\begin{thm}
\label{thm:preservationS}Let $b\in\rti_{>0}$ be a large infinite
number, and let $f\in\gsf(K)$ be uniformly bounded by a tame polynomial
at $K$, i.e.
\begin{equation}
\exists M,c\in\rti\,\forall y\in K\,\forall j\in\N:\ \left|\diff{^{j}}f(y)\right|\le M\cdot c^{j},\quad\frac{b}{c}\text{ is a large infinite number}.\label{eq:unifBTP}
\end{equation}
Then for all $\omega\in\rti^{n}$, the HFT $\mathcal{F}_{k}(f)$ is
bounded by a tame polynomial at $\omega$. In particular, every $f\in\mathcal{S}(\R^{n})$
satisfies condition \eqref{eq:unifBTP}, and hence
\begin{equation}
\mathcal{F}(f)=\mathcal{F}(f\cdot\mathbb{1})=\iota_{\R^{n}}^{b}(\hat{f}),\label{eq:preservRapDecr}
\end{equation}
where $\hat{f}\in\mathcal{S}(\R^{n})$ is the classical FT of $f$.
\end{thm}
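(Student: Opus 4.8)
The plan is to prove the three assertions separately: the tame-polynomial bound on $\mathcal{F}_{k}(f)$ under \eqref{eq:unifBTP}, the fact that every $f\in\mathcal{S}(\R^{n})$ satisfies \eqref{eq:unifBTP}, and the chain \eqref{eq:preservRapDecr}. For the first, I would iterate the differentiation rule Thm.~\ref{thm:thmProperties}.\ref{enu:prop9} to obtain $\partial^{\alpha}\mathcal{F}_{k}(f)(\eta)=(-i)^{|\alpha|}\mathcal{F}_{k}(x^{\alpha}f)(\eta)$ for all $\alpha\in\N^{n}$ and $\eta\in\rti^{n}$. Since $|x^{\alpha}|\le k^{|\alpha|}$ on $K=[-k,k]^{n}$, the bound Thm.~\ref{thm:base}.\ref{enu:bound} gives $|\partial^{\alpha}\mathcal{F}_{k}(f)(\eta)|\le\int_{K}|x^{\alpha}||f|\,\diff{x}\le k^{|\alpha|}\Vert f\Vert_{1}$. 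Estimating the operator norm of the $j$-th differential by its partial derivatives costs at most a factor $n^{j}$, so $\Vert\diff{^{j}}\mathcal{F}_{k}(f)(\eta)\Vert\le\Vert f\Vert_{1}(nk)^{j}$ for every $\eta$ and $j$. This is a bound $M'(c')^{j}$ with $M'=\Vert f\Vert_{1}$ and $c'=nk$; since it is global in $\eta$, $\mathcal{F}_{k}(f)$ is bounded by a tame polynomial at every $\omega$, provided $b/(nk)$ is a large infinite number.

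For the second assertion, let $f\in\mathcal{S}(\R^{n})$; since $b$ is large infinite, say $b\ge\diff{\rho}^{-r}$ with $r\in\R_{>0}$, it is in particular strong infinite, so Thm.~\ref{thm:embeddingD'}.\ref{enu:embSmooth} represents the embedded $f$ by the constant net $(f)_{\eps}$, whence $\diff{^{j}}f(y)=[\diff{^{j}}f(y_{\eps})]$ and $|\diff{^{j}}f(y)|\le C_{j}:=\max_{|\alpha|=j}\sup_{\R^{n}}|\partial^{\alpha}f|<\infty$, a finite real constant independent of $y\in K$. Choosing $M:=C_{0}+1$ and $c:=\diff{\rho}^{-s}$ with $0<s<r$ makes $c$ infinite and $b/c$ large infinite, so each inequality $C_{j}\le Mc^{j}$ holds in $\rti$ (because $c^{j}$ is infinite for $j\ge1$); thus \eqref{eq:unifBTP} is satisfied. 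Moreover $f(x)=0$ for every strongly infinite $x$, so $\text{supp}(f)\subseteq\overline{B_{\diff{\rho}^{-1}}(0)}\fcmp\rti^{n}$ and $f\in\Dgsf(\rti^{n})$, with $\text{supp}(f)\subseteq K$ as soon as $k\ge\diff{\rho}^{-1}$; note that this inclusion forces $k$ to be strong infinite.

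For the chain \eqref{eq:preservRapDecr}, the first two parts together with Thm.~\ref{thm:preservationFinitePart} give $\mathcal{F}(f\cdot\mathbb{1})(\omega)=\mathcal{F}_{k}(f)(\omega)$ for all $\omega$, while $\mathcal{F}_{k}(f)=\mathcal{F}(f)$ because $\text{supp}(f)\subseteq K$. For $\mathcal{F}(f)=\iota_{\R^{n}}^{b}(\hat f)$ I would use the $\eps$-wise formula Thm.~\ref{thm:base}.\ref{enu:epsIntFT}, namely $\mathcal{F}_{k}(f)(\omega)=[\hat{\mathcal{F}}(\chi_{K_{\eps}}f)(\omega_{\eps})]$, and write $\hat{\mathcal{F}}(\chi_{K_{\eps}}f)=\hat f-\hat{\mathcal{F}}((1-\chi_{K_{\eps}})f)$. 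The rapid decay of $f$ yields $|\hat{\mathcal{F}}((1-\chi_{K_{\eps}})f)(\omega_{\eps})|\le\int_{\R^{n}\setminus K_{\eps}}|f|\le C_{N}k_{\eps}^{-N}$ for every $N$, which is $\rho$-negligible because $k$ is strong infinite. Hence $[\hat{\mathcal{F}}(\chi_{K_{\eps}}f)(\omega_{\eps})]=[\hat f(\omega_{\eps})]=\iota_{\R^{n}}^{b}(\hat f)(\omega)$, using that $\hat f\in\mathcal{S}\subseteq C^{\infty}$ is represented by its own net.

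The main obstacle is the scale mismatch in the first step: every natural estimate for the derivatives of $\mathcal{F}_{k}(f)$ carries the size $k$ of the integration interval $K$, so the tame-polynomial constant one obtains is of order $nk$ rather than of the order of the constant $c$ appearing in the hypothesis on $f$. Consequently the argument genuinely needs $b$ to dominate $k$, i.e.\ that $b/(nk)$ be a large infinite number, which is the role of the assumption that $b$ is a large infinite number (for $f\in\mathcal{S}$ one has $k\ge\diff{\rho}^{-1}$, so it suffices that $b\ge\diff{\rho}^{-(1+s)}$). Checking the uniformity in $j$ of all the $\rti$-inequalities, together with the $\rho$-negligibility of the tail, are the remaining points that require care.
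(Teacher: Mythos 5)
Your proposal follows the paper's own route in all three steps: the iterated differentiation rule Thm.~\ref{thm:thmProperties}.\ref{enu:prop9} combined with the bound Thm.~\ref{thm:base}.\ref{enu:bound} for the first claim; the finiteness of all real derivative bounds of a Schwartz function, so that any infinite $c$ with $b/c$ large infinite witnesses \eqref{eq:unifBTP}, for the second; and negligibility of the tail $\int_{\R^{n}\setminus K_{\eps}}f(x)e^{-ix\cdot\omega_{\eps}}\,\diff{x}$ for the third (the paper defers this to the Gaussian computation of Lem.~\ref{lem:example}; your rapid-decay estimate is the same idea).

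The valuable difference is that your bookkeeping in the first step is correct where the paper's is not. The paper's proof asserts
\begin{equation*}
\left|\diff{^{j}}\mathcal{F}_{k}(f)(\omega)\right|\le\max_{h\le j}\left|\mathcal{F}_{k}(x_{h}f)(\omega)\right|\le Mc^{j}\max_{h\le j}\int_{K}|x_{h}|\,\diff{x}=:\bar{M}c^{j},
\end{equation*}
but the entries of the $j$-th differential of $\mathcal{F}_{k}(f)$ are $\mathcal{F}_{k}(x^{\alpha}f)$ with $|\alpha|=j$, i.e.~degree-$j$ monomials rather than single coordinates, and $\int_{K}|x^{\alpha}|\,\diff{x}$ grows like $k^{j}$; hence the paper's $\bar{M}$ actually depends on $j$, and a bound of the form (moderate constant)$\,\cdot\,c^{j}$ with the hypothesis' constant $c$ is not established. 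Your estimate $\Vert\diff{^{j}}\mathcal{F}_{k}(f)(\eta)\Vert\le\Vert f\Vert_{1}(nk)^{j}$ is the correct one and is essentially sharp: for $n=1$, $f=1$, $\omega=0$ one has $|\diff{^{j}}\mathcal{F}_{k}(1)(0)|=2k^{j+1}/(j+1)$ for even $j$. Consequently your proviso that $b/(nk)$ be a large infinite number is genuinely needed and does not follow from the stated hypotheses. Indeed, take $k:=b^{2}$ and $f:=1$, which satisfies \eqref{eq:unifBTP} with $M=c=1$ and $b/c=b$ large infinite. If $c'$ satisfies $b/c'\ge\diff{\rho}^{-s}$, $s\in\R_{>0}$, then $c'\le b\,\diff{\rho}^{s}$, so with $b\ge\diff{\rho}^{-r}$ we get $k/c'\ge b\,\diff{\rho}^{-s}\ge\diff{\rho}^{-(r+s)}$, and any $M'$ dominating $2k^{j+1}/\bigl((j+1)(c')^{j}\bigr)\ge\frac{2}{j+1}\diff{\rho}^{-j(r+s)}$ for all even $j\in\N$ cannot be moderate. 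So $\mathcal{F}_{k}(1)$ is not bounded by a tame polynomial at $0$: the first claim of the theorem is false as literally stated and tacitly requires $b$ to dominate $k$ --- exactly the condition you isolated as ``the main obstacle''.

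For the ``in particular'' part this gap is harmless, but your fix can be streamlined. Rather than fixing $k\ge\diff{\rho}^{-1}$ and strengthening the hypothesis to $b\ge\diff{\rho}^{-(1+s)}$, use that for $f\in\mathcal{S}(\R^{n})$ one has $\text{supp}(f)\subseteq\overline{B_{\diff{\rho}^{-s'}}(0)}$ for \emph{every} $s'\in\R_{>0}$ (your own rapid-decay argument gives this). Writing $b\ge\diff{\rho}^{-r}$, choose $k:=\diff{\rho}^{-s'}$ with $0<s'<r$; then $b/(nk)\ge\diff{\rho}^{-(r-s')}/n$ is large infinite, so the first part and Thm.~\ref{thm:preservationFinitePart} apply to this $k$, and since $\mathcal{F}(f)=\mathcal{F}_{k}(f)$ does not depend on the admissible $k$ (Def.~\ref{def:intCmpSupp}), the chain \eqref{eq:preservRapDecr} follows under the theorem's stated assumption that $b$ is large infinite, with no strengthening.
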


\begin{proof}
We have
\begin{align*}
\left|\diff{^{j}}\mathcal{F}_{k}(f)(\omega)\right| & \le\max_{h\le j}\left|\mathcal{F}_{k}(x_{h}f)(\omega)\right|\le\max_{h\le j}\int_{K}\left|x_{h}f(x)\right|\,\diff{x}\le\\
 & \le Mc^{j}\max_{h\le j}\int_{K}|x_{h}|\,\diff{x}=:\bar{M}c^{j}.
\end{align*}
If $f\in\mathcal{S}(\R^{n})$, then $\left|\diff{^{j}}f(y)\right|\in\R$,
so that if $b\ge\diff{\rho}^{-r}$, $r\in\R_{>0}$, it suffices to
take $c=\diff{\rho}^{-r+s}$ where $0<s<r$ to have that \eqref{eq:unifBTP}
holds. The last equality in \eqref{eq:preservRapDecr} is equivalent
to say that $\int_{\R^{n}}f(x)e^{-ix\cdot\omega}\,\diff{x}=\int_{K}f(x)e^{-ix\cdot\omega}\,\diff{x}$,
which can be proved as for the Gaussian, see Lem.~\ref{lem:example}.
\end{proof}
We can now consider the relations between $\iota_{\R^{n}}^{b}(\hat{T})$
and $\mathcal{F}_{k}(\iota_{\R^{n}}^{b}(T))$ when $T\in\mathcal{S}'(\R^{n})$.
A first trivial link is given by the so-called \emph{equality in the
sense of generalized tempered distributions}: For all $\phi\in\mathcal{S}(\R^{n})$,
from \eqref{eq:pairTphiAsInt} we have
\[
\int\iota_{\R^{n}}^{b}(\hat{T})\phi=\langle\hat{T},\phi\rangle=\langle T,\hat{\phi}\rangle=\int\iota_{\R^{n}}^{b}(T)\hat{\phi}.
\]
Using the previous Thm.~\ref{thm:preservationS} we get $\hat{\phi}=\mathcal{F}(\phi)$
(identifying a smooth function with its embedding). Thereby
\begin{equation}
\int\iota_{\R^{n}}^{b}(\hat{T})\phi=\int\iota_{\R^{n}}^{b}(T)\mathcal{F}(\phi)=\int\mathcal{F}\left(\iota_{\R^{n}}^{b}(T)\right)\phi\quad\forall\phi\in\mathcal{S}(\R^{n}).\label{eq:gtd}
\end{equation}
In Colombeau's theory, this relation is usually written $\iota_{\R^{n}}^{b}(\hat{T})=_{\text{g.t.d.}}\mathcal{F}\left(\iota_{\R^{n}}^{b}(T)\right)$.

In the following result, we give a sufficient condition to have a
pointwise equality between the HFT of the finite part of $\iota_{\R^{n}}^{b}(T)$
and $\hat{T}$:
\begin{thm}
\label{thm:preservation}Let $b\in\rti_{>0}$ be a large infinite
number and $T\in\mathcal{S}'(\R^{n})$. Assume that $\mathcal{F}(\iota_{\R^{n}}^{b}(T))$
is bounded by a tame polynomial at $\omega\in\rti^{n}$. Then
\[
\mathcal{F}_{k}(\iota_{\R^{n}}^{b}(T))(\omega)=\mathcal{F}(\iota_{\R^{n}}^{b}(T)\cdot\mathbb{1})(\omega)=\iota_{\R^{n}}^{b}(\hat{T})(\omega).
\]
\end{thm}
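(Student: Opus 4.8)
The plan is to handle the two equalities separately, since the first is essentially a direct application of an earlier result while the second carries the real content. For the first equality, note that because $T\in\mathcal{S}'(\R^{n})$ the defining net $(T\ast\psi_{\eps}^{b})$ is smooth on all of $\R^{n}$ and of at most polynomial growth, so by Thm.~\ref{thm:propGSF}.\ref{enu:globallyDefNet} the embedding $\iota_{\R^{n}}^{b}(T)$ restricts to a GSF on $K=[-k,k]^{n}$ (one checks $\rho$-moderateness of its values and derivatives on $K$ using that $k$ is $\rho$-moderate and $b\in\rti_{>0}$). Hence $\mathcal{F}_{k}(\iota_{\R^{n}}^{b}(T))$ is well defined, it coincides with the $\mathcal{F}(\iota_{\R^{n}}^{b}(T))$ appearing in the hypothesis, and it is by assumption bounded by a tame polynomial at $\omega$. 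Thus Thm.~\ref{thm:preservationFinitePart} applied with $f=\iota_{\R^{n}}^{b}(T)$ yields at once $\mathcal{F}(\iota_{\R^{n}}^{b}(T)\cdot\mathbb{1})(\omega)=\mathcal{F}_{k}(\iota_{\R^{n}}^{b}(T))(\omega)$.

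The second equality $\mathcal{F}_{k}(\iota_{\R^{n}}^{b}(T))(\omega)=\iota_{\R^{n}}^{b}(\hat{T})(\omega)$ I would prove at the level of representatives. By Thm.~\ref{thm:base}.\ref{enu:epsIntFT} the left-hand side equals $\bigl[\int_{K_{\eps}}(T\ast\psi_{\eps}^{b})(x)e^{-ix\cdot\omega_{\eps}}\,\diff x\bigr]$; writing $(T\ast\psi_{\eps}^{b})(x)=\langle T_{y},\psi_{\eps}^{b}(x-y)\rangle$ and interchanging the (compact, hence harmless) integration over $K_{\eps}$ with the distributional pairing, this becomes $\bigl[\langle T_{y},\Psi_{\eps}(y)\rangle\bigr]$ with $\Psi_{\eps}(y)=e^{-iy\cdot\omega_{\eps}}\int_{K_{\eps}-y}\psi_{\eps}^{b}(z)e^{-iz\cdot\omega_{\eps}}\,\diff z$. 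On the other hand $\iota_{\R^{n}}^{b}(\hat{T})(\omega)=[(\hat{T}\ast\psi_{\eps}^{b})(\omega_{\eps})]$, and using the classical duality $\langle\hat{T},\phi\rangle=\langle T,\hat\phi\rangle$ together with the evenness of $\psi_{\eps}^{b}$ (Lem.~\ref{lem:strictDeltaNet}.\ref{enu:suppStrictDeltaNet}) one rewrites this as $\bigl[\langle T_{y},e^{-iy\cdot\omega_{\eps}}\widehat{\psi_{\eps}^{b}}(y)\rangle\bigr]$. The whole problem therefore reduces to showing that the net $\langle T_{y},\Psi_{\eps}(y)-e^{-iy\cdot\omega_{\eps}}\widehat{\psi_{\eps}^{b}}(y)\rangle$ is $\rho$-negligible.

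To estimate this difference I would split $\Psi_{\eps}$ into a bulk part and a boundary part. Since $\text{supp}(\psi_{\eps}^{b})\subseteq\overline{B}_{1/b_{\eps}}(0)$ is infinitesimal, for every $y$ with $\max_{p}|y_{p}|\le k_{\eps}-b_{\eps}^{-1}$ we have $K_{\eps}-y\supseteq\text{supp}(\psi_{\eps}^{b})$, so there $\Psi_{\eps}(y)=e^{-iy\cdot\omega_{\eps}}\widehat{\psi_{\eps}^{b}}(\omega_{\eps})$; on the remaining infinitesimal boundary shell one must keep the truncation. Hence, modulo the boundary term, the difference is $\langle T_{y},e^{-iy\cdot\omega_{\eps}}[\widehat{\psi_{\eps}^{b}}(\omega_{\eps})-\widehat{\psi_{\eps}^{b}}(y)]\rangle$, and since $\widehat{\psi_{\eps}^{b}}(\zeta)=\widehat{\psi_{\eps}}(\zeta/b_{\eps})$, the vanishing-moment conditions Lem.~\ref{lem:strictDeltaNet}.\ref{enu:momentsStrictDeltaNet} make $\widehat{\psi_{\eps}}$ agree with $1$ to arbitrarily high order at $0$; combined with $b\ge\diff{\rho}^{-r}$ for some $r\in\R_{>0}$ this forces the bulk contribution to be $\rho$-negligible.

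The hard part will be controlling the boundary shell and, more essentially, pinpointing where the tame-polynomial hypothesis is indispensable: it is precisely the local regularity of $\hat{T}$ near $\omega$ that guarantees the two approximate identities behind the two sides (the truncated-exponential/Dirichlet kernel for $\mathcal{F}_{k}$ and the mollifier kernel for $\iota_{\R^{n}}^{b}(\hat{T})$) evaluate $\hat{T}$ to the same generalized number at $\omega$. When this regularity fails the equality breaks, as the exploratory Heaviside computation shows: $\hat{H}=\pi\delta-i/\omega$ carries a $1/\omega$ singularity at the origin, $\mathcal{F}_{k}(\iota_{\R^{n}}^{b}(H))$ is then not bounded by a tame polynomial at $\omega\approx0$, and the identity genuinely fails there. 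A cleaner but less self-contained alternative would be to read the equality off the generalized-tempered-distribution relation \eqref{eq:gtd} by pairing both sides against the embedded Dirac delta $\delta(\cdot-\omega)$ and invoking the sifting property (Ex.~\ref{exa:tamePol}.\ref{enu:intAgainstDelta}); the obstacle in that route is that \eqref{eq:gtd} is stated only for \emph{fixed} classical $\phi\in\mathcal{S}(\R^{n})$, whereas $\delta(\cdot-\omega)$ is represented by the $\eps$-dependent net $\psi_{\eps}^{b}(\cdot-\omega_{\eps})$, so one is pushed back to exactly the $\eps$-wise estimate above.
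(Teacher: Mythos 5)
Your handling of the first equality is correct and coincides with the paper's: both reduce it to Thm.~\ref{thm:preservationFinitePart}. For the second equality your route is also essentially the paper's (exchange the distributional pairing with the integral over $K_{\eps}$, then use the classical duality $\langle\hat{T},\phi\rangle=\langle T,\hat{\phi}\rangle$ and the evenness of $\psi_{\eps}^{b}$); in fact your reduction to the negligibility of $\langle T_{y},\Psi_{\eps}(y)-e^{-iy\cdot\omega_{\eps}}\widehat{\psi_{\eps}^{b}}(y)\rangle$ is a \emph{more} careful formulation than the paper's own chain, which treats the truncated integral $\int_{K_{\eps}}\psi_{\eps}^{b}(x-y)e^{-ix\cdot\omega_{\eps}}\,\diff{x}$ as the full-space transform $\widehat{y\oplus\psi_{\eps}^{b}}(\omega_{\eps})$ without comment (a step that is exact only when the pairing localizes, e.g.\ for $T\in\mathcal{E}'(\R^{n})$, since then for $\eps$ small the support of $\psi_{\eps}^{b}(\cdot-y)$ lies in $K_{\eps}$ for all $y$ the pairing sees). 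The problem is that you never close this estimate, and the one piece you do claim to close --- the bulk --- is not correct as stated.

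Concretely: the vanishing moments Lem.~\ref{lem:strictDeltaNet}.\ref{enu:momentsStrictDeltaNet} give $|\widehat{\psi_{\eps}}(\zeta)-1|\le C_{j}|\zeta|^{j}$ only near $\zeta=0$, hence they control $\widehat{\psi_{\eps}^{b}}(y)-\widehat{\psi_{\eps}^{b}}(\omega_{\eps})$ only on the region $|y|\ll b_{\eps}$. Your bulk region, however, extends up to $|y|_{\infty}\le k_{\eps}-b_{\eps}^{-1}$, and $k$ is an \emph{arbitrary} infinite number, unrelated to $b$: if, say, $k=b^{2}$, then on $b_{\eps}\lesssim|y|\le k_{\eps}$ the factor $\widehat{\psi_{\eps}^{b}}(y)=\widehat{\psi_{\eps}}(y/b_{\eps})$ is nowhere near $1$ (it tends to $0$ at infinity), so the bulk difference is of order $1$ on a region of measure $\sim k_{\eps}^{n}$. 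Pairing a general tempered $T$ against such a function is not negligible: for $T=1$ or a Dirac comb at $\omega=0$ the error is of size $\sim k_{\eps}$, and indeed for those $T$ the conclusion of the theorem fails ($\mathcal{F}_{k}(\iota_{\R}^{b}(1))(0)\approx2k$ while $\iota_{\R}^{b}(\hat{1})(0)=2\pi b\psi(0)$). Those $T$ are excluded only because $\iota_{\R^{n}}^{b}(T)$ is then not a compactly supported GSF, so that $\mathcal{F}(\iota_{\R^{n}}^{b}(T))$ --- a notation Def.~\ref{def:HyperfiniteFT} reserves for elements of $\Dgsf$ with support in $K$ --- is not even defined, and the hypothesis cannot hold. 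This compact-support information is precisely what any correct argument must use to kill the outer bulk, the shell and the exterior contributions, and you never invoke it: you explicitly defer the shell and ``pinpointing where the tame-polynomial hypothesis is indispensable'' as ``the hard part''. Since an argument ignoring that information would prove a false unconditional statement, what is missing is not bookkeeping but the actual content of the second equality; the proposal is an unfinished proof.
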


\begin{proof}
For simplicity of notation, we use $\iota:=\iota_{\R^{n}}^{b}$. Using
Thm.~\ref{thm:preservationFinitePart}, we have
\[
\mathcal{F}\left(\iota(T)\cdot\mathbb{1}\right)(\omega)=\mathcal{F}_{k}(\iota(T))(\omega).
\]
Let $\psi(x)=[\psi_{\eps}(x_{\eps})]$ \emph{be an $n$-}dimensional
Colombeau mollifier defined by $b$, and set $K_{\eps}:=[-k_{\eps},k_{\eps}]^{n}$;
we have
\begin{align*}
\mathcal{F}_{k}\left(\iota(T)\right)(\omega) & =\left[\int_{K_{\eps}}\langle T(y),\psi_{\eps}(x-y)\rangle e^{-ix\cdot\omega_{\eps}}\,\diff{x}\right]=\\
 & =\left[\langle T(y),\int\psi_{\eps}(x-y)e^{-ix\cdot\omega_{\eps}}\,\diff{x}\rangle\right]=\\
 & =\left[\langle T(y),\widehat{y\oplus\psi_{\eps}}(\omega_{\eps})\rangle\right]=\\
 & =\left[\langle\hat{T}(y),\left(y\oplus\psi_{\eps}\right)(\omega_{\eps})\rangle\right]=\\
 & =\left[\langle\hat{T}(y),\psi_{\eps}(\omega_{\eps}-y)\rangle\right]=\iota(\hat{T})(\omega).
\end{align*}
\end{proof}

\subsection{Fourier transform in the Colombeau's setting}

Only in this section we assume a very basic knowledge of Colombeau's
theory.

Assume that $\Omega\subseteq\mathbb{R}^{n}$ is an open set. The algebra
$\mathcal{G}_{\tau}^{s}\left(\Omega\right)$ of tempered generalized
functions was introduced by J.F.~Colombeau in \cite{C1}, in order
to develop a theory of Fourier transform. Since then, there has been
a rapid development of Fourier analysis, regularity theory and microlocal
analysis in this setting.
\begin{defn}
The $\mathcal{G}_{\tau}^{s}(\Omega)$ algebra of \emph{Colombeau tempered
GF} (trivially generalized by using an arbitrary gauge $\rho$) is
defined as follows:
\begin{align*}
\mathcal{E}_{\tau}^{s}\left(\Omega\right) & :=\left\{ \left(u_{\eps}\right)\in\Coo(\Omega)^{I}\mid\forall\alpha\in\mathbb{N}^{n}\,\exists N\in\mathbb{N}:\right.\\
 & \phantom{:=\qquad}\left.\sup_{x\in\Omega}\left(1+\left|x\right|\right)^{-N}\left|\partial^{\alpha}u_{\eps}\left(x\right)\right|=O(\rho_{\eps}^{-N})\right\} ,
\end{align*}

\begin{align*}
\mathcal{N}_{\tau}^{s}\left(\Omega\right) & :=\left\{ \left(u_{\eps}\right)\in\Coo(\Omega)^{I}\mid\forall\alpha\in\mathbb{N}^{n}\,\exists p\in\N\,\forall m\in\mathbb{N}:\right.\\
 & \phantom{:=\qquad}\left.\sup_{x\in\Omega}\left(1+\left|x\right|\right)^{-p}\left|\partial^{\alpha}u_{\eps}\left(x\right)\right|=O(\rho_{\eps}^{m})\right\} ,
\end{align*}
\[
\mathcal{G}_{\tau}^{s}(\Omega):=\mathcal{E}_{\tau}^{s}(\Omega)/\mathcal{N}_{\tau}^{s}(\Omega).
\]
\end{defn}

\noindent Colombeau tempered GF can be embedded as GSF, at least if
the internal set $[\Omega]$ is sharply bounded. We first define
\begin{defn}
\label{def:gsf_tau}Let $X\subseteq\rti^{n}$, then
\[
\gsf_{\tau}(X):=\left\{ u\in\gsf(X)\mid\forall\alpha\in\mathbb{N}^{n}\,\exists N\in\mathbb{N}\,\forall x\in X:\ \left|\partial^{\alpha}u\left(x\right)\right|\le\frac{\left(1+\left|x\right|\right)^{N}}{\diff{\rho}^{N}}\right\} .
\]
\end{defn}

\begin{thm}
\label{thm:embCTGF}Let $\Omega\subseteq\R^{n}$ be an open set such
that $[\Omega]$ is sharply bounded. A Colombeau tempered GF $u=(u_{\eps})+\ns_{\tau}(\Omega)\in\gs_{\tau}(\Omega)$
defines a GSF $u:[x_{\eps}]\in[\Omega]\longrightarrow[u_{\eps}(x_{\eps})]\in\rccrho$.
This assignment provides an algebra isomorphism $\gs_{\tau}(\Omega)\simeq\gsf_{\tau}([\Omega])$.
\end{thm}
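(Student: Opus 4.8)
The plan is to prove the two assertions separately: first that the representative-wise evaluation $u=(u_{\eps})+\ns_{\tau}(\Omega)\mapsto[u_{\eps}(-)]$ is a well-defined map into $\gsf_{\tau}([\Omega])$, and then that it is a bijective algebra homomorphism. Throughout I would exploit the standing hypothesis that $[\Omega]$ is sharply bounded. This forces $\Omega$ to be bounded in $\R^{n}$: if $\Omega$ were unbounded one could pick $x_{\eps}\in\Omega$ with $|x_{\eps}|$ a moderate net exceeding any given bound $M\in\rcrho_{>0}$, producing $[x_{\eps}]\in[\Omega]\setminus B_{M}(0)$. Hence $\mathrm{cl}(\Omega)\Subset\R^{n}$, and by Thm.~\ref{thm:strongMembershipAndDistanceComplement}.\ref{enu:internalGeneratedByClosed} and Rem.~\ref{rem:defFunctCmpt} we get $[\Omega]=[\mathrm{cl}(\Omega)]\fcmp\rcrho^{n}$, so that every $x=[x_{\eps}]\in[\Omega]$ has a representative with $x_{\eps}\in\Omega$ which is uniformly bounded, $\exists L\,\forall^{0}\eps:\ |x_{\eps}|\le\rho_{\eps}^{-L}$ (indeed by a constant).

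For well-definedness and the GSF property, fix $x=[x_{\eps}]\in[\Omega]$ with $x_{\eps}\in\Omega$. Since $(u_{\eps})\in\mathcal{E}_{\tau}^{s}(\Omega)$, for each $\alpha$ there is $N$ with $|\partial^{\alpha}u_{\eps}(x_{\eps})|\le(1+|x_{\eps}|)^{N}O(\rho_{\eps}^{-N})$, which with $|x_{\eps}|\le\rho_{\eps}^{-L}$ gives $(\partial^{\alpha}u_{\eps}(x_{\eps}))\in\Cc_{\rho}$; thus the three conditions of Def.~\ref{def:netDefMap} hold with $\Omega_{\eps}=\Omega$ and $[u_{\eps}(x_{\eps})]\in\rccrho$. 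Independence of the representative is the main obstacle: if $[x_{\eps}]=[x'_{\eps}]$ with $x_{\eps},x'_{\eps}\in\Omega$, then $|x_{\eps}-x'_{\eps}|\le\rho_{\eps}^{q}$ for all $q$, and the mean value theorem with the tempered bound on $\partial u_{\eps}$ gives $|u_{\eps}(x_{\eps})-u_{\eps}(x'_{\eps})|\le\rho_{\eps}^{q}O(\rho_{\eps}^{-N'})$ \emph{provided the segment $[x_{\eps},x'_{\eps}]$ stays in $\Omega$}. The delicate case is at boundary points of $[\Omega]$, where two negligibly close representatives in the open set $\Omega$ need not be joined by a short segment inside $\Omega$. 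The hard part will be to control this; the correct tool is the global, uniform-up-to-the-boundary character of the tempered estimates (automatic here since $\Omega$ is bounded) together with the sharp continuity of GSF (Thm.~\ref{thm:propGSF}.\ref{enu:GSF-cont}), extending the evaluation from points with representatives staying uniformly inside $\Omega$ to the sharply closed set $[\Omega]$, invoking the evaluation machinery for GSF on closed internal sets from \cite{GiKu15,GIO1}.

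The membership $[u_{\eps}(-)]\in\gsf_{\tau}([\Omega])$ is then a direct translation of the defining estimate of $\mathcal{E}_{\tau}^{s}(\Omega)$ into the pointwise bound of Def.~\ref{def:gsf_tau} at generalized points, and the map respects the algebra operations because $u([x_{\eps}])=[u_{\eps}(x_{\eps})]$ is computed representative-wise. To see that it descends to the quotient, I would show $(u_{\eps})\in\mathcal{N}_{\tau}^{s}(\Omega)$ implies $[u_{\eps}(x_{\eps})]=0$ for all $x\in[\Omega]$: the estimate $\sup_{x\in\Omega}(1+|x|)^{-p}|u_{\eps}(x)|=O(\rho_{\eps}^{m})$ for every $m$, with $|x_{\eps}|\le\rho_{\eps}^{-L}$, yields $|u_{\eps}(x_{\eps})|\le\rho_{\eps}^{m-pL}$ for all $m$, hence negligibility by \eqref{eq:negligible}.

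For injectivity it then suffices to prove the converse point-value characterisation: if $[u_{\eps}(x_{\eps})]=0$ for all $x\in[\Omega]$ then $(u_{\eps})\in\mathcal{N}_{\tau}^{s}(\Omega)$. I would first evaluate at constant nets $x=[x]$, $x\in\Omega$, to get pointwise negligibility, then upgrade to the uniform all-derivatives estimate defining $\mathcal{N}_{\tau}^{s}$ by a Landau/Taylor transfer argument, using the already-established moderateness of all derivatives to absorb higher-order terms; this is the tempered analogue of the point-value characterisation of $\ns(\Omega)$ behind Thm.~\ref{thm:inclusionCGF}, and is where uniformity up to the boundary is again essential. For surjectivity, given $v\in\gsf_{\tau}([\Omega])$ I would choose a globally defined net $v_{\eps}\in\Coo(\R^{n})$ (Thm.~\ref{thm:propGSF}.\ref{enu:globallyDefNet}) and restrict it to $\Omega$; converting the pointwise growth bound of Def.~\ref{def:gsf_tau} into the uniform net-wise bound of $\mathcal{E}_{\tau}^{s}(\Omega)$ via the same transfer argument shows $(v_{\eps}|_{\Omega})\in\mathcal{E}_{\tau}^{s}(\Omega)$ and maps to $v$. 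Together these establish the announced algebra isomorphism, the recurring crux being the passage between conditions stated at generalized points and uniform, net-wise estimates.
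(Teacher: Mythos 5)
The paper states Thm.~\ref{thm:embCTGF} without giving a proof, so your argument has to stand on its own; it does not, and the failure sits exactly at the step you yourself defer as ``the hard part''. Consider $n=2$, $\Omega:=\Eball_{1}(0)\setminus\left([0,1)\times\{0\}\right)$ (the slit disk: open and bounded, so $[\Omega]$ is sharply bounded), let $\phi:\Omega\to(0,2\pi)$ be the angular coordinate, let $\chi\in\Coo(\R)$ satisfy $\chi=0$ on $(-\infty,\tfrac{1}{2}]$ and $\chi=1$ on $[1,+\infty)$, and set $u_{\eps}(x):=\phi(x)\cdot\chi(|x|/\rho_{\eps})$. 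Since $|\partial^{\alpha}\phi(x)|\le C_{\alpha}|x|^{-|\alpha|}$ and the cut-off kills everything for $|x|\le\rho_{\eps}/2$, one gets $\sup_{\Omega}|\partial^{\alpha}u_{\eps}|=O(\rho_{\eps}^{-|\alpha|})$, hence $(u_{\eps})\in\mathcal{E}_{\tau}^{s}(\Omega)$ but $(u_{\eps})\notin\mathcal{N}_{\tau}^{s}(\Omega)$; note that these tempered bounds are already ``uniform up to the boundary'', which is precisely the tool you propose to rely on. Now take $\delta_{\eps}:=e^{-1/\rho_{\eps}}$ and the representatives $x_{\eps}:=\tfrac{1}{2}(\cos\delta_{\eps},\sin\delta_{\eps})$, $x'_{\eps}:=\tfrac{1}{2}(\cos\delta_{\eps},-\sin\delta_{\eps})$, both in $\Omega$, with $|x_{\eps}-x'_{\eps}|\le\delta_{\eps}$ $\rho$-negligible, so $[x_{\eps}]=[x'_{\eps}]\in[\Omega]$; yet $[u_{\eps}(x_{\eps})]=0$ while $[u_{\eps}(x'_{\eps})]=2\pi$. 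Thus the evaluation $[x_{\eps}]\mapsto[u_{\eps}(x_{\eps})]$ is not well defined on $[\Omega]$, and no sharply continuous map (Thm.~\ref{thm:propGSF}.\ref{enu:GSF-cont}) is compatible with these values near the slit. The gap is therefore not a technicality you can discharge to ``evaluation machinery'': with the stated hypotheses the claim itself fails, and any correct proof must add a geometric assumption on $\Omega$ (convexity, or a uniform quasi-convexity/Lipschitz condition ensuring that points at negligible Euclidean distance are joined by paths in $\Omega$ of comparable length), which is what makes your mean-value argument, together with the global moderate bound on $\nabla u_{\eps}$, work for the balls and boxes the paper actually uses.

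Your proposed repair also fails structurally, independently of the counterexample. Since $[\Omega]=[\text{cl}(\Omega)]$ (Thm.~\ref{thm:strongMembershipAndDistanceComplement}.\ref{enu:internalGeneratedByClosed}), the points of $[\Omega]$ lying over $\partial\Omega$ are at positive \emph{real} distance from every point admitting representatives uniformly inside $\Omega$ (such as the points of $\csp{\Omega}$); in the sharp topology such a set is therefore not dense in $[\Omega]$, so there is no ``extension by sharp continuity'' to invoke --- this is exactly why the classical point-value theorem, Thm.~\ref{thm:inclusionCGF}, is formulated on $\csp{\Omega}$ and does not transfer to $[\Omega]$ for free. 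Two further steps need repair even in the good cases: (i) Def.~\ref{def:netDefMap} requires $[\Omega]\subseteq\sint{\Omega_{\eps}}$ for the defining net, which fails for the constant net $\Omega_{\eps}=\Omega$ (points of $[\Omega]$ over $\partial\Omega$ are never strongly internal), so one must first extend each $u_{\eps}$ beyond $\overline{\Omega}$, e.g.\ by a Whitney--Seeley operator, before one has a GSF in the sense of the paper; (ii) your opening claim that sharp boundedness of $[\Omega]$ forces $\Omega$ to be bounded is not established by your selection argument, because the chosen net exceeding a given bound $M$ must also be $\rho$-moderate, and for a sparse unbounded $\Omega$ combined with a gauge having huge multiplicative gaps, the smallest point of $\Omega$ above $M_{\eps}$ can fail to be moderate (the implication does hold for $\rho_{\eps}=\eps$, but the paper's $\gs_{\tau}$ is defined for arbitrary gauges).
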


Integration of a CGF $u=[u_{\eps}]\in\gs(\Omega)$ over a standard
$K\Subset\Omega$ can be defined $\eps$-wise as $\int_{K}u\left(x\right)\,\diff{x}:=\left[\int_{K}u_{\varepsilon}\left(x\right)\,\diff{x}\right]\in\rti$.
Similarly we can proceed for $\int_{\Omega}u$ if $u$ is compactly
supported and $\Omega\subseteq\R^{n}$ is an arbitrary open set. On
the other hand, to define the Fourier transform, we have to integrate
tempered CGF on the entire $\R^{n}$. Using this integration of CGF,
this is accomplished by multiplying the generalized function by a
so-called \emph{damping measure} $\phi$, see e.g.~\cite{Hor99}:
\begin{defn}
\label{def:damping}Let $\phi\in\mathcal{S}(\R^{n})$ with $\int_{\R^{n}}\phi=1$,
$\int_{\R^{n}}x^{\alpha}\phi(x)\,\diff{x}=0$ for all $\alpha\in\N^{n}\setminus\{0\}$,
and set $\phi_{\eps}(x):=\rho_{\eps}\odot\phi(x)=\rho_{\eps}^{-n}\phi(\rho_{\eps}^{-1}x)$.
Let $u=[u_{\eps}]\in\mathcal{G}_{\tau}(\R^{n})$, then $u_{\hat{\phi}}:=[u_{\eps}\widehat{\phi_{\eps}}]$,
$\int_{\R^{n}}u(x)\,\diff{}_{\hat{\phi}}x:=\int_{\R^{n}}u_{\hat{\phi}}\,\diff{x}=\left[\int_{\R^{n}}u_{\eps}(x)\widehat{\phi_{\eps}}(x)\,\diff{x}\right]\in\Ctil$,
where $\widehat{\phi_{\eps}}$ denotes the classical FT. In particular,
\begin{align*}
\mathcal{F}_{\hat{\phi}}(u)(\omega) & :=\int_{\R^{n}}e^{-ix\omega}u(x)\,\diff{}_{\hat{\phi}}x=\left[\int_{\R^{n}}e^{-ix\omega}u_{\eps}(x)\widehat{\phi_{\eps}}(x)\,\diff{x}\right]\\
\mathcal{F}_{\hat{\phi}}^{*}(u)(x) & :=(2\pi)^{-n}\int_{\R^{n}}e^{-ix\omega}u(\omega)\,\diff{}_{\hat{\phi}}\omega=\left[(2\pi)^{-n}\int_{\R^{n}}e^{-ix\omega}u_{\eps}(\omega)\widehat{\phi_{\eps}}(\omega)\,\diff{\omega}\right].
\end{align*}
\end{defn}

As a result, although this notion of Fourier transform in the Colombeau
setting shares several properties with the classical one, it lacks
e.g.~the Fourier inversion theorem, which holds only at the level
of equality in the sense of generalized tempered distributions \cite{Col85,Das91,NedPil92},
see also \eqref{eq:gtd}. See also \cite{Sor96} for a Paley-Wiener
like theorem. In other words, we only have e.g.~$\mathcal{F}_{\hat{\phi}}(\partial^{\alpha}u)=_{\text{g.t.d.}}i^{|\alpha|}\omega^{\alpha}\mathcal{F}_{\hat{\phi}}(u)$,
$i^{|\alpha|}\mathcal{F^{*}}_{\hat{\phi}}(\partial^{\alpha}u)=_{\text{g.t.d.}}x^{\alpha}\mathcal{F^{*}}_{\hat{\phi}}(u)$,
$\mathcal{F}_{\hat{\phi}}\mathcal{F^{*}}_{\hat{\phi}}u=_{\text{g.t.d.}}\mathcal{F^{*}}_{\hat{\phi}}\mathcal{F}_{\hat{\phi}}u$,
where $\mathcal{F}_{\hat{\phi}}(u)$ denotes the Fourier transform
with respect to the damping measure. Moreover $\langle\iota_{\R}(\hat{T}),\psi\rangle\approx\langle\mathcal{F}_{\hat{\phi}}\iota_{\R}(T),\psi\rangle$
for all $T\in\mathcal{S}'(\R)$ and all $\psi\in\mathcal{S}(\R)$,
where $\iota_{\R}(T)$ is the embedding of Thm.~\ref{thm:embeddingD'}.
Intuitively, one could say that the use of the multiplicative damping
measure introduces a perturbation of infinitesimal frequencies that
inhibit several results that, on the contrary, hold for the HFT. On
the other hand, HFT lies on a better integration theory that allows
to integrate any GSF on the functionally compact set $K$. The only
possibilities to obtain a strict Fourier inversion theorem in Colombeau's
theory, are the approach used by \cite{Nig16}, where smoothing kernels
are used as index set (instead of the simpler $\eps\in I$) and therefore
the knowledge of infinite dimensional calculus in convenient vector
spaces is needed, or \cite{RaTkRa,BoSa10}, which are based on the
Colombeau space $\mathcal{G}(\mathcal{S}(\R))$, but where the imbedding
of $\mathcal{S}'(\R)$ is more complicated.

Finally, the following result links the HFT with the FT of tempered
CGF as defined above.
\begin{thm}
\label{thm:presFT_TCGF}Let $\Omega\subseteq\R^{n}$ be an open set
such that $[\Omega]$ is sharply bounded, and let $u\in\gsf_{\tau}([\Omega])$
be a tempered CGF (identified with the corresponding GSF). Finally,
let $\phi\in\mathcal{S}(\R^{n})$ be a dumping measure. Then
\[
\mathcal{F}_{\hat{\phi}}(u)=\mathcal{F}\left[u\cdot\hat{\phi}((-)\cdot\diff{\rho})\right]=\mathcal{F}\left[u\cdot\mathcal{F}(\phi)((-)\cdot\diff{\rho})\right].
\]
\end{thm}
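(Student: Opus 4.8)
The plan is to compute both members $\eps$-wise and to reduce the first equality to the negligibility of a single tail integral. First I would make the damping factor explicit: since $\phi_\eps=\rho_\eps\odot\phi$, the scaling property of the classical Fourier transform (substitute $z=\rho_\eps^{-1}x$) gives $\widehat{\phi_\eps}(x)=\hat\phi(\rho_\eps x)$, so that the net $\bigl(\widehat{\phi_\eps}(-)\bigr)$ represents exactly the GSF $\hat\phi((-)\cdot\diff{\rho})$. Fixing a tempered representative $u_\eps\in\Coo(\R^n)$ of $u$, Def.~\ref{def:damping} then reads
\[
\mathcal{F}_{\hat\phi}(u)(\omega)=\Bigl[\int_{\R^n}u_\eps(x)\,\hat\phi(\rho_\eps x)\,e^{-ix\cdot\omega_\eps}\,\diff{x}\Bigr]\in\rccrho.
\]

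Next I would show that $v:=u\cdot\hat\phi((-)\cdot\diff{\rho})$, defined by $v_\eps(x)=u_\eps(x)\hat\phi(\rho_\eps x)$, is a compactly supported GSF. It is a GSF on $\rti^n$ by the Leibniz rule, because the tempered bound $|\partial^\beta u_\eps(x)|\le C\rho_\eps^{-N}(1+|x|)^N$ and the boundedness of every derivative of $\hat\phi$ make all derivatives of $v_\eps$ $\rho$-moderate. To locate its support I combine this tempered bound with the rapid decrease $|\hat\phi(\rho_\eps x)|\le C_q(\rho_\eps|x|)^{-q}$, valid for all $q\in\N$: for a point with $|x|\ge\diff{\rho}^{-s}$ and $s>1$ one gets, for each fixed $q>N$, a bound $|v_\eps(x_\eps)|\le C_q'\rho_\eps^{(s-1)q-N(1+s)}$ whose exponent tends to $+\infty$ as $q\to+\infty$, so $v$ vanishes there. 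By Thm.~\ref{thm:DerivativeIsZero}.\ref{enu:equivCmptSupp} together with \eqref{eq:closureBall} this gives $v\in\Dgsf\bigl(\overline{B_{\diff{\rho}^{-s}}(0)}\bigr)$ for every $s>1$; hence $\mathcal{F}(v)=\mathcal{F}_k(v)$ is well defined (Def.~\ref{def:HyperfiniteFT}) and, taking $k_\eps:=\rho_\eps^{-s}$, Thm.~\ref{thm:base}.\ref{enu:epsIntFT} yields
\[
\mathcal{F}(v)(\omega)=\Bigl[\int_{K_\eps}u_\eps(x)\,\hat\phi(\rho_\eps x)\,e^{-ix\cdot\omega_\eps}\,\diff{x}\Bigr].
\]

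The heart of the proof, and the step I expect to be the main obstacle, is then to show that passing from $\int_{\R^n}$ to $\int_{K_\eps}$ changes the value only by a $\rho$-negligible net, i.e.\ that $\int_{\R^n\setminus K_\eps}|u_\eps(x)|\,|\hat\phi(\rho_\eps x)|\,\diff{x}=O(\rho_\eps^m)$ for every $m\in\N$. On $\R^n\setminus K_\eps$ one has $|x|\ge k_\eps=\rho_\eps^{-s}$; inserting the tempered and rapid-decrease bounds and passing to polar coordinates, for $q>N+n$ the tail is of order $\rho_\eps^{-N-q}k_\eps^{N+n-q}=\rho_\eps^{(s-1)q-N(1+s)-sn}$, and since $s>1$ the exponent can be made arbitrarily large by taking $q$ large. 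This is exactly negligibility, so comparing the two displays above establishes the first equality $\mathcal{F}_{\hat\phi}(u)=\mathcal{F}\bigl[u\cdot\hat\phi((-)\cdot\diff{\rho})\bigr]$. The delicate point throughout is the simultaneous bookkeeping of the three competing powers of $\rho_\eps$ (the tempered blow-up $\rho_\eps^{-N}$, the damping scale $\rho_\eps^{-q}$ hidden in $\hat\phi(\rho_\eps x)$, and the truncation gain from $k_\eps$), which is precisely what forces the choice $s>1$.

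Finally, the second equality is just the identification $\hat\phi=\mathcal{F}(\phi)$ of the classical Fourier transform of the Schwartz function $\phi$ with its HFT. By the preservation Thm.~\ref{thm:preservationS}, formula \eqref{eq:preservRapDecr}, we have $\mathcal{F}(\phi)=\iota_{\R^n}^{b}(\hat\phi)$; and since $\hat\phi\in\mathcal{S}(\R^n)$ is smooth with derivatives bounded of every order, its embedding coincides with $\hat\phi$ up to a $\rho$-negligible net even at infinite generalized points (the mollification error is controlled by the strong infinite $b$ and the bounded derivatives of $\hat\phi$). Therefore $\mathcal{F}(\phi)((-)\cdot\diff{\rho})=\hat\phi((-)\cdot\diff{\rho})$ as multipliers, and the two hyperfinite transforms agree.
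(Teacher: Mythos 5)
Your proof is correct and follows essentially the same route as the paper's: rewrite the damping factor via the classical scaling identity $\widehat{\rho_{\eps}\odot\phi}=\hat{\phi}(\rho_{\eps}\,\cdot)$, identify the resulting $\eps$-wise integral over $\R^{n}$ with the HFT of the product $u\cdot\hat{\phi}((-)\cdot\diff{\rho})$, and conclude the last equality with the preservation formula \eqref{eq:preservRapDecr}. The paper's own proof is exactly this chain of equalities written formally; your extra work (moderateness and compact support of the product via the choice $s>1$, the negligibility of the tail $\int_{\R^{n}\setminus K_{\eps}}$, and the control of the mollification error of $\iota_{\R^{n}}^{b}(\hat{\phi})$ at non-finite moderate points using the strong infinite $b$) supplies rigorous justification for the steps the paper treats as immediate.
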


\begin{proof}
Def.~\eqref{def:damping} yields
\begin{align*}
\mathcal{F}_{\hat{\phi}}(u)(\omega) & =\int_{\R^{n}}u(x)e^{-ix\cdot\omega}\,\diff{_{\hat{\phi}}x}=\\
 & =\int_{\R^{n}}u(x)e^{-ix\cdot\omega}\widehat{\diff{\rho}\odot\phi}(x)\,\diff{x}=\\
 & =\int_{\R^{n}}u(x)e^{-ix\cdot\omega}\left(\diff{\rho}\diamond\hat{\phi}\right)(x)\,\diff{x}=\\
 & =\int_{\R^{n}}u(x)e^{-ix\cdot\omega}\hat{\phi}(\diff{\rho}\cdot x)\,\diff{x}=\\
 & =\mathcal{F}\left[u\cdot\hat{\phi}((-)\cdot\diff{\rho})\right](\omega)=\\
 & =\mathcal{F}\left[u\cdot\mathcal{F}(\phi)((-)\cdot\diff{\rho})\right](\omega),
\end{align*}
where, in the last equality, we applied \eqref{eq:preservRapDecr}.
\end{proof}

\section{Examples and applications\label{sec:Examples-and-applications}}

In this section we present an initial study of possible applications
of the HFT. Our main aim is to highlight the new potentialities of
the theory. For example, thanks to the possibility of applying the
HFT also to non-tempered GF, the next deductions are fully rigorous,
even if they correspond to the frequently used statement: ``proceeding
formally, we obtain...''. We also note that the FT is often used
to prove necessary conditions: if the solution $y$ satisfies a given
differential equation, then necessarily $y=\ldots$ In the following,
we propose an attempt to also reverse this implication, even if this
depends on a suitable \emph{extensibility property}: If the HFT of
the given differential equation holds on some space, e.g.~$\mathring{K}\times\RC{\rho}_{\ge0}$,
then it also holds on the entire $\RC{\rho}\times\RC{\rho}_{\ge0}$.
We discuss some sufficient conditions for this property to hold, but
a thorough study of this condition is out of the scope of the present
work.

\subsection{Applications of HFT to ordinary differential equations}

\subsubsection*{The simplest ODE}

We first consider the following, apparently trivial but actually meaningful,
example: 
\begin{equation}
y'=y,\ y\left(0\right)=c,\ y\in\gsf\left(\left[-k,k\right]\right),\ c\in\rcrho,\label{eq:first_order_ode}
\end{equation}
where $k=-\log\left(\diff{\rho}\right)$. Since we do not impose limitations
on the initial value $c$, this simple example clearly shows the possibilities
of the HFT to manage non tempered generalized functions. Applying
the HFT to both sides of \eqref{eq:first_order_ode} and using the
derivation formula \eqref{eq:DerRule1}, we get 
\begin{equation}
\mathcal{F}_{k}\left(y\right)=\Delta_{1k}y+i\omega\mathcal{F}_{k}\left(y\right).\label{eq:FT_of_first_order_ode}
\end{equation}
Set for simplicity $\Delta_{y}\left(\omega\right):=\Delta_{1k}y\left(\omega\right)=y(k)e^{-ik\omega}-y(-k)e^{ik\omega}$
and note that the function $\Delta_{y}$ does not depend on the whole
function $y$ but only on the two values $y(\pm k)$. We get $\mathcal{F}_{k}\left(y\right)\left(\omega\right)=\frac{\Delta_{y}\left(\omega\right)}{1-i\omega}$,
and applying the Fourier inversion Thm.~\ref{thm:FIT}, we obtain
\begin{equation}
y(x)=\lim_{h\to+\infty}\mathcal{F}_{h}^{-1}\left(\frac{\Delta_{y}\left(\omega\right)}{1-i\omega}\right)(x)\quad\forall x\in\mathring{K}.\label{eq:expNec}
\end{equation}
Using the initial condition in \eqref{eq:first_order_ode}, we have
\begin{equation}
y\left(0\right)=\lim_{h\to+\infty}\mathcal{F}_{h}^{-1}\left(\frac{\Delta_{y}\left(\omega\right)}{1-i\omega}\right)\left(0\right)=\lim_{h\to+\infty}\int_{-h}^{h}\frac{\Delta_{y}\left(\omega\right)}{1-i\omega}\,\diff{\omega}=c.\label{eq:C}
\end{equation}
Clearly, e.g.~by separation of variables, \eqref{eq:first_order_ode}
necessarily yields $y(x)=ce^{x}$ for all $x\in[-k,k]$. Therefore,
$y\left(k\right)=ce^{-\log\diff{\rho}}=\frac{c}{\diff{\rho}}$, $y\left(-k\right)=ce^{\log\diff{\rho}}=c\diff{\rho}$
and $\Delta_{y}\left(\omega\right)=c\diff{\rho^{i\omega-1}}-c\diff{\rho^{-i\omega+1}}$
because $\diff{\rho^{i\omega}}=e^{-ik\omega}$.

Vice versa, take $a$, $b\in\rti$, and set
\[
\Delta_{a,b}^{k}(\omega):=a\cdot e^{-ik\omega}-b\cdot e^{ik\omega}\quad\forall\omega\in\rti.
\]
We also assume the following compatibility conditions on $a$, $b$:
\begin{equation}
\begin{cases}
a=\lim_{h\to+\infty}\mathcal{F}_{h}^{-1}\left(\frac{\Delta_{a,b}^{k}(\omega)}{1-i\omega}\right)(k)\\
b=\lim_{h\to+\infty}\mathcal{F}_{h}^{-1}\left(\frac{\Delta_{a,b}^{k}(\omega)}{1-i\omega}\right)(-k)
\end{cases}\label{eq:compCondExp}
\end{equation}
We will see in Rem.~\ref{rem:exp}.\ref{enu:expOverdet} that actually
these conditions overdetermine $a$, $b$. Set
\begin{equation}
y(x):=\lim_{h\to+\infty}\mathcal{F}_{h}^{-1}\left(\frac{\Delta_{a,b}^{k}(\omega)}{1-i\omega}\right)(x)\in\rti\quad\forall x\in K,\label{eq:def_y_exp}
\end{equation}
where we also assumed that the limit in \eqref{eq:def_y_exp} exists.
Thereby, \eqref{eq:def_y_exp} and \eqref{eq:compCondExp} imply $\Delta_{a,b}^{k}(\omega)=\Delta_{1k}y(\omega)$.
Now, apply $\mathcal{F}_{k}$ to both sides of \eqref{eq:def_y_exp}
and use Thm.~\ref{thm:contResult}, Thm.~\ref{thm:FIT} to get
\begin{align}
\mathcal{F}_{k}(y)(\omega) & =\mathcal{F}_{k}\left(\lim_{h\to+\infty}\mathcal{F}_{h}^{-1}\left(\frac{\Delta_{1k}y(\omega)}{1-i\omega}\right)\right)(\omega)\\
 & =\lim_{h\to+\infty}\mathcal{F}_{k}\left(\mathcal{F}_{h}^{-1}\left(\frac{\Delta_{1k}y(\omega)}{1-i\omega}\right)\right)(\omega)\\
 & =\lim_{h\to+\infty}\mathcal{F}_{h}^{-1}\left(\mathcal{F}_{k}\left(\frac{\Delta_{1k}y(\omega)}{1-i\omega}\right)\right)(\omega)\label{eq:exchange}\\
 & =\frac{\Delta_{1k}y(\omega)}{1-i\omega}\quad\forall\omega\in\mathring{K}.\label{eq:antecExtExp0}
\end{align}
Note that in \eqref{eq:exchange} we used Fubini's theorem to exchange
$\mathcal{F}_{h}^{-1}$ with $\mathcal{F}_{k}$. We can now reverse
all the calculations leading us to the necessary condition \eqref{eq:expNec}
to obtain
\begin{equation}
\mathcal{F}_{k}(y)(\omega)=\mathcal{F}_{k}(y')(\omega)\quad\forall\omega\in\mathring{K}.\label{eq:antecExtExp}
\end{equation}
We would like to apply $\mathcal{F}_{h}$ to both sides of \eqref{eq:antecExtExp}
and then take $\lim_{h\to+\infty}$. However, to make this step, we
need equality \eqref{eq:antecExtExp} to hold for all $\omega\in\rti$
because $h\to+\infty$, $h\in\rti$. We therefore assume that \eqref{eq:antecExtExp}
can be extended from $\mathring{K}$ to the entire $\rti$ (note that
both sides of \eqref{eq:antecExtExp} are GSF defined on $\rti$):
\begin{equation}
\mathcal{F}_{k}(y)|_{\mathring{K}}=\mathcal{F}_{k}(y')|_{\mathring{K}}\Rightarrow\ \mathcal{F}_{k}(y)=\mathcal{F}_{k}(y')\text{ on }\rti.\label{eq:extExp}
\end{equation}
This is the aforementioned \emph{extensibility property for the ODE}
$y'=y$. Under this assumption, we can apply the Fourier inversion
Thm.~\ref{thm:FIT} to obtain $y(x)=y'(x)$ for all $x\in\mathring{K}$,
and hence $y=y'$ by continuity. We can simply, and more generally,
state the extensibility property saying: \emph{if the HFT of the differential
equation holds in $\mathring{K}$ for the function $y$, then it holds
everywhere}.
\begin{rem}
\label{rem:exp}\ 
\begin{enumerate}
\item \label{enu:expOverdet}Since $y(x)=y(0)e^{x}$, compatibility conditions
\eqref{eq:compCondExp} imply $a=y(0)e^{k}$ and $b=y(0)e^{-k}$.
If $y(0)$ is invertible, we obtain $a=be^{2k}$. Therefore, \eqref{eq:compCondExp}
overdetermine the constants $a$, $b$.
\item Using the notion of hyperseries we already mentioned in Sec.~\ref{subsec:The-heuristic-motivation}
(see \cite{TiwGio21}), we can say that if both $\mathcal{F}_{k}(y)$
and $\mathcal{F}_{k}(y')$ can be expanded in Taylor hyperseries (we
can say that they are \emph{real hyper analytic}), i.e.~if for some
$\bar{\omega}\in\mathring{K}$ and for all $\omega\in\rti$, we have
\begin{align*}
\mathcal{F}_{k}(y)(\omega) & =\hypersum{\rho}{\sigma}\frac{\mathcal{F}_{k}(y)^{(n)}(\bar{\omega})}{n!}\cdot(\omega-\bar{\omega})^{n}\\
\mathcal{F}_{k}(y')(\omega) & =\hypersum{\rho}{\sigma}\frac{\mathcal{F}_{k}(y')^{(n)}(\bar{\omega})}{n!}\cdot(\omega-\bar{\omega})^{n},
\end{align*}
then \eqref{eq:extExp} holds, because \eqref{eq:antecExtExp} yields
$\mathcal{F}_{k}(y)^{(n)}(\bar{\omega})=\mathcal{F}_{k}(y')^{(n)}(\bar{\omega})$
for all $n\in\N$.
\end{enumerate}
\end{rem}

Even if this ODE is the simplest one, we want to underline our deductions
with the following statements:
\begin{thm}
\label{thm:HFTExp}If $k=-\log\diff\rho$, $y\in\gsf(K)$, $\Delta_{y}:=\Delta_{1k}y$
and $y'=y$, then
\[
y(x)=\lim_{h\to+\infty}\mathcal{F}_{h}^{-1}\left(\frac{\Delta_{y}\left(\omega\right)}{1-i\omega}\right)(x)\quad\forall x\in\mathring{K}.
\]
\end{thm}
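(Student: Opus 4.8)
The plan is to make rigorous the necessary-condition computation already sketched in equations \eqref{eq:FT_of_first_order_ode}--\eqref{eq:expNec}, invoking at each step a result established earlier in the paper. First I would apply the hyperfinite Fourier transform $\mathcal{F}_{k}$ to both sides of $y'=y$. Since we are in dimension $n=1$ and $\Delta_{y}=\Delta_{1k}y$, the derivation formula \eqref{eq:DerRule1} of Thm.~\ref{thm:thmProperties}.\ref{enu:prop8} yields $\mathcal{F}_{k}(y')=i\omega\,\mathcal{F}_{k}(y)+\Delta_{y}$. Using the hypothesis $y'=y$ to identify the two transforms, this collapses to the single algebraic identity $\mathcal{F}_{k}(y)=i\omega\,\mathcal{F}_{k}(y)+\Delta_{y}$ in $\gsf(\rti)$, which is precisely \eqref{eq:FT_of_first_order_ode}.

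Next I would solve for $\mathcal{F}_{k}(y)$. The key point, already noted in Example~\ref{exa:exp}, is that $1-i\omega$ is invertible for \emph{every} $\omega\in\rti$, with explicit inverse $\frac{1+i\omega}{1+\omega^{2}}$. Consequently no subpoint argument or case distinction on $\omega$ is required, and we may divide globally to obtain $\mathcal{F}_{k}(y)(\omega)=\frac{\Delta_{y}(\omega)}{1-i\omega}$ for all $\omega\in\rti$.

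Finally, I would recover $y$ by inversion. Because $k=-\log\diff\rho$ is a positive infinite number, $K=[-k,k]\fcmp\rti$ by Cor.~\ref{cor:intervalsFunctCmpt}, and $y\in\gsf(K)$, so the Fourier inversion Thm.~\ref{thm:FIT} applies: for every sharply interior point $x\in\mathring{K}$ we have $\lim_{h\to+\infty}\mathcal{F}_{h}^{-1}(\mathcal{F}_{k}(y))(x)=y(x)$. Substituting the expression for $\mathcal{F}_{k}(y)$ found above then gives $y(x)=\lim_{h\to+\infty}\mathcal{F}_{h}^{-1}\!\left(\frac{\Delta_{y}(\omega)}{1-i\omega}\right)(x)$, as claimed.

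There is essentially no deep obstacle here: the argument is exactly the forward (necessary) half of the discussion preceding the statement, and every ingredient—the derivative rule, the global invertibility of $1-i\omega$, and the inversion theorem—is already in hand. The only points that genuinely require care are (i) checking that $k=-\log\diff\rho$ is a bona fide positive infinite \emph{and} $\rho$-moderate number (so that $K$ is functionally compact and Thm.~\ref{thm:FIT} is admissible), and (ii) respecting the restriction of the conclusion to the interior $\mathring{K}$, which is forced by the fact that the Fourier inversion theorem holds only at sharply interior points.
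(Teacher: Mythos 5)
Your proposal is correct and is essentially identical to the paper's own argument, which consists precisely of the forward deduction \eqref{eq:FT_of_first_order_ode}--\eqref{eq:expNec}: apply $\mathcal{F}_{k}$ to $y'=y$ via the derivative rule \eqref{eq:DerRule1}, divide by the globally invertible $1-i\omega$ (inverse $\frac{1+i\omega}{1+\omega^{2}}$), and conclude by the inversion Thm.~\ref{thm:FIT} at sharply interior points. Your two cautionary remarks (moderateness of $k=-\log\diff\rho$ and the restriction to $\mathring{K}$) are consistent with, and implicit in, the paper's treatment.
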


\noindent The sufficient condition deduction corresponds to the following
\begin{thm}
\label{thm:ExpSuff}Let $a$, $b\in\rti$, and set
\begin{align*}
 & \Delta_{a,b}^{k}(\omega):=a\cdot e^{-ik\omega}-b\cdot e^{ik\omega}\quad\forall\omega\in\rti\\
 & \forall x\in K\,\exists\lim_{h\to+\infty}\mathcal{F}_{h}^{-1}\left(\frac{\Delta_{a,b}^{k}(\omega)}{1-i\omega}\right)(x)=:y(x)\in\rti.
\end{align*}
Assume the compatibility conditions \eqref{eq:compCondExp} and the
extensibility property for the ODE $y'=y$, i.e.~\eqref{eq:extExp}.
Then
\[
\begin{cases}
y'=y\text{ on }K\\
y(0)=\lim_{h\to+\infty}\int_{-h}^{h}\frac{\Delta_{a,b}^{k}(\omega)}{1-i\omega}\,\diff\omega.
\end{cases}
\]
\end{thm}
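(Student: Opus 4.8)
The plan is to assemble the computation already carried out in the discussion running from \eqref{eq:def_y_exp} to \eqref{eq:extExp}, but now reading it as a sufficient-condition argument. First I would record that the compatibility conditions \eqref{eq:compCondExp}, together with the defining formula \eqref{eq:def_y_exp}, say precisely that $y(k)=a$ and $y(-k)=b$. Since by the definition of $\Delta_{1k}$ in Thm.~\ref{thm:thmProperties}.\ref{enu:prop8} one has $\Delta_{1k}y(\omega)=y(k)e^{-ik\omega}-y(-k)e^{ik\omega}$, this identifies $\Delta_{a,b}^{k}=\Delta_{1k}y$, so that the generalized function whose inverse transform defines $y$ is exactly $\frac{\Delta_{1k}y(\omega)}{1-i\omega}$.

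Next I would apply $\mathcal{F}_{k}$ to both sides of \eqref{eq:def_y_exp}. The key technical point, and the step I expect to be the main obstacle, is the exchange of the limit $h\to+\infty$ with the integration defining $\mathcal{F}_{k}$: this is licensed by the continuity Thm.~\ref{thm:contResult}, whose hypotheses hold because the pointwise hyperlimit defining $y$ is \emph{assumed} to exist on the functionally compact set $K$, which then forces uniform convergence and $y\in\gsf(K)$. Exchanging $\mathcal{F}_{h}^{-1}$ with $\mathcal{F}_{k}$ by Fubini (Thm.~\ref{thm:muMeasurableAndIntegral}.\ref{enu:int-ndimInt}) and invoking the Fourier inversion Thm.~\ref{thm:FIT}, exactly as in \eqref{eq:exchange}--\eqref{eq:antecExtExp0}, yields
\[
\mathcal{F}_{k}(y)(\omega)=\lim_{h\to+\infty}\mathcal{F}_{h}^{-1}\left(\mathcal{F}_{k}\left(\frac{\Delta_{1k}y(\omega)}{1-i\omega}\right)\right)(\omega)=\frac{\Delta_{1k}y(\omega)}{1-i\omega}\qquad\forall\omega\in\mathring{K}.
\]

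Then I would reverse the necessary-condition computation: multiplying by $1-i\omega$ gives $\mathcal{F}_{k}(y)=\Delta_{1k}y+i\omega\,\mathcal{F}_{k}(y)$ on $\mathring{K}$, and the derivation formula \eqref{eq:DerRule1} identifies the right-hand side as $\mathcal{F}_{k}(y')$, which is \eqref{eq:antecExtExp}, i.e.~$\mathcal{F}_{k}(y)=\mathcal{F}_{k}(y')$ on $\mathring{K}$. The extensibility hypothesis \eqref{eq:extExp} upgrades this to an equality on all of $\rti$. At this point I would apply $\mathcal{F}_{h}^{-1}$ to both sides and let $h\to+\infty$; the Fourier inversion Thm.~\ref{thm:FIT} gives $y(x)=y'(x)$ for every $x\in\mathring{K}$, and sharp continuity of the GSF $y$ and $y'$ (Thm.~\ref{thm:propGSF}.\ref{enu:GSF-cont}) extends the equality to all of $K$, proving $y'=y$ on $K$.

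Finally, the initial condition is immediate and requires no further work: evaluating the defining formula \eqref{eq:def_y_exp} at $x=0$ gives $y(0)=\lim_{h\to+\infty}\mathcal{F}_{h}^{-1}\big(\frac{\Delta_{a,b}^{k}(\omega)}{1-i\omega}\big)(0)=\lim_{h\to+\infty}\int_{-h}^{h}\frac{\Delta_{a,b}^{k}(\omega)}{1-i\omega}\,\diff\omega$, exactly as in \eqref{eq:C}. Thus the whole argument is a bookkeeping of the earlier derivation, with the only genuinely delicate ingredient being the double interchange of the $h\to+\infty$ limit with the two transforms, all of whose hypotheses are guaranteed by the assumed existence of the defining hyperlimit and by the functional compactness of $K$.
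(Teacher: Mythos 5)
Your proposal is correct and follows essentially the same route as the paper's own argument, which is precisely the deduction surrounding \eqref{eq:def_y_exp}--\eqref{eq:extExp}: identifying $\Delta_{a,b}^{k}=\Delta_{1k}y$ from the compatibility conditions, applying $\mathcal{F}_{k}$ and exchanging it with the $h\to+\infty$ limit via Thm.~\ref{thm:contResult} and with $\mathcal{F}_{h}^{-1}$ via Fubini, invoking the inversion Thm.~\ref{thm:FIT} to get $\mathcal{F}_{k}(y)=\frac{\Delta_{1k}y}{1-i\omega}$ on $\mathring{K}$, reversing the algebra through \eqref{eq:DerRule1}, and then using extensibility plus a final inversion and sharp continuity. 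Your explicit remark that the assumed pointwise existence of the limit on the functionally compact $K$ yields uniform convergence and $y\in\gsf(K)$ is a welcome clarification of a step the paper leaves implicit, but it does not change the argument.
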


We finally underscore that:
\begin{enumerate}[label=(\alph*)]
\item In the classical theory, the lacking of the term $\Delta_{1k}y(\omega)$
does not allow to obtain the non-tempered solution for $c\ne0$: in
other words, if $\Delta_{1k}y=0$, then \eqref{eq:C} implies that
necessarily $c=0$.
\item In the previous deduction, it is clearly important that the HFT can
be applied to all the GF of the space $\gsf(K)$.
\item Compare \eqref{eq:expNec} with Example \ref{exa:exp} to note that
if $c\ge r\in\R_{>0}$, then in \eqref{eq:expNec} we are considering
the inverse HFT of a GSF which always takes infinite values for all
finite $\omega$. Clearly, this strongly motivates the use of a non-Archimedean
framework for this type of problems.
\item All our results, in particular the inversion Thm.~\ref{thm:FIT},
hold for an arbitrary infinite number $k$. In this particular case,
we considered $k$ of logarithmic type to get moderateness of the
exponential function.
\end{enumerate}

\subsubsection*{General constant coefficient ODE}

Let us consider an arbitrary $n$-th order constant (generalized)
coefficient ODE
\begin{equation}
a_{n}y^{\left(n\right)}+\ldots a_{1}y^{\left(1\right)}+a_{0}y=g,\quad y,g\in\gsf(\left[-k,k\right]),\ a_{j}\in\rcrho,\ n\in\mathbb{N}_{\geq1}.\label{eq:non_homog_ode}
\end{equation}
Note that simply assuming to have a solution $y$ defined on the infinite
interval $[-k,k]$, already yields an implicit limitation on the coefficients
$a_{j}\in\rti$. In fact, the equation $y'-\frac{1}{\diff{\rho}}y=0$
has solution $y(x)=y(0)e^{x/\diff{\rho}}$, which is defined only
if $x\le-N\diff{\rho}\log\diff{\rho}\approx0$ for some $N\in\N$.
Thereby, its domain will never be of the type $[-k,k]$ unless $y(0)=0$.
By applying the HFT to both sides of equation \eqref{eq:non_homog_ode},
the differential equation is converted into the algebraic equation
\begin{equation}
P\cdot\mathcal{F}_{k}\left(y\right)+\Delta_{y}=\mathcal{F}_{k}\left(g\right),\label{eq:hft_applied_to_nonhomog_ode}
\end{equation}
where 
\[
P\left(\omega\right):=\sum_{j=0}^{n}a_{j}\left(i\omega\right)^{j},
\]
and $\Delta_{y}\left(\omega\right)$ is the sum of all the extra terms
in Thm.~\ref{thm:thmProperties}.\ref{enu:prop8}, which in this
case becomes
\[
\Delta_{y}(\omega):=\sum_{j=1}^{n}a_{j}\cdot\sum_{p=1}^{j}(i\omega)^{j-p}\Delta_{1k}y^{(p-1)}(\omega)\quad\forall\omega\in\rti.
\]
Note that the function $\Delta_{y}$ depends on the points $y^{(p)}(\pm k)$
for $p=0,\ldots,n-1$. Assuming that $P(\omega)$ is invertible for
all $\omega\in K$, from \eqref{eq:hft_applied_to_nonhomog_ode} and
the inversion Thm\@.~\ref{thm:FIT}, we get
\begin{equation}
y(x)=\lim_{h\to+\infty}\mathcal{F}_{h}^{-1}\left(\frac{\mathcal{F}_{k}(g)-\Delta_{y}}{P}\right)(x)\quad\forall x\in\mathring{K}.\label{eq:finalODE}
\end{equation}
Proceeding as in the previous example, i.e.~using again the inversion
Thm\@.~\ref{thm:FIT}, the differentiation formula \eqref{eq:DerRule1}
and assuming suitable compatibility and extensibility conditions,
we can actually prove that \eqref{eq:finalODE} yields a solution
of \eqref{eq:non_homog_ode}. For a generalization to GSF of the usual
results about $n$-th order constant generalized coefficient ODE,
see \cite{LuGi16a}.

\subsubsection*{Airy equation}

A simple example of non-constant coefficient linear ODE is given by
the Airy equation 
\begin{gather}
u''(x)-x\cdot u(x)=0,\quad u\in\gsf(\left[-k,k\right],\rcrho).\label{eq:airy}
\end{gather}
By applying the derivative formulas Thm.~\ref{thm:thmProperties}.\ref{enu:prop8}
and Thm.~\ref{thm:thmProperties}.\ref{enu:prop9}, we get 
\[
i\omega^{2}\mathcal{F}_{k}\left(u\right)+\omega\Delta_{1k}u-i\Delta_{1k}u'-\mathcal{F}'_{k}\left(u\right)=0.
\]
Let us now set $\Delta_{u}\left(\omega\right):=\omega\Delta_{1k}u\left(\omega\right)-i\Delta_{1k}u'\left(\omega\right)$,
$\forall\omega\in\rti$. Once again, the function $\Delta_{u}$ depends
on the points $u\left(\pm k\right)$ and $u'\left(\pm k\right)$.
\begin{equation}
\mathcal{F}'_{k}\left(u\right)-i\omega^{2}\mathcal{F}_{k}\left(u\right)=\Delta_{u}.\label{eq:non_CC_ode}
\end{equation}
Equation \eqref{eq:non_CC_ode} is a first order non-constant coefficient,
non-homogeneous generalized ODE with respect to the variable $\omega$.
We can solve it e.g.~by considering the integrating factor $\mu\left(\omega\right):=e^{\int_{0}^{\omega}-iz^{2}\,\diff{z}}=e^{-i\frac{\omega^{3}}{3}}$.
Then, the solution of \eqref{eq:non_CC_ode} is given by 
\[
\mathcal{F}_{k}\left(u\right)\left(\omega\right)=\frac{\intop_{0}^{\omega}\mu\left(z\right)\Delta_{u}\left(z\right)\,\diff z+c}{\mu\left(\omega\right)}=\frac{\intop_{0}^{\omega}e^{-\frac{iz^{3}}{3}}\Delta_{u}\left(z\right)\,\diff z+c}{e^{-\frac{i\omega^{3}}{3}}}\quad\forall\omega\in\rti,
\]
where $c:=\mathcal{F}_{k}(u)(0)\in\rcrho$. Finally, we apply the
inversion Thm.~\ref{thm:FIT} and substitute $\Delta_{u}\left(\omega\right)$
to recover the original function
\begin{align*}
u(x) & =\lim_{h\to+\infty}\mathcal{F}_{h}^{-1}\left(\frac{\intop_{0}^{\omega}e^{-\frac{iz^{3}}{3}}\Delta_{u}\left(z\right)\diff z+c}{e^{-\frac{i\omega^{3}}{3}}}\right)(x)=\\
 & =\lim_{h\to+\infty}\mathcal{F}_{h}^{-1}\left(\frac{\intop_{0}^{\omega}e^{-\frac{iz^{3}}{3}}\Delta_{u}\left(z\right)\diff z}{e^{-\frac{i\omega^{3}}{3}}}\right)(x)+\lim_{h\to+\infty}\frac{c}{\pi}\int_{0}^{h}\cos\left(\frac{\omega^{3}}{3}+\omega x\right)\,\diff\omega\\
 & =\lim_{h\to+\infty}\frac{1}{\pi}\int_{0}^{h}\cos\left(\omega x+\frac{\omega^{3}}{3}\right)\int_{0}^{\omega}e^{-i\left(hz+\frac{z^{3}}{3}\right)}\left(zu\left(k\right)-iu'\left(k\right)\right)\diff z\,\diff\omega\\
 & \phantom{=}-\lim_{h\to+\infty}\frac{1}{\pi}\int_{0}^{h}\cos\left(\omega x+\frac{\omega^{3}}{3}\right)\int_{0}^{\omega}e^{-i\left(-hz+\frac{z^{3}}{3}\right)}\left(zu\left(-k\right)-iu'\left(-k\right)\right)\diff z\,\diff\omega\\
 & \phantom{=}+\lim_{h\to+\infty}\frac{c}{\pi}\int_{0}^{h}\cos\left(\frac{\omega^{3}}{3}+\omega x\right)\,\diff\omega\quad\forall x\in K.
\end{align*}
If we assume that $u(\pm k)\approx0$ and $u'\left(\pm k\right)\approx0$,
then we get the first Airy function up to infinitesimals $u(x)\approx c\cdot\text{Ai}(x)$.
Therefore, if $u(\pm k)\not\approx0$ or $u'\left(\pm k\right)\not\approx0$
and $c=0$, we must get, up to infinitesimals, a multiple of the second
Airy function (see e.g.~\cite{AbrSte72})
\[
\exists d\in\rti:\ u(x)\approx\text{Bi}(x)=\frac{d}{\pi}\int_{0}^{+\infty}\left\{ \exp\left(-\frac{t^{3}}{3}+xt\right)+\sin\left(\frac{t^{3}}{3}+xt\right)\right\} \diff t.
\]
We explicitly recall that $\text{Bi}(x)$ is of exponential order
as $x\to+\infty$ and hence it is not a tempered distribution, so
that classically we miss this solution.

\subsection{Applications of HFT to partial differential equations}

\subsubsection*{\label{subsec:The-wave-equation}The wave equation}

Let us consider the one dimensional (generalized) wave equation\textit{
\begin{equation}
\frac{\partial^{2}u}{\partial t^{2}}=c^{2}\frac{\partial^{2}u}{\partial x^{2}},\quad c\in\RC{\rho},\ u\in\Gsf{\rho}(\left[-k,k\right]\times\RC{\rho}_{\geq0}),\label{eq:wave}
\end{equation}
}where $c$ is invertible, and subject to the initial conditions at
$t=0$
\begin{align}
u(-,0) & =f,\quad\partial_{t}u(-,0)=g,\label{eq:ICwave}
\end{align}
Where $f$, $g\in\Gsf{\sigma}(\rti)$. As usual, we directly apply
the HFT $\mathcal{F}_{k}$ with respect to the variable $x$ to both
sides and then apply the derivation formula Thm.~\ref{thm:thmProperties}.\ref{enu:prop8}
to the right hand side
\[
\mathcal{F}_{k}\left(\frac{\partial^{2}u}{\partial t^{2}}\right)=c^{2}\mathcal{F}_{k}\left(\frac{\partial^{2}u}{\partial x^{2}}\right),
\]
\[
\frac{\partial^{2}\mathcal{F}_{k}\left(u\right)}{\partial t^{2}}=-c^{2}\omega^{2}\mathcal{F}_{k}\left(u\right)+i\omega\Delta_{1k}u+\Delta_{1k}\left(\partial_{x}u\right).
\]
Note that also the $\Delta_{1k}$-terms refer to the variable $x$,
but the result is a function of $t$. More precisely, set
\begin{equation}
\Delta_{u}\left(\omega,t\right):=i\omega\Delta_{1k}\left(u(-,t)\right)(\omega)+\Delta_{1k}\left(\partial_{x}u(-,t)\right)(\omega).\label{eq:Cwave}
\end{equation}
The function $\Delta_{u}$ does not depend on the whole functions
$u$ and $\partial_{x}u$ but only on its boundary values: $u\left(\pm k,-\right)$
and $\partial_{x}u\left(\pm k,-\right)$, which are functions of $t$.
Hence, we get 
\begin{equation}
\frac{\partial^{2}\mathcal{F}_{k}\left(u\right)}{\partial t^{2}}(\omega,-)+c^{2}\omega^{2}\mathcal{F}_{k}\left(u\right)(\omega,-)=\Delta_{u}(\omega,-)\quad\forall\omega\in\rti.\label{eq:SOnHODE}
\end{equation}
We obtain, for each fixed $\omega$, a constant (generalized) coefficient,
\emph{non}-ho\-mo\-ge\-ne\-ous, second order ODE in the unknown
$\mathcal{F}_{k}\left(u\right)\left(\omega,-\right)$. Clearly, \eqref{eq:SOnHODE}
already highlights a difference with the classical FT, where $\Delta_{u}=0$.
To solve equation \eqref{eq:SOnHODE}, we can use the standard method
of variation of parameters to get
\begin{align}
\mathcal{F}_{k}(u)(\omega,t) & =d_{2}(\omega)tS(c\omega t)+d_{1}(\omega)\cos(c\omega t)+\nonumber \\
 & \phantom{=}+tS(c\omega t)\int_{1}^{t}\Delta_{u}(\omega,s)\cos(c\omega s)\,\diff s-\\
 & \phantom{=}-\cos(c\omega t)\int_{1}^{t}s\Delta_{u}(\omega,s)S(c\omega s)\,\diff s,\label{eq:Fwave}\\
S(z) & :=\frac{1}{2}\int_{-1}^{1}\cos(zt)\,\diff t.\label{eq:sinx/x}
\end{align}
More precisely, in the previous step we applied the general theory
of linear constant generalized coefficient, non-homogeneous ODE developed
in \cite{LuGi16a}, which generalizes the classical theory proving
that the space of all the solutions is a $2$-dimensional $\rti$-module,
generated in this case by $tS(c\omega t)$ and $\cos(c\omega t)$,
and translated by a particular solution of \eqref{eq:SOnHODE}. Explicitly
note that every functions in \eqref{eq:Fwave} is a smooth function
or a GSF. We also note that the functions $d_{1}$, $d_{2}$ satisfy
\begin{align*}
d_{1}(\omega) & =\mathcal{F}_{k}(f)(\omega)-\int_{0}^{1}s\Delta_{u}(\omega,s)S(c\omega s)\,\diff s\\
d_{2}(\omega) & =\mathcal{F}_{k}(g)(\omega)+\int_{0}^{1}\Delta_{u}(\omega,s)\cos(c\omega s)\,\diff s.
\end{align*}
They hence depend on the functions $f$, $g$ of the initial conditions
\eqref{eq:ICwave}, but also on the unknown function $u$ because
of \eqref{eq:Cwave}. Finally, applying the inversion Thm.~\ref{thm:FIT},
for all the interior points $x\in\mathring{K}$ and all $t\in\RC{\rho}_{\ge0}$,
we get 
\begin{align*}
u(x,t) & =\lim_{h\to+\infty}\left\{ \phantom{\int_{1}^{t}}\!\!\!\!\!\!\!\!\mathcal{F}_{h}^{-1}\left(d_{2}(\omega)tS(c\omega t)+d_{1}(\omega)\cos(c\omega t)\right)(x,t)+\right.\\
 & \phantom{=}+\mathcal{F}_{h}^{-1}\left(tS(c\omega t)\int_{1}^{t}\Delta_{u}(\omega,s)\cos(c\omega s)\,\diff s\right)(x,t)-\\
 & \phantom{=}-\left.\mathcal{F}_{h}^{-1}\left(\cos(c\omega t)\int_{1}^{t}s\Delta_{u}(\omega,s)S(c\omega s)\,\diff s\right)(x,t)\right\} .
\end{align*}
Following the usual calculations, the first summand yields the following
generalizations of the d'Alembert formula
\begin{align}
u(x,t) & =\frac{1}{2}\left[f(x-ct)+f(x+ct)\right]+\frac{1}{2c}\int_{x-ct}^{x+ct}g(x')\,\diff x'+\nonumber \\
 & \phantom{=}+\lim_{h\to+\infty}\left\{ \phantom{\int_{1}^{t}}\!\!\!\!\!\!\!\!\right.\mathcal{F}_{h}^{-1}\left(tS(c\omega t)\int_{1}^{t}\Delta_{u}(\omega,s)\cos(c\omega s)\,\diff s\right)(x,t)-\nonumber \\
 & \phantom{=}-\left.\mathcal{F}_{h}^{-1}\left(\cos(c\omega t)\int_{1}^{t}s\Delta_{u}(\omega,s)S(c\omega s)\,\diff s\right)(x,t)\right\} \label{eq:waveSol2}
\end{align}
for all the  interior points $x\in\mathring{K}$ and all $t\in\RC{\sigma}_{\ge0}$.
Note explicitly that \eqref{eq:waveSol2} does not yield a uniqueness
result because $\Delta_{u}$ depends on $u\left(\pm k,-\right)$ and
$\partial_{x}u\left(\pm k,-\right)$ (see \eqref{eq:Cwave}). This
proves the following
\begin{thm}
\label{thm:waveGlobal}Let $f$, $g\in\Gsf{\rho}(\left[-k,k\right])$
and assume that $u\in\Gsf{\rho}(\left[-k,k\right]\times\RC{\rho}_{\geq0})$
is a solution of the wave equation \eqref{eq:wave} subject to the
initial conditions \eqref{eq:ICwave}. Then necessarily $u(x,t)$
satisfies relation \eqref{eq:waveSol2} at all  interior points $x\in\mathring{K}$
and all $t\in\RC{\rho}_{\ge0}$. In particular, if we also assume
that $u\left(\pm k,-\right)=0=\partial_{x}u\left(\pm k,-\right)$,
we get the usual d'Alembert solution, and if in addition we take $f=0$,
$g=\delta$, we get the wave kernel $u(x,t)=\frac{1}{2c}\left[H(x+ct)-H(x-ct)\right]$.
\end{thm}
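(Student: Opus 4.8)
The plan is to make rigorous the formal deductions already carried out in the paragraphs preceding the statement, which produce the representation \eqref{eq:waveSol2}; since the theorem asserts only that a solution \emph{necessarily} satisfies \eqref{eq:waveSol2}, no existence argument is required. First I would apply the HFT $\mathcal{F}_{k}$ in the spatial variable $x$ to both sides of \eqref{eq:wave}, for each fixed $t\in\RC{\rho}_{\ge0}$. On the left-hand side the time derivative $\partial_{t}^{2}$ commutes with $\mathcal{F}_{k}$, because the latter integrates only over $x$, so differentiation under the integral sign (Thm.~\ref{thm:intRules}.\ref{enu:derUnderInt}) applies; on the right-hand side I would invoke the second-order form of the derivation rule Thm.~\ref{thm:thmProperties}.\ref{enu:prop8}, which supplies precisely the boundary terms collected in $\Delta_{u}$ of \eqref{eq:Cwave}. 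This converts \eqref{eq:wave} into the family of constant-coefficient, non-homogeneous second-order ODEs \eqref{eq:SOnHODE} in the variable $t$, one for each $\omega\in\rti$, with unknown $\mathcal{F}_{k}(u)(\omega,-)$.

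Next I would solve \eqref{eq:SOnHODE} by variation of parameters, relying on the generalized theory of linear constant-generalized-coefficient ODEs of \cite{LuGi16a}: the homogeneous solutions are $\cos(c\omega t)$ and $tS(c\omega t)$, with $S$ the GSF of \eqref{eq:sinx/x} chosen so that the second solution stays well-defined at $\omega=0$. Imposing the initial conditions \eqref{eq:ICwave} through $\mathcal{F}_{k}(u)(\omega,0)=\mathcal{F}_{k}(f)(\omega)$ and $\partial_{t}\mathcal{F}_{k}(u)(\omega,0)=\mathcal{F}_{k}(g)(\omega)$ fixes the coefficients $d_{1}$, $d_{2}$ and yields \eqref{eq:Fwave}. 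Applying $\mathcal{F}_{h}^{-1}$ and letting $h\to+\infty$, the Fourier inversion Thm.~\ref{thm:FIT} recovers $u(x,t)$ at every sharply interior point $x\in\mathring{K}$; expanding the contribution of the $d_{1}\cos(c\omega t)+d_{2}tS(c\omega t)$ term by the usual inverse-transform computation produces the d'Alembert pair $\tfrac{1}{2}[f(x-ct)+f(x+ct)]+\tfrac{1}{2c}\int_{x-ct}^{x+ct}g$, while the two remaining summands carry the $\Delta_{u}$-dependence. This establishes \eqref{eq:waveSol2}.

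For the specializations, assuming $u(\pm k,-)=0=\partial_{x}u(\pm k,-)$ makes $\Delta_{u}\equiv0$ by its very definition \eqref{eq:Cwave}, so the two correction integrals in \eqref{eq:waveSol2} vanish and only the d'Alembert formula survives; taking then $f=0$ and $g=\delta$ reduces $u$ to $\tfrac{1}{2c}\int_{x-ct}^{x+ct}\delta(x')\,\diff{x'}$, which equals $\tfrac{1}{2c}[H(x+ct)-H(x-ct)]$ since $H'=\delta$ (Thm.~\ref{thm:embeddingD'}) together with the fundamental theorem of calculus for GSF Thm.~\ref{thm:intRules}.\ref{enu:foundamental}. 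The main obstacle I anticipate is the bookkeeping of the repeated interchanges of $\mathcal{F}_{h}^{-1}$, the hyperlimit $h\to+\infty$, and the inner $t$-integrals in \eqref{eq:Fwave}: each exchange must be justified by Fubini (Thm.~\ref{thm:muMeasurableAndIntegral}.\ref{enu:int-ndimInt}) and by the limit-exchange result Thm.~\ref{thm:contResult}, and throughout one must respect that inversion is valid only on $\mathring{K}$, so that the d'Alembert expansion and every pointwise identity are asserted only at sharply interior points.
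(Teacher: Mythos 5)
Your proposal is correct and follows the paper's own derivation essentially step for step: applying $\mathcal{F}_{k}$ in $x$ with the boundary-term derivation formula Thm.~\ref{thm:thmProperties}.\ref{enu:prop8} to obtain \eqref{eq:SOnHODE}, solving by variation of parameters via the generalized ODE theory of \cite{LuGi16a}, fixing $d_{1}$, $d_{2}$ from the initial conditions, and recovering $u$ through the inversion Thm.~\ref{thm:FIT} on $\mathring{K}$, with the same treatment of the specializations $\Delta_{u}=0$ (d'Alembert) and $f=0$, $g=\delta$ (wave kernel via $H'=\delta$). No genuinely different ideas or gaps relative to the paper's argument.
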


Now, we want to see how to revert the previous steps to obtain a sufficient
condition. Given GSF $F_{+}$, $F_{-}$, $G_{+}$, $G_{-}\in\Gsf{\rho}(\RC{\rho}_{\ge0})$,
set for all $\omega\in\RC{\rho}$ and all $t\in\RC{\rho}_{\ge0}$
\begin{equation}
\Delta_{\pm}^{k}(\omega,t):=i\omega\left(F_{+}(t)e^{-ik\cdot\omega}-F_{-}(t)e^{ik\cdot\omega}\right)+\left(G_{+}(t)e^{-ik\cdot\omega}-G_{-}(t)e^{ik\cdot\omega}\right)\label{eq:C_pmWave}
\end{equation}
Let $W_{\pm}^{k}(x,t)$ be the function defined by the right hand
side of \eqref{eq:waveSol2} with $\Delta_{\pm}^{k}$ instead of $\Delta_{u}$,
i.e. for all $(x,t)\in K\times\rti_{\ge0}$
\begin{align}
W_{\pm}^{k}(x,t) & =\frac{1}{2}\left[f(x-ct)+f(x+ct)\right]+\frac{1}{2c}\int_{x-ct}^{x+ct}g(x')\,\diff x'+\nonumber \\
 & \phantom{=}+\lim_{h\to+\infty}\left\{ \phantom{\int_{1}^{t}}\!\!\!\!\!\!\!\!\right.\mathcal{F}_{h}^{-1}\left(tS(c\omega t)\int_{1}^{t}\Delta_{\pm}^{k}(\omega,s)\cos(c\omega s)\,\diff s\right)(x,t)-\nonumber \\
 & \phantom{=}-\left.\mathcal{F}_{h}^{-1}\left(\cos(c\omega t)\int_{1}^{t}s\Delta_{\pm}^{k}(\omega,s)S(c\omega s)\,\diff s\right)(x,t)\right\} \label{eq:Wfun}
\end{align}
(we are clearly implicitly assuming that such limit exists, which
is, on the other hand a necessary condition of the previous deduction).
We assume the compatibility conditions
\begin{align}
F_{+}(t) & =W_{\pm}^{k}(k,t),\quad F_{-}(t)=W_{\pm}^{k}(-k,t)\nonumber \\
G_{+}(t) & =\partial_{x}W_{\pm}^{k}(k,t),\quad G_{-}(t)=\partial_{x}W_{\pm}^{k}(-k,t)\label{eq:compCondWave}
\end{align}
(as usual, we will see that they are redundant because having a solution
of the DE imply further restrictions on these functions). Finally,
set $u(x,t):=W_{\pm}^{k}(x,t)\in\RC{\rho}$ for all $(x,t)\in K\times\RC{\rho}_{\ge0}$.
Conditions \eqref{eq:compCondWave} and \eqref{eq:C_pmWave} imply
\begin{equation}
\Delta_{\pm}^{k}(\omega,t)=i\omega\Delta_{1k}\left(u(-,t)\right)(\omega)+\Delta_{1k}\left(\partial_{x}u(-,t)\right)(\omega),\label{eq:C_pm_delta}
\end{equation}
which is an important equality to reverse all the previous steps.
In fact, applying $\mathcal{F}_{k}$ to both side of the equality
$u(x,t)=W_{\pm}^{k}(x,t)$ we get
\begin{align}
\mathcal{F}_{k}\left(u\right)(\omega,t) & =\mathcal{F}_{k}\left(\frac{1}{2}\left[f(x-ct)+f(x+ct)\right]+\frac{1}{2c}\int_{x-ct}^{x+ct}g(x')\,\diff x'\right)(\omega,t)+\nonumber \\
 & \phantom{=}+\mathcal{F}_{k}\left(\lim_{h\to+\infty}\left\{ \phantom{\int_{1}^{t}}\!\!\!\!\!\!\!\!\right.\mathcal{F}_{h}^{-1}\left(tS(c\omega t)\int_{1}^{t}\Delta_{u}(\omega,s)\cos(c\omega s)\,\diff s\right)\right.-\nonumber \\
 & \phantom{=}-\left.\left.\mathcal{F}_{h}^{-1}\left(\cos(c\omega t)\int_{1}^{t}s\Delta_{u}(\omega,s)S(c\omega s)\,\diff s\right)(x,t)\right\} \right)(\omega,t).\label{eq:HFTWaveSol1}
\end{align}
The first summand can be written as
\begin{multline*}
\mathcal{F}_{k}\left(\frac{1}{2}\left[f(x-ct)+f(x+ct)\right]+\frac{1}{2c}\int_{x-ct}^{x+ct}g(x')\,\diff x'\right)(\omega,t)=\\
=\mathcal{F}_{k}\left(\lim_{h\to+\infty}\mathcal{F}_{h}^{-1}\left(d_{2}(\omega)tS(c\omega t)+d_{1}(\omega)\cos(c\omega t)\right)\right)(\omega,t).
\end{multline*}
As above, we can exchange $\mathcal{F}_{k}$ and $\lim_{h\to+\infty}$
because of Thm.~\ref{thm:contResult}. Thereby, applying the Fourier
inversion Thm.~\ref{thm:FIT}, we obtain
\begin{align}
\mathcal{F}_{k}(u)(\omega,t) & =d_{2}(\omega)tS(c\omega t)+d_{1}(\omega)\cos(c\omega t)+\nonumber \\
 & \phantom{=}+tS(c\omega t)\int_{1}^{t}\Delta_{\pm}^{k}(\omega,s)\cos(c\omega s)\,\diff s-\\
 & \phantom{=}-\cos(c\omega t)\int_{1}^{t}s\Delta_{\pm}^{k}(\omega,s)S(c\omega s)\,\diff s,\nonumber 
\end{align}
which holds for all interior point $\omega\in K$ and for all $t\in\RC{\rho}$.
Reversing the previous calculations, we arrive at
\[
\frac{\partial^{2}\mathcal{F}_{k}\left(u\right)}{\partial t^{2}}(\omega,t)+c^{2}\omega^{2}\mathcal{F}_{k}\left(u\right)(\omega,t)=\Delta_{\pm}^{k}(\omega,t).
\]
Now, we can substitute \eqref{eq:C_pm_delta} and use the derivation
formula Thm.~\ref{thm:thmProperties}.\ref{enu:prop8}, to get
\[
\mathcal{F}_{k}\left(\frac{\partial^{2}u}{\partial t^{2}}\right)(\omega,t)=c^{2}\mathcal{F}_{k}\left(\frac{\partial^{2}u}{\partial x^{2}}\right)(\omega,t)\qquad\forall\omega\in\mathring{K}\,\forall t\in\RC{\rho}_{\ge0}.
\]
We finally assume the extensibility property for the wave equation:
If the HFT of the wave equation holds on $\mathring{K}\times\RC{\rho}_{\ge0}$,
then it also holds on $\RC{\rho}\times\RC{\rho}_{\ge0}$. This allows
us to apply $\mathcal{F}_{h}^{-1}$ to both sides and use again the
Fourier inversion theorem:
\begin{align*}
\mathcal{F}_{h}^{-1}\left(\mathcal{F}_{k}\left(\frac{\partial^{2}u}{\partial t^{2}}\right)\right) & =c^{2}\mathcal{F}_{h}^{-1}\left(\mathcal{F}_{k}\left(\frac{\partial^{2}u}{\partial x^{2}}\right)\right)\qquad\forall h\in\RC{\rho},\\
\lim_{h\to+\infty}\mathcal{F}_{h}^{-1}\left(\mathcal{F}_{k}\left(\frac{\partial^{2}u}{\partial t^{2}}\right)\right) & (x,t)=c^{2}\lim_{h\to+\infty}\mathcal{F}_{h}^{-1}\left(\mathcal{F}_{k}\left(\frac{\partial^{2}u}{\partial x^{2}}\right)\right)(x,t),
\end{align*}
\begin{equation}
\frac{\partial^{2}u}{\partial t^{2}}(x,t)=c^{2}\frac{\partial^{2}u}{\partial x^{2}}(x,t)\label{eq:rhoWave}
\end{equation}
for all $x\in\mathring{K}$ and all $t$, and hence also for all $x\in K$
by continuity. It is important to note that equation \eqref{eq:rhoWave}
implies the usual compatibility conditions (see e.g.~\cite{Joh91}
for similar calculations):
\begin{align*}
 & F_{+}(0)=f(h),\quad F'_{+}(0)=g(h),\quad F''_{+}(0)=c^{2}f''(h)\\
 & F_{-}(0)=f(-h),\quad F'_{-}(0)=g(-h),\quad F''_{-}(0)=c^{2}f''(-h)\\
 & g(k-ct)-f'(k-ct)=F'_{+}(t)-G_{+}(t)\\
 & g(-k-ct)+f'(-k-ct)=F'_{+}(t)+G_{+}(t)
\end{align*}
Therefore, conditions \eqref{eq:compCondWave} over-determine the
functions $F_{\pm}$ and $G_{\pm}$ which hence cannot be freely chosen.
In particular, if $f$, $g\in\Dgsf(\RC{\rho})$, and we take $F_{\pm}=G_{\pm}=0$,
then $u(\pm k,-)=0=\partial_{x}u(\pm k,-)$ for some $k\in\RC{\rho}$
sufficiently large, and hence the compatibility conditions \eqref{eq:ICwave}
always hold.

This proves the following
\begin{thm}
\label{thm:waveBack}Let $f$, $g\in\Gsf{\rho}(\left[-k,k\right])$,
$F_{+}$, $F_{-}$, $G_{+}$, $G_{-}\in\Gsf{\rho}(\RC{\rho}_{\ge0})$.
Define $\Delta_{\pm}^{k}$ as in \eqref{eq:C_pmWave}, $W_{\pm}^{k}$
as in \eqref{eq:Wfun} (assuming that the corresponding $\lim_{h\to+\infty}$
exists) and $u(x,t):=W_{\pm}^{k}(x,t)$ for all $(x,t)\in K\times\RC{\rho}_{\ge0}$.
Assume the compatibility conditions \eqref{eq:compCondWave}, and
the extensibility property: If $\mathcal{F}_{k}\left(\frac{\partial^{2}u}{\partial t^{2}}\right)=c^{2}\mathcal{F}_{k}\left(\frac{\partial^{2}u}{\partial x^{2}}\right)$
holds in $\mathring{K}\times\RC{\rho}_{\ge0}$, then it also holds
on $\RC{\rho}\times\RC{\rho}_{\ge0}$. Then $u$ satisfies the wave
equation on $K\times\RC{\rho}_{\ge0}$. In particular, if $F_{\pm}=G_{\pm}=0$,
then $u$ also satisfies the initial conditions \eqref{eq:ICwave}.
Finally, if $f$, $g\in\DGsf{\rho}(\RC{\rho})$, then the compatibility
conditions \eqref{eq:ICwave} always hold for some $k\in\RC{\rho}$
sufficiently large, and $u$ is given by the usual d'Alembert formula.
\end{thm}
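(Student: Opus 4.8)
The plan is to read Theorem~\ref{thm:waveBack} as the exact converse of the necessary-condition computation that produced \eqref{eq:waveSol2}: having \emph{defined} $u:=W_{\pm}^{k}$ through \eqref{eq:Wfun}, I would reverse, one by one, the implications of that derivation until the wave equation \eqref{eq:wave} reappears. The first and decisive step is to turn the compatibility conditions \eqref{eq:compCondWave} into the operator identity \eqref{eq:C_pm_delta}. Indeed, substituting $F_{+}(t)=W_{\pm}^{k}(k,t)=u(k,t)$, $F_{-}(t)=u(-k,t)$, $G_{+}(t)=\partial_{x}u(k,t)$, $G_{-}(t)=\partial_{x}u(-k,t)$ into the definition \eqref{eq:C_pmWave} of $\Delta_{\pm}^{k}$ makes the artificial boundary source $\Delta_{\pm}^{k}(\omega,t)$ coincide with the genuine boundary functional $i\omega\Delta_{1k}(u(-,t))(\omega)+\Delta_{1k}(\partial_{x}u(-,t))(\omega)$ attached to the candidate $u$. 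This is precisely the datum needed so that, after the reversal, the derivation formula Thm.~\ref{thm:thmProperties}.\ref{enu:prop8} can reabsorb the extra terms.

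Next I would apply $\mathcal{F}_{k}$ to the equality $u=W_{\pm}^{k}$, using Thm.~\ref{thm:contResult} to exchange $\mathcal{F}_{k}$ with the hyperlimit $\lim_{h\to+\infty}$ and then the Fourier inversion Thm.~\ref{thm:FIT} to collapse each $\mathcal{F}_{k}\circ\mathcal{F}_{h}^{-1}$ on $\mathring{K}$, exactly as in \eqref{eq:HFTWaveSol1}. This recovers, for every fixed $\omega\in\mathring{K}$, the second-order ODE in $t$
\[
\frac{\partial^{2}\mathcal{F}_{k}(u)}{\partial t^{2}}(\omega,t)+c^{2}\omega^{2}\mathcal{F}_{k}(u)(\omega,t)=\Delta_{\pm}^{k}(\omega,t).
\]
Substituting the identity \eqref{eq:C_pm_delta} on the right-hand side and reading the derivation formula Thm.~\ref{thm:thmProperties}.\ref{enu:prop8} backwards then gives $\mathcal{F}_{k}(\partial_{t}^{2}u)=c^{2}\mathcal{F}_{k}(\partial_{x}^{2}u)$ on $\mathring{K}\times\RC{\rho}_{\ge0}$.

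At this point I would invoke the assumed extensibility property to upgrade this identity from $\mathring{K}$ to all $\omega\in\RC{\rho}$, so that $\mathcal{F}_{h}^{-1}$ may legitimately be applied (since $h\to+\infty$ ranges over infinite numbers and therefore requires the equality on the whole line). Applying $\mathcal{F}_{h}^{-1}$ to both sides, letting $h\to+\infty$, and using Thm.~\ref{thm:FIT} once more yields \eqref{eq:rhoWave}, i.e.\ the wave equation at every interior point $x\in\mathring{K}$; sharp continuity (Thm.~\ref{thm:propGSF}.\ref{enu:GSF-cont}) then extends it to all of $K$. For the remaining assertions I would specialise $F_{\pm}=G_{\pm}=0$: then $\Delta_{\pm}^{k}\equiv0$, the defining expression \eqref{eq:Wfun} collapses to the d'Alembert term, whose $t=0$ trace equals $f$ and whose $t$-derivative at $t=0$ equals $g$, giving the initial conditions \eqref{eq:ICwave}; finally, if $f,g\in\DGsf{\rho}(\RC{\rho})$ I would pick $k$ so large that $\text{supp}(f),\text{supp}(g)\subseteq\mathring{K}$, whence $u(\pm k,-)=0=\partial_{x}u(\pm k,-)$, the compatibility conditions \eqref{eq:compCondWave} hold automatically, and $u$ reduces to the classical d'Alembert solution.

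The step I expect to be the main obstacle is the self-consistency encoded in \eqref{eq:C_pm_delta}: because $u$ is \emph{defined} through $W_{\pm}^{k}$, which is itself built from $\Delta_{\pm}^{k}$ and hence from $F_{\pm},G_{\pm}$, the compatibility conditions \eqref{eq:compCondWave} are genuinely circular, and one must check that they are consistent (not vacuous) and that they really force $\Delta_{\pm}^{k}=\Delta_{u}$ for all $t$ rather than merely at $t=0$. Everything downstream is a mechanical reversal of the forward computation, modulo the two analytic tools already at our disposal — the exchange of $\mathcal{F}_{k}$ with $\lim_{h\to+\infty}$ (Thm.~\ref{thm:contResult}) and the inversion theorem (Thm.~\ref{thm:FIT}) — together with the extensibility property, which here is assumed rather than proved.
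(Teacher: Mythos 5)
Your proposal follows the paper's own proof essentially step for step: converting the compatibility conditions \eqref{eq:compCondWave} into the identity \eqref{eq:C_pm_delta}, applying $\mathcal{F}_{k}$ to $u=W_{\pm}^{k}$ and exchanging it with $\lim_{h\to+\infty}$ via Thm.~\ref{thm:contResult}, collapsing $\mathcal{F}_{k}\circ\mathcal{F}_{h}^{-1}$ with Thm.~\ref{thm:FIT} to recover the transformed ODE, substituting \eqref{eq:C_pm_delta} and reading Thm.~\ref{thm:thmProperties}.\ref{enu:prop8} backwards, invoking the assumed extensibility property, inverting once more, and finishing by sharp continuity and the specializations $F_{\pm}=G_{\pm}=0$ and $f,g\in\DGsf{\rho}(\RC{\rho})$. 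The circularity you flag is also present in the paper, which resolves it only by observing that the compatibility conditions over-determine $F_{\pm}$, $G_{\pm}$, so your treatment is consistent with the published argument.
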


\noindent Explicitly note that even the last case, $F_{\pm}=G_{\pm}=0$
and $f$, $g\in\Dgsf(\RC{\rho})$, includes for $f$ and $g$ a large
class of GSF, e.g.~non-linear operations $F\left(\left(\delta^{(p)}\right)^{a}\right)_{\substack{0<a\le A\\
0\le p\le P
}
}$ of Dirac delta and its derivatives, such that $F\left(0\right)=0$.

\subsubsection*{The Heat equation}

Let us consider the one dimensional (generalized) heat equation\textit{
\begin{equation}
\frac{\partial u}{\partial t}=a^{2}\frac{\partial^{2}u}{\partial x^{2}},\quad u\in\gsf(\left[-k,k\right]\times\rcrho_{\geq0}),\label{eq:Heat}
\end{equation}
}(where $a\in\rcrho_{>0}$, $t\leq-\frac{N}{a^{2}k^{2}}\log\left(\diff\rho\right)$,
$N\in\mathbb{N}_{>0}$) and subject to the initial conditions at $t=0$
\begin{equation}
u(-,0)=f,\label{eq:ICheat}
\end{equation}
where $f\in\gsf(\left[-k,k\right])$. Applying, as usual, the HFT
with respect to the variable $x$ to both sides of \eqref{eq:Heat}
and Thm.~\ref{thm:thmProperties}.\ref{enu:prop8} we get
\[
\frac{\partial\mathcal{F}_{k}\left(u\right)}{\partial t}=-a^{2}\omega^{2}\mathcal{F}_{k}\left(u\right)+i\omega\Delta_{1k}u+\Delta_{1k}\left(\partial_{x}u\right).
\]
For all $\omega\in\rti$, set
\begin{align*}
\Delta_{u}\left(\omega,t\right) & :=i\omega\Delta_{1k}\left(u(-,t)\right)+\Delta_{1k}\left(\partial_{x}u(-,t)\right).
\end{align*}
Therefore, we get 
\begin{equation}
\frac{\partial\mathcal{F}_{k}\left(u\right)}{\partial t}\left(\omega,-\right)+a^{2}\omega^{2}\mathcal{F}_{k}\left(u\right)\left(\omega,-\right)=\Delta_{u}\left(\omega,-\right)\quad\forall\omega\in\rcrho.\label{eq:FOnHODE}
\end{equation}
Solving \eqref{eq:FOnHODE} with the integrating factor $\mu\left(t\right):=e^{a^{2}\omega^{2}\intop_{0}^{t}\diff t}=e^{a^{2}\omega^{2}t}$
(which is well-defined if $\omega\in K$ because we assumed that $t\leq-\frac{N}{a^{2}k^{2}}\log\left(\diff\rho\right)$),
we have 
\[
\mathcal{F}_{k}\left(u\right)\left(\omega,t\right)=\frac{\intop_{0}^{t}e^{a^{2}\omega^{2}t}\Delta_{u}\left(\omega,t\right)\diff t+c(\omega)}{e^{a^{2}\omega^{2}t}},
\]
where $c(\omega):=\mathcal{F}_{k}\left(u\right)\left(\omega,0\right)=\mathcal{F}_{k}(f)(\omega)\in\rcrho$,
so that
\begin{align*}
\mathcal{F}_{k}\left(u\right)\left(\omega,t\right) & =e^{-a^{2}\omega^{2}t}\intop_{0}^{t}e^{a^{2}\omega^{2}t}\Delta_{u}\left(\omega,t\right)\diff t+\mathcal{F}_{k}(f)(\omega)e^{-a^{2}\omega^{2}t}\\
 & =e^{-a^{2}\omega^{2}t}\intop_{0}^{t}e^{a^{2}\omega^{2}t}\Delta_{u}\left(\omega,t\right)\diff t+\mathcal{F}_{k}(f)(\omega)\mathcal{F}\left(\frac{1}{2a\sqrt{\pi t}}e^{-\frac{x^{2}}{4a^{2}t}}\right)(\omega,t)\\
 & =:e^{-a^{2}\omega^{2}t}\intop_{0}^{t}e^{a^{2}\omega^{2}t}\Delta_{u}\left(\omega,t\right)\diff t+\mathcal{F}_{k}(f)(\omega)\mathcal{F}\left(H_{t}^{a}(x)\right)(\omega,t)\\
 & =e^{-a^{2}\omega^{2}t}\intop_{0}^{t}e^{a^{2}\omega^{2}t}\Delta_{u}\left(\omega,t\right)\diff t+\mathcal{F}_{k}\left(f*H_{t}^{a}\right)(\omega,t),
\end{align*}
where $H_{t}^{a}(x):=\frac{1}{2a\sqrt{\pi t}}e^{-\frac{x^{2}}{4a^{2}t}}$
is the heat kernel (which, in our setting, is a compactly supported
GSF). Finally, applying the inversion Thm.\ref{thm:FIT} and the convolution
formula Thm.~\ref{thm:thmProperties}.\ref{enu:prop10} we get
\begin{equation}
u(x,t)=(f*H_{t}^{a})(x,t)+\mathcal{F}_{k}^{-1}\left(e^{-a^{2}\omega^{2}t}\intop_{0}^{t}e^{a^{2}\omega^{2}t}\Delta_{u}\left(\omega,t\right)\diff t\right)(x,t).\label{eq:heatNec}
\end{equation}
As usual, if $\Delta_{u}\left(\omega,t\right)$ equals zero, we obtain
the classical solution. This proves the following
\begin{thm}
\label{thm:waveGlobal-1}Let $f\in\gsf(\left[-k,k\right])$, and assume
that $u\in\gsf(\left[-k,k\right]\times\rcrho_{\geq0})$ is a solution
of the heat equation \eqref{eq:Heat}, where $a\in\rcrho_{>0}$, $t\leq-\frac{N}{a^{2}k^{2}}\log\left(\diff\rho\right)$,
$N\in\mathbb{N}_{>0}$, subject to initial condition \eqref{eq:ICheat}.
Then necessarily $u(x,t)$ satisfies \eqref{eq:heatNec} at all interior
points $x\in\mathring{K}$ and all $t\in\rti_{\ge0}$. In particular,
if we also assume that $u(\pm k,-)=0=\partial_{x}u(\pm k,-)$, we
get the usual solution, and if in addition we take $f=\delta$, then
we get the heat kernel $u(x,t)=H_{t}^{a}(x)=\frac{1}{2a\sqrt{\pi t}}e^{-\frac{x^{2}}{4a^{2}t}}$.
\end{thm}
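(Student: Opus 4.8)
The plan is to reproduce, as the actual proof, the necessary-condition derivation carried out in the paragraphs immediately preceding the statement, and then to specialize it. First I would apply the spatial HFT $\mathcal{F}_k$ to both sides of \eqref{eq:Heat}. On the left-hand side, differentiation under the integral sign (Thm.~\ref{thm:intRules}.\ref{enu:derUnderInt}) lets me pull $\partial_t$ outside the $x$-integral, giving $\partial_t\mathcal{F}_k(u)$; on the right-hand side, the second-order derivative formula in Thm.~\ref{thm:thmProperties}.\ref{enu:prop8} converts $\mathcal{F}_k(\partial_x^2 u)$ into $-\omega^2\mathcal{F}_k(u)$ plus the boundary correction terms collected in $\Delta_u(\omega,t)=i\omega\Delta_{1k}(u(-,t))+\Delta_{1k}(\partial_x u(-,t))$. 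This turns the PDE into the first-order linear generalized ODE \eqref{eq:FOnHODE} in $t$, parametrized by $\omega$.

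Next I would solve \eqref{eq:FOnHODE} with the integrating factor $\mu(t)=e^{a^2\omega^2 t}$. The role of the hypothesis $t\le-\frac{N}{a^2k^2}\log\diff\rho$ is precisely to keep $a^2\omega^2 t$ moderate for $\omega\in K$, so that $\mu$ is a well-defined invertible GSF, which I would check at the representative level. Imposing the initial condition $\mathcal{F}_k(u)(\omega,0)=\mathcal{F}_k(f)(\omega)$ from \eqref{eq:ICheat} gives the explicit expression for $\mathcal{F}_k(u)(\omega,t)$. I would then recognize $e^{-a^2\omega^2 t}=\mathcal{F}(H_t^a)(\omega)$ from the Gaussian transform of Lem.~\ref{lem:example} (with the dilation property Thm.~\ref{thm:thmProperties}.\ref{enu:prop11} to absorb the scaling), and use the convolution formula Thm.~\ref{thm:thmProperties}.\ref{enu:prop10} — valid since $H_t^a\in\Dgsf$, so $\mathcal{F}=\mathcal{F}_k$ on it — to rewrite the homogeneous part as $\mathcal{F}_k(f*H_t^a)$. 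Applying the Fourier inversion Thm.~\ref{thm:FIT} at interior points $x\in\mathring K$ then produces \eqref{eq:heatNec}.

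For the two special cases I would read off the consequences. If $u(\pm k,-)=0=\partial_x u(\pm k,-)$ then every $\Delta_{1k}$-term vanishes, hence $\Delta_u\equiv0$, and the residual inverse-HFT term in \eqref{eq:heatNec} drops out, leaving $u=f*H_t^a$. Taking furthermore $f=\delta$, I would invoke that the heat kernel $H_t^a$ is a smooth Gaussian with finite derivatives at every finite point, hence bounded by a tame polynomial, so Thm.~\ref{thm:convIdentity} (using the evenness of $\delta$ as in Example~\ref{exa:tamePol}.\ref{enu:intAgainstDelta}) gives $\delta*H_t^a=H_t^a$, the claimed kernel.

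I expect the main obstacle to be the bookkeeping around the integrating factor rather than any deep difficulty: one must verify that $e^{a^2\omega^2 t}$ and its reciprocal stay moderate on all of $K$ under the stated $t$-restriction, that the convolution $f*H_t^a$ is genuinely admissible (compact support of the Gaussian inside $K$), and that the exchanges of $\mathcal{F}_k$ with $\partial_t$ and with the $t$-integral are legitimate for GSF (this is where Thm.~\ref{thm:intRules}.\ref{enu:derUnderInt} and the continuity Thm.~\ref{thm:contResult} are used). A secondary subtlety is that the inversion theorem only returns the identity at sharply interior points, which is why \eqref{eq:heatNec} is asserted on $\mathring K$; passing to the closure in the special cases relies on sharp continuity of the GSF involved.
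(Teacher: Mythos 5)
Your proposal reproduces the paper's own derivation step for step: apply $\mathcal{F}_{k}$ in $x$ via the derivative formula of Thm.~\ref{thm:thmProperties}.\ref{enu:prop8}, solve the resulting first-order ODE \eqref{eq:FOnHODE} in $t$ with the integrating factor $e^{a^{2}\omega^{2}t}$ (well defined on $K$ thanks to the stated bound on $t$), identify $e^{-a^{2}\omega^{2}t}$ with $\mathcal{F}(H_{t}^{a})$ and use the convolution formula Thm.~\ref{thm:thmProperties}.\ref{enu:prop10}, then invert via Thm.~\ref{thm:FIT} at sharply interior points to obtain \eqref{eq:heatNec}, with the special cases read off from $\Delta_{u}=0$. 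The additional details you supply (citing Thm.~\ref{thm:intRules}.\ref{enu:derUnderInt} for exchanging $\partial_{t}$ with the integral, and the tame-polynomial argument via Thm.~\ref{thm:convIdentity} for $\delta*H_{t}^{a}=H_{t}^{a}$) only make explicit what the paper leaves implicit, so the two proofs are essentially identical.
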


It is now clear how one can proceed to obtain a sufficient condition
for the heat equation similar to Thm.~\ref{thm:waveBack}, and for
this reason, we omit it here.

\subsubsection*{Laplace's equation}

Actually, we show this example only for the sake of completeness,
but we present here only a preliminary study. Let us consider the
one dimensional Laplace's equation \textit{
\begin{equation}
\frac{\partial^{2}u}{\partial x^{2}}+\frac{\partial^{2}u}{\partial y^{2}}=0\ ,u\in\gsf([-k,k]\times[0,-\frac{N}{k}\log\diff\rho]),\label{eq:laplace}
\end{equation}
}where $N\in\N_{>0}$, and subject to the boundary conditions at $y=0$
\begin{align}
u(-,0) & =f,\quad\partial_{y}u(-,0)=0,\label{eq:IClaplace}\\
u(\pm k,-) & =0,\quad\partial_{x}u(\pm k,-)=0,\label{eq:ICLaplace0}
\end{align}
where $f\in\gsf(\left[-k,k\right])$. Set $Y:=[0,-\frac{N}{k}\log\diff\rho]\subseteq\rti$.
By applying the HFT with respect to $x$ and Thm.~\ref{thm:thmProperties}.\ref{enu:prop8},
the problem is converted into 
\begin{equation}
\frac{\partial^{2}\mathcal{F}_{k}\left(u\right)}{\partial y^{2}}=\omega^{2}\mathcal{F}_{k}\left(u\right)\label{eq:FTLaplace}
\end{equation}
because of \eqref{eq:ICLaplace0}. The general solution of \eqref{eq:FTLaplace}
is
\begin{align*}
\mathcal{F}_{k}(u)(\omega,y) & =d_{1}(\omega)e^{\omega y}+d_{2}(\omega)e^{-\omega y},
\end{align*}
where the functions \textbf{$d_{1}$}, $d_{2}$ satisfy $\mathcal{F}_{k}(f)(\omega)=d_{1}(\omega)+d_{2}(\omega)$
and $\partial_{y}\mathcal{F}_{k}(u)(\omega,0)=\mathcal{F}_{k}\left(\partial_{y}u(-,0)\right)(\omega)=0=\omega d_{1}(\omega)-\omega d_{2}(\omega)$
because $\partial_{y}u(-,0)=0$. Since the set of invertible numbers
in $\RC{\rho}$ is dense in the sharp topology, we hence have
\[
d_{1}(\omega)=d_{2}(\omega)=\frac{1}{2}\mathcal{F}_{k}(f)(\omega).
\]
Note that $e^{\pm\omega y}$ is well defined for all $\omega\in K$
and all $y\in Y=[0,-\frac{N}{k}\log\diff\rho]$. Finally, applying
the inversion Thm.~\ref{thm:FIT}, we get
\begin{align}
u(x,y) & =\lim_{h\to+\infty}\mathcal{F}_{h}^{-1}\left(\mathcal{F}_{k}(f)\cdot\cosh(\omega y)\right)(x,y)\label{eq:LaplaceSol2}
\end{align}
for all $(x,y)\in\mathring{K}\times Y$. Note that a term of the type
$\mathcal{F}_{h}^{-1}\left(\cosh(\omega y)\right)$ cannot be considered
if $h\in\RC{\rho}$ is sufficiently large and $y$ is invertible because
$\cosh(\pm hy)$ would yield a non $\rho$-moderate number. Thereby,
we cannot transform the product in \eqref{eq:LaplaceSol2} into a
convolution.
\begin{thm}
Let $f\in\gsf(K)$, $N\in\N_{>0}$, and set $Y:=[0,-\frac{N}{k}\log\diff\rho]$.
Assume that $u\in\gsf(K\times Y)$ is a solution of the Laplace's
equation subject to the boundary conditions \eqref{eq:IClaplace}.
Then necessarily $u(x,y)$ satisfies relation \eqref{eq:LaplaceSol2}
for all $(x,y)\in\mathring{K}\times Y$. In particular, if $f=\delta$
then instead of $\mathcal{F}_{h}^{-1}\left(\mathcal{F}_{k}(f)\cdot\cosh(\omega y)\right)$
in \eqref{eq:LaplaceSol2} we can take $\mathcal{F}_{h}^{-1}\left(\mathbb{1}\cdot\cosh(\omega y)\right)$.
\end{thm}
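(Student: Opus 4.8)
The plan is to turn the preceding derivation into a rigorous proof by regarding $\mathcal{F}_{k}$ as a fixed $\rti$-linear operator acting only in the $x$-variable. First I would apply $\mathcal{F}_{k}$ in $x$ to both sides of \eqref{eq:laplace}. On the $\partial_{x}^{2}$ term I invoke the derivation formula Thm.~\ref{thm:thmProperties}.\ref{enu:prop8}, which for a single spatial variable gives $\mathcal{F}_{k}(\partial_{x}^{2}u)=-\omega^{2}\mathcal{F}_{k}(u)+i\omega\,\Delta_{1k}(u)+\Delta_{1k}(\partial_{x}u)$; the crucial observation is that the boundary conditions \eqref{eq:ICLaplace0}, namely $u(\pm k,-)=0$ and $\partial_{x}u(\pm k,-)=0$, force both $\Delta_{1k}$-terms to vanish, leaving $\mathcal{F}_{k}(\partial_{x}^{2}u)=-\omega^{2}\mathcal{F}_{k}(u)$. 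On the $\partial_{y}^{2}$ term I use differentiation under the integral sign (Thm.~\ref{thm:intRules}.\ref{enu:derUnderInt}) to commute $\mathcal{F}_{k}$ past $\partial_{y}^{2}$. This yields, for each fixed $\omega$, the second order constant generalized coefficient homogeneous ODE \eqref{eq:FTLaplace} in the $y$-variable.

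Next I would solve \eqref{eq:FTLaplace} by the theory of linear constant generalized coefficient ODE developed in \cite{LuGi16a}, whose solution space is the $2$-dimensional $\rti$-module spanned by $e^{\omega y}$ and $e^{-\omega y}$, so that $\mathcal{F}_{k}(u)(\omega,y)=d_{1}(\omega)e^{\omega y}+d_{2}(\omega)e^{-\omega y}$. Here the restriction $y\in Y=[0,-\frac{N}{k}\log\diff{\rho}]$ is precisely what guarantees $|\omega y|\le -N\log\diff{\rho}$ and hence that $e^{\pm\omega y}$ is $\rho$-moderate for every $\omega\in K$. To determine $d_{1},d_{2}$ I evaluate at $y=0$: since the integration is only in $x$, the evaluation and $\partial_{y}$ both commute with $\mathcal{F}_{k}$ (the latter again by Thm.~\ref{thm:intRules}.\ref{enu:derUnderInt}), so the initial conditions \eqref{eq:IClaplace} give $d_{1}(\omega)+d_{2}(\omega)=\mathcal{F}_{k}(f)(\omega)$ and $\omega\bigl(d_{1}(\omega)-d_{2}(\omega)\bigr)=0$.

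The one genuinely delicate point is this last relation: because $\omega$ need not be invertible, I cannot simply cancel it. I would instead note that $\omega(d_{1}-d_{2})=0$ forces $d_{1}(\omega)=d_{2}(\omega)$ on the set of invertible $\omega$, which is dense in the sharp topology, and then extend the equality to all $\omega\in K$ by the sharp continuity of the GSF $d_{1},d_{2}$ (Thm.~\ref{thm:propGSF}.\ref{enu:GSF-cont}). This yields $d_{1}=d_{2}=\frac{1}{2}\mathcal{F}_{k}(f)$ and hence $\mathcal{F}_{k}(u)(\omega,y)=\mathcal{F}_{k}(f)(\omega)\cosh(\omega y)$ on $K\times Y$. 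Applying the Fourier inversion Thm.~\ref{thm:FIT} in $x$ then recovers \eqref{eq:LaplaceSol2} at every interior point $(x,y)\in\mathring{K}\times Y$, which is exactly the asserted necessity. For the special case $f=\delta$, it suffices to observe that $\delta$ is compactly supported with support inside $K$, so that $\mathcal{F}_{k}(\delta)=\mathcal{F}(\delta)=\mathbb{1}$ by the simplified notation of Def.~\ref{def:HyperfiniteFT} together with Rem.~\ref{rem:R-L}; thus $\mathcal{F}_{k}(f)$ may be replaced by $\mathbb{1}$ in \eqref{eq:LaplaceSol2}.
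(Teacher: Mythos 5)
Your proposal is correct and follows essentially the same route as the paper's own derivation: apply $\mathcal{F}_{k}$ in $x$, use the boundary conditions \eqref{eq:ICLaplace0} to annihilate the $\Delta_{1k}$-terms, solve the resulting ODE \eqref{eq:FTLaplace} as $d_{1}(\omega)e^{\omega y}+d_{2}(\omega)e^{-\omega y}$, conclude $d_{1}=d_{2}=\frac{1}{2}\mathcal{F}_{k}(f)$ from the density of invertible numbers in the sharp topology, and invert via Thm.~\ref{thm:FIT}. You in fact supply slightly more detail than the paper (moderateness of $e^{\pm\omega y}$ on $Y$, commuting $\partial_{y}^{2}$ with the integral, and the identification $\mathcal{F}_{k}(\delta)=\mathcal{F}(\delta)=\mathbb{1}$); the only cosmetic refinement would be to apply sharp continuity to the single GSF $\mathcal{F}_{k}(u)(\omega,y)-\mathcal{F}_{k}(f)(\omega)\cosh(\omega y)$ rather than to $d_{1},d_{2}$ themselves, whose GSF status off the invertible set is not needed.
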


\noindent It is well-known that if $f\in\mathcal{C}^{\infty}$ is
a classical smooth function, then $f$ is necessarily an analytic
function (see \cite{Had02} and e.g.~\cite{HaVaGo92}). Assuming
that the classical Hadamard result \cite{Had02} can be extended to
GSF, this would not exclude the case $f=\delta$, which can be proved
to be an analytic GSF.

To reverse the previous steps, assume that
\begin{align}
 & f\in\DGsf{\rho}(H),\quad H\fcmp\RC{\rho}\nonumber \\
 & \exists C,B\in\RC{\rho}\,\forall x\in H\,\forall j\in\N:\ \left|f^{(j)}(x)\right|\le C\cdot B^{j}.\label{eq:analytic}
\end{align}
Note that if $f\in\mathcal{C}^{\infty}$ is an ordinary smooth function
and $C$, $B\in\R$, assumption \eqref{eq:analytic} implies $f\in\mathcal{C}^{\omega}(H\cap\R)$,
i.e.~$f$ is real analytic. Moreover, for the sake of clarity, finally
note that $f\in\mathcal{C}^{\omega}(H\cap\R)\cap\DGsf{\rho}(H)$ implies
only $f(x_{\eps})\sim_{\sigma}0$ for all $[x_{\eps}]\notin H$, but
it does not imply $f=0$.\\
The previous assumptions and the Riemann-Lebesgue Lem.~\ref{lem:Rieman-Lebesgue}
yield that also $\mathcal{F}(f)$ is compactly supported in $\RC{\rho}$.
Define
\begin{equation}
u(x,y):=\lim_{h\to+\infty}\mathcal{F}_{h}^{-1}\left(\mathcal{F}(f)\cdot\cosh(\omega y)\right)(x,y)\qquad\forall x\in K\,\forall y\in Y.\label{eq:def_u_Laplace}
\end{equation}
Since $\mathcal{F}(f)\cdot\cosh(\omega y)$ is compactly supported
in $\RC{\rho}$ and satisfies the assumptions of Riemann-Lebesgue
Lem.~\ref{lem:Rieman-Lebesgue}, we have that also $u(-,y)$ is compactly
supported in $\RC{\rho}$. We hence assume to have considered $k$
sufficiently large so that
\begin{equation}
u(\pm k,-)=0=\partial_{x}u(\pm k,-).\label{eq:LapBound}
\end{equation}
We now proceed in the usual way:
\begin{align}
\mathcal{F}_{k}\left(u\right)(\omega) & =\mathcal{F}_{k}\left(\lim_{h\to+\infty}\mathcal{F}_{h}^{-1}\left(\mathcal{F}(f)\cdot\cosh(\omega y)\right)\right)(\omega)\nonumber \\
 & =\lim_{h\to+\infty}\mathcal{F}_{k}\left(\mathcal{F}_{h}^{-1}\left(\mathcal{F}(f)\cdot\cosh(\omega y)\right)\right)(\omega)\nonumber \\
 & =\lim_{h\to+\infty}\mathcal{F}_{h}^{-1}\left(\mathcal{F}_{k}\left(\mathcal{F}(f)\cdot\cosh(\omega y)\right)\right)(\omega)\nonumber \\
 & =\mathcal{F}(f)(\omega)\cdot\cosh(\omega y)\label{eq:LaplaceBeforeRev}
\end{align}
for all $\omega\in\mathring{K}$ and all $y\in Y$. Since $i\omega\Delta_{1k}\left(u(-,y)\right)+\Delta_{1k}\left(\partial_{x}u(-,y)\right)=0$,
from \eqref{eq:LaplaceBeforeRev} we can revert the previous calculations
to obtain
\begin{equation}
\mathcal{F}_{k}\left(\frac{\partial^{2}u}{\partial x^{2}}\right)(\omega,y)+\mathcal{F}_{k}\left(\frac{\partial^{2}u}{\partial y^{2}}\right)(\omega,y)=0\qquad\forall(\omega,y)\in\mathring{K}\times Y.\label{eq:antecedExtProp}
\end{equation}
Once again, we assume that our solution $u$ satisfies the extensibility
property, i.e.~that \eqref{eq:antecedExtProp} implies that the same
equation holds on $\RC{\rho}\times Y$. A final application of the
Fourier inversion Thm.~\ref{thm:FIT} yields that $u$ satisfies
\eqref{eq:laplace}. Setting $y=0$ in \eqref{eq:def_u_Laplace} we
obtain the first boundary condition in \eqref{eq:IClaplace}. Finally,
\[
\partial_{y}\left(\mathcal{F}_{h}^{-1}\left(\mathcal{F}(f)\cdot\cosh(\omega y)\right)\right)=\mathcal{F}_{h}^{-1}\left(\mathcal{F}(f)\cdot\omega\sinh(\omega y)\right)
\]
converges for $h\to+\infty$ for all fixed $y\in Y$ because $\mathcal{F}(f)\cdot\omega\sinh(\omega y)$
is compactly supported (see \eqref{eq:intCmpSuppLimit}). Since $Y$
is functionally compact, Thm.~\ref{thm:contResult} implies that
the convergence of these partial derivatives is actually uniform on
$Y$. Thereby $\partial_{y}u(x,0)=\left.\lim_{h\to+\infty}\partial_{y}\left(\mathcal{F}_{h}^{-1}\left(\mathcal{F}(f)\cdot\cosh(\omega y)\right)\right)\right|_{y=0}=0$.
\begin{thm}
Assume that $f$ satisfies \eqref{eq:analytic} and that for all $x\in K$,
$y\in Y:=[0,-\frac{N}{k}\log\diff\rho]$
\[
\exists\lim_{h\to+\infty}\mathcal{F}_{h}^{-1}\left(\mathcal{F}(f)\cdot\cosh(\omega y)\right)(x,y)=:u(x,y).
\]
Finally, assume that $u$ satisfies the extensibility property on
$\mathring{K}\times Y$ for the Laplace's equation and $k$ is sufficiently
large so that \eqref{eq:LapBound} holds. Then $u$ satisfies the
Laplace's equations \eqref{eq:laplace} and the boundary conditions
\eqref{eq:IClaplace} and \eqref{eq:ICLaplace0}.
\end{thm}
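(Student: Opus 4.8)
The plan is to organize, and read backwards, the reverse deduction already carried out in the passage \eqref{eq:def_u_Laplace}--\eqref{eq:antecedExtProp} preceding the statement, turning the candidate $u$ defined by \eqref{eq:def_u_Laplace} into an actual solution of \eqref{eq:laplace}. The structure mirrors exactly the sufficient-condition argument for the wave equation in Thm.~\ref{thm:waveBack}, with $\cosh(\omega y)$ playing the role that $tS(c\omega t)$ and $\cos(c\omega t)$ played there.

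First I would establish the compact-support facts that make all later transforms and hyperlimits legitimate. Assumption \eqref{eq:analytic} together with the Riemann--Lebesgue Lem.~\ref{lem:Rieman-Lebesgue} forces $\mathcal{F}(f)$ to be compactly supported. For every fixed $y\in Y$ the map $\omega\mapsto\cosh(\omega y)$ has $\rho$-moderate values and $\omega$-derivatives obeying a tame-polynomial bound: this is precisely why $Y=[0,-\frac{N}{k}\log\diff\rho]$ is chosen, since then $|\omega y|\le N\log\diff\rho^{-1}$ and hence $e^{\pm\omega y}\le\diff\rho^{-N}$ whenever $\omega\in K$. Consequently $\mathcal{F}(f)(\omega)\cdot\cosh(\omega y)$ is still compactly supported with tame derivatives, and a second application of Riemann--Lebesgue (to the $\mathcal{F}_h^{-1}$ in \eqref{eq:def_u_Laplace}) shows each $u(-,y)$ is compactly supported in $\RC{\rho}$. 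This justifies choosing $k$ large enough that the boundary identity \eqref{eq:LapBound}, i.e.\ $u(\pm k,-)=0=\partial_x u(\pm k,-)$, holds.

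Next I would compute $\mathcal{F}_k(u)$. The analytic core is the double interchange: exchange $\mathcal{F}_k$ with the hyperlimit $\lim_{h\to+\infty}$ via the continuity Thm.~\ref{thm:contResult}, and exchange $\mathcal{F}_k$ with $\mathcal{F}_h^{-1}$ by Fubini (Thm.~\ref{thm:muMeasurableAndIntegral}.\ref{enu:int-ndimInt}); the inversion Thm.~\ref{thm:FIT} then collapses $\mathcal{F}_k\mathcal{F}_h^{-1}$ and yields \eqref{eq:LaplaceBeforeRev}, namely $\mathcal{F}_k(u)(\omega,y)=\mathcal{F}(f)(\omega)\cosh(\omega y)$ on $\mathring K\times Y$. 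Since \eqref{eq:LapBound} annihilates the boundary terms, i.e.\ $i\omega\Delta_{1k}(u(-,y))+\Delta_{1k}(\partial_x u(-,y))=0$, the derivation formula Thm.~\ref{thm:thmProperties}.\ref{enu:prop8} reduces to $\mathcal{F}_k(\partial_{xx}u)=-\omega^2\mathcal{F}_k(u)$, while differentiating under the integral sign gives $\mathcal{F}_k(\partial_{yy}u)=\partial_{yy}\mathcal{F}_k(u)=\omega^2\mathcal{F}_k(u)$; adding these produces \eqref{eq:antecedExtProp}, the transformed Laplace equation on $\mathring K\times Y$.

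Finally I would invoke the assumed extensibility property to upgrade \eqref{eq:antecedExtProp} from $\mathring K\times Y$ to $\RC{\rho}\times Y$, apply $\mathcal{F}_h^{-1}$, take $\lim_{h\to+\infty}$, and use Thm.~\ref{thm:FIT} one last time to recover \eqref{eq:laplace} on $\mathring K\times Y$, extending to $K\times Y$ by sharp continuity (Thm.~\ref{thm:propGSF}.\ref{enu:GSF-cont}). For the data: setting $y=0$ in \eqref{eq:def_u_Laplace} gives $u(-,0)=f$ because $\cosh 0=1$ and $\mathcal{F}^{-1}\mathcal{F}f=f$ (Cor.~\ref{cor:FIT-RLlemma}); \eqref{eq:ICLaplace0} is the assumed \eqref{eq:LapBound}; and $\partial_y u(-,0)=0$ follows since $\partial_y\mathcal{F}_h^{-1}(\mathcal{F}(f)\cosh(\omega y))=\mathcal{F}_h^{-1}(\mathcal{F}(f)\,\omega\sinh(\omega y))$ is compactly supported and, $Y$ being functionally compact, Thm.~\ref{thm:contResult} upgrades the $h$-convergence to uniform convergence in $y$, letting me differentiate under $\lim_{h\to+\infty}$ and evaluate the odd integrand at $y=0$. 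The main obstacle is not a single estimate but the consistent bookkeeping of $\rho$-moderateness and of the two Fourier parameters $k$ (fixed) and $h\to+\infty$: every interchange of $\mathcal{F}_k$, $\mathcal{F}_h^{-1}$ and $\lim_{h\to+\infty}$ must keep all integrands in $\R_\rho$, which is exactly what the compact support of $\mathcal{F}(f)$, the confinement to $y\in Y$, and Thm.~\ref{thm:contResult} jointly guarantee; the extensibility property itself is the conceptual gap, but it enters only as an explicit hypothesis.
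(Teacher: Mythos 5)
Your proposal is correct and follows essentially the same route as the paper's own argument: Riemann--Lebesgue (via \eqref{eq:analytic}) to get compact support of $\mathcal{F}(f)$ and of $u(-,y)$ so that \eqref{eq:LapBound} can be arranged, the exchange of $\mathcal{F}_k$ with $\lim_{h\to+\infty}$ (Thm.~\ref{thm:contResult}) and with $\mathcal{F}_h^{-1}$ (Fubini) plus inversion to get \eqref{eq:LaplaceBeforeRev}, reversal to \eqref{eq:antecedExtProp}, the extensibility hypothesis, and the uniform-convergence argument on the functionally compact $Y$ for $\partial_y u(-,0)=0$. Your explicit spelling out of the reversal step ($\mathcal{F}_k(\partial_{xx}u)=-\omega^2\mathcal{F}_k(u)$ and $\mathcal{F}_k(\partial_{yy}u)=\partial_{yy}\mathcal{F}_k(u)=\omega^2\mathcal{F}_k(u)$) and the citation of Cor.~\ref{cor:FIT-RLlemma} for $u(-,0)=f$ only make precise what the paper leaves implicit.
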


\subsection{Applications to convolution equations.}

Consider the following convolution equation in $y$
\begin{equation}
g=f*y,\label{eq:integralEquation}
\end{equation}
where we assume that $y$, $g\in\gsf(K)$ and $f\in\Dgsf(\rti)$.
As in the classical theory, we apply the convolution Thm.~\ref{thm:thmProperties}.\ref{enu:prop10}
to get 
\[
\mathcal{F}_{k}\left(g\right)=\mathcal{F}\left(f\right)\mathcal{F}_{k}\left(y\right).
\]
Assuming that $\mathcal{F}\left(f\right)(\omega)$ is invertible for
all $\omega\in K$, the inversion Thm.~\ref{thm:FIT} yields 
\[
y(t)=\lim_{h\to+\infty}\mathcal{F}_{h}^{-1}\left(\frac{\mathcal{F}_{k}\left(g\right)}{\mathcal{F}\left(f\right)}\right)(t),\quad\forall t\in\mathring{K}.
\]

\noindent For example, to highlight the differences with the classical
theory, let us consider the convolution equation $(\delta'+\delta)*y=\delta$
with $y(-1)=0$. We have $g=\delta$, and $f=\delta'+\delta$ so that
$\mathcal{F}(f)=i\omega\mathbb{1}+\mathbb{1}$, where as usual $\mathbb{1}=\mathcal{F}_{k}\left(\delta\right)$.
Since $\mathbb{1}(\omega)\in\rti$, the quantity $i\omega\mathbb{1}(\omega)+\mathbb{1}(\omega)$
is always invertible, and hence we obtain 
\[
y(t)=\mathcal{F}^{-1}\left(\frac{\mathbb{1}}{i\omega\mathbb{1}+\mathbb{1}}\right)(t),\quad\forall t\in\mathring{K}.
\]
It is easy to prove that $y(t)+y'(t)=\mathcal{F}^{-1}\left(1|_{K}\right)\left(t\right)=\frac{1}{2\pi}\int_{-k}^{k}e^{i\omega t}\,\diff t=\frac{k}{\pi}S(kt)$
(see \eqref{eq:sinx/x}) and hence $y(t)=e^{-t}\frac{k}{\pi}\int_{-1}^{t}S(kx)e^{x}\,\diff x$
e.g.~for all $\log(\diff\rho)\le t\le-\log(\diff\rho)$. Therefore
\[
y(t)=e^{-t}\int_{-1}^{t}\mathcal{F}^{-1}\left(1|_{K}\right)(s)e^{s}\,\diff s\approx e^{-t}\int_{-1}^{t}\delta(s)e^{s}\,\diff s=e^{-t}H(t),
\]
for all $t\in\mathring{K}$ which are far from the origin, i.e.~such
that $|t|\ge r\in\R_{>0}$ for some $r$.

\section{Conclusions}

In the introduction of this article, we motivated the natural attempts
of several authors to extend the domain of some kind of Fourier transform.
The HFT presented in this paper can be applied to the entire space
of all the GSF defined in the infinite interval $[-k,k]^{n}$. These
clearly include all tempered Schwartz distributions, all tempered
Colombeau GF, but also a large class of non-tempered GF, such as the
exponential functions, or non-linear examples like $\delta^{a}\circ\delta^{b}$,
$\delta^{a}\circ H^{b}$, $a$, $b\in\N$, etc.

We want to close by listing some features of the theory that allow
some of the main results we saw:
\begin{enumerate}
\item The power of a non-Archimedean language permeates the whole theory
since the beginning (e.g.~by defining GF as set-theoretical maps
with infinite values derivatives or in the use of sharp continuity).
This power turned out to be important also for the HFT: see the heuristic
motivation of the FT in Sec.~\ref{subsec:The-heuristic-motivation},
Example \ref{exa:uncertaintyDelta} about application of the uncertainty
principle to a delta distribution, or the HFT of exponential functions
in Example \ref{exa:exp} and in Sec.~\ref{sec:Examples-and-applications}.
\item The results presented here are deeply founded on a strong and flexible
theory of multidimensional integration of GSF on functionally compact
sets: the possibility to exchange hyperlimits and integration has
been used several times in the present work; the possibility to compute
$\eps$-wise integrals on intervals is another feature used in several
theorems and a key step in defining integration of compactly supported
GSF.
\item It can also be worth explicitly mentioning that the definition of
HFT is based on the classical formulas used only for rapidly decreasing
smooth functions and not on duality pairing. In our opinion, this
is a strong simplification that even underscores more the strict analogies
between ordinary smooth functions and GSF. All this in spite of the
fact that the ring of scalars $\rti$ is not a field and is not totally
ordered.
\item Important differences with respect to the classical theory result
from the Riemann-Lebesgue Lem.~\ref{lem:Rieman-Lebesgue} and the
differentiation formula \eqref{eq:DerRule1}. In the former case,
we explained these differences as a general consequence of integration
by part formula, i.e.~of the non-linear framework we are working
in, see Thm.~\ref{thm:R-Limp}. The compact support of the HFT $\mathbb{1}$
of Dirac's delta reveals to be very important in stating and proving
the preservation properties of HFT, see Sec.~\ref{sec:preservation}.
Surprisingly (the classical formula dates back at least to 1822),
in Sec.~\ref{sec:Examples-and-applications} we showed that the new
differentiation formula is very important to get out of the constrained
world of tempered solutions.
\item Finally, Example \ref{exa:uncertaintyDelta} of application of the
uncertainty principle, further suggests that the space $\gsf(K)$
may be a useful framework for quantum mechanics, so as to have both
GF and smooth functions in a space sharing several properties with
the classical $L^{2}(\R^{n})$ (but which, on the other hand, is a
\emph{graded} Hilbert space).
\end{enumerate}
\textbf{Acknowledgement.} We are grateful to Michael Oberguggenberger
and Sanja Konjik for their careful reading of our work. In particular,
for the stimulating discussions with Michael, which considerably improved
this manuscript.

\end{document}